\tikzset{
  symbol/.style={
    draw=none,
    every to/.append style={
      edge node={node [sloped, allow upside down, auto=false]{$#1$}}}
  }
}
\definecolor{darkgreen}{rgb}{0,0.8,0}
\definecolor{darkblue}{rgb}{0,0,0.8}
\crefname{introthm}{Theorem}{Theorems}
\Crefname{introthm}{Theorem}{Theorems}
\declaretheorem[name=Theorem,numberwithin=section]{thm}
\theoremstyle{plain}
\newtheorem{lem}[thm]{Lemma}
\newtheorem*{keylem}{Key Lemma}
\newtheorem{prop}[thm]{Proposition}
\newtheorem{introthm}{Theorem}
\newtheorem*{introcor}{Corollary}
\newtheorem{cor}[thm]{Corollary}
\newtheorem*{thm*}{Theorem}
\theoremstyle{definition}
\newtheorem{df}[thm]{Definition}
\newtheorem{exmp}[thm]{Example}
\theoremstyle{remark}
\newtheorem{rem}[thm]{Remark}
\DeclareMathOperator{\spec}{Spec}
\DeclareMathOperator{\pr}{pr}
\DeclareMathOperator{\id}{id}
\DeclareMathOperator{\Frac}{Frac}
\DeclareMathOperator{\ev}{ev}
\DeclareMathOperator{\EV}{Ev}
\DeclareMathOperator{\Hom}{Hom}
\newcommand{\C}{\mathbb{C}}
\newcommand{\Cx}{\C^\times}
\newcommand{\Cxd}{\C^\times_{\mathrm{dil}}}
\newcommand{\Cxh}{\C^\times_\hbar}
\newcommand{\Z}{\mathbb{Z}}
\newcommand{\PP}{\mathbb{P}}
\newcommand{\pt}{\mathrm{pt}}
\newcommand{\Gr}{\mathrm{Gr}}
\newcommand{\oh}{\mathcal{O}}
\newcommand{\ok}{\mathcal{K}}
\newcommand{\lt}{\mathfrak{t}}
\newcommand{\bN}{\mathbf{N}}
\newcommand{\bV}{\mathbf{V}}
\newcommand{\BR}{\mathcal{R}}
\newcommand{\BT}{\mathcal{T}}
\newcommand{\BS}{\mathcal{S}}
\newcommand{\BI}{\mathcal{I}}
\newcommand{\sE}{\mathcal{E}}
\newcommand{\sA}{\mathcal{A}}
\newcommand{\sB}{\mathcal{B}}
\newcommand{\cX}{\mathcal{X}}
\newcommand{\cM}{\mathcal{M}}
\newcommand{\cN}{\mathcal{N}}
\newcommand{\cZ}{\mathcal{Z}}
\newcommand{\hG}{\widehat{G}}
\newcommand{\hT}{\widehat{T}}
\newcommand{\Eff}{\mathrm{Eff}}
\newcommand{\loc}{\mathrm{loc}}
\newcommand{\vir}{\mathrm{vir}}
\newcommand{\bbS}{\mathbb{S}}
\newcommand{\cct}{(\!(t)\!)}
\newcommand{\can}{\mathrm{can}}
\newcommand{\PD}{\mathrm{PD}}
\newcommand{\tw}{\mathrm{tw}}
\newcommand{\Alg}{\mathrm{-Alg}}
\newcommand{\Set}{\mathrm{Set}}
\newcommand{\Iri}{\mathrm{Iri}}
\DeclareFontFamily{U}{mathx}{}
\DeclareFontShape{U}{mathx}{m}{n}{<-> mathx10}{}
\DeclareSymbolFont{mathx}{U}{mathx}{m}{n}
\DeclareMathAccent{\widehat}{0}{mathx}{"70}
\DeclareMathAccent{\widecheck}{0}{mathx}{"71}
\title[Quantum cohomology, shift operators, and Coulomb branches]{Quantum cohomology, shift operators, and Coulomb branches}
\author[K. F. Chan, K. Chan, C. H. E. Lam]{Ki Fung Chan, Kwokwai Chan, Chin Hang Eddie Lam}
\address{The Institute of Mathematical Sciences and Department of Mathematics, The Chinese University of Hong Kong, Shatin, Hong Kong}
\email{kfchan@math.cuhk.edu.hk}
\address{Department of Mathematics, The Chinese University of Hong Kong, Shatin, Hong Kong}
\email{kwchan@math.cuhk.edu.hk}
\address{Department of Mathematics, The Chinese University of Hong Kong, Shatin, Hong Kong}
\email{echlam@math.cuhk.edu.hk}
\begin{document}

\begin{abstract}
Given a complex reductive group $G$ and a $G$-representation $\bN$, there is an associated Coulomb branch algebra $\sA_{G,\bN}^\hbar$ defined in~\cite{Nak,BFN}. In this paper, we prove a new characterization of $\sA_{G,\bN}^\hbar$ as the largest subspace of the equivariant Borel--Moore homology of the affine Grassmannian on which shift operators (and their deformations induced by flavour symmetries) are regular, meaning that they are defined without localizations. 
The proofs involve showing that the defining equations of the Coulomb branch algebras precisely reflect properness of the moduli spaces used to define shift operators.
As a main application, we 
show that if $X$ is a smooth semiprojective variety equipped with a $G$-action, and $f \colon X \to \bN$ is a $G$-equivariant proper holomorphic map, then the equivariant big quantum cohomology $QH^\bullet_G(X)$ defines a family of closed Lagrangians in the Coulomb branch $\spec \sA_{G,\bN}$, yielding a transformation of 3d branes in 3d mirror symmetry.
Regularity of shift operators also gives way to highly efficient computations in equivariant Gromov--Witten theory; in particular, we obtain a very short proof of the Peterson isomorphism.
\end{abstract}
\maketitle


\section*{Introduction}
\subsection*{Background and the main results}

Let $G$ be a connected complex reductive group. For each representation $\bN$ of $G$, one can associate a 3d~$\mathcal{N}=4$ supersymmetric gauge theory with matter $T^*\bN$. The Coulomb branch $\sA^\hbar_{G,\bN}$ of this theory was constructed by \citeauthor{BFN}~\cite{BFN} using representation-theoretic methods. 
This paper concerns the interaction between \emph{quantum cohomology}~\cite{Witten,Kontsevich-Manin,Cox-Katz}, \emph{shift operators}~\cite{Seidel,OP,Iritani}, and \emph{Coulomb branches}~\cite{Nak,BFN,BDGH,2drole} arising from 3d~$\mathcal{N}=4$ supersymmetric gauge theories.

Our main results can be succinctly summarized by the following slogan:
\begin{center}
\emph{Coulomb branches are the maximal domains of regularity for shift operators in quantum cohomology.}
\end{center}
A shift operator is called \emph{regular} if its action on quantum cohomology is defined without localization. The slogan means that the Coulomb branch intrinsically encodes the enumerative information carried by regular shift operators, hence the title of this paper. 
This perspective also suggests that geometric structures on Coulomb branches admit categorical interpretations at the level of quantum cohomology and shift operators. 
We will provide a more precise formulation after recalling the relevant background on quantum cohomology and shift operators.

\subsubsection*{Shift operators}
Let $T\subset G$ be a maximal torus. Throughout this paper, $X$ denotes a smooth semiprojective variety (see \Cref{qh}) equipped with a $G$-action. Shift operators are endomorphisms of equivariant quantum cohomology that play an important role in symplectic topology, mirror symmetry, and representation theory~\cite{Seidel, OP, MO, Iritani}.

Suppose, for the moment, that the $T$-fixed locus $X^T$ is proper. Then for each cocharacter $\lambda: \mathbb{C}^\times \to T$, there exists a shift operator
\[
\bbS_{\lambda} \in \operatorname{End}^\bullet(H^\bullet_{T}(X)[\hbar])_\loc[[q,\tau]];
\]
here $\hbar$ is the parameter of loop rotation, while $q$ and $\tau$ are the Novikov and bulk parameters, respectively, appearing in the theory of quantum connections. We will review this notation in \Cref{qh}. Shift operators are defined using genus zero Gromov--Witten invariants of certain $X$-bundles over $\PP^1$. Since $X$ is not assumed to be compact, these Gromov--Witten invariants are generally defined using equivariant localization. The subscript ``$\loc$'' means that we localize at the multiplicative set of nonzero homogeneous elements of $\C[\lt][\hbar]$, where $\lt$ is the Lie algebra of $T$ and we identify $H_T^\bullet(\pt)=\C[\lt]$.

These shift operators satisfy the relations
\begin{align}
\bbS_{\lambda}\circ\bbS_{\lambda'} &= \bbS_{\lambda+\lambda'}, \label{Shiftrelations1} \\
\bbS_{\lambda}a -a\bbS_{\lambda}&= a(\lambda)\hbar\bbS_{\lambda}, \label{Shiftrelations2}
\end{align}
for any cocharacters $\lambda,\lambda':\Cx\to T$ and $a\in \lt^*$. For each cocharacter $\lambda$ of $T$, we write $t^\lambda$ for the corresponding character of the dual torus $\check T$. Let $\oh^\hbar_{T^*\check T}$ denote the quantization of the ring of regular functions on $T^*\check T=\check T\times \lt$. As a vector space, we have $\oh^\hbar_{T^*\check T}=\C[T^*\check T][\hbar]$, but the product is deformed so that $t^\lambda a -at^\lambda=a(\lambda)\hbar t^\lambda$.
Thus the relations \eqref{Shiftrelations1} and \eqref{Shiftrelations2} can be summarized by saying that there is a graded ring homomorphism
\begin{equation}\label{eq:rational_shift_operators}
    \bbS_{G,X}:(\oh_{T^*\check T}^\hbar)_\loc \to \operatorname{End}^\bullet(H^\bullet_{T}(X)[\hbar])_\loc[[q,\tau]],
\end{equation}
where $\loc$ is understood as above. We may interpret $(\oh_{T^*\check T}^\hbar)_\loc$ as the largest space for which localized shift operators are defined. 

Given a representation $\bN$ of $G$, the Coulomb branch algebra $\sA^\hbar_{G,\bN}$ is a subspace of $(\oh_{T^*\check T}^\hbar)_\loc$.
\begin{introthm}\label{IntroThm2}
Given any equivariant proper morphism $f: X \to \bN$, there exists a graded homomorphism
\begin{equation}\label{Introshiftoperators3}
    \bbS_{G,\bN,X} : \mathcal{A}_{G,\bN}^{\hbar} \to \operatorname{End}^\bullet(H^\bullet_{G}(X)[\hbar])[[q,\tau]],
\end{equation}
that commutes with the quantum connection.
Moreover, if the $T$-fixed locus $X^T$ is proper, then $\bbS_{G,\bN,X}$ agrees with the restriction of $\bbS_{G,X}$ in \eqref{eq:rational_shift_operators}.
\end{introthm}
An endomorphism in $\operatorname{End}^\bullet(H^\bullet_{T}(X)[\hbar])_\loc[[q,\tau]]$ is said to be \emph{regular} if it lies in the nonlocalized subspace $\operatorname{End}^\bullet(H^\bullet_{G}(X)[\hbar])[[q,\tau]]$. Thus, \Cref{IntroThm2} says that every element of $\sA^\hbar_{G,\bN}$ defines a regular shift operator.

Let us spell out a semiclassical version of \Cref{IntroThm2}. We write $QH^\bullet_G(X)$ for the \emph{big} quantum cohomology ring of $X$, whose underlying vector space is $H^\bullet_G(X)[[q,\tau]]$; the relevant definitions will be reviewed in \Cref{qh}.
We set $\sA_{G,\bN} := \sA_{G,\bN}^\hbar|_{\hbar=0}$. This is a commutative algebra, and the affine scheme $\spec \sA_{G,\bN}$ carries a natural Poisson structure whose restriction to the smooth locus is symplectic.
\begin{introthm}\label{IntroThm1}
The shift operator $\bbS_{G,\bN,X}$ induces a graded ring homomorphism, called the \emph{(generalized) Seidel homomorphism}
\begin{equation}\label{Introshiftoperators5}
    \Psi_{G,\bN,X} \colon \sA_{G,\bN} \longrightarrow QH^\bullet_G(X)
\end{equation}
from the Coulomb branch algebra $\sA_{G,\bN}$ to the $G$-equivariant big quantum cohomology of $X$.
Moreover, the image of the induced morphism
\[
\spec QH^\bullet_G(X)\to \spec \sA_{G,\bN}
\]
is a family of closed Lagrangian subvarieties in $\spec \sA_{G,\bN}$ parametrized by the Novikov and bulk parameters.
\end{introthm}
The shift operator $\bbS_{G,\bN,X}$ can therefore be regarded as a \emph{geometric quantization} of the family of Lagrangian subvarieties $\spec QH^\bullet_G(X)\to \spec \sA_{G,\bN}$. 

From the perspective of 3d mirror symmetry, \Cref{IntroThm1} can be interpreted as describing a mirror correspondence of boundary conditions between the 3d theories associated with the Higgs and Coulomb branches. The equivariant morphism $f: X \to \bN$ should be regarded as a boundary condition associated to the Higgs branch $[T^*\bN/\!\!/G]$. Under 3d mirror symmetry, $\spec QH^\bullet_G(X)$ should be regarded as the support of the \emph{mirror} boundary condition on the Coulomb branch $\spec \sA_{G,\bN}$. We remark that regularity of $\bbS_{G,\bN,X}$ is needed in order to prove the Lagrangian property of $\spec QH^\bullet_G(X)$.

On the other hand, the regularity of equivariant Gromov--Witten invariants is highly effective in concrete computations. For example, it allows us to give a very short proof of a slight generalization of the Peterson isomorphism in \Cref{Peterson}. In the sequel \cite{Cotangentofflag}, we apply this regularity result to compute the action of shift operators on the stable envelopes of cotangent bundles of partial flag varieties.

More importantly, Coulomb branch algebras are in fact completely characterized by the regularity of shift operators, as we now explain.

\subsubsection*{Flavour symmetries and deformations}
Suppose that the $G$-action on $\bN$ extends to a group $\hG$ fitting into a short exact sequence
\[
\xi: 1\to G\to \hG\to T_\xi\to 1,
\]
where $T_\xi$ is a torus. We call $\xi$ a \emph{flavour symmetry}, and write $\hT\supset T$ for a maximal torus of $\hG$. We write
\[
\oh_{T^*\check T}^{\hbar,\xi}
=
\C[\hat\lt]\otimes_{\C[\lt]}\oh_{T^*\check T}^{\hbar},
\]
which carries an induced algebra structure from $\oh_{T^*\check T}^{\hbar}$; see \Cref{sectionnewcharacterization}. If, in addition, the morphism $f:X\to\bN$ is $\hT$-equivariant, then one can define the flavour-deformed shift operators
\[
    \bbS^\xi_{G,X}:(\oh_{T^*\check T}^{\hbar,\xi})_\loc
    \to
    \operatorname{End}^\bullet(H^\bullet_{\hT}(X)[\hbar])_\loc[[q,\tau]].
\]
We are now ready to give the promised characterization of the Coulomb branch algebra $\sA^\hbar_{G,\bN}$.
\begin{introthm}\label{Introlargestsubalg}
If $a \in (\oh_{T^*\check T}^\hbar)_\loc\setminus \sA^\hbar_{G,\bN}$, then there exist a flavour symmetry $\xi$ and a smooth semiprojective variety $X$ equipped with a $\hG$-equivariant proper morphism $f: X \to \bN$, such that, for any lift $\tilde a \in (\oh_{T^*\check T}^{\hbar,\xi})_\loc$ of $a$ (see \Cref{df:lift}), the deformed shift operator
\[
\bbS_{G,\bN, X}^\xi(\widetilde a)\in \operatorname{End}^\bullet(H_{\hG\times \Cx}^\bullet(X)[\hbar])_\loc[[q,\tau]]
\]
is not regular.
\end{introthm}
The desired characterization is an immediate consequence of \Cref{IntroThm2,Introlargestsubalg}:
\begin{introcor}\label{IntroCorollary}
The Coulomb branch algebra $\sA^\hbar_{G,\bN}$ is the \emph{largest} linear subspace of $(\oh_{T^*\check T}^\hbar)_\loc$ on which the shift operators and their flavour-deformations are regular. 
\end{introcor}
In this paper, we also consider flavour-deformed Coulomb branches $\sA^{\hbar,\xi}_{G,\bN}$; the corresponding analogues of \Cref{IntroThm2,IntroThm1,Introlargestsubalg} and the Corollary remain valid in this setting, as explained in \Cref{sectionnewcharacterization}.

\subsubsection*{Categorification of properties of the Coulomb branch}
The Corollary gives a characterization of the Coulomb branch in terms of the regularity of shift operators. This suggests that the geometry of Coulomb branches can be studied through shift operators. Physically, our characterization says that boundary conditions on the Coulomb branch determine the Coulomb branch itself. It is therefore natural to ask whether structural properties of Coulomb branches admit categorified counterparts, namely, whether they can be understood in terms of boundary conditions and shift operators.

For instance, the Corollary characterizes the Coulomb branch $\sA_{G,\bN}^\hbar$ only as a linear subspace of $(\oh_{T^*\check T}^\hbar)_\loc$. However, the fact that $\sA_{G,\bN}^\hbar$ is a subalgebra of $(\oh_{T^*\check T}^\hbar)_\loc$ follows from, and should be regarded as the decategorified manifestation of, the fact that the composition of two regular shift operators is again regular.
We expect that most of the properties of Coulomb branches can be understood in this way. Further examples will be given in \Cref{subsection:properties_and_categorifications}. Some properties of Coulomb branches and their categorified counterparts in terms of shift operators are listed in the following table.

\begin{center}
\renewcommand{\arraystretch}{1.35}
\begin{tabular}{p{0.42\textwidth}|p{0.50\textwidth}}
\textbf{Property of Coulomb branches}
&
\textbf{Categorified counterpart for shift operators}
\\
\hline

Algebra structure
&
The composition of regular shift operators is regular.
\\[0.5em]

Coproduct structure
&
The K\"unneth formula for shift operators.
\\[0.5em]

Flavour symmetries deform the Coulomb branch.
&
Flavour symmetries deform the shift operators.
\\[0.5em]

The Weyl group of the flavour symmetry acts on the deformed Coulomb branch.
&
The Weyl group acts on Cartan-equivariant quantum cohomology and commutes with the shift operators.
\\[0.5em]

The torus $T^!=H^2_G(\pt;\C)/H^2_G(\pt;\Z)$ acts on the Coulomb branch.
&
The torus $T^!$ deforms the shift operators.
\end{tabular}
\end{center}

\subsection*{The construction of \texorpdfstring{$\bbS_{G,\bN,X}$}{S\_G,N,X}}
We now explain the construction of the shift operators $\bbS_{G,\bN,X}$ in \Cref{IntroThm2} using \citeauthor{BFN}'s definition of Coulomb branches \cite{BFN}. A comparison with other works will be given at the end of this introduction.

\subsubsection*{Equivariant Novikov variables}
Given $G$ and $X$, we let $\tau\in H_{G\times\Cxh}^\bullet(X)$ be a general point, treated as a formal variable. We define $\mathbb{C}[[q,\tau]]$ to be a formal completion of $\mathbb{C}[[\tau]][H_2^{\mathrm{ord},G}(X;\mathbb{Z})]$ along the cone of effective curve classes (see \Cref{qh}).\footnote{We use $H_\bullet$ to denote Borel--Moore homology and $H_\bullet^{\mathrm{ord}}$ to denote ordinary (i.e., singular) homology.} 
In this paper, the equivariant quantum cohomology ring has underlying vector space given by a completed tensor product
\begin{equation*}
    H^\bullet_G(X)[[q,\tau]] \coloneqq H^\bullet_G(X) {\widehat{\otimes}}\, \mathbb{C}[[q,\tau]].
\end{equation*}
The quantum product is defined using equivariant genus-zero Gromov--Witten invariants.

\subsubsection*{The case $\bN = \mathbf{0}$}

We begin with the construction in the pure gauge case. Note that when $\bN=0$, the variety $X$ is projective. 

The first step involves a convolution-type operation. We define a map (see \Cref{Twistingdef})
\[
\tw := (\pi_R)_* \circ (\pi_L)^*: H_\bullet^{G_\mathcal{O} \rtimes \mathbb{C}^\times_\hbar}(\mathrm{Gr}_G) \otimes_{\C[\hbar]} H_\bullet^{G \times \mathbb{C}^\times_\hbar}(X) \longrightarrow H_\bullet^{G \times \mathbb{C}^\times_\hbar}(G_\mathcal{K} \times_{G_\mathcal{O}} X)
\]
via the correspondence
\begin{equation*}
\begin{tikzcd}
    & G_\mathcal{K} \times X \ar[ld, swap, "\pi_L"] \ar[rd, "\pi_R"] & \\
    \mathrm{Gr}_G \times X && G_\mathcal{K} \times_{G_\mathcal{O}} X
\end{tikzcd}.
\end{equation*}

The map $\tw$ is \emph{twisted-linear} in the following sense. The projection
\begin{equation*}
u: \mathrm{Gr}_G = G_\mathcal{K} / G_\mathcal{O} \longrightarrow [\mathrm{pt} / G_\mathcal{O}]
\end{equation*}
induces a pullback homomorphism
\[
u^*: H^\bullet_{G \times \mathbb{C}^\times_\hbar}(\mathrm{pt}) \longrightarrow H^\bullet_{G_\oh \rtimes \mathbb{C}^\times_\hbar}(\mathrm{Gr}_G).
\]
For any $P \in H^\bullet_{G \times \mathbb{C}^\times_\hbar}(\mathrm{pt})$ and $\Gamma \otimes \alpha \in H_\bullet^{G \times \mathbb{C}^\times_\hbar}(\mathrm{Gr}_G) \otimes H_\bullet^{G \times \mathbb{C}^\times_\hbar}(X)$, we have
\[
\tw(\Gamma \otimes P \alpha) = \tw((u^*P\cap \Gamma )\otimes \alpha) = u^*P\cap \tw(\Gamma\otimes\alpha),
\]
which in the abelian case reflects the twisted-linearity relation~\eqref{Shiftrelations2} (see \Cref{abeliantwlinear}).

The second step involves curve-counting in a certain associated $X$-bundle. By a theorem of \citeauthor{BL}~\cite{BL}, there is a canonical principal $G$-bundle $\mathcal{E}$ over $\mathrm{Gr}_G \times \mathbb{P}^1$ together with a trivialization 
\begin{equation}\label{trivialization}
\varphi: \sE|_{\Gr_G\times\spec(\C[t^{-1}])}\stackrel{\sim}{\longrightarrow}G\times \Gr_G\times\spec(\C[t^{-1}])
\end{equation}
over $\mathrm{Gr}_G \times \operatorname{Spec}(\mathbb{C}[t^{-1}])$; here we use $t$ to denote the coordinate on $\PP^1$. The trivialization $\varphi$ will play a crucial role in the general $\bN$ case below.
Let $\mathcal{E}(X) \coloneqq \mathcal{E} \times_G X$ denote the associated $X$-bundle. The restrictions of $\mathcal{E}(X)$ to $\mathrm{Gr}_G \times \{0\}$ and $\mathrm{Gr}_G \times \{\infty\}$ are isomorphic to $G_\mathcal{K} \times_{G_\mathcal{O}} X$ and $\mathrm{Gr}_G \times X$, respectively. 

Let $\Eff(\mathcal{E}(X))^{\mathrm{sec}} \subset H_2^{\text{ord}}(\mathcal{E}(X); \mathbb{Z})$ denote the subset of effective section classes, i.e., those effective classes whose image in $H_2^{\text{ord}}(\mathrm{Gr}_G \times \mathbb{P}^1; \mathbb{Z})$ is equal to $[\mathrm{pt} \times \mathbb{P}^1]$.
For $\beta\in\Eff(\mathcal{E}(X))^{\mathrm{sec}}$, let $\mathcal{M}(X,\beta)_n$ be the moduli space of genus-zero stable maps to $\mathcal{E}(X)$ with curve class $\beta$ and $n+2$ points $y_0,y_\infty,y_1,\dots,y_n$, such that $y_0$ and $y_\infty$ lie over the $0$ and $\infty$ fibres, respectively. Let $\ev_0, \ev_\infty, \ev_1,...$ be the evaluation maps. We define
\[
\widetilde{\bbS}_{G,X} : H_\bullet^{G_\oh\rtimes \mathbb{C}^\times_\hbar}(G_\mathcal{K} \times_{G_\mathcal{O}} X) \longrightarrow H_\bullet^{G\times \mathbb{C}^\times_\hbar}( X)[[q_G,\tau]]
\]
by the formula
\begin{align*}
\widetilde{\bbS}_{G,X}(\gamma) 
= \sum_{\beta \in \Eff(\mathcal{E}(X))^{\sec}} \sum_{n=0}^\infty
\frac{q^{\overline{\beta}}}{n^!}\, 
\pr_{X*}\ev_{\infty*} \left( 
    \ev_0^*(\gamma) \prod_{\ell=1}^n\ev_\ell^*(\hat\tau)\cap [\mathcal{M}(X, \beta)_n]^{\mathrm{vir}} 
\right),
\end{align*}
where $\pr_X:\Gr_G\times X\to X$ is the projection map, $[\mathcal{M}(X, \beta)_n]^{\mathrm{vir}}$ is the virtual fundamental class of the moduli space, and $\overline{\beta}$ is the image of $\beta$ under the natural map
\[
H_2^{\text{ord}}(\mathcal{E}(X); \mathbb{Z}) \longrightarrow H_2^{\text{ord},G}(X; \mathbb{Z})
\]
induced by $\mathcal{E}(X)=\sE\times_G X \to [X/G]$; and the definition of $\hat\tau \in H_{G \times \Cxh}^\bullet(\sE(X))$ is given in \Cref{tauhat}.

The module map $\bbS_{G,X}$ is then defined to be the composition
\begin{align*}
    H^{G_\oh \rtimes \Cx_\hbar}_\bullet(\Gr_G) \otimes_{\C[\hbar]} H_\bullet^{G \times \Cx_\hbar}(X)[[q_G,\tau]] 
    &\xrightarrow{\mathmakebox[\widthof{$\widetilde\bbS_{G,X}$}][c]{\tw}} 
    H_\bullet^{G \times \Cx_\hbar}(G_\ok \times_{G_\oh} X)[[q_G,\tau]] \\
    &\xrightarrow{\widetilde\bbS_{G,X}} 
    H_\bullet^{G \times \Cx_\hbar}(X)[[q_G,\tau]],
\end{align*}
up to intertwining with Poincar\'e duality. 

Here are two remarks about the above construction.
\begin{enumerate}
\item 
First, to obtain a well-defined notion of virtual fundamental classes, we must approximate the Borel--Moore homology of $\Gr_G$ by the homology of resolutions of its affine Schubert varieties.
\item
Second, the use of equivariant Novikov variables is essential in this construction. When $G = T$ is a torus, one can define a map $H_2^{\text{ord}}(\mathcal{E}(X); \mathbb{Z}) \to H_2^{\text{ord}}(X; \mathbb{Z})$ depending on the choices of suitable section classes (see~\cite{Iritani}). In the cases studied in~\cite{Chow}, the natural map $H_2^{\text{ord}}(X; \mathbb{Z}) \to H_2^{\text{ord},G}(X; \mathbb{Z})$ is an isomorphism. In~\cite{GMP,GMP2}, equivariant Novikov variables appeared implicitly by the consideration of vertical Chern classes (cf. \Cref{degree}).
\end{enumerate}

\subsubsection*{The general case}
The case of a general $\bN$ is more difficult and much richer, revealing a deep and previously unexplored connection between the Coulomb branch and properness of the relevant moduli spaces. We begin by explaining the difficulties.

Since $\sA_{G,\bN}^\hbar$ is a subalgebra of $\sA_{G}^\hbar$, it is tempting to define $\bbS_{G,\bN,X}$ as the restriction of $\bbS_{G,X}$. This idea works for the map $\tw$, but fails for the map $\widetilde{\bbS}_{G,X}$. The main issue is that the $T$-fixed locus $X^T$ may \emph{not} be compact in our general setting. In this case, the evaluation map $\ev_\infty$ (or its restriction to the $T$-fixed locus) may fail to be proper. Hence, the pushforward $\ev_{\infty*}$ is not well-defined in general, even using localization.

Our strategy is to cut down the moduli space $\cM(X,\beta)_n$ to a smaller subspace on which the evaluation maps become proper. A key ingredient in our approach is a reformulation of the Coulomb branch algebra $\sA^\hbar_{G,\bN}$, which we now describe.

Recall that in~\cite{BFN}, the authors considered an infinite-rank vector bundle $\BT = \BT_{\bN}$ over the affine Grassmannian $\Gr_G$ and a fibrewise linear subvariety $\BR \subset \BT$. The Coulomb branch algebra $\sA_{G,\bN}^\hbar$ is defined as the equivariant Borel--Moore homology of $\BR$. Let $\BS \coloneq \BT/\BR$ be the fibrewise quotient. We prove the following theorem in \Cref{Coulomb}.

\begin{introthm}\label{Introthm3}
The (quantized) Coulomb branch algebra $\sA_{G,\bN}^\hbar$ is isomorphic to the following subalgebra of $\sA_G^\hbar$:
\[
 e(\BS) \cap H^{G \times \mathbb{C}^\times_\hbar}_\bullet(\mathrm{Gr}_G) \subset \sA_G^\hbar.
\]
\end{introthm}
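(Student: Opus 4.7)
My plan is to construct an explicit isomorphism
\[
\Phi\colon \sA^\hbar_{G,\bN} = H^{G_\oh\rtimes\Cxh}_\bullet(\BR) \;\longrightarrow\; e(\BS) \cap H^{G\times\Cxh}_\bullet(\Gr_G)
\]
as the composition of pushforward along the closed embedding $\iota\colon \BR \hookrightarrow \BT$ with the inverse of the Thom isomorphism for the (pro-)vector-bundle $\pi_\BT\colon \BT \to \Gr_G$. The geometric input that forces the image to land inside $e(\BS) \cap H_\bullet(\Gr_G)$ is the Cartesian square
\[
\begin{tikzcd}
\BR \ar[r, hook, "\iota"] \ar[d, "\pi_\BR"'] & \BT \ar[d, "q"] \\
\Gr_G \ar[r, hook, "0"'] & \BS,
\end{tikzcd}
\]
which exhibits $\BR$ as the preimage of the zero section of the quotient bundle $\BS = \BT/\BR$. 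By base change, the pushforward satisfies $\iota_*(\pi_\BR^*\gamma) = \pi_\BT^*(e(\BS) \cap \gamma)$ for any $\gamma \in H_\bullet(\Gr_G)$, and since $\pi_\BR\colon\BR \to \Gr_G$ is itself a (pro-)vector-bundle admitting its own Thom isomorphism, every class on $\BR$ is of the form $\pi_\BR^*\gamma$. This identifies $\im \Phi$ with $e(\BS) \cap H^{G\times\Cxh}_\bullet(\Gr_G)$.

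For injectivity, I would verify that cap product with $e(\BS)$ is injective on $H^{G\times\Cxh}_\bullet(\Gr_G)$. Restricting to each affine Schubert variety $\overline{\Gr_G^\lambda}$, the bundle $\BS$ is approximated by finite-rank bundles whose Chern roots carry nontrivial $\Cxh$-weights coming from loop rotation on $\bN_\ok/\bN_\oh$. Consequently, $e(\BS)$ has a nonzero leading $\hbar$-term and is a non-zero divisor in $H^\bullet_{G\times\Cxh}(\overline{\Gr_G^\lambda})$ within a suitable $\hbar$-adic completion; passing to the colimit over $\lambda$ yields injectivity of $\Phi$.

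Finally, the algebra compatibility would be checked by tracing through the BFN convolution diagram for $\sA^\hbar_{G,\bN}$ alongside the convolution diagram for $\sA^\hbar_G$: the convolutions of $\BR$, $\BT$, and $\BS$ fit into compatible short exact sequences over $G_\ok \times_{G_\oh} \Gr_G$, and the multiplicativity of Euler classes under these sequences translates directly into the statement that $\Phi$ intertwines the convolution products, and in particular that $e(\BS) \cap H_\bullet(\Gr_G)$ is closed under convolution. The principal obstacle, as I see it, is the careful bookkeeping needed to handle the ind-scheme structure of $\Gr_G$ together with the infinite rank of $\BT$: to make sense of $e(\BS)$ as an element of an appropriate completion of $H^\bullet_{G\times\Cxh}(\Gr_G)$, to justify Thom isomorphisms in equivariant Borel--Moore homology for the pro-bundles $\BR$ and $\BT$, and to verify that the finite-dimensional approximations (resolutions of Schubert varieties together with truncations of $\bN_\ok$) used throughout are consistent with both the pushforward $\iota_*$ and the convolution product. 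Once this bookkeeping is in place, the three steps above combine into the desired subalgebra identification.
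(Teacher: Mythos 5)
There is a genuine gap. Your construction of $\Phi$ treats $\BR$ and $\BS$ as (pro-)vector bundles over $\Gr_G$, but neither is: over the Schubert cell $C_\lambda$ the codimension of $\BR$ in $\BT$ equals $d_\lambda = -\sum_{\langle\xi_i,\lambda\rangle<0}\langle\xi_i,\lambda\rangle$, which jumps as $\lambda$ varies, so $\BS = \BT/\BR$ is only a \emph{stratified} bundle — a vector bundle over each cell, not over the Schubert varieties $C_{\leq\lambda}$, and not over $\Gr_G$. This invalidates two of your three steps. First, ``$\pi_\BR\colon\BR\to\Gr_G$ is itself a pro-vector bundle admitting its own Thom isomorphism, every class on $\BR$ is of the form $\pi_\BR^*\gamma$'' is false; the correct structure is an affine stratification $\BR^d_{\leq\lambda} = \bigsqcup_{\mu\leq\lambda}\BR^d_\mu$, which the paper exploits to get a free $H_T^\bullet(\pt)$-basis rather than a Thom isomorphism. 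Second, your proposed Cartesian square against the zero section of $\BS$, and the resulting base-change formula $\iota_*\pi_\BR^*\gamma = \pi_\BT^*(e(\BS)\cap\gamma)$, presuppose that $\BS$ admits an Euler class over $\Gr_G$; the whole point of the paper's construction is that it does not, and that one must first pass to $\BI\rtimes\Cxh$-equivariant resolutions $\widetilde{C}_{\leq\lambda}\to C_{\leq\lambda}$ (\Cref{existresol}) over which $\BS$ extends to a genuine vector bundle $\widetilde{\BS}_{\leq\lambda}$, and then define $e(\BS)\cap H_\bullet(\Gr_G)$ via pushforward from those resolutions, checking independence of the resolution (\Cref{independentResol}). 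You flag ``the ind-scheme structure of $\Gr_G$ together with the infinite rank of $\BT$'' as the obstacle, but this is a red herring; the paper handles those with standard finite-dimensional approximations. The nonstandard obstacle is precisely the stratified nature of $\BR$ and $\BS$, which your sketch does not acknowledge.

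Two secondary points. Your injectivity argument appeals to nonzero $\Cxh$-weights and an ``$\hbar$-adic completion'', which is both unnecessary and delicate (you would need to justify why a non-zero-divisor in a completion gives injectivity on the nose, and your approach would not descend to the classical $\hbar=0$ statement that the paper also proves). The paper instead observes directly from \Cref{Nweights} that the fibre $\BS_\mu$ at $t^\mu$ is a $T$-representation with no zero weights, so $e(\BS_\mu)\neq 0$ already in $H_T^\bullet(\pt)$, which gives the required nondegeneracy with no completion. Finally, for multiplicativity you invoke ``compatible short exact sequences'' of $\BR$, $\BT$, $\BS$ over the convolution space; the same stratification issue recurs there, and the actual argument (\Cref{AppendixStability}) instead exhibits the preimage of $\BR_{\leq\lambda_1}\times\BR_{\leq\lambda_2}$ as the vanishing locus of an explicit section $\phi(g_1,s_1,[g_2,s_2]) = [g_1, s_1-g_2s_2]$ of $\BT^d_{\leq\lambda_1}$ pulled back to the convolution space.
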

The heuristics behind \Cref{Introthm3} are as follows: if one ``resolves'' the affine Grassmannian $\Gr_G$ by the vector bundle $\BT$, then the pullback of $\BS$ admits a canonical section whose zero locus is precisely $\BR$. The fibrewise quotient $\BS$ in \Cref{Introthm3} is a ``stratified'' vector bundle over $\Gr_G$, meaning that $\BS$ restricts to a vector bundle over each affine Schubert cell in $\mathrm{Gr}_G$. See \Cref{Coulomb} on how we make sense of the symbol $e(\BS)\cap$.

Here we uncover a novel relation between Coulomb branches and shift operators. \Cref{Introthm3} establishes that $\sA_{G,\bN}^\hbar$ is the subalgebra of $\sA_G^\hbar$ cut out by the stratified bundle $\BS$. The following lemma
shows that, at the same time, $\BS$ cuts out subspaces of the moduli space $\cM(X,\beta)_n$ that are proper with respect to the evaluation maps.

\begin{keylem}[=\Cref{inTp}+\Cref{propermoduli}]
    The pullback of $\BS$ to $\cM(X,\beta)_n$ admits a canonical section whose zero locus is proper with respect to the evaluation map $\ev_\infty$.
\end{keylem}

We are ready to continue our construction of shift operators for a general $\bN$. For simplicity, we will denote the pullback of $\BS$ to the various spaces by the same symbol. The twisting map $\tw$ now restricts to give a homomorphism
\[
\tw:\sA_{G,\bN} \otimes_{\C[\hbar]} H^{G \times \C^\times_\hbar}_\bullet(X) \longrightarrow e(\BS) \cap H_\bullet^{G \times \mathbb{C}^\times_\hbar}(G_\mathcal{K} \times_{G_\mathcal{O}} X).
\]
The Key Lemma provides a canonical section of $\BS$ over the moduli space $\mathcal{M}(X, \beta)_n$ such that the restriction of $\ev_\infty$ to its zero loci $\mathcal{Z}(X, \beta)_n$ is proper. As an example, the subspace $\mathcal{Z}(\bN,\beta)_0 \subset \mathcal{M}(\bN,\beta)_0 = \Gamma(\Gr_G, \sE(\bN))$ consists of those sections that are constant over $\operatorname{Spec}(\mathbb{C}[t^{-1}])$, with respect to the trivialization of $\varphi$ (\Cref{trivialization}).

We may therefore define
\[
\widetilde\bbS_{G,\bN,X} : e(\BS) \cap H_\bullet^{G \times \mathbb{C}^\times_\hbar}(G_\mathcal{K} \times_{G_\mathcal{O}} X) \longrightarrow H_\bullet^{G \times \mathbb{C}^\times_\hbar}(X)[[q_G,\tau]]
\]
by the formula
\begin{align*}
\widetilde\bbS_{G,\bN,X}(e(\BS) \cap \gamma)
= \sum_{\beta \in \Eff(\mathcal{E}(X))^{\sec}} \sum_{n=0}^\infty
\frac{q^{\overline{\beta}}}{n^!}\, 
\pr_{X*}\ev_{\infty*} \left( 
    \ev_0^*(\gamma) \prod_{\ell=1}^n\ev_\ell^*(\hat\tau)\cap[\mathcal{Z}(X, \beta)_n]^{\mathrm{vir}}\right),
\end{align*}
and set
$\bbS_{G,\bN,X}=\widetilde\bbS_{G,\bN,X}\circ\tw$, up to intertwining with Poincar\'e duality.

\begin{rem}[Assumption on $X$]
    In \Cref{qh}, we assume that $X$ admits an extra commuting $\Cx$-action such that $X^{T\times\Cx}$ is proper. This is required in order to define the quantum cohomology and $\bbS_{G,\bN}$. However, this extra $\Cx$-action is actually not used in the proofs of the main theorems.
\end{rem}

\begin{rem}[Independence of the choice of representation]
A priori, the construction of $\bbS_{G,\bN,X}$ depends on the choice of the representation $\bN$ and the map $f \colon X \to \bN$. However, we will show that this dependence is in fact superfluous. More precisely, suppose $\bN'$ is another representation of $G$, and $g \colon X \to \bN'$ is a proper $G$-equivariant morphism. Then we show in \Cref{CommonDomain} that $\bbS_{G,\bN,X}$ and $\bbS_{G,\bN',X}$ agree on the common domain of definition:
\begin{equation*}
(\sA_{G,\bN}^\hbar \cap \sA_{G,\bN'}^\hbar) \otimes_{\mathbb{C}[\hbar]} QH_{G \times \mathbb{C}^\times_\hbar}^\bullet(X).
\end{equation*}
Therefore, they extend to the same map on the localization
\[(\oh_{T^*\check T}^\hbar)_\loc\otimes_{\C[\hbar]} QH_{G\times\Cxh}^\bullet(X).\]
\end{rem} 

\begin{rem}[Relation with 2d and 3d mirror symmetry]
As explained above, in the context of 3d mirror symmetry, an equivariant fibration $f: X \to \bN$ corresponds to a 3d brane on the Higgs branch $[T^*\bN/\!\!/G]$ and the mirror brane should be given by a Lagrangian in $\spec \sA_{G,\bN}$ produced from \Cref{IntroThm1}. It is expected that this Lagrangian reflects the 2d mirror symmetry of the fibration $f: X \to \bN$ (see~\cite{3dmirror} for further discussion).


In the special case $\bN = 0$, this perspective is useful in the study of quantum cohomology of GIT quotients~\cite{PomerleanoTeleman, IritaniFourier}. The categorical generalization of this correspondence, in terms of wrapped Fukaya categories, was also conjectured in \cite{Lekili-Segal}. If one interprets the quantum cohomology $QH^\bullet_G(X)$ as a closed-string incarnation of the (equivariant) 2d mirror of $X$, then one may expect a corresponding open-string construction of the family of closed Lagrangians
\[
\operatorname{Supp} QH^\bullet_G(X) \subset \operatorname{Spec} \mathcal{A}_{G,\bN}.
\]
Such a construction was described by the first-named author and Leung in the abelian case in~\cite{3dmirror} and then in the non-abelian case in~\cite{nonabelian} (see also~\cite[Section~1.8]{3dmirror}). We conjecture that the Lagrangians constructed in\cite{3dmirror, nonabelian} coincide with those given by \Cref{IntroThm1}.
\end{rem}

\subsection*{Relations with other constructions}

\subsubsection*{The case $G = T$ and $\bN = \mathbf{0}$}
Shift operators were originally introduced by Braverman, Maulik, Okounkov, and Pandharipande in~\cite{OP,BMO,MO} for small quantum cohomology. The version on equivariant symplectic cohomology was constructed in \cite{LJshiftop}. The classical limit as $\hbar \to 0$ corresponds to the so-called Seidel elements or Seidel representations, which had appeared earlier in~\cite{Seidel}. A generalization of shift operators to big quantum cohomology was developed by Iritani~\cite{Iritani} to study toric mirror symmetry. This corresponds to the ``$G = T$ and $\bN = \mathbf{0}$'' case of \Cref{IntroThm2}.

\subsubsection*{The case of general $G$ and $\bN = \mathbf{0}$}
Non-abelian shift operators in the case $\bN = \mathbf{0}$ were suggested in~\cite{Tel}, and constructed in~\cite{GMP} using symplectic geometry in the setting of compact monotone symplectic manifolds. This is (the symplecto-geometric version of) the ``general $G$ and $\bN = \mathbf{0}$'' case of \Cref{IntroThm2} restricted to small quantum cohomology.
We also note that the non-abelian generalization of Seidel elements were studied in~\cite{Savelyev} via the homology of the loop group of the Hamiltonian symplectomorphism group; the ideas therein might have influenced later development. In \cite{GMP2}, a non-abelian version of Seidel representation was also constructed for equivariant symplectic cohomology.

The map $\Psi_{G,X}$ in the special case where $G$ is simple and simply connected, and $X = G/P$ is a partial flag variety was studied in~\cite{Chow}, whose setting is closer to ours. However, their treatment of Novikov variables was specific to that particular case. An essential difference in our approach is that we use equivariant Novikov variables, which allows the construction to extend to more general $X$ and arbitrary $G$. 

The main focus of~\cite{Chow} is to show that the non-abelian shift operators recover the Peterson isomorphism. In \Cref{Peterson}, we show that our generalization likewise induces an analogue of the Peterson isomorphism, valid for more general $G$, including groups that are neither simply connected nor semisimple. In this broader setting, equivariant Novikov variables play a crucial role, as certain Lagrangians in $\spec \sA_{G,\bN}$ appear only after we incorporate such variables.

\subsubsection*{The case of general $G$ and $\bN$} 
Our paper is the first to define the shift operators $\bbS_{G,\bN,X}$ and $\Psi_{G,\bN,X}$ for general choices of $G$, $\bN$, and $X$. In particular, our construction recovers the following special cases:
\begin{enumerate}
    \item In the case $G = T$ with $X^T$ compact, shift operators were defined in~\cite{Iritani} using localization. In fact, under the assumptions of \textit{op.~cit.}, there always exists a $G$-representation $\bN$ and a proper holomorphic map $f: X \to \bN$. Then, the shift operators of \textit{op.~cit.} can be recovered by localizing our construction of $\bbS_{G,\bN,X}$ (see \Cref{CompareIritaniRemark}).
    
    \item We also recover the case $X = \bN$ studied in~\cite{GMP2}, which builds on Teleman’s description of the Coulomb branch $\spec \sA_{G,\bN}$ as a gluing of two copies of the pure gauge Coulomb branch $\spec \sA_G$ (see~\cite{2drole}). Their construction relies on the observation that the gluing maps coincide with certain Seidel elements in the case $X = \bN$. In contrast, our result yields a new, self-contained proof of Teleman’s gluing construction (\Cref{Telemangluing}).
\end{enumerate}
We remark that even in the abelian case, the construction of shift operators when the $T$-fixed locus $X^T$ is noncompact has not previously appeared in the literature.

\subsubsection*{Affine flag manifold version}
In \cite{Cotangentofflag}, we consider the affine flag version of Coulomb branches (called Iwahori--Coulomb branches) and the corresponding shift operators. We prove analogues of \Cref{Introthm3} and the Key Lemma in that setting, and explicitly compute the shift operators on quantum cohomology of cotangent bundles of partial flag manifolds, which yield various interesting applications.

\subsubsection*{Shift operators for quasimaps}
In \cite{Tamagni}, the author provided a different construction of non-abelian shift operators using quasimaps for $G=\operatorname{GL}_n$. Their computation offers a new proof that $\sA^\hbar_{\mathrm{GL}_n}$ is a quotient of the shifted Yangian (see also \cite{BFNslice}).

\subsubsection*{Open-string analogue}
As mentioned above, the first-named author and Leung have attempted to construct Lagrangians in the Coulomb branch $\spec \sA_{G,\bN}$ using equivariant 2d mirror symmetry, motivated again by ideas of Teleman~\cite{Tel}. The abelian case was discussed in~\cite{3dmirror}. The non-abelian case involves studying non-displaceable (real) Lagrangians under a Hamiltonian action. The case of general $G$ with $\bN = 0$ was studied in~\cite{nonabelian}; see~\cite[Section 1.8]{3dmirror} for a discussion of the case with general $G$ and arbitrary $\bN$.

\subsection*{Structure of the paper}
In \Cref{Coulomb}, we review the definition of the Coulomb branches and prove \Cref{Introthm3}. The twisting map $\tw$ is discussed in \Cref{twisting}. \Cref{preparation,sectionshiftoperators,propertyshiftoperators} are devoted to the definitions and properties of the shift operators. The proofs of \Cref{IntroThm1} and \Cref{IntroThm2} are given in \Cref{propertyshiftoperators}. 
Several applications and computational examples of shift operators are presented in \Cref{Applications}, including \Cref{Introlargestsubalg} (which recovers Teleman's gluing formula for Coulomb branches \cite{2drole} as a corollary)
and various generalizations of the Peterson isomorphism.

\subsubsection*{Conventions}
In this paper, $H_\bullet$ denotes the Borel--Moore homology and $H_\bullet^{\mathrm{ord}}$ denotes the ordinary homology. All varieties, schemes and stacks considered in this paper are over $\C$.

\section*{Acknowledgements}
We thank Hiroshi Iritani, Syu Kato, Conan Leung, Yan-Lung Leon Li, Cheuk Yu Mak, Michael McBreen, Daniel Pomerleano, and Ben Webster for valuable discussions at various stages of this project. We are particularly grateful to Hiroshi Iritani for many useful comments and suggestions.

K. F. Chan was substantially supported by grants from the Research Grants Council of the Hong Kong Special Administrative Region, China (Project No. CUHK14301721, CUHK14306322,
and CUHK14305923) and a direct grant from CUHK. He would like to thank the support provided by The Institute of Mathematical Sciences at The Chinese University of Hong Kong. K. Chan and C. H. E. Lam were substantially supported by grants of the Hong Kong Research Grants Council (Project No. CUHK14305322, CUHK14305023 \& CUHK14302524).

\section{Coulomb branches}\label{Coulomb}
In this section, we give a short treatment of the Coulomb branch and set up notations that will be important for later sections. We work over the complex numbers $\mathbb{C}$.

\subsubsection*{Lie-theoretic notations}

We let $G$ denote a connected complex reductive group, $T \subset G$ a Cartan subgroup, and $W = N_G(T)/T$ the corresponding Weyl group. We write $\Phi$ for the set of roots of $G$. We fix a subset $\Phi^+ \subset \Phi$ of positive roots, or equivalently, we fix a Borel subgroup $T \subset B \subset G$. We denote $\lt$ the Lie algebra of $T$.

The coweight lattice of $G$ is denoted by $\Lambda$, and its submonoid of dominant coweights is denoted by $\Lambda^+$.

\subsubsection*{The affine Grassmannian} For a $\mathbb{C}$-algebra $A$, let $G_A$ denote the sheaf\footnote{Sheaves are defined with respect to the fppf topology.} on the category of affine schemes over $\mathbb{C}$ (that is, the opposite category of the category of $\mathbb{C}$-algebras) defined by sending a $\mathbb{C}$-algebra $R$ to $G(R \otimes A)$.

Denote $\oh = \mathbb{C}[[t]]$ (the ring of formal power series) and $\ok = \mathbb{C}\cct$ (the field of Laurent series). In particular, we have
\begin{align*}
    G_\oh(R) &= G(R[[t]]), \\
    G_\ok(R) &= G(R\cct),
\end{align*}
for any $\mathbb{C}$-algebra $R$. The affine Grassmannian of $G$ is defined as the quotient
\[
\Gr_G = G_\ok / G_\oh.
\]
Let $\BI$ be the Iwahori subgroup
\[
\BI = \{g \in G_\oh : g(0) \in B\},
\]
that is, $\BI$ is the preimage of $B$ under the evaluation map $\operatorname{ev}_{t=0} : G_\oh \to G$.

For $\lambda \in \Lambda$, let $t^\lambda$ denote the corresponding point in $\Gr_G$. Define the $\BI$-orbit
\[
C_\lambda = \BI \cdot t^\lambda \subset \Gr_G,
\]
and let $C_{\leq \lambda}$ denote its closure. Each $C_\lambda$ is isomorphic to an affine space, and $C_{\leq \lambda}$ admits the structure of a projective variety. We define a partial order on $\Lambda$ by declaring that $\mu \leq \lambda$ if and only if $C_\mu \subset C_{\leq \lambda}$. In particular,
\[
C_{\leq \lambda} = \bigsqcup_{\mu \leq \lambda} C_\mu.
\]
We remark that $C_\lambda$ (and hence $C_{\leq \lambda}$) is $G_\oh$-invariant if and only if $\lambda \in \Lambda^+$. 

Let $\Cxh$ be a one-dimensional complex torus that scales the parameter $t$. It is called the \emph{group of loop rotations}, and it acts on $G_\ok$, $G_\oh$, and $\Gr_G$. For $z \in \Cxh$ and $g(t) \in G_\ok$, the action is given by
\[
z \cdot g(t) = g(zt),
\]
and the other actions are defined similarly. See \Cref{univbun} and \Cref{Gtorsorappendix} for more discussion.

\subsubsection*{Equivariant Borel--Moore homology}

In this paper, if $X$ is a complex quasiprojective variety, we use $H_\bullet(X) = H^{-\bullet}(X, \omega_X)$ to denote the Borel--Moore homology groups of $X$ with complex coefficients, using the classical topology. Here, $\omega_X$ is the dualizing complex of $X$. Similarly, if $K$ is an algebraic group acting algebraically on $X$, we define the equivariant Borel--Moore homology $H^K_\bullet(X) \coloneqq H_K^{-\bullet}(X, \omega_X)$,
where both $K$ and $X$ are considered with the classical topology. We refer to~\cite{BernsteinLunts} for the basics of cohomology of equivariant sheaves.

\subsubsection*{The pure gauge Coulomb branch} We define
\[
\sA^\hbar_G=H^{G\times\Cxh}_\bullet(\Gr_G) \coloneqq \varinjlim_{\lambda \in \Lambda^+} H^{G\times\Cxh}_\bullet(C_{\leq \lambda}).
\]
We can similarly define $H^{K}_\bullet(\Gr_G)$, for any algebraic group $G$ acting on $\Gr_G$.

There is a convolution product $\ast$ on $\sA^\hbar_G$ turning it into a finitely generated commutative $\mathbb{C}$-algebra~\cite{BFM,BFN}. The resulting algebra $(\sA^\hbar_G,\ast)$ is called the \emph{(pure gauge, quantized) Coulomb branch algebra}. 

We follow~\cite{MV,BFN}. The convolution product on $H^{G_\oh}_\bullet(\Gr_G)$ is defined using the diagram
\begin{equation}\label{convolution1}
\begin{tikzcd}
    \Gr_G \times \Gr_G & \ar[l, "p"'] G_\ok \times \Gr_G \ar[r, "q"] & G_\ok \times_{G_\oh} \Gr_G \ar[r, "m"] & \Gr_G,
\end{tikzcd}
\end{equation}
where $p$ and $q$ are the natural projections, and $m$ is given by $m([g,g']) = [gg']$. Let $G_\oh$ act on $\Gr_G$ and $G_\ok \times_{G_\oh} \Gr_G$ from the left, and let $G_\oh \times G_\oh$ act on $G_\ok \times \Gr_G$ by
\[
(g_1, g_2) \cdot (g, [g']) = (g_1 g g_2^{-1}, [g_2 g']).
\]
Then $p$ is $G_\oh \times G_\oh$-equivariant, $m$ is $G_\oh$-equivariant, and $q$ is equivariant with respect to the first $G_\oh$-action on $G_\ok\times\Gr_G$. The convolution product is given by
\begin{equation}\label{product1}
    m_* \circ (q^*)^{-1} \circ p^* : \sA^\hbar_G \otimes \sA^\hbar_G \to \sA^\hbar_G.
\end{equation}

Since we are dealing with infinite-dimensional spaces and groups, the Borel--Moore homology groups and the homomorphisms among them must be treated carefully. We briefly explain this, and refer to~\cite{BFN} for more details.

For any positive integer $i$, let $G_i = G_{\oh / t^i \oh}$, and $K_i$ denote the kernel of the natural homomorphism $G_\oh \to G_i$. Note that the inclusion $G\subset G_i$ is a deformation retract. For $\lambda \in \Lambda^+$, we write $G_\ok^{\leq \lambda}$ for the preimage of $C_{\leq \lambda} \subset \Gr_G$ in $G_\ok$,.

Let $\lambda_1, \lambda_2 \in \Lambda^+$ and set $\lambda_3 = \lambda_1 + \lambda_2$. Choose a positive integer $i \gg 0$ such that the action of $K_i$ on $C_{\leq\lambda_2}$ is trivial. The diagram~\eqref{convolution1} induces the diagram below (we use the same symbols for the induced maps):
\begin{equation}\label{convolution2}
\begin{tikzcd}
    C_{\leq \lambda_1} \times C_{\leq \lambda_2} & \ar[l, "p"'] (G_\ok^{\leq \lambda_1} / K_i) \times C_{\leq \lambda_2} \ar[r, "q"] & G_\ok^{\leq \lambda_1} \times_{G_\oh} C_{\leq \lambda_2} \ar[r, "m"] & C_{\leq \lambda_3}.
\end{tikzcd}
\end{equation}

The product~\eqref{product1} is understood as the direct limit over $\lambda_1, \lambda_2 \in \Lambda^+$ of
\[
m_* \circ (q^*)^{-1} \circ p^* : H^{G\times\Cxh}_\bullet(C_{\leq \lambda_1}) \otimes_{\C[\hbar]} H^{G\times\Cxh}_\bullet(C_{\leq \lambda_2}) \longrightarrow H^{G\times\Cxh}_\bullet(C_{\leq \lambda_3}),
\]
where $p^*$, $q^*$, and $m^*$ are homomorphisms induced by the diagram~\eqref{convolution2}. Explicitly:
\begin{itemize}
    \item The map
    \[
    p^* : H^{G\times\Cxh}_r(C_{\leq \lambda_1}) \otimes_{\C[\hbar]} H^{G\times\Cxh}_s(C_{\leq \lambda_2}) \longrightarrow H^{G\times G\times\Cxh}_{r+s+2\dim G_i}\left((G_\ok^{\leq \lambda_1} / K_i) \times C_{\leq \lambda_2}\right)
    \]
    is the pullback along $p$.
    
    \item The map
    \[
    q^* : H^{G\times\Cxh}_{r+s}\left(G_\ok^{\leq \lambda_1} \times_{G_\oh} C_{\leq \lambda_2}\right) \xlongrightarrow{\simeq} H^{G\times G\times\Cxh}_{r+s+2\dim G_i}\left((G_\ok^{\leq \lambda_1} / K_i) \times C_{\leq \lambda_2}\right)
    \]
    is the pullback along $q$, with respect to the inclusion $G\cong G\times\{e\} \subset G \times G$.
    
    \item The map
    \[
    m_* : H^{G\times\Cxh}_{r+s}\left(G_\ok^{\leq \lambda_1} \times_{G_\oh} C_{\leq \lambda_2}\right) \longrightarrow H^{G\times\Cxh}_{r+s}(C_{\leq \lambda_3})
    \]
    is the pushforward along $m$.
\end{itemize}

Note that $q^*$ is an isomorphism because the $G_i$-action makes $(G_\ok^{\leq \lambda_1} / K_i) \times C_{\leq \lambda_2}$ into a principal $G_i$-bundle over $G_\ok^{\leq \lambda_1} \times_{G_\oh} C_{\leq \lambda_2}$.

\subsubsection*{The classical pure gauge Coulomb branch}
As shown in~\cite{BFM}, the convolution product of $\sA^\hbar_G$ descends to a commutative product $\sA_G=\sA^\hbar_G/\hbar \sA^\hbar_G$. Moreover, it induces a Poisson bracket on $\sA_G$ which makes $\operatorname{Spec} \sA_G$ smooth symplectic variety.

\subsubsection*{The BFN Coulomb branch}Let $\bN$ be a complex representation of $G$, and we fix a decomposition
\[
\bN = \bigoplus_{j=1}^n \C_{\eta_j}
\] 
into one-dimensional $T$-representations, where each $\eta_j$ is a character of $T$, and $\C_{\eta_j}$ denotes the corresponding one-dimensional $T$-representation. We define the following spaces:
\[
\BT = \BT_{G,\bN} = G_\ok \times_{G_\oh} \bN_\oh,
\]
\[
\BR = \BR_{G,\bN} = \{(g, s) \in \BT : g s \in \bN_\oh \},
\]
\[
\BS = \BS_{G,\bN} = \BT_{G,\bN} / \BR_{G,\bN}.
\]

Suppose $d$ is a positive integer, we write $\BT^d$ for the vector bundle
\[
\BT^d = G_\ok \times_{G_\oh} (\bN_\oh / t^d \bN_\oh).
\]
If $\lambda \in \Lambda$, we write $\BT^d_{\leq \lambda}$ for the restriction of $\BT^d$ to $C_{\leq \lambda}$. If, furthermore, $gt^d \bN_\oh \subset \bN_\oh$ for all $[g] \in C_{\leq \lambda}$, then we write $\BR^d_{\leq \lambda}$ as the image of $\BR_{\leq \lambda}=\BR|_{C_{\leq \lambda}}$ in $\BT^d_{\leq \lambda}$. Note that the fibrewise quotient
\[
\BS_{\leq \lambda} \vcentcolon= \BT^d_{\leq \lambda} / \BR^d_{\leq \lambda}
\]
is independent of the choice of such $d$. Moreover, $\BS_\lambda \vcentcolon= \BS_{\leq \lambda}|_{C_\lambda}$ is a vector bundle, whose rank is denoted by $d_\lambda$. For $p \in \Gr_G$, we write $\BS_p$ for the fibre of $\BS_{\leq \lambda}$ at $p$, for $\lambda \in \Lambda$ satisfying $p \in C_{\leq \lambda}$. This definition is independent of the choice of $\lambda$.

For later use, we record the decomposition
\begin{equation}\label{Nweights}
    \BS_{t^\lambda} \cong \bigoplus_j\bigoplus_{c=0}^{-\langle\eta_j, \lambda \rangle -1} \C_{\eta_j+c\hbar},
\end{equation}
as $T\times\Cxh$-representations. In particular, 
\begin{equation}\label{SlambdaRank}
    d_\lambda = -\sum_{j:\langle\eta_j,\lambda\rangle < 0} \langle\eta_j,\lambda\rangle.
\end{equation}
For each $\lambda \in \Lambda^+$, we choose an integer $d$ so that $\BR^d_{\leq \lambda}$ is defined, and we use the same symbol $z^*_\lambda$ to denote the corresponding Gysin pullbacks
\[H^K_{\bullet}(\BT^d_{\leq\lambda})\longrightarrow H^K_\bullet(C_{\leq\lambda}),\]
where $K$ stands for one of $G$, $G\times\Cxh$, $T$, or $T\times\Cxh$. The following is a reformulation of the definition of the Coulomb branch in~\cite{BFN}, which follows from Lemma 5.11 in \textit{loc.~cit}.

\begin{df}[\cite{BFN}]
The \emph{classical Coulomb branch algebra} $\sA_{G,\bN}$ is defined as the sum over $\lambda \in \Lambda^+$ of the images of $H^{G\times\Cxh}_\bullet(\BR^d_{\leq \lambda})$\footnote{In our convention, the loop rotation does not act on $\bN$.} under the composition
\[
H^{G}_\bullet(\BR^d_{\leq \lambda})
\longrightarrow
H^{G}_\bullet(\BT^d_{\leq \lambda})
\stackrel{z_\lambda^*}{\longrightarrow}
H^{G}_\bullet(C_{\leq \lambda})
\longrightarrow
H^{G}_\bullet(\Gr_G),
\]
where the first and last maps are pushforwards along inclusions.

Similarly, the \emph{(quantized) Coulomb branch algebra} $\sA^\hbar_{G,\bN}$ is the sum over $\lambda \in \Lambda^+$ of the images of $H^{G\times\Cxh}_\bullet(\BR^d_{\leq \lambda})$ under the composition
\[
H^{G\times\Cxh}_\bullet(\BR^d_{\leq \lambda})
\longrightarrow
H^{G\times\Cxh}_\bullet(\BT^d_{\leq \lambda})
\stackrel{z_\lambda^*}{\longrightarrow}
H^{G\times\Cxh}_\bullet(C_{\leq \lambda})
\longrightarrow
H^{G\times\Cxh}_\bullet(\Gr_G).
\]

\end{df}

\subsubsection*{An alternative description}

We now give a new and alternative description of the Coulomb branch algebras. 

\begin{lem}\label{existresol}
There exists an $\BI\rtimes\Cxh$-equivariant resolution of singularities\footnote{That is, $\widetilde{C}_{\leq\lambda}$ is non-singular and $\rho_\lambda$ is a proper birational map.} 
\[
\rho_\lambda : \widetilde{C}_{\leq\lambda} \longrightarrow C_{\leq\lambda}
\]
such that $\rho_\lambda^{-1}(\BS|_{C_\lambda})$ extends to a (necessarily unique) $\BI\rtimes\Cxh$-equivariant quotient vector bundle $\widetilde{\BS}_{\leq\lambda}$ of $\rho_\lambda^{-1}(\BT^d_{\leq\lambda})$. Moreover, we may assume that $\rho_\lambda$ and $\widetilde{\BS}_{\leq\lambda}$ are $G\times\Cxh$-equivariant if $\lambda \in \Lambda^+$.
\end{lem}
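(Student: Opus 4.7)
The plan is to realize $\widetilde C_{\leq\lambda}$ as an equivariant modification of $C_{\leq\lambda}$ sitting inside a relative Grassmannian bundle: the vector bundle $\BS|_{C_\lambda}$ records an equivariant section over the open stratum, and the Zariski closure of its graph furnishes a proper birational equivariant model carrying a tautological quotient bundle of the desired shape; an equivariant resolution of singularities then smooths this model without disturbing the open stratum.

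First I would form the relative Grassmannian bundle $\pi\colon \mathrm{Gr}_{d_\lambda}(\BT^d_{\leq\lambda})\to C_{\leq\lambda}$ parametrizing rank-$d_\lambda$ quotients of fibres of $\BT^d_{\leq\lambda}$, together with its tautological equivariant rank-$d_\lambda$ quotient $\mathcal{Q}$ of $\pi^*\BT^d_{\leq\lambda}$. Since $\BT^d_{\leq\lambda}$ is $\BI\rtimes\Cxh$-equivariant (and $G_\oh\rtimes\Cxh$-equivariant when $\lambda\in\Lambda^+$), both $\pi$ and $\mathcal{Q}$ are equivariant. The surjection $\BT^d_{\leq\lambda}|_{C_\lambda}\twoheadrightarrow\BS|_{C_\lambda}$ classifies an equivariant section $s\colon C_\lambda\to \mathrm{Gr}_{d_\lambda}(\BT^d_{\leq\lambda})|_{C_\lambda}$, and I would let $Y\subset \mathrm{Gr}_{d_\lambda}(\BT^d_{\leq\lambda})$ be the Zariski closure of $s(C_\lambda)$. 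Then $Y$ is an equivariant closed subvariety whose structural morphism $\pi_Y\colon Y\to C_{\leq\lambda}$ is proper (inherited from $\pi$) and birational (as $s$ is an open immersion onto its image over the dense open $C_\lambda$).

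Next I would apply functorial equivariant resolution of singularities (Bierstone--Milman, Villamayor) to $Y$, leaving the smooth locus $\pi_Y^{-1}(C_\lambda)$ untouched, to obtain a smooth equivariant variety $\widetilde C_{\leq\lambda}$ with a proper birational equivariant map to $Y$. I would then set $\rho_\lambda$ to be the composition $\widetilde C_{\leq\lambda}\to Y\to C_{\leq\lambda}$ and $\widetilde\BS_{\leq\lambda}$ to be the pullback of $\mathcal{Q}$ along $\widetilde C_{\leq\lambda}\to Y\hookrightarrow\mathrm{Gr}_{d_\lambda}(\BT^d_{\leq\lambda})$. By construction $\widetilde\BS_{\leq\lambda}$ is an equivariant rank-$d_\lambda$ quotient bundle of $\rho_\lambda^*\BT^d_{\leq\lambda}$ that agrees with the pullback of $\BS|_{C_\lambda}$ over $\rho_\lambda^{-1}(C_\lambda)$, since the classifying map there factors through $s$. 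Uniqueness of the extension is automatic, because any such quotient bundle determines a morphism to the separated scheme $\mathrm{Gr}_{d_\lambda}(\BT^d_{\leq\lambda})$, and that morphism is already forced by its restriction to the dense open $\rho_\lambda^{-1}(C_\lambda)$.

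The main technical subtlety will be equivariance for the pro-algebraic groups $\BI$ and $G_\oh$. I would handle it by observing that $C_{\leq\lambda}$ and $\BT^d_{\leq\lambda}$ are finite-dimensional and that the $\BI$- (resp.\ $G_\oh$-) action factors through a finite-dimensional quotient, exactly as in the setup preceding the lemma. This reduces the construction to an equivariant resolution problem for a linear algebraic group action on finite-dimensional varieties, where the cited functorial resolution theorems directly apply. The loop rotation $\Cxh$ normalizes $\BI$ and $G_\oh$ and acts compatibly on all the spaces in the construction, and functoriality of the resolution ensures that it is preserved throughout, yielding the desired $\BI\rtimes\Cxh$- (and $G_\oh\rtimes\Cxh$-) equivariance.
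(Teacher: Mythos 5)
Your proposal follows essentially the same route as the paper: both take the Zariski closure of the section of the relative Grassmannian $\mathrm{Gr}(d_\lambda,\BT^d_{\leq\lambda})$ classifying the quotient $\BS|_{C_\lambda}$, pull back the tautological quotient, and then apply equivariant resolution of singularities (the paper cites Koll\'ar's Theorem~3.27; you cite Bierstone--Milman/Villamayor, but this is immaterial). The extra remarks you add about uniqueness via separatedness of the Grassmannian and about reducing the pro-algebraic group action to finite-dimensional quotients are consistent with the paper's setup and correctly fill in details the paper leaves implicit.
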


\begin{proof}
Let $d > 0$ be a sufficiently large integer such that $\BS_{\leq\lambda}$ is well-defined. Consider the Grassmannian bundle $\Gr(d_\lambda, \BT^d_{\leq\lambda})$, which parameterizes rank $d_\lambda$ quotients of $\BT^d_{\leq\lambda}$. The vector bundle $\BS_\lambda$ defines a section of $\Gr(d_\lambda, \BT^d_{\leq\lambda})$ over $C_\lambda$. Let $\overline{C}_\lambda$ denote the closure of the image of this section. By construction, the section $C_\lambda \to \Gr(d_\lambda, \BT^d_{\leq\lambda})$ extends uniquely to a $\BI\rtimes\Cxh$-equivairant (or $G\times\Cxh$-equivairant when $\lambda\in\Lambda^+$) map
\[
\overline{C}_\lambda \longrightarrow \Gr(d_\lambda, \BT^d_{\leq\lambda}).
\]
As a result, $\BS_\lambda$ extends to a quotient bundle of the pullback of $\BT^d_{\leq\lambda}$ to $\overline{C}_\lambda$. We then choose $\widetilde{C}_{\leq\lambda}$ to be an $\BI\rtimes\Cxh$-equivariant (or $G\times\Cxh$-equivariant) resolution of $\overline{C}_\lambda$, which always exists (see~\cite[Theorem 3.27]{Kollar}).
\end{proof}

\begin{rem}\label{Stildesurjects}
    Let $\widetilde\BR^d_{\leq\lambda}\coloneq \ker(\rho_\lambda^{-1}(\BT^d_{\leq\lambda})\to \widetilde\BS_{\leq\lambda})$. Since $\widetilde\BR^d_{\leq\lambda}$ is a vector bundle, it is equal to the closure of $\rho_\lambda^{-1}(\BR^d_\lambda)$ in $\rho_\lambda^{-1}(\BT^d_{\leq\lambda})$. Thus, we have $\widetilde\BR^d_{\leq\lambda}\subset \rho_\lambda^{-1}(\BR^d_{\leq\lambda})$ and hence there is a natural surjection
    \[\widetilde\BS_{\leq\lambda}\twoheadrightarrow \rho_\lambda^{-1}(\BS_{\leq\lambda}).\]
\end{rem}
We now fix the $\BI\rtimes\Cxh$-equivariant resolution $\rho_\lambda : \widetilde{C}_{\leq\lambda} \to C_{\leq\lambda}$ and $\widetilde\BS_{\leq\lambda}$ for each $\lambda \in \Lambda$, and $\rho_\lambda$ is assumed to be $G\times\Cxh$-equivariant when $\lambda \in \Lambda^+$.

\begin{prop}\label{independentResol}
Let $\lambda \in \Lambda$, and let $d$ be a sufficiently large positive integer such that $\BR^d_{\leq\lambda}$ is well-defined. Then the following subspaces of $H^{T}_\bullet(\Gr_G)$ are equal:
\begin{enumerate}[label=\textup{(\arabic*)}]
    \item The sum of the images $e(\widetilde{\BS}_{\leq\mu}) \cap H^{T}_\bullet(\widetilde{C}_{\leq\mu})$ in $H^{T}_\bullet(\Gr_G)$ under the pushforward
    \[
    H^{T}_\bullet(\widetilde{C}_{\leq\mu}) \xrightarrow{\rho_{\mu*}} H^{T}_\bullet(C_{\leq\mu}) \subset H^{T}_\bullet(\Gr_G),
    \]
    taken over all $\mu \leq \lambda$;
    
    \item The image of $H^{T}_\bullet(\BR^d_{\leq\lambda})$ under the composition
    \[
    H^{T}_\bullet(\BR^d_{\leq\lambda})\longrightarrow H^{T}_\bullet(\BT^d_{\leq\lambda}) \stackrel{z_\lambda^*}{\longrightarrow} H^{T}_\bullet(C_{\leq\lambda}) \subset H^{T}_\bullet(\Gr_G);
    \]
    
    \item The direct sum
\[
\bigoplus_{\mu \leq \lambda} H_T^\bullet(\pt) \cdot p_{\mu*}\left( e(\widetilde{\BS}_{\leq\mu}) \cap [\widetilde{C}_{\leq\mu}] \right),
\]
i.e., the free $H_T^\bullet(\pt)$-submodule of $H^{T}_\bullet(\Gr_G)$ with basis $\{p_{\mu*}\left( e(\widetilde{\BS}_{\leq\mu}) \cap [\widetilde{C}_{\leq\mu}] \right)\}_{\mu \leq \lambda}$.
\end{enumerate}

Moreover, each element $e(\widetilde{\BS}_{\leq\mu}) \cap [\widetilde{C}_{\leq\mu}]$ is independent of the choice of resolution. The same equivalences hold when $T$ is replaced by $T \rtimes \Cxh$. Furthermore, \textup{(1)} and \textup{(2)} remain equivalent when $T$ is replaced by $G$ or $G \times \Cxh$, and only dominant coweights are considered.
\end{prop}

\begin{proof}
We only prove the proposition for $T$; the case for $T\times \Cxh$ proceeds similarly. The cases for $G$ and $G\times \Cxh$ are obtained by taking Weyl invariants.

We first show that (1) $\subset$ (2). Consider the fibre diagram
\[
\begin{tikzcd}
\rho_\mu^{-1}\BT^d_{\leq \mu} \ar[r, "\rho'_\mu"] \ar[d, "\widetilde{\pr}"'] 
& \BT^d_{\leq \mu} \ar[d, "\pr"]\\
\widetilde{C}_{\leq\mu} \ar[r, "\rho_\mu"]
& C_{\leq\mu} 
\end{tikzcd}.
\]

Let $\widetilde{z_\lambda}^*$ be the Gysin map for the vector bundle $\rho_\mu^{-1}\BT^d_{\leq \mu}$ over $\widetilde{C}_{\leq\mu}$, then for $\gamma\in H^{T}_\bullet(\widetilde{C}_{\leq\mu})$, we have
\begin{equation} \label{pullpushcal}
\begin{aligned}
\rho_{\mu*}\left(e(\widetilde{\BS}_{\leq\mu}) \cap \gamma\right)
&= \rho_{\mu*}\left(e(\widetilde{\BS}_{\leq\mu}) \cap \widetilde{z_\lambda}^*\widetilde{\pr}^*\gamma\right) \\
&= \rho_{\mu*}\, \widetilde{z_\lambda}^*\left( \widetilde{\pr}^*e(\widetilde{\BS}_{\leq\mu}) \cap \widetilde{\pr}^*\gamma \right) \\
&= z_\lambda^* \,\rho'_{\mu*}\left(\widetilde\pr^*e(\widetilde{\BS}_{\leq\mu}) \cap \widetilde{\pr}^*\gamma\right).
\end{aligned}
\end{equation}

Let $\widetilde{\BR}^d_{\leq \mu}$ denote the kernel of the projection $\rho_\mu^{-1}\BT^d_{\leq \mu} \to \widetilde{\BS}_{\leq\mu}$, and $\iota:\widetilde{\BR}^d_{\leq \mu}\to \rho_\mu^{-1}\BT^d_{\leq \mu}$ be the inclusion. Since $\widetilde{\BS}_{\leq \mu}=\rho_\mu^{-1}\BT^d_{\leq \mu}/\widetilde{\BR}^d_{\leq\mu}$, we have:
\[
\widetilde\pr^*e(\widetilde{\BS}_{\leq\mu}) \cap H^{T}_\bullet(\rho_\mu^{-1}\BT^d_{\leq \mu}) \subset \iota_*H^{T}_\bullet(\widetilde{\BR}^d_{\leq \mu}).
\]
As a result:
\[
\rho_{\mu*}\big(e(\widetilde{\BS}_{\leq\mu}) \cap \gamma\big)
\in z_\lambda^* \,\rho'_{\mu*} \iota_*H^{T}_\bullet(\widetilde{\BR}^d_{\leq \mu})
\subset z_\lambda^*\,\iota_* H^{T}_\bullet(\BR^d_{\leq \mu}),
\]
where the last inclusion follows from $\rho'_\mu(\widetilde{\BR}^d_{\leq \mu}) \subset \BR^d_{\leq \mu}$. This proves that (1) $\subset$ (2).

To prove (2) $\subset$ (3), note that there is an affine stratification
\begin{equation}\label{Rdecom}
\BR^d_{\leq \lambda} = \bigsqcup_{\mu \leq \lambda} \BR^d_\mu,
\end{equation}
where $\BR^d_{\mu}=\BR^d_{\leq \lambda}|_{C_{\mu}}$ and hence
\[
H^{T}_\bullet(\BR^d_{\leq \lambda}) = \bigoplus_{\mu \leq \lambda} H^\bullet_T(\pt)\cdot [\overline\BR^d_{\mu}],
\]
where $\overline \BR_\mu^d$ is the closure of $\BR^d_\mu$ in $\BR^d_{\leq\lambda}$.
Putting $\gamma=[\widetilde{C}_{\leq\mu}]$ in \eqref{pullpushcal}, and using $\widetilde{\BS}_{\leq \mu}=\rho_\mu^{-1}\BT^d_{\leq \mu}/\widetilde{\BR}^d_{\leq\mu}$ again, we obtain :
\[
\rho_{\mu*}\big(e(\widetilde{\BS}_{\leq\mu}) \cap [\widetilde{C}_{\leq\mu}]\big)
= z_\lambda^* \,\rho'_{\mu*}[\widetilde{\BR}^d_{\leq \mu}] = z_\lambda^*[\overline\BR^d_{\mu}],
\]
where the last equality follows from the fact that $\widetilde{\BR}^d_{\leq \mu}$ is mapped birationally to $\overline\BR^d_{\mu}$ under $\rho_\mu'$. This proves (2) $\subset$ (3) and also establishes that the class $\rho_{\mu*}\big(e(\widetilde{\BS}_{\leq\mu}) \cap [\widetilde C_{\leq\mu}]\big)$ is independent of the choice of resolutions.

To see that the elements $\{\rho_{\mu*}\big(e(\widetilde{\BS}_{\leq\mu}) \cap [\widetilde{C}_{\leq\mu}]\}_{\mu \leq \lambda}$ are linearly independent over $H^\bullet_T(\pt)$, it suffices to show that both the Gysin map $H^{T}_\bullet(\BT^d_{\leq \lambda}) \to H^{T}_\bullet(C_{\leq \lambda})$ and the pushforward $H^{T}_\bullet(\BR^d_{\leq \lambda}) \to H^{T}_\bullet(\BT^d_{\leq \lambda})$ are injective.

The former is clear because $\BT^d_{\leq \lambda}$ is a vector bundle over $C_{\leq \lambda}$. For the latter, note that there is an affine stratification:
\[
\BT^d_{\leq \lambda} = \bigsqcup_{\mu \leq \lambda} \BT^d_\mu,
\]
which is compatible with \eqref{Rdecom}. Therefore, it suffices to show that each pushforward $H^{T}_\bullet(\BR^d_\mu) \to H^{T}_\bullet(\BT^d_\mu)$ is injective. This is equivalent to verifying that the element $e(\BS_\mu) = e(\BT^d_\mu / \BR^d_\mu) \in H^\bullet_T(C_\mu) \cong H^\bullet_T(\pt)$ is nonzero. However, by \Cref{Nweights}, the fibre of $\BS_\mu$ at $t^\mu$ is a $T$-representation with no zero weights, hence $e(\BS_\mu) \neq 0$.

Finally, it is clear that the subspace defined in (3) is contained in the subspace defined in (1).
\end{proof}

We introduce the notation
\begin{equation*}
e(\BS)\cap H^{G}_\bullet(\Gr_G) \coloneqq \sum_{\lambda\in \Lambda^+} \operatorname{Im}\left( e(\widetilde{\BS}_{\leq\lambda})\cap H^{G}_\bullet(\widetilde{C}_{\leq\lambda}) \to H^{G}_\bullet(\Gr_G) \right),
\end{equation*}
and similarly for the versions of $G\times \Cxh$, $T$, or $T\times \Cxh$-equivariant Borel--Moore homology.

Now, the following theorem follows immediately.

\begin{thm}[=\Cref{Introthm3}]
We have
\begin{equation*}
\sA_{G,\bN} = e(\BS_{G,\bN}) \cap H^{G}_\bullet(\Gr_G) = \left( e(\BS_{G,\bN}) \cap H^{T}_\bullet(\Gr_G) \right)^W,
\end{equation*}
and similarly
\begin{equation*}
\sA_{G,\bN}^\hbar = e(\BS_{G,\bN}) \cap H^{G\times\Cxh}_\bullet(\Gr_G) = \left( e(\BS_{G,\bN}) \cap H^{T\times\Cxh}_\bullet(\Gr_G) \right)^W.
\end{equation*}
\end{thm}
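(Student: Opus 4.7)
The plan is to deduce Theorem~\ref{AGN} directly from Proposition~\ref{independentResol}, together with the classical identification of connected-reductive-equivariant Borel--Moore homology with $W$-invariants of torus-equivariant Borel--Moore homology.

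For the first equality $\sA_{G,\bN} = e(\BS_{G,\bN}) \cap H^{G_\oh}_\bullet(\Gr_G)$, I would take the direct limit over $\lambda \in \Lambda^+$ of the identification (1)~$=$~(2) supplied by Proposition~\ref{independentResol} in the $G_\oh$-equivariant setting. By definition, the left-hand side $\sA_{G,\bN}$ is assembled from description~(2), while the right-hand side $e(\BS_{G,\bN}) \cap H^{G_\oh}_\bullet(\Gr_G)$ is assembled from description~(1); note that although the definition of the right-hand side sums only over $\lambda \in \Lambda^+$, this already captures all the images coming from non-dominant strata $\mu \leq \lambda$, as the explicit $H^\bullet_T(\pt)$-basis in description~(3) makes visible. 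The same reasoning, with $G_\oh$ replaced by $G_\oh \rtimes \Cxh$, handles the quantized version.

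For the second equality $\sA_{G,\bN} = (e(\BS_{G,\bN}) \cap H^{T_\oh}_\bullet(\Gr_G))^W$, the key input is that for $\lambda \in \Lambda^+$, each of the spaces $\BR^d_{\leq \lambda}$, $\BT^d_{\leq \lambda}$, and $C_{\leq \lambda}$ appearing in description~(2) is $G_\oh$-invariant with action factoring through a finite-dimensional reductive quotient $G_i$. By the standard fact that for a $G_i$-variety $Y$ one has $H^{G_i}_\bullet(Y) \cong (H^{T_i}_\bullet(Y))^W$, and using that the Gysin pullback $z_\lambda^*$ together with the pushforward to $\Gr_G$ are all $W$-equivariant (being induced by maps of $G_\oh$-varieties), one identifies the $G_\oh$-image with the $W$-invariants of the $T_\oh$-image at each finite stage. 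Passing to the colimit over $\Lambda^+$ and combining with the first equality yields the claim, and the same argument including loop rotation handles the quantized case.

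The main obstacle will be checking that $W$-invariants commute with the relevant colimits, i.e.\ that the filtration by dominant coweights on $e(\BS_{G,\bN}) \cap H^{T_\oh}_\bullet(\Gr_G)$ is $W$-stable. This should follow from the observation that for $\lambda \in \Lambda^+$, the closure $C_{\leq \lambda}$ is $G_\oh$-invariant and hence Weyl-stable, so every non-dominant stratum $C_{\leq \mu}$ with $\mu \leq \lambda$ is absorbed into the same dominant level $\lambda$, and the explicit $H^\bullet_T(\pt)$-basis of description~(3) at level $\lambda$ is permuted (up to scalar from the $W$-action on $H^\bullet_T(\pt)$) by the Weyl group. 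Consequently, $W$-invariants taken at each finite stage agree with $W$-invariants of the colimit, which together with Proposition~\ref{independentResol} yields the theorem.
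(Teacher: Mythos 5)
Your proof is correct and takes essentially the same route as the paper: the paper derives the theorem as an immediate consequence of Proposition~\ref{independentResol}, whose $G_\oh$ and $G_\oh\rtimes\Cxh$ cases are themselves obtained by taking $W$-invariants of the $T_\oh$ case. Your extra care in checking that $W$-invariance commutes with the colimit over $\Lambda^+$ (using the $W$-stability of the dominant levels and the explicit basis in description (3) of the proposition) is exactly what the paper implicitly relies on.
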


The following proposition follows from \Cref{independentResol} and \cite{BFN}. An independent proof is given in \Cref{AppendixStability}.

\begin{prop}\label{stableprod}
    The subspaces $e(\BS) \cap H^{G}_\bullet(\Gr_G)$ and $e(\BS) \cap H^{G\times\Cxh}_\bullet(\Gr_G)$ are stable under the convolution product.
\end{prop}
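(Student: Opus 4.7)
The plan is to prove stability by lifting the convolution diagram \eqref{convolution2} to the vector bundles $\BT^d$ and the subvarieties $\BR^d$, and combining with the identification from \Cref{independentResol}(2). Fix dominant coweights $\mu_1, \mu_2\in \Lambda^+$ and set $\mu_3=\mu_1+\mu_2$. Choose truncation level $d\gg 0$ so that $\BR^d_{\leq\mu_i}$ is defined for $i=1,2,3$. By \Cref{independentResol}(2), every element of $e(\BS)\cap H^{G_\oh \rtimes \Cxh}_\bullet(C_{\leq\mu_i})$ can be written as $z_{\mu_i}^*\iota_{i*}\beta_i$ for some $\beta_i\in H^{G_\oh\rtimes\Cxh}_\bullet(\BR^d_{\leq\mu_i})$, where $\iota_i:\BR^d_{\leq\mu_i}\hookrightarrow \BT^d_{\leq\mu_i}$ is the inclusion. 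It thus suffices to show that $(z_{\mu_1}^*\iota_{1*}\beta_1)\ast (z_{\mu_2}^*\iota_{2*}\beta_2)$ lies in the image $z_{\mu_3}^*\iota_{3*}H^{G_\oh\rtimes\Cxh}_\bullet(\BR^d_{\leq\mu_3})$.

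I would first construct a convolution diagram parallel to \eqref{convolution2} at the $\BR^d$-level. The assignment $[g_1, [g_2, s]]\mapsto [g_1 g_2, s]$ defines a natural map on the twisted product $G_\ok^{\leq\mu_1}\times_{G_\oh}\BT^d_{\leq\mu_2}$, which sends the subvariety $(\BR\tilde\times \BR)^d := \{[g_1, [g_2, s]] : g_2 s,\, g_1 g_2 s\in \bN_\oh/t^d\bN_\oh\}$ into $\BR^d_{\leq\mu_3}$ (enlarging $d$ if necessary). This yields
\[
\BR^d_{\leq\mu_1}\times\BR^d_{\leq\mu_2}\xleftarrow{p^{\BR}}(G_\ok^{\leq\mu_1}/K_j)\times \BR^d_{\leq\mu_2}\xrightarrow{q^{\BR}}(\BR\tilde\times \BR)^d\xrightarrow{m^{\BR}} \BR^d_{\leq\mu_3},
\]
covering \eqref{convolution2} via the natural projections $\BT^d_{\leq\mu_i}\to C_{\leq\mu_i}$ and fitting into Cartesian squares with them.

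Next, using the base-change compatibility of Gysin pullbacks $z_{\mu_i}^*$ with proper pushforwards and with the principal-bundle pullback $(q^*)^{-1}$, one obtains
\[
(z_{\mu_1}^*\iota_{1*}\beta_1)\ast(z_{\mu_2}^*\iota_{2*}\beta_2) = z_{\mu_3}^*\iota_{3*}\bigl[(m^{\BR})_*((q^{\BR})^*)^{-1}(p^{\BR})^*(\beta_1\otimes\beta_2)\bigr].
\]
The right-hand side manifestly lies in $z_{\mu_3}^*\iota_{3*}H^{G_\oh\rtimes\Cxh}_\bullet(\BR^d_{\leq\mu_3})$, hence in $e(\BS)\cap H^{G_\oh\rtimes\Cxh}_\bullet(\Gr_G)$ by \Cref{independentResol}(2).

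The main obstacle is the technical verification of the base-change identities connecting the convolution operations on $\BT^d$ and on $C_{\leq\mu_i}$ through the Gysin pullbacks $z_{\mu_i}^*$, especially since $q$ is only a principal bundle after quotienting by $K_j$ and $m$ is only proper upon restriction to affine Schubert varieties. These verifications, while routine for each individual Cartesian square, require care in the stabilization over the truncation parameter $d$ and the passage to the direct limits over $\mu_i\in \Lambda^+$. Finally, the $G_\oh$-equivariant stability follows from the $T_\oh$-equivariant stability by Weyl invariance, paralleling the proof of \Cref{AGN}; this is why the $T_\oh$-formulation is the natural one to work with.
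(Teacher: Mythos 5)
Your reduction via \Cref{independentResol}(2) and your decision to lift the convolution diagram to the $\BR^d$ level is exactly the paper's strategy (Appendix B); in particular your $(\BR\tilde\times\BR)^d$ is precisely the paper's $Z' = Z/G_\oh$, and your $m^\BR$ is the paper's $m''$. So in outline this is the same proof.

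However there is a real gap in how you have written the middle of the lifted diagram. The source of $q^\BR$ cannot be $(G_\ok^{\leq\mu_1}/K_j)\times\BR^d_{\leq\mu_2}$: the descent map $(g_1,[g_2,s])\mapsto[g_1,[g_2,s]]$ only lands in $(\BR\tilde\times\BR)^d$ when the extra condition $g_1g_2s\in\bN_\oh/t^d\bN_\oh$ holds, which is not built into your source. Similarly $p^\BR$ has no natural component in the $\BR^d_{\leq\mu_1}$ factor from that source. The correct middle object is $Z=\{(g_1,[g_2,s])\in(G_\ok)_{\leq\mu_1}\times\BR^d_{\leq\mu_2}:g_1g_2s\in\bN_\oh\}$ and, crucially, the passage from the flat pullback $p^{-1}(\BR^d_{\leq\mu_1}\times\BR^d_{\leq\mu_2})$ (which has \emph{two} independent $\bN_\oh$-coordinates $(s_1,[g_2,s_2])$) to $Z$ (which has \emph{one}) is \emph{not} a base change along a Cartesian square. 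It is the step the paper handles by exhibiting the section $\phi(g_1,s_1,[g_2,s_2])=[g_1,s_1-g_2s_2]$ of the pullback of $\BT^d_{\leq\mu_1}$, whose zero locus is $Z$, so that applying the first Gysin map (the one for $\BT^d_{\leq\mu_1}$) sends $H_\bullet(p^{-1}(\BR^d_{\leq\mu_1}\times\BR^d_{\leq\mu_2}))$ into $H_\bullet(Z)$. Your formula $(z_{\mu_1}^*\iota_{1*}\beta_1)\ast(z_{\mu_2}^*\iota_{2*}\beta_2) = z_{\mu_3}^*\iota_{3*}[(m^\BR)_*((q^\BR)^*)^{-1}(p^\BR)^*(\beta_1\otimes\beta_2)]$ overstates what holds — the argument only gives a containment, obtained by peeling off one Gysin factor via $\phi$ and carrying the other along through $m'$; describing this as ``base-change compatibility of Gysin pullbacks'' obscures the non-routine step. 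Once you insert $Z$ as the middle object and supply the section-vanishing argument, your proof becomes the paper's.
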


We recall that $\sA_T$ can be regarded as the ring $\oh_{T^*\check T}$ of regular functions on $T^*\check T$ (see, e.g., \cite{BFN}), and that $\sA_T^\hbar=\oh_{T^*\check T}^\hbar$ is its deformation quantization. We use the subscript ``$\loc$'' to denote localization at the multiplicative set of nonzero homogeneous elements of $H^\bullet_K(\pt)$, where $K$ is the equivariant group under consideration.

We remark that localization
\begin{equation}\label{localizeindepofN}
\sA^\hbar_{G,\bN,\loc}=(\oh_{T^*\check T}^\hbar)_\loc
\end{equation}
is independent of $\bN$ (cf. \cite[Lemma 5.9]{BFN}).

\section{Twisting maps and twisted linearity}\label{twisting}

\subsection{Twisting maps}

Let $X$ be a smooth quasiprojective variety with a $G$-action. Consider the space $G_\ok\times_{G_\oh}X$, where $G_\oh$ acts on $X$ via the homomorphism $\ev_{t=0}:G_\oh\to G$. It is equipped with a left $G_\oh$-action via $h\cdot [g,x]=[hg,x]$. There is a $G_\oh$-equivariant morphism $G_\ok\times_{G_\oh}X\to \Gr_G$ by sending $[g,x]\mapsto [g]\in \Gr_G$. 

Let $K\subset G$ be a subgroup, $C$ be a projective variety equipped with a $K\times \Cxh$-action, and $\vartheta:C\to \Gr_G$ be a $K\times\Cxh$-equivariant morphism. We write 
\[
G_\ok^{C}=C\times_{\Gr_G}G_\ok.
\]
Note that $G_\ok^C$ is a principal $G_\oh$-bundle over $C$.

\subsubsection*{Convolution}
Consider the convolution diagram,
\[
\begin{tikzcd}
C\times X& (G_\ok^C/K_1)\times X \arrow[l, "p_{C,X}"'] \arrow[r, "q_{C,X}"] & G^C_\ok\times_{G_\oh}X,
\end{tikzcd}
\]
where $K_1=\ker(G_\oh\to G)$.
\begin{df}\label{Twistingdef}
The homomorphism
\begin{equation*}
\tw_G=\tw_{G,C,X}=(q_{C,X}^*)^{-1}\circ p_{C,X}^*: H^{K\times\Cxh}_\bullet(C) \otimes_{\C[\hbar]} H^{G\times\Cxh}_\bullet(X)
\longrightarrow H^{K\times\Cxh}_\bullet(G_\ok^C \times_{G_\oh} X).
\end{equation*}
is called the \emph{twisting map}, where
    \begin{align*}
    p_{C,X}^* : H^{K\times\Cxh}_r(C) \otimes_{\C[\hbar]} H^{G\times\Cxh}_s(X)  \longrightarrow H^{K\times G\times\Cxh}_{r+s+2\dim G}\left((G_\ok^{C} / K_1) \times X\right).
    \end{align*}
    is the pullback along $p_{C,X}$, and
    \begin{align*}
    q_{C,X}^* :H^{G\times\Cxh}_{r+s}\left(G_\ok^{C} \times_{G_\oh} X\right)  \xlongrightarrow{\simeq} H^{K\times G\times\Cxh}_{r+s+2\dim G}\left((G_\ok^{\leq \lambda} / K_1) \times X\right).
    \end{align*}
    is the pullback along $q_{C,X}$, with respect to the inclusion $K\cong K\times\{e\} \subset K \times G$.
\end{df}
From now on, we consider the case where $C$ is smooth. Then $G_\ok^{C} \times_{G_\oh} X$ is also smooth. By Poincar\'e duality, there is an induced homomorphism
\begin{equation*}
\tw_G: H^{K\times\Cxh}_\bullet(C) \otimes_{\C[\hbar]} H_{G\times\Cxh}^\bullet(X)
\longrightarrow H_{K\times\Cxh}^\bullet(G_\ok^C \times_{G_\oh} X),
\end{equation*}
which we also denote by the same symbol $\tw_G$
\begin{lem}\label{twistingfunctorial}
Suppose $C'\to C$ is a $K\times \Cxh$-equivariant morphism. Then the following diagram commutes:
\begin{equation*}
    \begin{tikzcd}
H^{K\times\Cxh}_\bullet(C') \otimes_{\C[\hbar]} H_{G\times\Cxh}^\bullet(X) \arrow[r, "\tw_G"]\arrow[d] & H_{T\times\Cxh}^\bullet\big(G_\ok^{C'} \times_{G_\oh} X\big) \arrow[d]\\
H^{K\times\Cxh}_\bullet(C) \otimes_{\C[\hbar]} H_{G\times\Cxh}^\bullet(X) \arrow[r, "\tw_G"] & H_{T\times\Cxh}^\bullet\big(G_\ok^{C} \times_{G_\oh} X\big)
    \end{tikzcd}
\end{equation*}
Here, the vertical maps are pushforwards.
\end{lem}
\begin{lem}\label{twistingeuler}
Suppose $e\in H_{T\times\Cxh}^\bullet(C)$, $\Gamma \in H^{T\times\Cxh}_\bullet(C)$ and $\alpha \in H^{G\times\Cxh}_\bullet(X)$, we have
\begin{equation*}
\tw_G\left( (e \cap \Gamma) \otimes \alpha\right)
=
e \cup \tw_G(\Gamma \otimes \alpha),
\end{equation*}
where $e$ on the right-hand side is understood as the pullback along the projection $\widetilde{G}_\ok^{C} \times_{G_\oh} X \longrightarrow \widetilde{C}$.
\end{lem}
Suppose that $\vartheta:C\to \Gr_G$ is the inclusion of the $T$-fixed point $t^\lambda\in \Gr_G$. Then we have
\[
G_\ok^C \times_{G_\oh} X=T_\ok^C \times_{T_\oh} X\cong X^\lambda,
\]
where $X^\lambda$ is $X$ with the twisted $T\times\Cxh$-action given by $(z,z_\hbar)\cdot x=z\cdot(\lambda(z_\hbar)\cdot x)$.
\begin{prop}\label{twistingcompat}
    Suppose $K=T$, and that the image of $\vartheta:C\to \Gr_G$ is contained in $\Gr_G$, then $\tw_G$ is equal to the composition
    \begin{equation}\label{twistingcompatdiag}
    H^{T\times\Cxh}_\bullet(C) \otimes_{\C[\hbar]} H^{G\times\Cxh}_\bullet(X)\subset H^{T\times\Cxh}_\bullet(C) \otimes_{\C[\hbar]} H^{T\times\Cxh}_\bullet(X)
\xlongrightarrow{\tw_T} H^{K\times\Cxh}_\bullet(X^\lambda).
    \end{equation}
\end{prop}
\subsection{Twisted linearity}
It is clear that $\tw_G(-\otimes-)$ is $H_{K\times\Cxh}^\bullet(\pt)$-linear in the first argument. However, linearity in the second argument is more subtle, which we will explain now. Let
\begin{equation*}\label{secondmodule}
u^*:H_{G}^\bullet(\pt)\longrightarrow H_{K\times\Cxh}^\bullet(C)
\end{equation*}
be the map which sends $c_k(V)\in H_G^\bullet(\pt)$ to the cohomology class
\[u^*(c_k(V))=c_k(G_\ok^C\times_{G_\oh}V)\in H_{G_\oh\times\Cxh}^\bullet(C).\]
for any $G$-representation $V$, and $G_\ok\times_{G_\oh} V$ is the associated vector bundle on $\Gr_G$. 
\begin{prop}\label{twistinglinear}
Let $P \in H_{G \times \Cxh}^\bullet(\pt)$. Then
\begin{equation*}
\tw_G(\Gamma \otimes (P \cap \alpha)) = \tw_G((u^* P \cap \Gamma) \otimes \alpha) = u^* P \cup \tw_G(\Gamma \otimes \alpha),
\end{equation*}
where $u^* P$ on the right-hand side is understood as the pullback along the projection $\pr:\widetilde{G}_\ok^{C} \times_{G_\oh} X \longrightarrow \widetilde{C}$.
\end{prop}
\begin{proof}
We may assume that $P=c_k(V)$ for some $G$-representation $V$. Then $X\times V$ is a $G\times \Cxh$-equivariant vector bundle over $X$. The pullback $p_{C,X}^*(X\times V)$ is the $K\times G\times\Cxh$-equivariant vector bundle $G_\ok^C/K_1\times X\times V$, where $K$ acts trivially on $V$. This descends to the $K\times\Cxh$-equivariant vector bundle
\begin{equation}\label{eq:twistinglinear}
    (G_\ok^C/K_1)\times_G (X\times V)
\end{equation}
over $\widetilde{G}_\ok^{C} \times_{G_\oh} X$. On the other hand, there is an isomorphism
\[
G_\ok^C/K_1\times_{C}(G_\ok^C\times_{G_\oh}X)\cong G_\ok^C/K_1\times X
\]
of $G$-bundles over $G_\ok^C\times_{G_\oh}X$. As a result, \eqref{eq:twistinglinear} is isomorphic to
\[
\left(G_\ok^C/K_1\times_{C}(G_\ok^C\times_{G_\oh}X)\right)\times_G V,
\]
which is the pullback of $G_\ok^C\times_{G_\oh}V$ along the morphism $\widetilde{G}_\ok^{C} \times_{G_\oh} X \longrightarrow \widetilde{C}$.
\end{proof}
\subsubsection*{Twisting by a cocharacter}
Let $G=T$ be a torus. We now give another description of the twisting map. For $\lambda \in \Lambda$,  let $X^\lambda$ denote the fibre of $T_\ok \times_{T_\oh} X$ over $[t^\lambda]\in \Gr_T$. As a $T$-variety, $X^\lambda$ is isomorphic to $X$, but the loop rotation acts by
\[
z_\hbar \cdot x = \lambda(z_\hbar)\cdot x,
\qquad z_\hbar \in \Cxh,\ x\in X^\lambda.
\]

Consider the diagram
\[
\begin{tikzcd}[column sep=huge]
  & T \times X \ar[ld, "a"'] \ar[rd, "b"] & \\
  X && X^\lambda
\end{tikzcd}
\]
where $a$ is the projection onto the second factor and $b$ is induced by the original $T$-action on $X$. We let $T\times T\times \Cxh$ act on $T\times X$ by
\[
    (z_1,z_2,z_\hbar)\cdot (z,x)
    =
    \bigl(\lambda(z_\hbar)z_1zz_2^{-1},z_2x\bigr).
\]
Then $a$ and $b$ are equivariant for the homomorphisms $T\times T\times \Cxh\to T\times \Cxh$ given by
\[
\begin{aligned}
    (z_1,z_2,z_\hbar)&\mapsto (z_2,z_\hbar),\\
    (z_1,z_2,z_\hbar)&\mapsto (z_1,z_\hbar),
\end{aligned}
\]
respectively. Hence the twisting map
\[
\tw_T([t^\lambda] \otimes -) \colon
H_{T \times \Cxh}^\bullet(X)
\longrightarrow
H_{T \times \Cxh}^\bullet(X^\lambda)
\]
is given by
\[
H_{T \times \Cxh}^\bullet(X)
\xrightarrow{a^*}
H_{T \times T \times \Cxh}^\bullet(T \times X)
\xrightarrow{(b^*)^{-1}}
H_{T \times \Cxh}^\bullet(X^\lambda).
\]
The inverse of $b^*$ is $c^*$, where $c:X^\lambda\to T\times X$ is given by $x\mapsto (1,x)$ and is equivariant for
\[
T\times \Cxh\to T\times T\times \Cxh,
\qquad
(z,z_\hbar)\mapsto \bigl(z,\lambda(z_\hbar)z,z_\hbar\bigr).
\]
Moreover, $a\circ c=\operatorname{id}_X$, viewed as a map $X^\lambda\to X$, is equivariant for the automorphism
\[
T\times \Cxh\to T\times \Cxh,
\qquad
(z,z_\hbar)\mapsto \bigl(\lambda(z_\hbar)z,z_\hbar\bigr).
\]
\begin{prop}\label{abeliantwlinear}
Let $\Phi_\lambda \colon H_{T \times \Cxh}^\bullet(X) \to H_{T \times \Cxh}^\bullet(X^\lambda)$
be the homomorphism induced by the identity map $\operatorname{id} \colon X \to X^\lambda$, which is equivariant with respect to to the automorphism $(z,z_\hbar)\to (z\lambda(z_\hbar),z_\hbar)$ of $T\times \Cxh$. Then $\Phi_\lambda$ agrees with the twisting map $\tw_T([t^\lambda] \otimes -)$. In particular, for $P(a,\hbar) \in H_{T \times \Cx_\hbar}^\bullet(\pt)$ and $\alpha \in H_{T \times \Cx_\hbar}^\bullet(X)$, we have
\begin{equation*}
\tw_T([t^\lambda] \otimes P(a, \hbar) \alpha) = P(a + \lambda(\hbar), \hbar)\, \tw_T([t^\lambda] \otimes \alpha).
\end{equation*}
\end{prop}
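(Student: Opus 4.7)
The plan is to organize the computation that has already been set up in the paragraphs preceding the proposition into a clean argument and then extract the linearity formula.

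Step one, I would reduce to the Borel model. Taking $j = 1$ in the finite-dimensional approximation diagram~\eqref{convolution3}, $\tw_T([t^\lambda] \otimes -)$ is the composition $(b^*)^{-1} \circ a^*$ on the span $X \xleftarrow{a} T \times X \xrightarrow{b} X^\lambda$. Working with the Borel constructions as already introduced, a direct check using the explicit formulas for $a'$, $b'$, and the section $c'$ shows that $a' \circ c' = \mathrm{id}$ and that $b' \circ c'$ equals the well-defined map $\Psi\colon (E \times E') \times_{T \times \Cxh} X \to (E^\lambda \times E') \times_{T \times \Cxh} X^\lambda$, $(e,e',x) \mapsto (e,e',x)$. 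Since $c'$ is a two-sided homotopy inverse of $a'$, $(c')^* = ((a')^*)^{-1}$ on cohomology; combined with $(c')^* \circ (b')^* = (b'\circ c')^* = \Psi^*$, a short algebraic manipulation gives $((b')^*)^{-1} \circ (a')^* = (\Psi^{-1})^*$. Thus $\tw_T([t^\lambda] \otimes -) = (\Psi^{-1})^*$.

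Step two, I would identify $(\Psi^{-1})^*$ with $\Phi_\lambda$ and deduce the linearity formula. The identity map $E \times E' \to E^\lambda \times E'$ (and its extension by $\mathrm{id}$ on $X$) is equivariant with respect to the group automorphism $\phi\colon(z,z_\hbar) \mapsto (z\lambda(z_\hbar)^{-1},z_\hbar)$, as verified immediately above the proposition by directly comparing the two $T \times \Cxh$-actions. Consequently $\Psi$ descends from this equivariant identity and therefore covers the self-map of $BT \times B\Cxh$ induced by $\phi$, while $\Psi^{-1}$ covers the one induced by the inverse automorphism $\phi^{-1}\colon(z,z_\hbar) \mapsto (z\lambda(z_\hbar),z_\hbar)$. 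The differential of $\phi^{-1}$ is $(a,\hbar) \mapsto (a+\lambda(\hbar), \hbar)$, so on $H^\bullet_{T \times \Cxh}(\pt) = \mathbb{C}[a,\hbar]$ we get the substitution $P(a,\hbar) \mapsto P(a+\lambda(\hbar), \hbar)$. Since pullback commutes with cap products, this simultaneously identifies $(\Psi^{-1})^*$ with $\Phi_\lambda$ and yields the desired formula $\tw_T([t^\lambda] \otimes P(a,\hbar)\alpha) = P(a+\lambda(\hbar),\hbar)\,\tw_T([t^\lambda]\otimes \alpha)$.

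The one place where care is required is tracking the direction. The identity map $X \to X^\lambda$, equivariant under $\phi$, canonically induces a pullback $H^\bullet_{T \times \Cxh}(X^\lambda) \to H^\bullet_{T \times \Cxh}(X)$, whereas $\Phi_\lambda$ goes in the opposite direction; inverting accounts for the sign that produces $+\lambda(\hbar)$ rather than $-\lambda(\hbar)$ in the shift. Everything else is a routine unwinding of the definitions already laid out.
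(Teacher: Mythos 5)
Your proof is correct and follows essentially the same route as the paper: identifying $\tw_T([t^\lambda]\otimes -)=((b')^*)^{-1}\circ(a')^*$, using the section $c'$ of $a'$, and observing that $b'\circ c'$ is the identity on underlying sets. The added value is that you name this map $\Psi$ and carefully unwind $((b')^*)^{-1}\circ(a')^*=(\Psi^{-1})^*$, making explicit the direction-and-sign bookkeeping that the paper leaves implicit in its phrase ``$b'\circ c'=\mathrm{id}$''; this is a clean elaboration, not a different argument.
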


\begin{rem}
    $\Phi_\lambda$ is the same as the twisted homorphism defined in \cite[Section 3.1]{Iritani}. 
\end{rem}

\section{Shift operators I: preparation}\label{preparation}
\subsection{Quantum cohomology}\label{qh}
\subsubsection*{Assumptions on the space}
Let $X$ be a smooth, semiprojective complex variety with an algebraic $G$-action. By \textit{semiprojective}, we mean that $X$ is quasiprojective and that the affinization map
\begin{equation*}
\mathrm{aff}: X \longrightarrow X^{\mathrm{aff}} \vcentcolon= \spec H^0(X, \mathcal{O}_X)
\end{equation*}
is projective. We assume that $X$ carries an algebraic $G$-action commuting with an auxiliary $\C^\times_{\mathrm{attr}}$-action satisfying:
\begin{enumerate}
  \item all $\C^\times_{\mathrm{attr}}$-weights on $H^0(X, \mathcal{O}_X)$ are non-positive; and
  \item the $\C^\times_{\mathrm{attr}}$-invariants satisfy $H^0(X, \mathcal{O}_X)^{\C^\times_{\mathrm{attr}}} = \C$.
\end{enumerate}

These conditions imply in particular that $X^{\C^\times_{\mathrm{attr}}}$ is compact. We refer to this $\C^\times_{\mathrm{attr}}$-action as the \emph{conical action}, not to be confused with the group of \emph{loop rotations}, $\C^\times_\hbar$, which acts trivially on $X$.

These assumptions ensure that $X$ has nice cohomological properties, including \Cref{equivformal} below. Note that in \cite{Iritani}, it is assumed that the conical group $\Cx_{\mathrm{attr}}$ is a subgroup of $G$, which is required for defining shift operators in their setting. However, we do not need this assumption in the Coulomb branch setting, as will be explained in \Cref{sectionshiftoperators}.

We remark that the $\Cx_{\mathrm{attr}}$-action is auxilliary. It is assumed in the paper in order to define the equivariant quantum cohomology, as well as the shift operators, specifically in \Cref{lemmacurveclass2}. However, the action is not used in the proofs of the main theorems in \Cref{propertyshiftoperators}.

\begin{prop}\label{equivformal}
  The $G$-action on $X$ is equivariantly formal.
\end{prop}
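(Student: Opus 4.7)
The plan is to construct a $G$-equivariant strong deformation retraction of $X$ onto the compact fixed locus $X^{\Cxd}$ using the contracting $\Cxd$-action, and then combine this with the classical equivariant formality of smooth projective $G$-varieties.

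First, the hypotheses on the $\Cxd$-weights of $H^0(X,\oh_X)$, together with the projectivity of the affinization map, imply that the $\Cxd$-action extends to a morphism $\mathbb{A}^1\times X\to X$ and that $X^{\Cxd}$ is projective. The restriction of this extended action to $\{0\}\times X$ defines a $G$-equivariant morphism $\pi\colon X\to X^{\Cxd}$ sending each point to the limit of its $\Cxd$-orbit, and the full extended action realises $X^{\Cxd}$ as a $G$-equivariant strong deformation retract of $X$ (the homotopy is $G$-equivariant because $G$ and $\Cxd$ commute). The Borel construction then yields an isomorphism $H^\bullet_G(X)\cong H^\bullet_G(X^{\Cxd})$ of $H^\bullet(BG)$-modules.

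Second, I would invoke the classical theorem that any smooth projective $G$-variety is $G$-equivariantly formal. This can be proved, for instance, by restricting to a maximal compact subgroup $K\subset G$ and applying Morse--Bott theory to the norm-square of the moment map, whose critical sets have only even Morse indices; alternatively, it follows from a Hodge-theoretic purity argument. Applied to $X^{\Cxd}$, this gives a (non-canonical) isomorphism $H^\bullet_G(X^{\Cxd})\cong H^\bullet(BG)\otimes_{\C} H^\bullet(X^{\Cxd})$. Combining with the first step yields
\[
H^\bullet_G(X)\cong H^\bullet(BG)\otimes_{\C} H^\bullet(X^{\Cxd})\cong H^\bullet(BG)\otimes_{\C} H^\bullet(X),
\]
which is the desired equivariant formality.

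The main technical step is producing the $\mathbb{A}^1$-extension of the $\Cxd$-action. The non-positive weights hypothesis ensures such an extension on the affinization $X^{\mathrm{aff}}$, and a valuative-criterion argument using the projectivity of $X\to X^{\mathrm{aff}}$ then lifts it to $X$ itself. Once this extension is in hand, the remainder of the argument is largely formal, taking the classical equivariant formality for smooth projective $G$-varieties as a black box.
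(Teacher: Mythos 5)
Your central claim — that the $\Cxd$-action extends to a morphism $\mathbb{A}^1\times X\to X$ whose time-zero slice retracts $X$ $G$-equivariantly onto $X^{\Cxd}$, and hence that $H^\bullet_G(X)\cong H^\bullet_G(X^{\Cxd})$ — is false. Take $X=\PP^1$, $G$ trivial, and $\Cxd=\Cx$ acting by $t\cdot[x:y]=[tx:y]$; this satisfies the hypotheses of \Cref{qh} since $H^0(\PP^1,\oh_{\PP^1})=\C$ is concentrated in weight $0$. Then $X$ is connected but $X^{\Cxd}=\{[0:1],[1:0]\}$ has two components, so there is no retraction; and $H^\bullet(\PP^1)=\C\oplus\C[-2]$ is not isomorphic as a graded group to $H^\bullet(X^{\Cxd})=\C^{\oplus 2}$. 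Correspondingly, the rational map $(t,[x:y])\mapsto[tx:y]$ has an irremovable indeterminacy at $(0,[1:0])$, so the asserted $\mathbb{A}^1$-extension of the action does not exist. The valuative-criterion argument you sketch only produces the limit $\lim_{t\to 0}t\cdot x$ one point at a time, not a morphism of families: once the fibres of $X\to X^{\mathrm{aff}}$ are positive-dimensional (as in $\PP^1\to\pt$), the global rational extension acquires a nonempty closed indeterminacy locus inside $\{0\}\times X$.

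The correct route, which is essentially what Iritani's Proposition~2.1 does (and is what the paper cites), replaces the nonexistent retraction by the Bia{\l}ynicki-Birula stratification attached to $\Cxd$: $X$ is filtered by $G\times\Cxd$-invariant open subsets whose successive closed strata are affine bundles of codimensions $c_F$ over the compact smooth projective components $F$ of $X^{\Cxd}$. The associated Thom--Gysin long exact sequences in $G$-equivariant cohomology split — by a purity argument, or equivalently by Morse--Bott theory for an $S^1$-moment map on $X$ whose critical indices are all even — giving $H^\bullet_G(X)\cong\bigoplus_F H^{\bullet-2c_F}_G(F)$ as $H^\bullet_G(\pt)$-modules, with degree shifts that your isomorphism erases. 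Your second step, equivariant formality of smooth projective $G$-varieties applied to each $F$, is exactly right and finishes the argument from here; the gap is that the intermediate isomorphism must carry those shifts, and it comes from a spectral-sequence/stratification argument rather than from a $G$-equivariant retraction of $X$ onto $X^{\Cxd}$.
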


\begin{proof}
  See \cite[Proposition 2.1]{Iritani}. In \textit{loc.~cit.}, only abelian $G$ were considered, but the same proof applies to any reductive group $G$.
\end{proof}

\subsubsection*{Equivariant quantum cohomology}
We fix a graded $\C$-basis $\{a_i\}_{i\in I}$ of $H_G^\bullet(\pt)$ and a graded $H_G^\bullet(\pt)$-basis $\{\phi_j\}_{j=0}^N$ of $H_G^\bullet(X)$. We denote $\phi_{i,j}\coloneq a_i\phi_j$. Let $\{\tau^{i,j}\}$ be coordinates on $H_G^\bullet(X)$ dual to the basis $\{\phi_{i,j}\}$, and set $\tau = \sum_{i,j} \tau^{i,j} \phi_{i,j}$. We declare the degrees of the coordinates $\tau^{i,j}$ to be
\[
\deg \tau^{i,j} = 2 - \deg (\phi_{i,j}).
\]
We write $\C[[\tau]] = \C[[\tau^{i,j}]]$, where the odd variables (that is, those $\tau^{i,j}$ for which $\deg \phi_{i,j}$ is odd) anti-commute. In other words, this is the tensor product of the formal power series ring in the even variables with the exterior algebra in the odd variables. Let
\begin{equation*}
\C[q]=\C[q_X] = \C[H_2^{\text{ord},G}(X;\Z)]
\end{equation*}
be the group algebra of the abelian group $H_2^{\text{ord},G}(X; \Z)$. In other words, it is the $\C$-algebra generated by the symbols $q^\beta$ for $\beta\in H_2^{\text{ord},G}(X;\Z)$ subject to the relations $q^0=1$ and $q^{\beta_1}q^{\beta_2}=q^{\beta_1+\beta_2}$. The ring $\C[q]$ is equipped with a grading determined by
\begin{equation*}
\deg q^\beta = 2\langle c_1^G(X), \beta \rangle.
\end{equation*}

Let $\Eff(X) \subset H_2^{\text{ord}}(X; \Z)$ be the subset of effective curve classes. We denote $\iota_*: H_2^{\text{ord}}(X; \Z) \to H_2^{\text{ord},G}(X; \Z)$ the natural map. 

For a graded vector space $A$, we define $A[[q,\tau]]$ to be the graded completion of $A[[\tau]][q]$ along the direction of $\Eff(X)$. More explcitly, $A[[q,\tau]]$ is a graded vector space whose degree $n$ graded piece consists of formal sums
\begin{equation*}
\sum_{\beta, m} a_{\beta, m}q^\beta  \,  \prod_{\substack{i\in I \\ 0\leq j\leq N}} (\tau^{i,j})^{m_{i,j}}
\end{equation*}
satisfying the following conditions:
\begin{enumerate}
    \item $m=(m_{i,j})$ is a collection of nonnegative integers with finitely many of them being nonzero.
    \item $a_{\beta, m}\in A$ is a homogeneous element of degree $n-\sum m_{i,j} \deg \tau^{i,j} - \deg q^\beta$.
    \item There exists a finite subset $S \subset H_2^{\text{ord},K}(X; \Z)$ such that $a_{\beta, m} = 0$ unless $\beta \in S + \iota_*(\Eff(X))$.
\end{enumerate}
It can be checked that $A$ is a graded-algebra, then $A[[q,\tau]]$ is also a graded-algebra. 

\begin{df}
Let $K\subset G$ be a subgroup, then the \emph{$K$-equivariant quantum cohomology} of $X$ with $G$-equivariant Novikov variables is the ring
\[
  QH^\bullet_K(X)\colon = (H^\bullet_K(X)[[q,\tau]],\star_{\tau}).
\]
Here, the \emph{big quantum product} $\star_{\tau}$ is defined by
\begin{equation*}
  \gamma \star_{\tau} \gamma'
  = \sum_{\beta \in \Eff(X)} \sum_{n=0}^\infty
    \frac{q^{\beta}}{n!} \,
    \mathrm{PD} \circ \ev_{3*} \Bigl( \ev_1^* (\gamma) \, \ev_2^* (\gamma') \prod_{\ell=4}^{n+3} \ev_\ell^* (\tau) \cap [\overline{M}_{0,n+3}(X,\beta)]^\vir \Bigr)
\end{equation*}
for $\gamma, \gamma' \in H^\bullet_K(X)$. Here, $\overline{M}_{0,n+3}(X, \beta)$ is the moduli stack of genus-zero stable maps to $X$ with $n+3$ marked points and curve class $\beta$, and $[\overline{M}_{0,n+3}(X, \beta)]^\vir$ denotes its virtual fundamental class. Note that $\ev_3$ is proper, thanks to $X$ being semiprojective. The product $\star_{\tau}$ on a general element of $H^\bullet_K(X)[[q, \tau]]$ is then defined termwise on its power series expansion.
\end{df}
One can similarly consider $K'$-equivariant Novikov variables for any subgroup $K'\subset G$. Unless otherwise specified, however, we always use the $G$-equivariant Novikov variables.

\subsubsection*{Quantum connection}
We consider the trivial $\C^\times_\hbar$-action on $X$. We follow the notations in \cite[Section 2.1]{IritaniFourier}.
\begin{df}[Quantum connection]\label{def:quantum_connection}
The \emph{equivariant quantum connections} are the operators 
$$\nabla_{\tau^{i,j}}, \nabla_{\hbar\partial_\hbar}, \nabla_{D}:H_{G \times \Cxh}^\bullet(X)[[q_K, \tau]] \to H_{G \times \Cxh}^\bullet(X)[[q_K, \tau]] [\hbar^{-1}],$$
defined as
\begin{align}\label{qconnformula}
\nabla_{\tau^{i,j}} &= \partial_{\tau^{i,j}} - \hbar^{-1}(\phi_{i,j} \star_{\tau}), \nonumber\\
\nabla_{\hbar\partial_\hbar} & = \hbar\partial_\hbar + \hbar^{-1}(E\star_\tau) +\mu_X , \\
\nabla_{Dq\partial_q}  &= Dq\partial_q - \hbar^{-1}(D\star_\tau), \nonumber
\end{align}
where $D\in H^2_G(X)$ and $Dq\partial_q$ is the derivation on $\C[[q_K]]$ with $Dq\partial_q q^\beta=\langle D,\beta\rangle q^\beta$, while $E_X$ is the \emph{Euler vector field} and $\mu_X$ is the \emph{grading operator}, defined respectively by the formulas
\[E_X=c_1^G(X)+\sum_{i,j}\frac{\deg(\tau^{i,j})}{2} \tau^{i,j}\phi_{i,j}\]
and
\begin{equation}\label{gradingop}
\mu_X(\phi_{i,j})=\frac 12\left(\deg(\phi_{i,j})-\dim X\right) \phi_{i,j}.
\end{equation}
\end{df}

\begin{df}[Fundamental solution]\label{def:fundamental_solution}
The \emph{fundamental solution} to the quantum differential equation is the operator
\begin{equation*}
\mathbb{M}_X : H_{G \times \Cxh}^\bullet(X)[[q_G, \tau]][[\hbar^{-1}]] \to H_{G \times \Cxh}^\bullet(X)[[q_G, \tau]][[\hbar^{-1}]]
\end{equation*}
defined by
\begin{equation*}
\mathbb{M}_X(\gamma) 
= \gamma+\sum_{0\neq \beta \in \Eff(X)}
    \sum_{n=0}^\infty \frac{q^\beta}{n!} \,
    \mathrm{PD} \circ \ev_{2*} \left( \frac{\ev_1^* (\gamma)}{\hbar - \psi_1}
    \prod_{\ell=3}^{n+2} \ev_\ell^* (\tau) \cap [\overline{M}_{0,n+2}(X, \beta)]^{\vir} \right),
\end{equation*}
where $\psi_1$ is the equivariant first Chern class of the universal cotangent line bundle at the first marked point.
\end{df}

The following proposition is well-known (see \cite{GiventalequivGW,Pandharipande1998,CrepantToricCompleteInt,IritaniFourier}).
\begin{prop}
The operator $\mathbb{M}_X$ is invertible and satisfies the identities 
\begin{align}\label{qconnintertwine}
\nabla_{\tau^{i,j}} \circ \mathbb{M}_X &= \mathbb{M}_X \circ \partial_{\tau^{i,j}}, \nonumber\\
\nabla_{\hbar\partial_\hbar} \circ \mathbb{M}_X &= \mathbb{M}_X\circ \left(\hbar\partial_\hbar + \hbar^{-1}(c_1^G(X)\cup) + \mu_X\right),\\
\nabla_{Dq\partial_q}\circ \mathbb{M}_X &= \mathbb{M}_X\circ \left(Dq\partial_q-\hbar^{-1}(D\cup)\right). \nonumber
\end{align}
\end{prop}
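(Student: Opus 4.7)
The plan is to treat the four claims separately, since each uses a different ingredient from genus-zero Gromov--Witten theory, and then to observe that the use of equivariant Novikov variables does not disturb any of the standard arguments in \cite{GiventalequivGW,Pandharipande1998,CrepantToricCompleteInt,IritaniFourier}. The common thread is to expand $(\hbar-\psi_1)^{-1}=\sum_{k\geq 0}\psi_1^k\hbar^{-k-1}$ inside the integrand defining $\mathbb{M}_X$, reducing every statement to universal relations on $\overline{M}_{0,n+2}$ together with the divisor and homogeneity axioms for the virtual class. Invertibility is the easy part: every correction term in $\mathbb{M}_X(\gamma)$ carries at least one factor of $\hbar^{-1}$ and a strictly positive $(q,\tau)$-filtration degree, so $\mathbb{M}_X=\id+A$ with $A$ topologically nilpotent in the completion $H^\bullet_{G\times\Cxh}(X)[[q_G,\tau]][[\hbar^{-1}]]$, and the formal geometric series $\sum_{k\geq 0}(-A)^k$ provides the inverse.

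For the third identity, I would use the genus-zero divisor equation in its descendant form: inserting a new marked point carrying $\ev^\ast D$ contributes both a factor $\langle D,\beta\rangle$ (from the forgetful map to $\overline{M}_{0,n+1}(X,\beta)$) and a descendant correction in which $\psi_1^k$ is replaced by $\psi_1^{k-1}\cup D$. The former matches the derivation $Dq\partial_q q^\beta = \langle D,\beta\rangle q^\beta$, while the latter collapses the expansion of $(\hbar-\psi_1)^{-1}$ by one order in $\hbar^{-1}$ and matches the operator $\hbar^{-1}(D\cup)\circ \mathbb{M}_X$. For the first identity, I would apply the topological recursion relation on $\overline{M}_{0,n+2}$ (valid for $n\geq 1$), which expresses $\psi_1$ as a sum of boundary divisors separating the first marked point from two chosen others. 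Substituting this into each $\psi_1^k$-term in the expansion of $\mathbb{M}_X$ and invoking the splitting axiom for the virtual class converts a $\psi_1$-insertion on one factor into a quantum product on the other. Combining this with the observation that $\partial_{\tau^{i,j}}\mathbb{M}_X(\gamma)$ inserts an extra $\phi_{i,j}$-marked point yields $\nabla_{\tau^{i,j}}\circ\mathbb{M}_X=\mathbb{M}_X\circ\partial_{\tau^{i,j}}$.

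The main obstacle is the second identity. The operator $\hbar\partial_\hbar$ acts on $\psi_1^k\hbar^{-k-1}$ simply as multiplication by $-(k+1)$, which by itself carries no geometric content. The required input is the virtual dimension formula $\mathrm{virdim}\,\overline{M}_{0,n+2}(X,\beta) = \dim X + n - 1 + \langle c_1^G(X),\beta\rangle$ together with the homogeneity of every cohomology class that enters, under the combined grading given by $\mu_X$, $\deg \tau^{i,j}$, and $\deg q^\beta = 2\langle c_1^G(X),\beta\rangle$. A careful degree count, tracking separately the contributions of the grading operator $\mu_X$, the Euler-scaling term $\sum \tfrac{\deg \tau^{i,j}}{2}\tau^{i,j}\phi_{i,j}\star_\tau$, and the first Chern class piece $\hbar^{-1}(c_1^G(X)\cup)$, assembles them into the claimed intertwining. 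The one point that actually needs checking in the present setting is that enlarging the coefficient ring by the equivariant Novikov parameters $q_G$ does not alter any of these arguments; this is automatic because every operator involved is $\C[[q_G]]$-linear except for $Dq\partial_q$, whose action was defined precisely to track this dependence.
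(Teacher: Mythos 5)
Your proposal is correct and follows exactly the standard arguments in the references the paper cites (\cite{GiventalequivGW,Pandharipande1998,CrepantToricCompleteInt,IritaniFourier}); the paper itself gives no proof beyond this citation. Your sketch correctly identifies the four ingredients (topological nilpotence of the correction term for invertibility, the genus-zero TRR and splitting axiom for the $\tau$-direction, the descendant divisor equation for the $q$-direction, and homogeneity/the virtual dimension formula for the $\hbar\partial_\hbar$-direction), and you correctly isolate the one point that is potentially new in this setting — compatibility with equivariant Novikov variables — and explain why it is automatic: the sum is still over $\Eff(X)\subset H_2^{\mathrm{ord}}(X;\mathbb{Z})$ with the grading $\deg q^{\iota_*\beta}=2\langle c_1^G(X),\beta\rangle$ unchanged, so every homogeneity and divisor-equation argument passes through the pushforward $\iota_*:H_2^{\mathrm{ord}}(X;\mathbb{Z})\to H_2^{\mathrm{ord},G}(X;\mathbb{Z})$ without modification.

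One small refinement worth making explicit in the second identity: rather than doing a direct degree count on the $\psi_1^k\hbar^{-k-1}$ expansion, it is cleaner to observe that $\nabla_{\hbar\partial_\hbar}$ differs from $\tfrac12\,\mathrm{Gr}$ (the cohomological grading operator on $H^\bullet_{G\times\Cxh}(X)[[q_G,\tau]][[\hbar^{-1}]]$, shifted by $\tfrac{\dim X}{2}$) by the combination $-\nabla_{c_1^G(X)q\partial_q}-\sum_{i,j}\tfrac{\deg\tau^{i,j}}{2}\tau^{i,j}\nabla_{\tau^{i,j}}$; since $\mathbb{M}_X$ commutes with $\mathrm{Gr}$ (each summand in its definition has total cohomological degree zero by the virtual dimension formula), the $\hbar\partial_\hbar$-intertwining follows formally from the other two that you have already established, with no further geometry required.
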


\subsection{Universal \texorpdfstring{$G$}{G}-torsor}\label{univbun}
We fix the standard affine chart $\C = \spec \C[t] \subset \PP^1$ and take the base point $0 \in \C \subset \PP^1$. By the Beauville--Laszlo theorem \cite{BL} (see also \cite{Zhu}), the affine Grassmannian $\Gr_G$ represents the functor on the category of $\C$-scheme.
\begin{equation*}
Z \longmapsto 
\left\{
(\mathcal{P}, \varphi) \,\middle|\,
\begin{array}{l}
\mathcal{P} \text{ is a } G\text{-torsor over } \PP^1_Z, \\
\varphi_\infty : \mathcal{P}|_{(\PP^1 \setminus \{0\})_Z} \xrightarrow{\sim} (\PP^1 \setminus \{0\})_Z \times G \text{ is a trivialization}
\end{array}
\right\} \big/ \cong.
\end{equation*}
In particular, there exists a universal $G$-torsor $\sE \to \Gr_G \times \PP^1$. If $p\in \Gr_G$, we denote $\sE_p$ the restriction of $\sE$ to $p\times\PP^1$. 

We now make the correspondence between a map $f : Z \to \Gr_G$ and the pair $(\mathcal{P}, \varphi_\infty)$ more explicit. Suppose first that $f$ factors through a map $\widetilde{f} : Z \to G_\ok$. Then $\mathcal{P}$ is the unique (up to isomorphism) $G$-torsor over $\PP^1_R$ with local trivializations 
\begin{equation*}
\varphi_\infty : \mathcal{P}|_{Z \times (\PP^1 \setminus \{0\})} \xrightarrow{\sim} (Z \times (\PP^1 \setminus \{0\})) \times G,
\end{equation*}
and
\begin{equation*}
\varphi_0 : \mathcal{P}|_{Z \times \spec \oh} \xrightarrow{\sim} (Z \times \spec \oh) \times G,
\end{equation*}
such that the induced automorphism
\begin{equation*}
\varphi_\infty|_{Z \times \spec \ok} \circ (\varphi_0|_{Z \times \spec \ok})^{-1} : (Z \times \spec \ok) \times G \to (Z \times \spec \ok) \times G
\end{equation*}
is given by left multiplication by the map
\begin{equation*}
(Z \times \spec \ok) \times G \to Z \times \spec \ok \xrightarrow{\widetilde{f}} G.
\end{equation*}
The existence of such a torsor $\mathcal{P}$ is the content of the Beauville--Laszlo theorem. In general, there exists an fppf cover $\bigsqcup Z_i \to Z$ such that the pullback of $f$ to each $Z_i$ factors through $G_\ok$. One can then construct $\mathcal{P}$ and $\varphi_\infty$ by gluing the corresponding $G$-torsors and local trivializations over the cover, using fppf descent.

Under the above correspondence, the subgroup $G_{\C[t^{-1}]} \subset G_\ok$ acts on $\Gr_G$ by modifying the section $\varphi$ over $\PP^1 \setminus \{0\}$, while loop rotation acts by scaling the coordinate on $\PP^1$. More explicitly, for $z \in \C^\times_\hbar$, let $m_z : \PP^1 \to \PP^1$ be the morphism given by $t \mapsto z^{-1}t$. There is an action of $G_{\C[t^{-1}]} \rtimes \C^\times_\hbar$ on $\Gr_G$ given by
\begin{equation*}
g=(g_1,z) \cdot (\mathcal{P}, \varphi) = \left( m_{z}^* \mathcal{P},\ g \cdot m_{z}^* \varphi \right),
\end{equation*}
where $g_1$ is regarded as a morphism $\spec \C[t^{-1}] \to G$.

The action of $G_{\C[t^{-1}]} \rtimes \C^\times_\hbar$ lifts to the universal torsor $\sE$, in the sense of the next lemma, which follows from the functorial description of $\Gr_G$.

\begin{lem}
Suppose $f : Z \to \Gr_G$ is a morphism from a scheme $Z$ corresponding to the pair $(\mathcal{P}, \varphi_\infty)$, and let $g = (g_1, z) \in G_{\C[t^{-1}]} \rtimes \C^\times_\hbar$. If $(\mathcal{P}', \varphi'_\infty)$ corresponds to the map $g \cdot f : Z \to \Gr_G$, then there exists a $G$-equivariant isomorphism
\begin{equation*}
g_f : \mathcal{P} \xrightarrow{\sim} m_z^*\mathcal{P}'
\end{equation*}
such that $m_z^*\varphi'_\infty \circ g_f = g_1\cdot \varphi_\infty$.
\end{lem}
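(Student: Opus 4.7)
The plan is to derive the lemma directly from the Beauville--Laszlo functorial description of $\Gr_G$ recalled just before the statement; as the paragraph preceding the lemma asserts, the proof is a formal consequence of representability.

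First I would trace through the definitions. By the defining formula $g\cdot(\mathcal{P},\varphi)=(m_z^*\mathcal{P},\ g_1\cdot m_z^*\varphi)$, the pair $(m_z^*\mathcal{P},\, g_1\cdot m_z^*\varphi_\infty)$ is a representative of the isomorphism class of pairs classifying the morphism $g\cdot f:Z\to \Gr_G$. Since $(\mathcal{P}',\varphi'_\infty)$ is, by the hypothesis of the lemma, another representative of the same isomorphism class, representability of the moduli problem furnishes a canonical $G$-equivariant isomorphism of pairs
\[
\beta : (m_z^*\mathcal{P},\, g_1\cdot m_z^*\varphi_\infty)\xrightarrow{\sim}(\mathcal{P}',\varphi'_\infty),
\]
i.e., a $G$-equivariant iso $m_z^*\mathcal{P}\xrightarrow{\sim}\mathcal{P}'$ of torsors on $\PP^1_Z$ intertwining the trivializations on $(\PP^1\setminus\{0\})_Z$. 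The general case, in which $f$ does not globally factor through a map $\widetilde{f}:Z\to G_\ok$, is handled by passing to an fppf cover on which each restriction does factor, carrying out the above construction locally, and gluing via functoriality in $f$.

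Second, since $m_z$ is an automorphism of $\PP^1$, pullback along $m_z$ is an equivalence of categories of $G$-torsors on $\PP^1_Z$; using the standard identifications furnished by this equivalence, one repackages $\beta$ into the desired $G$-equivariant isomorphism $g_f : \mathcal{P}\to m_z^*\mathcal{P}'$, and the compatibility $m_z^*\varphi'_\infty\circ g_f = g_1\cdot \varphi_\infty$ is obtained by applying $m_z^*$ to the analogous trivialization-intertwining identity built into $\beta$. The entire argument is formal; the only substantive input is Beauville--Laszlo representability, and the only bookkeeping hurdle I anticipate is to keep the various $m_z^*$ pullback conventions consistent and to verify that $G$-equivariance survives the repackaging, but neither presents a serious obstacle.
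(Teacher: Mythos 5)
Your overall strategy is exactly what the paper intends: the sentence preceding the lemma ("which follows from the functorial description of $\Gr_G$") signals that the content is formal, and using Beauville--Laszlo representability to obtain a canonical isomorphism between two representatives of the same $Z$-point is the right move. The paper itself does not spell out a proof in this section, so your argument is playing the role of the proof.

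However, the ``repackaging'' step as you have written it does not go through, and this is not merely a cosmetic convention to tidy up later. You obtain $\beta : m_z^*\mathcal{P} \xrightarrow{\sim} \mathcal{P}'$ and propose to pull it back along $m_z$. But $m_z^*\beta$ has domain $m_z^*(m_z^*\mathcal{P}) = m_{z^2}^*\mathcal{P}$, not $\mathcal{P}$, since $m_z \circ m_z = m_{z^2}$. The only way to turn the domain of $\beta$ into $\mathcal{P}$ is to apply $m_{z^{-1}}^* = (m_z^{-1})^*$, which yields an isomorphism $\mathcal{P} \xrightarrow{\sim} m_{z^{-1}}^*\mathcal{P}'$ --- that is, with $z^{-1}$ where the lemma has $z$. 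So taking the displayed action formula $(g_1,z)\cdot(\mathcal{P},\varphi)=(m_z^*\mathcal{P},\,g_1\cdot m_z^*\varphi)$ at face value, your derivation produces a statement that contradicts the one you are trying to prove; the plan of ``applying $m_z^*$'' cannot simultaneously fix the domain and the codomain. You flag the $m_z^*$-bookkeeping as a hurdle but then assert it ``presents no serious obstacle'' without actually resolving it, and the resolution is precisely the crux.

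The underlying issue is that the in-text action formula is not compatible with the stated lemma under the convention $m_z(t)=z^{-1}t$. If one unwinds the gluing data as in \Cref{Gtorsorappendix}, where the loop rotation on $G_\oh(R)$ is $g(t)\mapsto g(zt)=m_z^{*-1}(g)$, the pair corresponding to $g\cdot f$ is $(m_{z^{-1}}^*\mathcal{P},\,g_1\cdot m_{z^{-1}}^*\varphi_\infty)$, not $(m_z^*\mathcal{P},\,g_1\cdot m_z^*\varphi_\infty)$. With that corrected identification, representability gives $\beta : m_{z^{-1}}^*\mathcal{P}\xrightarrow{\sim}\mathcal{P}'$, and then $m_z^*\beta : \mathcal{P}\xrightarrow{\sim}m_z^*\mathcal{P}'$ is exactly $g_f$, with the compatibility following by applying $m_z^*$ to the trivialization-intertwining property of $\beta$. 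To complete your argument you should trace this through carefully (and compare with the explicit isomorphism $\vartheta_{g,h,z}:\hat\sE_{R,\gamma}\to m_z^*\hat\sE_{R,\gamma'}$ constructed in \Cref{Gtorsorappendix}), rather than deferring it; as written, the step you elide is the one that fails.
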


In particular, if $H \subset G_{\C[t^{-1}]} \rtimes \C^\times_\hbar$ and $f : Z \to \Gr_G$ is an $H$-equivariant morphism corresponding to the pair $(\mathcal{P}, \varphi_\infty)$, then there is an induced $H$-action on $\mathcal{P}$ such that the projection $\mathcal{P} \to Z \times \PP^1$ is $H$-equivariant (where $\C^\times_\hbar$ acts on $\PP^1$ by scalar multiplication).

Strictly speaking, we have defined the action on $\sE$ only at the level of $\C$-points of $G_{\C[t^{-1}]} \rtimes \C^\times_\hbar$. In \Cref{Gtorsorappendix}, we will describe the action of $G_{\C[t^{-1}]} \rtimes \C^\times_\hbar$ as a group scheme by considering its $R$-points, and in particular show that the actions defined above are algebraic.

\begin{exmp}\label{CompareIritani1}
Suppose $G = T = \C^\times$ and $\lambda = 1$. Then
\begin{equation*}
\sE_t \coloneq \sE|_{t} \cong \oh_{\PP^1}(-1)^\times \cong \C^2 \setminus \{0\}.
\end{equation*}
Let $u, v$ be coordinates on $\C^2$, so that the projection $\C^2 \setminus \{0\} \to \PP^1$ is given by $t = v/u$. Indeed, there are trivializations
\begin{equation*}
\varphi_0\colon \oh_{\PP^1}(-1)^\times|_{\spec \C[t]} \xrightarrow{\sim} \spec \C[t] \times \C^\times,
\end{equation*}
\begin{equation*}
\varphi_\infty\colon \oh_{\PP^1}(-1)^\times|_{\spec \C[t^{-1}]} \xrightarrow{\sim} \spec \C[t^{-1}] \times \C^\times,
\end{equation*}
given by
\begin{equation*}
\varphi_0(u, v) = \left( \frac{v}{u}, u \right), \qquad
\varphi_\infty(u, v) = \left( \frac{u}{v}, v \right).
\end{equation*}
It is clear that the transition function $\varphi_\infty \circ \varphi_0^{-1}$ corresponds to left multiplication by $t = v/u$. The subgroup\footnote{$\C^\times$ and $\C^\times_{\C[t^{-1}]}$ have the same underlying $\C$-points} $\C^\times \times \Cxh \subset \C^\times_{\C[t^{-1}]} \rtimes \Cxh$ acts on $\oh_{\PP^1}(-1)^\times$ by $(g, z_\hbar) \cdot (u, v) = (z_\hbar gu, g v)$.

More generally, if $G = T$ is abelian and $\lambda \in \Lambda$ is arbitrary, then
\begin{equation*}
\sE_{t^\lambda} \cong \oh_{\PP^1}(-1)^\times \times_{\C^\times} T,
\end{equation*}
where $\C^\times$ acts on $T$ via $\lambda$, and $T \times \Cxh$ acts on $\sE_{t^\lambda}$ via $(g, z_\hbar) \cdot (u, v, g') = (z_\hbar u, v, gg')$.
\qed
\end{exmp}

\subsection{Seidel spaces}
For each $\lambda \in \Lambda$, we fix an $\BI$-equivariant resolution of singularities $\rho_\lambda \colon \widetilde{C}_{\leq \lambda} \to C_{\leq \lambda}$ satisfying the conditions in \Cref{existresol}. Let $\sE_{\leq \lambda}$ be the principal $G$-bundle on $\widetilde{C}_{\leq \lambda} \times \PP^1$ induced by the morphism
\begin{equation*}
\widetilde{C}_{\leq \lambda} \xrightarrow{\rho_\lambda} C_{\leq \lambda} \subset \Gr_G.
\end{equation*}
If $p \in \widetilde{C}_{\leq \lambda}$, we write $\sE_p$ for the restriction of $\sE_{\leq \lambda}$ to $p \times \PP^1$. In particular, there is a canonical isomorphism $\sE_p \cong \sE_{\rho_\lambda(p)}$.

If $X$ is a $G$-variety, we call
\begin{equation*}
\sE_{\leq \lambda}(X) := \sE_{\leq \lambda} \times_G X
\end{equation*}
the \emph{Seidel space associated to $X$ and $\widetilde{C}_{\leq \lambda}$}.

By construction, there is a $T \times \C^\times_\hbar$-action on $\sE_{\leq \lambda}(X)$ such that the projection
\begin{equation*}
\pi : \sE_{\leq \lambda}(X) \to \widetilde{C}_{\leq \lambda} \times \PP^1
\end{equation*}
is equivariant, where $\C^\times_\hbar$ acts on $\PP^1$ by scalar multiplication. Moreover, if $\lambda \in \Lambda^+$, this action can be upgraded to a $G \times \C^\times_\hbar$-action.
\begin{exmp}\label{CompareIritani2}
The bundle $\sE_{t^\lambda}(X)$ is isomorphic to $\oh_{\PP^1}(-1)^\times \times_{\Cx} X$, where $\C^\times$ acts on $X$ via the cocharacter $\lambda\colon \C^\times \to T$. The action of $T \times \Cxh$ on $\sE_{t^\lambda}(X)$ is given by (cf.~\Cref{CompareIritani1})
\begin{equation*}
(g, z_\hbar) \cdot (u, v, x) = (z_\hbar u, v, gx).
\end{equation*}
This space is $T \times \Cxh$-equivariantly isomorphic to the Seidel space $E_\lambda$ defined in Section~3.2 of \cite{Iritani}, but with the roles of $0$ and $\infty$ swapped.\qed
\end{exmp}

\subsubsection*{Section classes}
An effective curve class $\beta \in H_2^{\text{ord}}(\sE_{\leq \lambda}(X); \Z)$ is called a \emph{section class} if $\pi_* \beta = [\pt \times \PP^1]$. Denote by $\Eff(\sE_{\leq \lambda}(X))^{\mathrm{sec}}$ the subset of section classes.

The projection $\sE_{\leq \lambda}\times X \to X$ is $G$-equivariant, and hence induces a pushforward map in equivariant homology:
\begin{equation*}
H_2^{\text{ord}}(\sE_{\leq \lambda}(X); \Z)\,\longrightarrow H_2^{\text{ord},G}(X; \Z).
\end{equation*}
For any $\beta \in H_2^{\text{ord}}(\sE_{\leq \lambda}(X); \Z)$, we denote by $\overline{\beta}\in H^{\text{ord},G}_2(X,\Z)$ its image under this map.

Let $T^{\mathrm{vert}} \sE_{\leq \lambda}(X) \coloneqq \ker(d\pi) \cong \sE_{\leq \lambda}(TX)$ be the relative tangent bundle of $\pi$. The following lemma is immediate.

\begin{lem}\label{degree}
$\deg(q^{\overline{\beta}}) = 2\, c_1(T^{\mathrm{vert}} \sE_{\leq \lambda}(X)) \cdot \beta$.
\end{lem}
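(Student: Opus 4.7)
The plan is to unwind the definition of $\deg(q^{\overline{\beta}})$ and recognize the right-hand side as the pullback of $c_1^G(X)$ under a natural morphism $\sE_{\leq\lambda}(X)\to [X/G]$, then conclude by the projection formula.

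First, by the grading convention established just before \Cref{equivariantqh}, we have
\[\deg(q^{\overline{\beta}}) = 2\langle c_1^G(X),\overline{\beta}\rangle,\]
so the claim reduces to the identity
\[\langle c_1^G(X),\overline{\beta}\rangle = \langle c_1(T^{\mathrm{vert}}\sE_{\leq\lambda}(X)),\beta\rangle.\]
Next, I would reinterpret the definition of $\overline{\beta}$. The $G$-equivariant projection $\sE_{\leq\lambda}\times X\to X$ induces, after taking Borel constructions and using the fact that $G$ acts freely on $\sE_{\leq\lambda}$ (so $H_\bullet^G(\sE_{\leq\lambda}\times X)\cong H_\bullet(\sE_{\leq\lambda}(X))$), a morphism $\pi_G\colon \sE_{\leq\lambda}(X)\to [X/G]$ (or, if one prefers, its Borel-construction avatar) whose induced pushforward in homology sends $\beta\mapsto \overline{\beta}$.

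The heart of the argument is then the pullback computation $\pi_G^*\, c_1^G(X) = c_1(T^{\mathrm{vert}}\sE_{\leq\lambda}(X))$. This is because $T^{\mathrm{vert}}\sE_{\leq\lambda}(X)=\ker(d\pi)$ is, by construction, the associated bundle $\sE_{\leq\lambda}\times_G TX$, which is precisely the pullback of the tautological quotient-stack bundle classified by $TX\in\Coh^G(X)$ under $\pi_G$. Once this identity is in place, the projection formula gives
\[\langle c_1^G(X),\overline{\beta}\rangle = \langle c_1^G(X),\pi_{G*}\beta\rangle = \langle \pi_G^*c_1^G(X),\beta\rangle = \langle c_1(T^{\mathrm{vert}}\sE_{\leq\lambda}(X)),\beta\rangle,\]
completing the proof.

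There is no real obstacle; the only point requiring mild care is bookkeeping between the $G$-equivariant Borel construction of $X$ and the quotient stack $[X/G]$ so that naturality of Chern classes and the projection formula are unambiguously applicable. Since $G$ acts freely on $\sE_{\leq\lambda}$, this is standard, and no subtlety arising from the (possibly singular) base $\widetilde{C}_{\leq\lambda}$ enters: everything takes place over the smooth relative tangent bundle of a smooth morphism of smooth varieties.
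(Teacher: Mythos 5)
Your argument is correct, and it is essentially the only reasonable route; the paper itself omits the proof as immediate. Unwinding the grading convention $\deg q^{\overline\beta}=2\langle c_1^G(X),\overline\beta\rangle$, observing that $\overline\beta=(\pi_G)_*\beta$ for the classifying map $\pi_G\colon\sE_{\leq\lambda}(X)\to[X/G]$, identifying $T^{\mathrm{vert}}\sE_{\leq\lambda}(X)\cong\sE_{\leq\lambda}\times_G TX$ as the pullback of the tautological bundle on $[X/G]$ associated to $TX$, and applying the projection formula is exactly the intended content.
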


\subsubsection*{Fibres at $0$ and $\infty$}
We denote
\begin{equation*}
\iota_0:\mathcal X_{\leq \lambda, 0} \coloneqq \pi^{-1}(\widetilde{C}_{\leq \lambda} \times \{0\}) \hookrightarrow \sE_{\leq \lambda}(X),
\end{equation*}
\begin{equation*}
\iota_\infty:\mathcal{X}_{\leq \lambda, \infty} \coloneqq \pi^{-1}(\widetilde{C}_{\leq \lambda} \times \{\infty\}) \hookrightarrow \sE_{\leq \lambda}(X).
\end{equation*}
We let $\widetilde G_\ok^{\leq\lambda}$ denote $\widetilde G_\ok^{\widetilde{C}_{\leq\lambda}}=G_\ok\times_{\Gr_G} \widetilde{C}_{\leq\lambda}$. The following lemma is immediate from the construction in \Cref{univbun}.
\begin{lem}\label{zerofibre}
There are $T \times \C^\times_{\hbar}$-equivariant isomorphisms
\begin{align*}
    \mathcal{X}_{\leq \lambda, 0} &\cong \widetilde G_\ok^{\leq\lambda} \times_{G_\oh} X, \\
    \mathcal{X}_{\leq \lambda, \infty} &\cong \widetilde{C}_{\leq \lambda} \times X.
\end{align*}
Moreover, these isomorphisms are $G \times \C^\times_{\hbar}$-equivariant if $\lambda \in \Lambda^+$.
\end{lem}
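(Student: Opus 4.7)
The plan is to deduce both isomorphisms by base-changing the $G_{\C[t^{-1}]}\rtimes\Cxh$-equivariant identifications of \eqref{fibres} along the resolution $\rho_\lambda\colon\widetilde{C}_{\leq\lambda}\to C_{\leq\lambda}\hookrightarrow \Gr_G$. By the definition of $\sE_{\leq\lambda}$ in \Cref{Seidelspace}, the principal $G$-bundle $\sE_{\leq\lambda}$ over $\widetilde{C}_{\leq\lambda}\times\PP^1$ is the pullback of the universal torsor $\sE$ along $\rho_\lambda\times\mathrm{id}_{\PP^1}$; since the formation of associated bundles commutes with pullback, this gives a Cartesian square
\[
\begin{tikzcd}
\sE_{\leq\lambda}(X)\ar[r]\ar[d,"\pi"'] & \sE(X)\ar[d]\\
\widetilde{C}_{\leq\lambda}\times\PP^1\ar[r,"\rho_\lambda\times\mathrm{id}"] & \Gr_G\times\PP^1
\end{tikzcd}.
\]

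The first step is to restrict this Cartesian square to the slices $\widetilde{C}_{\leq\lambda}\times\{0\}$ and $\widetilde{C}_{\leq\lambda}\times\{\infty\}$, yielding $\mathcal X_{\leq\lambda,\star}\cong \widetilde{C}_{\leq\lambda}\times_{\Gr_G}\bigl(\sE(X)|_{\Gr_G\times\{\star\}}\bigr)$ for $\star\in\{0,\infty\}$. The second step is to substitute the identifications of \eqref{fibres}. For $\star=\infty$ this is immediate: $\widetilde{C}_{\leq\lambda}\times_{\Gr_G}(\Gr_G\times X)\cong \widetilde{C}_{\leq\lambda}\times X$. For $\star=0$, I would write $G_\ok\times_{G_\oh}X=(G_\ok\times X)/G_\oh$ fibring over $\Gr_G$ via the first factor; base change then commutes with the free diagonal $G_\oh$-quotient, and the fibre product $\widetilde{C}_{\leq\lambda}\times_{\Gr_G}G_\ok$ is precisely $\widetilde{G}_\ok^{\leq\lambda}$ by the definition \eqref{GKtilde}. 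This gives the two claimed isomorphisms on the level of underlying schemes.

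For the equivariance claim, the isomorphisms \eqref{fibres} are $G_{\C[t^{-1}]}\rtimes\Cxh$-equivariant, and I restrict the equivariance to the largest subgroup that simultaneously lifts to the base-change factor $\widetilde{C}_{\leq\lambda}$. Constant loops yield an inclusion $G\hookrightarrow G_{\C[t^{-1}]}$, and the induced $G$-action on $\Gr_G$ coincides with the usual left-multiplication action (constants also sit in $G_\ok$, and the two descriptions agree by the functorial description given in \Cref{univbun}, with algebraicity supplied by \Cref{Gtorsorappendix}). The subgroup $T\times\Cxh$ preserves $\widetilde{C}_{\leq\lambda}$ via the $\BI\rtimes\Cxh$-equivariant resolution of \Cref{existresol} (since $T\subset\BI$), so the base-changed isomorphisms are $T\times\Cxh$-equivariant. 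When $\lambda\in\Lambda^+$, \Cref{existresol} upgrades the resolution to a $G_\oh\rtimes\Cxh$-equivariant one, so constant loops $G\subset G_\oh$ lift to $\widetilde{C}_{\leq\lambda}$ as well, and the equivariance upgrades to $G\times\Cxh$.

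The principal (and only mild) obstacle I anticipate is bookkeeping: verifying that the two natural $G$-actions on $\Gr_G$ — one via $G\subset G_{\C[t^{-1}]}$ modifying $\varphi_\infty$, the other via $G\subset G_\oh\subset G_\ok$ acting by left multiplication — induce the same action, so that the equivariant structures on the two sides of each isomorphism match and the base change preserves equivariance with respect to the combined action. This compatibility is transparent from the functorial description of $\Gr_G$ in \Cref{univbun}, so once it is made explicit the argument reduces to routine unwinding of definitions, in line with the author's remark that the lemma "follows immediately" from \eqref{fibres}.
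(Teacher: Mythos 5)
Your argument is correct and fleshes out exactly what the paper leaves implicit: the paper gives no proof but simply asserts the lemma "follows immediately from \eqref{fibres}," and your spelled-out route — noting that $\sE_{\leq\lambda}(X)$ is the pullback of $\sE(X)$ along $\rho_\lambda\times\mathrm{id}_{\PP^1}$, base-changing \eqref{fibres} along $\rho_\lambda$ (with $\widetilde{C}_{\leq\lambda}\times_{\Gr_G}G_\ok=\widetilde{G}_\ok^{\leq\lambda}$ by \eqref{GKtilde}), and then cutting the $G_{\C[t^{-1}]}\rtimes\Cxh$-equivariance of \eqref{fibres} against the $\BI\rtimes\Cxh$- (resp.\ $G_\oh\rtimes\Cxh$-)equivariance of $\rho_\lambda$ from \Cref{existresol}, overlapping in the constant loops $T\subset\BI$ (resp.\ $G\subset G_\oh$) — is the natural unwinding the authors intend. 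Your "mild obstacle" is indeed the only point worth articulating, and your resolution of it is right: both copies of $G$ (via $G_{\C[t^{-1}]}$ and via $G_\oh$) are the constant loops inside $G_\ok$, and both act on $\Gr_G=G_\ok/G_\oh$ by the same left multiplication, so the equivariant structures match under base change.
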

In particular, $\rho_{X,\lambda} \colon \mathcal{X}_{\leq \lambda, 0} \to G_\ok^{\leq\lambda}\times_{G_\oh}X$ is a resolution of singularities.

\begin{df}\label{tauhat}
We define a map 
\begin{equation}\label{tauhatmap}
H_{G \times \Cxh}^\bullet(X) \to H_{T \times \Cxh}^\bullet(\sE_{\leq \lambda}(X)), \qquad \tau \mapsto \hat{\tau},
\end{equation}
as the composition of the following maps:
\begin{equation*}
H_{G\times\Cxh}^\bullet(X) 
\xrightarrow{\pr_X^*} H_{T \times G\times \Cxh}^\bullet(\sE_{\leq \lambda} \times X) \xrightarrow{\sim} H_{T\times\Cxh}^\bullet(\sE_{\leq \lambda}(X)),
\end{equation*}
where $G$ acts on $\sE_{\leq \lambda} \times X$ by $g\cdot (s,x)=(sg^{-1},gx)$.
\end{df}
The following lemma is clear.
\begin{lem}\label{tauhat=twisting}
The restrictions of $\hat\tau$ to $\cX_{\leq \lambda,0}$ and $\cX_{\leq \lambda,\infty}$ are respectively the images of $1\otimes \tau$ under the twisting map
\[
\tw_G\colon
H^\bullet_{T \times \Cxh}(\widetilde C_{\leq\lambda})
\otimes_{\C[\hbar]}
H^\bullet_{G \times \Cxh}(X)
\to
H^\bullet_{T \times \Cxh}(G_\ok^{\leq\lambda}\times_{G_\oh} X)
\]
and of $\tau$ under the pullback
\[
H_{G\times \Cxh}^\bullet(X)
\to
H_{T\times \Cxh}^\bullet(\widetilde C_{\leq \lambda} \times X).
\]
\end{lem}
\begin{rem}
    It follows from \Cref{tauhat=twisting} that when $G = T$ is abelian, the class $\hat{\tau}$ agrees with~\cite[Notation 3.8]{Iritani}, except that the roles of the zero and infinity fibres are reversed (cf.~\Cref{CompareIritani2}).
\end{rem}

\subsubsection*{Moduli spaces and virtual fundamental classes}
Let $\beta \in \Eff(\sE_{\leq \lambda}(X))^{\sec}$. Recall that $\overline{M}_{0,n+2}(\sE_{\leq \lambda}(X), \beta)$ denotes the moduli stack of genus-zero stable maps to $\sE_{\leq \lambda}(X)$ with $n+2$ marked points and curve class $\beta$. A typical object in this moduli stack is a stable map
\begin{equation*}
\sigma \colon (\Sigma, y_0, y_\infty,y_1,\dots,y_n) \to \sE_{\leq \lambda}(X),
\end{equation*}
where $\Sigma$ is a genus-zero nodal curve and $y_0, y_\infty,y_1\dots,y_n \in \Sigma$ are the marked points.

Since $\sE_{\leq \lambda}(X)$ is smooth, the moduli stack admits a virtual fundamental class (\cite{Intrinsic}),
\begin{equation*}
[\,\overline{M}_{0,n+2}(\sE_{\leq \lambda}(X), \beta)]^{\vir}\in H^{T\times\Cx_\hbar}_{\bullet}(\overline{M}_{0,n+2}(\sE_{\leq \lambda}(X), \beta)).
\end{equation*}

\begin{df}
Let $\cM_{\leq \lambda}(X, \beta)_n \subset \overline{M}_{0,n+2}(\sE_{\leq \lambda}(X), \beta)$ denote the substack consisting of those stable maps $\sigma$ such that $\sigma(y_0)$ lies over $0 \in \PP^1$ and $\sigma(y_\infty)$ lies over $\infty \in \PP^1$. In other words, we have the fibre diagram:
\begin{equation}\label{PBM}
\begin{tikzcd}
\cM_{\leq \lambda}(X, \beta)_n \ar[r, "j_\cM"] \ar[d] &
\overline{M}_{0,n+2}(\sE_{\leq \lambda}(X), \beta) \ar[d, "{(\pr_{\PP^1} \circ \EV_0 , \pr_{\PP^1} \circ \EV_\infty)}"] \\
\{(0, \infty)\} \ar[r,"j"] & \PP^1 \times \PP^1
\end{tikzcd}
\end{equation}
Here, $\EV_0$ and $\EV_\infty$ denote the \emph{evaluation maps at the marked points $y_0$ and $y_\infty$}, respectively. We also denote
\[
\cM_{\leq\lambda}(X)_n:=\bigsqcup_{\beta\in\Eff(\sE_{\leq\lambda}(X))^{\sec}} \cM_{\leq\lambda}(X,\beta)_n.
\]
\end{df}
In particular, the evaluation maps restrict to
\[
\ev_0: \cM_{\leq\lambda}(X)_n \longrightarrow \mathcal X_{\leq\lambda,0},\qquad
\ev_\infty: \cM_{\leq\lambda}(X)_n \longrightarrow \mathcal X_{\leq\lambda,\infty}.
\]
In order to distinguish between the evaluation maps on $\mathcal{M}$ and $\overline M$, we use the different symbols $\ev$ and $\EV$, respectively.

We define the virtual fundamental class of $\cM_{\leq \lambda}(X, \beta)_n$ to be
\begin{equation*}
[\cM_{\leq \lambda}(X, \beta)_n]^{\vir} := j^!\left[\, \overline{M}_{0,n+2}(\sE_{\leq \lambda}(X), \beta) \right]^{\vir},
\end{equation*}
where $j^!$ is the refined Gysin pullback
\begin{equation*}
j^! \colon H^{T\times\Cx_\hbar}_\bullet\big(\overline{M}_{0,n+2}(\sE_{\leq \lambda}(X), \beta)\big) \to H^{T\times\Cx_\hbar}_{\bullet-4}(\cM_{\leq \lambda}(X, \beta)_n)
\end{equation*}
defined via the fibre diagram \eqref{PBM} (see [8.3.21] in \cite{ChrissGinzburg} or part 3(ii)(b) of \cite{BFN}). The degree of the virtual fundamental class equals twice the virtual dimension of $\cM_{\leq \lambda}(X, \beta)_n$, which is computed as follows (cf.\ \cite{Fulton-Pandharipande,Cox-Katz}):
\begin{align*}
\mathrm{vdim}\big(\cM_{\leq \lambda}(X, \beta)_n\big)
&= \dim \sE_{\leq \lambda}(X) + c_1\big(T \sE_{\leq \lambda}(X)\big) \cdot \beta + n - 3 \nonumber \\
&= \dim X + \dim C_\lambda + c_1\big(T^{\mathrm{vert}} \sE_{\leq \lambda}(X)\big) \cdot \beta + n \nonumber \\
&= \dim X + \dim C_\lambda + c_1^G(X) \cdot \overline\beta + n.
\end{align*}
If $p$ is a point in $\Gr_G$ or in $\widetilde{C}_{\leq \lambda}$, the substack $\cM_p(X, \beta)_n \subset \overline{M}_{0,n+2}(\sE_p(X), \beta)$ is defined similarly. We write $\pr_{\widetilde{C}_{\leq\lambda}}:\cM_{\leq\lambda}(X,\beta)_n\to {\widetilde{C}_{\leq\lambda}}$ for the projection.

\section{Shift operators II: definition}\label{sectionshiftoperators}
For graded vector spaces $A=\oplus A^i$, $B=\oplus B^i$, we write
\[
\Hom^\bullet(A,B)=\bigoplus_k\Hom^k(A,B),\qquad \Hom^k(A,B)=\bigoplus_i\Hom(A^i,B^{i+k}).
\]
for the graded vector space whose homogeneous degree-$k$ elements are linear maps from $A$ to $B$ of degree $k$. When $A=B$, we also write
\[
\operatorname{End}^\bullet(A)\coloneqq \Hom^\bullet(A,A),
\]
which is a graded algebra under composition. 

We assume that $X$ satisfies the conditions in \Cref{qh}, that $\bN$ is a representation of $G$, and that there exists an equivariant proper morphism
\begin{equation*}
f \colon X \to \bN.
\end{equation*}
Our goal is to construct a homomorphism
\begin{equation*}
\bbS_{G,\bN,X} \colon \sA^\hbar_{G,\bN}\to \operatorname{End}^\bullet(H_T^\bullet(X)[\hbar])[[q,\tau]]
\end{equation*}
that endows $H^\bullet_{G \times \C^\times_\hbar}(X)[[q_G,\tau]]$ with the structure of a graded $\sA^\hbar_{G,\bN}$-module. Here we use the convention that classes in $H_k^{G_\oh\rtimes \C^\times}(\Gr_G)$ have degree $-k$.

\subsection{The tautological section and its zero locus}
Throughout this section, $\beta$ will always denote a section class.
\subsubsection*{A lemma on $\sE_p(\bN)$} 
Recall in \Cref{univbun} that we have a trivialization of $\sE$ over $\Gr_G\times \{\infty\}$. Let $p \in \Gr_G$, there is thus an induced local trivialization of the vector bundle $\sE_p(\bN)$ over $\PP^1$
\begin{equation*}
    \varphi_\infty(\bN): \sE_p(\bN)|_{\spec \C[t^{-1}]} \xrightarrow{\sim} \spec \C[t^{-1}] \times \bN
\end{equation*}
over $\spec \C[t^{-1}]$. We write $\varphi_\infty(\bN)$ simply as $\varphi_\infty$ when no confusion arises. Suppose $s$ is a section of the vector bundle $\sE_p(\bN)$, which corresponds to a morphism $s_\infty: \spec \C[t^{-1}] \to \bN$ under $\varphi_\infty$. We regard $s_\infty$ as an element of $\bN_{\C[t^{-1}]}$. Recall the bundle $\BT=G_\ok\times_{G_\oh}\bN_\oh$.
\begin{lem}\label{inTp}
    $s_\infty \in \BT_p$.
\end{lem}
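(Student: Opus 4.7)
The approach is to pick a lift of $p$ to $G_\ok$ and use the Beauville--Laszlo description of $\sE_p$ to compare $\varphi_\infty$ with a second local trivialization over $\spec \oh$.

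First, I would choose a representative $\widetilde{p} \in G_\ok$ of the $\C$-point $p \in \Gr_G = G_\ok/G_\oh$. By the Beauville--Laszlo construction reviewed in \Cref{univbun}, $\widetilde{p}$ determines a second trivialization
\[
\varphi_0 : \sE_p|_{\spec \oh} \xrightarrow{\sim} \spec \oh \times G
\]
such that the transition automorphism $\varphi_\infty \circ \varphi_0^{-1}$ over $\spec \ok$ is left multiplication by $\widetilde{p}$. Passing to the associated $\bN$-bundle yields trivializations $\varphi_0(\bN)$ and $\varphi_\infty(\bN)$ of $\sE_p(\bN)$ over $\spec \oh$ and $\spec \C[t^{-1}]$, respectively, whose transition function on $\spec \ok$ is the left action of $\widetilde{p} \in G_\ok$ on $\bN_\ok$.

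Next, I would expand the section $s$ in the trivialization $\varphi_0(\bN)$ as some $s_0 \in \bN_\oh$. The compatibility of $\varphi_0$ and $\varphi_\infty$ on the overlap $\spec \ok$ forces the identity $s_\infty = \widetilde{p} \cdot s_0$ in $\bN_\ok$, where $s_\infty$ is now viewed inside $\bN_\ok$ via the natural inclusion $\C[t^{-1}] \hookrightarrow \ok$. Since $s_0 \in \bN_\oh$, this gives $s_\infty \in \widetilde{p}\cdot \bN_\oh$. Finally, under the canonical embedding $\BT_p \hookrightarrow \bN_\ok$ sending the class of $(g, v)$ to $g v$, the image of $\BT_p$ is precisely $\widetilde{p}\cdot \bN_\oh$ (and is independent of the choice of lift, since a different representative $\widetilde{p}' = \widetilde{p} h$ with $h \in G_\oh$ yields the same subspace). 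Hence $s_\infty \in \BT_p$, as claimed.

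The only subtlety in carrying this out is purely notational: one must track two different embeddings into $\bN_\ok$, namely $\bN_\oh \hookrightarrow \bN_\ok$ and $\bN_{\C[t^{-1}]} \hookrightarrow \bN_\ok$, and check that the gluing identity $s_\infty = \widetilde{p} s_0$ is taken inside $\bN_\ok$ with respect to both. I do not anticipate any deeper obstacle.
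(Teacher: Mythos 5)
Your proof is correct and follows essentially the same route as the paper: choose a lift $\widetilde{p}\in G_\ok$, use the trivialization $\varphi_0$ over $\spec\oh$ with transition function $\widetilde{p}$, and conclude $s_\infty = \widetilde{p}\,s_0 \in \widetilde{p}\,\bN_\oh = \BT_p$. The extra remarks about the two embeddings into $\bN_\ok$ and independence of the lift are harmless elaborations but add nothing new.
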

\begin{proof}
    Let $\widetilde{p}$ be a lift of $p$ in $G_\ok$, and recall that $\BT_p = \widetilde{p} \,\bN_\oh\subset\bN_{\ok}$. There is a trivialization of $\sE_p(\bN)$ over $\spec \oh$:
    \begin{equation*}
        \varphi_0 : \sE_p(\bN)|_{\spec \oh} \xrightarrow{\sim} \spec \oh \times \bN
    \end{equation*}
    such that the transition function $\varphi_\infty \circ \varphi_0^{-1} : \bN_\ok \to \bN_\ok$ is given by the action of $\widetilde{p} \in G_\ok$. Let $s_0: \spec \oh \to \bN$ be the morphism induced by $s$ under $\varphi_0$, and regard it as an element of $\bN_\oh$. Then we have $s_\infty = \widetilde{p} s_0 \in \BT_p$, as claimed.
\end{proof}

\subsubsection*{The tautological section}
Let $\lambda \in \Lambda$, and let $\rho_\lambda \colon \widetilde{C}_{\leq \lambda} \to C_{\leq \lambda}$ be a resolution as in \Cref{existresol}. We will construct a section of $\widetilde{\BS}_{\leq \lambda}$ over $\cM_{\leq \lambda}(X)_n$\footnote{That is, a morphism $\cM_{\leq \lambda}(X)_n \to \widetilde{\BS}_{\leq \lambda}$ of spaces over $\widetilde{C}_{\leq \lambda}$.}.

Let $\sigma : (\Sigma, y_0, y_\infty,y_1,\dots,y_n) \to \sE_{\leq \lambda}(X)$ be a stable map representing a point in $\cM_{\leq \lambda}(X)_n$. By the definition of the section class, there exists a point $p_\sigma \in \widetilde{C}_{\leq \lambda}$ and a unique irreducible component $\Sigma_0$ of $\Sigma$ such that the composition
\begin{equation*}
    \Sigma \xrightarrow{\sigma} \sE_p(X) \to \widetilde{C}_{\leq \lambda} \times \PP^1
\end{equation*}
restricts to an isomorphism $r : \Sigma_0 \xrightarrow{\sim} p_\sigma \times \PP^1$. Composing $\sigma \circ r^{-1}$ with the projection $\cM_{\leq \lambda}(X)_n \to \cM_{\leq \lambda}(\bN)_n$ gives a section $\overline{\sigma}$ of the vector bundle $\sE_p(\bN) \coloneqq \sE_{\rho_\lambda(p)}(\bN)$. 

By \Cref{inTp}, the image $\overline{\sigma}_\infty$ lies in $\BT_{p_\sigma} \coloneqq \BT_{\rho_\lambda(p_\sigma)}$.

\begin{df}
    Let $\can_{\leq\lambda} (X)$ be the section of $\widetilde{\BS}_{\leq \lambda}$ over $\cM_{\leq \lambda}(X)_n$ given by the assignment
    \begin{equation*}
        \sigma \longmapsto [\overline{\sigma}_\infty],
    \end{equation*}
    where $\sigma \in \cM_{\leq \lambda}(X)_n$, and $[\overline{\sigma}_\infty]$ denotes the image of $\overline{\sigma}_\infty$ under the projection $\BT_{p_\sigma} \to (\widetilde{\BS}_{\leq \lambda})_{p_\sigma}$. 
    
    Moreover, if $\beta$ is a section class of $\sE_{\leq \lambda}(X)$, we denote by $\can_{\leq\lambda}(X, \beta)$ the restriction of $\can_{\leq\lambda}(X)$ to $\cM_{\leq \lambda}(X, \beta)_n$.
\end{df}
If $p \in \widetilde{C}_{\leq \lambda}$, then the maps $\can_p(X) \colon \cM_p(X)_n \to \widetilde{\BS}_{\leq \lambda}|_p$ are defined analogously.

\subsubsection*{The zero locus}
\begin{df}
    We denote by $\cZ_{\leq\lambda}(X,\beta)_n$ and $\cZ_{\leq\lambda}(X)_n$ the zero loci of $\can_{\leq\lambda}(X,\beta)$ and $\can_{\leq\lambda}(X)$ respectively. We also write $\cZ_{\leq\lambda}^\bN(X,\beta)_n$ and $\cZ_{\leq\lambda}^\bN(X)_n$ if we want to emphasize the representation $\bN$.
\end{df}

In contexts involving multiple representations, we include a superscript to indicate the representation, denoting, for example, $\cZ^{\bN}_{\leq\lambda}(X)_n$ and $\can^{\bN}_{\leq\lambda}(X)_n$.

If $p$ is a point in $\Gr_G$ or in $\widetilde{C}_{\leq \lambda}$, the substack $\cZ_p(X, \beta)_n \subset \cM_p(X, \beta)_n$ is defined similarly. We regard $\ev_\infty$ as a morphism $\cM_{\leq\lambda}(X)_n\to \widetilde C_{\leq\lambda}\times X$.

\begin{lem}\label{propermoduli}
    The restriction of the evaluation map $\ev_\infty: \cM_{\leq \lambda}(X)_n \to \widetilde C_{\leq\lambda}\times X$ to $\mathcal{Z}_{\leq \lambda}(X)_n$ is proper.
\end{lem}

\begin{proof}
Since the morphism $\cM_{\leq \lambda}(X)_{n+1} \to \cM_{\leq \lambda}(X)_n$ that forgets the last marked point (and stabilizes) is proper, it suffices to consider the case $n = 0$. For simplicity, we write $\cM_{\leq \lambda}(X)_0 = \cM_{\leq \lambda}(X)$.

We first consider the case $X = \bN$. In this case, the section class $\beta$ is unique. Let $s\in \cZ_{\leq\lambda}(\bN)$ be a point with image $\pr_{\widetilde C_{\leq\lambda}}(s)=p\in \widetilde C_{\leq\lambda}$. By definition, $s$ is a global section of the vector bundle $\sE_p(\bN)\to p\times\PP^1$, such that $\can_{\leq\lambda}(\bN)(s)=[s_\infty]=0\in (\widetilde\BS_{\leq\lambda})_p$. By \Cref{Stildesurjects}, the projection $\BT_p\to \BS_p$ factors as
\[\BT_p\to (\widetilde \BS_{\leq\lambda})_p\to \BS_p.\]
Hence, $s_\infty$ lies in $\BR_p$, which implies 
\[s_\infty \in \bN_\oh\cap \bN_{\C[t^{-1}]}=\bN.\]

Consider the diagram
\begin{equation*}
\begin{tikzcd}
    \cZ_{\leq\lambda}(\bN)\ar[r]\ar[d,"\ev_\infty"] &
    \cM_{\leq \lambda}(\bN)\ar[d] \\
    \widetilde C_{\leq \lambda}\times \bN\ar[rd] &
    \widetilde G^{\leq \lambda}_\ok\times_\BI \bN_\oh\ar[d] \\
    & \widetilde C_{\leq \lambda}\times \bN_\ok.
\end{tikzcd}
\end{equation*}
All arrows except the left vertical one are closed immersions. Hence, the evaluation morphism $\ev_\infty: \cZ_{\leq\lambda}(\bN)\to \widetilde C_{\leq\lambda}\times\bN$ is also a closed immersion.

    For the general case, we consider the commutative diagram:
    \begin{center}
    \begin{tikzcd}
    \mathcal Z_{\leq \lambda}(X,\beta) \arrow[r, "j_X", hook] \arrow[d, "\cZ(f)"] 
        & \cM_{\leq \lambda}(X,\beta) \arrow[r, "\ev_\infty"] \arrow[d, "\cM(f)"] 
        & \sE_{\leq \lambda}(X) \arrow[d, "\sE(f)"] \\
    \mathcal Z_{\leq \lambda}(\bN) \arrow[r, "j_{\bN}", hook] 
        & \cM_{\leq \lambda}(\bN) \arrow[r, "\ev_\infty"] 
        & \sE_{\leq \lambda}(\bN)
    \end{tikzcd}
    \end{center}
    It suffices to show that $\cM(f)$ is proper, since this implies the properness of $\cZ(f)$, and hence of the composition $\sE(f) \circ \ev_\infty \circ j_X = \ev_\infty \circ j_{\bN} \circ \cZ(f)$. In particular, this shows that $\ev_\infty \circ j_X$ is proper.

    Properness of $\cM(f)$ follows from the semiprojectivity of $X$. Indeed, $f: X \to \bN$ factors equivariantly as
    \begin{equation*}
        X \stackrel{h}{\longhookrightarrow} \PP^n \times \bN \longrightarrow \bN,
    \end{equation*}
    where $h$ is a closed embedding. Then there is a corresponding factorization of $\cM(f)$:
    \begin{equation*}
        \cM_{\leq \lambda}(X,\beta) \xrightarrow{\cM(h)} \cM_{\leq \lambda}(\PP^n \times \bN, h_* \beta) \longrightarrow \cM_{\leq \lambda}(\bN).
    \end{equation*}
    The natural map induces an isomorphism
    \begin{equation*}
        \cM_{\leq \lambda}(\PP^n \times \bN, h_* \beta) \cong \cM_{\leq \lambda}(\PP^n, (\pr_{\PP^n})_* h_* \beta) \times \cM_{\leq \lambda}(\bN).
    \end{equation*}
    Since $\cM(h)$ is a closed embedding (cf. \cite[Section\ 5.1]{Fulton-Pandharipande}) and $\cM_{\leq \lambda}(\PP^n, (\pr_{\PP^n})_* h_* \beta)$ is proper, the map $\cM(f)$ is also proper, as claimed.
\end{proof}

\subsection{Section-counting map}
\subsubsection*{Virtual fundamental classes}

Consider the fibre diagram
\begin{equation}\label{PBZ}
\begin{tikzcd}
\cZ_{\leq \lambda}(X, \beta)_n \arrow[r] \arrow[d] & \cM_{\leq \lambda}(X, \beta)_n \arrow[d, "\can_{\leq \lambda}(X{,}\beta)"] \\
\widetilde{C}_{\leq \lambda} \arrow[r,"\mathfrak{z}"] & \widetilde{\BS}_{\leq \lambda}
\end{tikzcd} ,
\end{equation}
where $\mathfrak{z}$ is the inclusion of the zero section. We define the virtual fundamental class
\begin{equation}\label{virZ}
[\cZ_{\leq \lambda}(X, \beta)_n]^\vir \coloneqq \mathfrak{z}^!\big[\cM_{\leq \lambda}(X, \beta)_n\big]^\vir,
\end{equation}
where $\mathfrak{z}^!$ denotes the Gysin pullback
\begin{equation*}
\mathfrak{z}^! \colon H^{T\times\Cx_\hbar}_\bullet\big(\cM_{\leq \lambda}(X, \beta)_n\big) 
\longrightarrow H^{T\times\Cx_\hbar}_{\bullet - 2d_\lambda}\big(\cZ_{\leq \lambda}(X, \beta)_n\big),
\end{equation*}
taken relative to the diagram \eqref{PBZ}. Here, $d_\lambda$ denotes the rank of the vector bundle $\widetilde{\BS}_{\leq \lambda}$ (see \eqref{SlambdaRank}). We record the following.
\begin{equation}\label{vdim2}
\mathrm{vdim}\big(\cZ_{\leq \lambda}(X, \beta)_n\big)= \dim X + \dim C_\lambda + c_1^G(X) \cdot \overline\beta-d_\lambda+n.
\end{equation}

We write $\ev_\infty$ in place of $\ev_\infty \circ \mathfrak{z}$ for simplicity. 

\begin{df}\label{Slambda}
We define the \emph{section-counting map}
\begin{equation*}
\widetilde\bbS_{\bN,\leq \lambda} \colon e(\widetilde{\BS}_{\leq\lambda}) \cup H_{T \times \C^\times_\hbar}^\bullet(\mathcal{X}_{\leq \lambda, 0}) \longrightarrow H^\bullet_{T \times \C^\times_\hbar}(X)[[q_G,\tau]]
\end{equation*}
by setting
\begin{equation}\label{sectioncountingmap}
\widetilde\bbS_{\bN,\leq \lambda}(e(\widetilde{\BS}_{\leq\lambda}) \cup \gamma) \vcentcolon= 
\sum_{\beta \in \Eff(\sE_{\leq \lambda}(X))^{\mathrm{sec}}} 
\sum_{n=0}^\infty \frac{q^{\overline{\beta}}}{n!} \,
\PD\circ \pr_{X*} \ev_{\infty*} \left(
    \ev_0^*(\gamma) \prod_{\ell=1}^n \ev_\ell^*(\hat\tau) \cap 
    [\cZ_{\leq \lambda}(X, \beta)_n]^\vir
\right)
\end{equation}
for $\gamma \in H_{T \times \Cxh}^\bullet(\cX_{\leq \lambda, 0})$. Here, $\pr_X \colon \cX_{\leq \lambda, \infty} \cong \widetilde C_{\leq \lambda} \times X \to X$ is the projection map, and $\mathrm{PD} \colon H_\bullet^{T \times \Cx_\hbar}(X) \to H_{T \times \Cxh}^{2 \dim X - \bullet}(X)$ is the Poincar\'e duality map. 
\end{df}

The above definition makes sense due to the following two lemmas.

\begin{lem}\label{lemmacurveclass2}
There exists a finite subset $S \subset H_2^{\mathrm{ord}, G}(X;\Z)$ such that
\begin{equation*}
\left\{\overline{\beta} \in H_2^{\mathrm{ord}, G}(X;\Z) \,\middle|\, \beta \in \Eff(\sE_{\leq \lambda}(X))^{\mathrm{sec}} \right\} \subset S + i_*\left(\Eff(X)\right),
\end{equation*}
where $i \colon X \hookrightarrow \sE_{\leq \lambda}(X)$ is the inclusion of a fibre.
\end{lem}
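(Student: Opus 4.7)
The strategy is to decompose an effective representative of $\beta$ into a unique horizontal component plus purely vertical ones, and to track each separately. A direct analysis of the equation $\pi_* \beta = [\pt \times \PP^1]$ in $H_2^{\mathrm{ord}}(\widetilde C_{\leq \lambda} \times \PP^1; \Z)$, using only that $\Eff(\widetilde C_{\leq \lambda})$ is a pointed cone and that $[\pt \times \PP^1]$ is primitive, shows that any effective $1$-cycle $C = \sum a_i C_i$ representing $\beta$ contains exactly one distinguished irreducible component $C_0$ of multiplicity one whose image under $\pi$ is a fibre $\{p\} \times \PP^1$ for some $p \in \widetilde C_{\leq \lambda}$, with $C_0 \to \{p\} \times \PP^1$ birational. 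All remaining components $C_j$ are contracted by $\pi$, hence lie inside $X$-fibres and represent effective curve classes in copies of $X$.

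Under the equivariant pushforward $\overline{(\,\cdot\,)} \colon H_2^{\mathrm{ord}}(\sE_{\leq \lambda}(X); \Z) \to H_2^{\mathrm{ord}, G}(X; \Z)$, the vertical contributions $\sum_{j > 0} a_j [C_j]$ land in $i_* \Eff(X)$, because after composing with the natural map $\sE_{\leq \lambda}(X) \to [X/G]$ the inclusions of any two $X$-fibres of $\pi$ become canonically identified. The problem thus reduces to showing that the set
\[
\bigl\{\,\overline{[C_0]}\ :\ p \in \widetilde C_{\leq \lambda},\ C_0\text{ an irreducible section of } \sE_p(X) \to \PP^1 \,\bigr\}
\]
is contained in a finite translate of $i_* \Eff(X)$ inside $H_2^{\mathrm{ord}, G}(X; \Z)$.

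For this, the plan is to exploit the Beauville--Laszlo trivialization $\varphi$ of $\sE_{\leq \lambda}$ over $\widetilde C_{\leq \lambda} \times \spec \C[t^{-1}]$ from Section~\ref{univbun}. Fixing a basepoint $x_0 \in X$ (one per connected component of $X$) gives a `constant partial section' $(p,t)\mapsto \varphi^{-1}(p,t,x_0)$ whose $G$-equivariant class is fixed. For an arbitrary section $C_0$, the restriction $C_0|_{\spec \C[t^{-1}]}$ projects via $\varphi$ to a rational map $\spec \C[t^{-1}] \to X$ whose closure $\bar c_0$ has an effective class in $\Eff(X)$. I expect a formula of the shape
\[
\overline{[C_0]} = (\text{reference class at } x_0) + i_* [\bar c_0] + (\text{correction concentrated at } t=0),
\]
which would reduce the problem to bounding the correction term.

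The main obstacle will be controlling this correction at $t=0$, where $\varphi$ degenerates. I expect only finitely many corrections can occur: the coweight $\lambda$ bounds the relevant pole orders of the transition cocycle, and compactness of $\widetilde C_{\leq \lambda}$ limits the remaining combinatorial possibilities, so that after enlarging the reference by these finitely many corrections one obtains the required finite set $S$. As a fallback, one may reduce to the compact $\C^\times_{\mathrm{dil}}$-fixed locus $X^{\C^\times_{\mathrm{dil}}}$ via the conical retraction from \Cref{qh}, where the effective cone is finitely generated and the corresponding statement is standard.
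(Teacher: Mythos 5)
Your opening decomposition is sound and matches the geometric picture, but the proposal stops short of the step that actually matters, and the sketch for closing that gap does not work as written.

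The reduction to bounding the classes $\overline{[C_0]}$ of irreducible section curves modulo $i_*\Eff(X)$ is correct, and the observation that vertical components contribute only $i_*\Eff(X)$ after pushing forward to $[X/G]$ is also fine. The problem is the plan for bounding the section classes. Your proposed formula
\[
\overline{[C_0]} = (\text{reference class at } x_0) + i_*[\bar c_0] + (\text{correction at } t=0)
\]
is asserted, not derived, and the claim that the correction term takes values in a finite set is unsubstantiated: the section $C_0$ need not be close to a constant section, the closure $\bar c_0$ need not vary continuously with $C_0$, and --- most importantly --- you cannot conclude finiteness of the corrections from a bound on pole orders, because $\Eff(X)$ is in general not finitely generated (the paper explicitly does not assume this, and goes to the trouble of completing along it). What you actually need is finiteness of $\{\overline{[C_0]}\}$ \emph{modulo} the cone $i_*\Eff(X)$, and nothing in the proposed decomposition at $t=\infty$ or $t=0$ accounts for that quotient.

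The paper's proof avoids this entirely by pushing the curve by group actions rather than analyzing the transition data. It first uses the conical $\C^\times_{\mathrm{dil}}$-action to slide any section class into $\sE_{\leq\lambda}(X^{\C^\times_{\mathrm{dil}}})$, replacing $X$ by a compact variety; then it uses the $T$-action to deform to a $T$-invariant curve lying over a $T$-fixed point $p\in\widetilde C_{\leq\lambda}$, after which the pair $(\sE_p(X),\text{section})$ is governed by the abelian $\sE^T$. For a $T$-invariant section the horizontal component is forced to be a \emph{constant} section $\sE^T_p\times_T x$ over some $x\in X^T$, and since $X^T$ has finitely many connected components (compactness), the class $\overline{[C_0]}$ is one of finitely many $\overline\beta_i$. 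That is where the finiteness comes from --- not from bounding analytic data near $t=0$. Your ``fallback'' gestures toward the compactification step but omits the $T$-invariance reduction and the constant-section observation, which are the content of the proof; reducing to compact $X$ alone does not make the effective cone finitely generated nor the statement ``standard.'' You should replace the $t=0$ correction argument by the group-action degeneration.
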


\begin{lem}\label{graded}
Let $\gamma\in H_{T\times\Cx_\hbar}^\bullet(\cX_{\leq\lambda,0})$ be a homogeneous element. Then, each summand in the power series expansion of the right-hand side of \eqref{sectioncountingmap} is of degree $\deg e((\widetilde{\BS}_{\leq\lambda}) \cup \gamma)-2\dim C_{\lambda}=2d_\lambda-2\dim C_{\lambda}+\deg\gamma$, which is independent of $\beta$ and $n$.
\end{lem}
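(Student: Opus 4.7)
The argument is a bookkeeping of degrees; every contribution to the total degree of a summand depends on $\beta$ or $n$, but the dependencies cancel. The plan is to expand $\prod_\ell \ev_\ell^*(\hat\tau)$ monomially in the coordinates $\tau^{i,j}$, fix one such monomial $\prod_\ell \tau^{i_\ell,j_\ell} \ev_\ell^*(\hat\phi_{i_\ell,j_\ell})$, and compute the total degree as the sum of $\deg q^{\overline\beta}$, the $\tau$-degrees $\sum_\ell \deg\tau^{i_\ell,j_\ell} = 2n - \sum_\ell \deg\phi_{i_\ell,j_\ell}$, and the cohomological degree of the homology/cohomology-valued expression in $H^\bullet_{T\times\Cx_\hbar}(X)$.

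For the cohomological piece, I would apply formula \eqref{vdim2} to obtain that $[\cZ_{\leq\lambda}(X,\beta)_n]^\vir$ has homological degree $2(\dim X + \dim C_\lambda + c_1^G(X)\cdot\overline\beta - d_\lambda + n)$. Capping with $\ev_0^*(\gamma) \prod_\ell \ev_\ell^*(\hat\phi_{i_\ell,j_\ell})$ drops the homological degree by $\deg\gamma + \sum_\ell \deg\phi_{i_\ell,j_\ell}$, and the proper pushforwards $\ev_{\infty*}$ and $\pr_{X*}$ preserve homological degree. Finally, $\PD$ converts homological degree $k$ on $X$ to cohomological degree $2\dim X - k$. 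Combining gives a cohomological degree of
\[
2d_\lambda - 2\dim C_\lambda - 2c_1^G(X)\cdot\overline\beta - 2n + \deg\gamma + \sum_\ell \deg\phi_{i_\ell,j_\ell}.
\]

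Adding $\deg q^{\overline\beta} = 2c_1^G(X)\cdot\overline\beta$ (by definition) and the $\tau$-degrees $2n - \sum_\ell \deg\phi_{i_\ell,j_\ell}$ shows that the terms involving $\beta$, $n$, and the monomial indices $(i_\ell,j_\ell)$ all cancel, leaving
\[
2d_\lambda - 2\dim C_\lambda + \deg\gamma,
\]
which equals $\deg(e(\widetilde{\BS}_{\leq\lambda})\cup\gamma) - 2\dim C_\lambda$ since $e(\widetilde{\BS}_{\leq\lambda})$ has cohomological degree $2d_\lambda$.

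There is no substantive obstacle; the only point requiring slight care is keeping track of the grading convention on the formal variables $\tau^{i,j}$ (which is exactly chosen so that $\hat\tau$ has total degree $2$) and verifying that the map \eqref{tauhatmap} is grading-preserving on the cohomology factor, so that $\deg\hat\phi_{i,j} = \deg\phi_{i,j}$. Both are immediate from the definitions in \Cref{qh} and \Cref{tauhat}.
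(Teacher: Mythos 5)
Correct, and it is the same calculation the paper gives: a degree count that combines \eqref{vdim2} with $\deg q^{\overline\beta}$ and $\deg\tau^{i,j}=2-\deg\phi_{i,j}$, and which cancels to $2d_\lambda - 2\dim C_\lambda + \deg\gamma$; the only difference is that you separate the $\tau^{i_\ell,j_\ell}$- and $\hat\phi_{i_\ell,j_\ell}$-contributions before recombining, whereas the paper treats $\tau^{i_\ell,j_\ell}\hat\phi_{i_\ell,j_\ell}$ as a single unit.
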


\begin{df}\label{bbSN}
\emph{The shift operator with matter} $\bN$ is the $H^{T}_\bullet(\pt)[\hbar]$-linear map
\begin{equation*}
\bbS_{G,\bN,X} \colon e(\BS)\cap H^{\BI\rtimes\Cx_\hbar}_\bullet(\Gr_G)\to \operatorname{End}^\bullet(H_T^\bullet(X)[\hbar])[[q,\tau]]
\end{equation*}
uniquely determined by
\begin{equation}\label{bbSNformula}
\bbS(e(\BS) \cap [C_{\leq \lambda}])(\alpha)=\widetilde\bbS_{\bN,\leq \lambda} \left(e(\widetilde{\BS}_{\leq\lambda})\cup \tw_{\leq \lambda}\left([\widetilde{C}_{\leq \lambda}] \otimes \alpha\right)\right).
\end{equation}
The map $\bbS_{G,\bN,X}$ is then extended $H_{T \times \C^\times_\hbar}^\bullet(\pt)$-linearly in the first argument. 
\end{df}
We will show in the next section that $\bbS_{G,\bN,X}$ is
$W$-equivariant. We denote the induced homomorphism $\sA_{G,\bN}\to \operatorname{End}^\bullet(H_G^\bullet(X)[h])[[q,\tau]]$ also by $\bbS_{G,\bN,X}$. 
\begin{rem}
    By \Cref{twistingeuler}, the expression \eqref{bbSNformula} can also be written as the composition $\bbS_{\bN,\leq\lambda}\circ \tw_G(e(\widetilde{\BS}_{\leq\lambda})\cap [\widetilde{C}_{\leq\lambda}]\otimes\alpha)$.
\end{rem}
We will simply write $\bbS_{G,\bN}$ when the space $X$ is clear from the context, and we write $\bbS_{G}$ if the representation $\bN=\bf 0$. We will show in \Cref{Wequiv} that $\bbS_{G,\bN,X}$ is $W$-equivariant and in particular its restriction to the subspace of $W$-invariants gives the map \eqref{Introshiftoperators3}.

\begin{rem}\label{CompareIritaniRemark}
    In the case where $X$ is $T$-compact, Iritani \cite{Iritani} defined shift operators for the $T$-action on $X$. Note that for semiprojective $X$, there always exists a $T$-representaion $\bN$ with an equivariant proper map $f:X\to \bN$. As we will see in \Cref{subsection:moduleproperty} (see also \Cref{IritaniRemark}), the shift operators $\bbS_{T,\bN}$ defined in this paper recovers Iritani's definition, up to adjoint and Novikov variables.
\end{rem}
\begin{rem}\label{remSemi-ve}
Suppose $G = T$ is abelian, and let $\lambda \in \Lambda$ be semi-negative with respect to $X$, in the sense that $\eta(\lambda) \leq 0$ for every weight $\eta$ of $H^0(X, \oh_X)$ (cf.~\cite[Definition~3.3]{Iritani}).

It is easy to show that there exists a representation $\bN$ with $\lambda$-non-positive weights and a proper $T$-equivariant morphism $f \colon X \to \bN$. By \Cref{Nweights}, we have $\BS_{t^\lambda} = 0$, and hence $\cZ_{\leq \lambda}(X)_n = \cM_{\leq \lambda}(X)_n$. In this case, the properness statement in \Cref{propermoduli} follows from~\cite[Lemma~3.5]{Iritani}.

Remark~3.10 of~\cite{Iritani} also notes that the semi-negativity condition implies that the shift operator is defined without localization. One may regard \Cref{propermoduli} and \Cref{bbSN} as generalizations of this observation. In \Cref{propertyshiftoperators} below, we will explain in detail how our construction of shift operators compares with that in~\cite{Iritani}.
\end{rem}

\begin{proof}[Proof of \Cref{lemmacurveclass2}]
We first reduce to the case where $X$ is compact. Indeed, if $C$ is a curve on $\sE_{\leq \lambda}(X)$ representing a section class, we may use the $\C^\times_{\mathrm{attr}}$-action to move $\sigma$ so that it lies in $\sE_{\leq \lambda}(X^{\Cx_{\mathrm{attr}}})$. Hence, we may replace $X$ by $X^{\Cx_{\mathrm{attr}}}$, which is compact.

Next, we use the $T$-action to move the curve $C$ to a $T$-invariant curve. In particular, its projection $p = \pr_{\widetilde{C}_{\leq \lambda}}(C)$ must be a $T$-fixed point in $\widetilde{C}_{\leq \lambda}$. We have
\begin{equation*}
\sE_p(X) = \sE_{\rho_\lambda(p)}^T(X),
\end{equation*}
where $\sE^T$ denotes the universal $T$-torsor over $\Gr_T \times \PP^1$. Therefore, we may reduce to the case where $G = T$ is abelian.

Now let $C \subset \sE^T_p(X)$ be a $T$-invariant curve representing a section class with irreducible components $C_0,\dots, C_m$. Then one of them, say $C_0$, must be the constant section
\begin{equation*}
\mathrm{Const}_x = \sE_p^T \times_T x
\end{equation*}
for some $x \in X^T$, and all other components lie in the fibres of $\sE_p^T(X) \to p \times \PP^1$. Let $\{F_k\}_{k\in I}$ be the connected components of $X^T$, and let $\beta_k \in \Eff(\sE_{\leq \lambda}(X))^{\sec}$ be the class represented by $\mathrm{Const}_x$ for some (any) $x \in F_k$. Then the above reasoning gives
\begin{equation*}
[C] \in \sum_{k\in I} \Z_{\geq 0} \beta_k + \Eff(X).
\end{equation*}
Therefore, we may conclude the lemma by taking $S = \{\overline{\beta_k}\}_{k\in I}$.
\end{proof}

\begin{proof}[Proof of \Cref{graded}]
Using \eqref{vdim2} and \Cref{degree}, we see that for any $i_\ell\in I$ and $j_\ell \in \{0, 1, \dots, N\}$, the degree of
\begin{equation*}
\frac{q^{\overline{\beta}}}{n!} \,
\mathrm{PD} \circ \pr_{X*} \ev_{\infty*}
    \left( \ev_0^* (\gamma) \prod_{\ell=1}^n \ev_\ell^* (\tau^{i_\ell,j_\ell} \hat\phi_{i_\ell,j_\ell}) \cap 
    [\cZ_{\leq \lambda}(X, \beta)_n]^\vir \right)
\end{equation*}
is equal to
\begin{align*}
& \quad 2c_1^G(X) \cdot \overline{\beta} + 2 \dim X 
-  2 \mathrm{vdim}(\cZ_{\leq\lambda}(X, \beta)_n) + \deg \gamma 
 + \sum_{\ell=1}^n \left( \deg \tau^{i_\ell,j_\ell} + \deg \phi_{i_\ell,j_\ell} \right) \\
&= 2d_\lambda - 2 \dim C_{\lambda} + \deg \gamma. \qedhere
\end{align*}
\end{proof}

We adopt the cohomological convention for degrees in Borel--Moore homology. In particular, an element of $H_k^{T \times \Cxh}(\Gr_G)$ has degree $-k$.

Since $\tw_G$ increases the degree by $2\dim C_\lambda$ (due to Poincar\'e duality), we obtain the following corollary of \Cref{graded}.

\begin{prop}
Let \( P \in H_{G \times \Cxh}^\bullet(\pt) \). Then
\begin{equation*}
\bbS_{G,\bN}(\Gamma)( (P \cup \alpha)) = \bbS_{G,\bN}(u^* P \cap \Gamma )(\alpha), 
\end{equation*}
where $u^*:H_G^\bullet(\pt)\to  H_{\BI\rtimes\Cx_\hbar}^\bullet(\Gr_G)$ is defined in \Cref{secondmodule}.
\end{prop}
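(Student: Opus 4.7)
The plan is to reduce both sides of the asserted equality to the same expression by combining the twisted linearity of the twisting map (the resolution-level analogue of \Cref{twistinglinear}) with the projection formula for $\rho_\lambda$. By $H^\bullet_{T\times\Cxh}(\pt)$-linearity in the first argument and the description of the domain in \Cref{independentResol}(3), it suffices to treat the case $\Gamma=\rho_{\lambda*}(e(\widetilde\BS_{\leq\lambda})\cap[\widetilde C_{\leq\lambda}])$ for some $\lambda\in\Lambda^+$, so that Definition~\ref{bbSN} specializes to
\[
\bbS_{G,\bN}(\Gamma\otimes\alpha')=\widetilde\bbS_{\bN,\leq\lambda}\bigl(e(\widetilde\BS_{\leq\lambda})\cup\tw_{\leq\lambda}([\widetilde C_{\leq\lambda}]\otimes\alpha')\bigr).
\]

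The first key step is to verify that \Cref{twistinglinear} carries over to the Poincar\'e-dualized $\tw_{\leq\lambda}$ of \eqref{twistlambda}: for any $\widetilde\Gamma\in H_\bullet^{T_\oh\rtimes\Cxh}(\widetilde C_{\leq\lambda})$,
\[
\tw_{\leq\lambda}(\widetilde\Gamma\otimes(P\cup\alpha))=u^*P\cup\tw_{\leq\lambda}(\widetilde\Gamma\otimes\alpha)=\tw_{\leq\lambda}\bigl((\rho_\lambda^*u^*P\cap\widetilde\Gamma)\otimes\alpha\bigr),
\]
where $u^*P$ in the middle term denotes the pullback of $P$ to $\widetilde G_\ok^{\leq\lambda}\times_{G_\oh}X$ via the natural composition to $[\pt/G_\oh]\to[\pt/G]$. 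This is proved exactly as \Cref{twistinglinear}: one works in the finite-dimensional approximation and uses \Cref{remarktwisting} to identify the two $G_\oh$-equivariances that are amalgamated by the twisting construction, then applies Poincar\'e duality at the end.

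The second key step is the projection formula applied to $\rho_\lambda$, yielding
\[
u^*P\cap\Gamma=\rho_{\lambda*}\bigl(e(\widetilde\BS_{\leq\lambda})\cap(\rho_\lambda^*u^*P\cap[\widetilde C_{\leq\lambda}])\bigr);
\]
this class lies in $e(\BS)\cap H^{T_\oh\rtimes\Cxh}_\bullet(\Gr_G)$ by \Cref{independentResol}(1). Applying Definition~\ref{bbSN} (extended to pushforwards of the form $\rho_{\lambda*}(e(\widetilde\BS_{\leq\lambda})\cap\widetilde\Gamma)$ using \Cref{twistingeuler} together with $H^\bullet_{T\times\Cxh}(\pt)$-linearity of $\widetilde\bbS_{\bN,\leq\lambda}$ and $\tw_{\leq\lambda}$), and then the twisted linearity above, both $\bbS_{G,\bN}(\Gamma\otimes(P\cup\alpha))$ and $\bbS_{G,\bN}((u^*P\cap\Gamma)\otimes\alpha)$ collapse to
\[
\widetilde\bbS_{\bN,\leq\lambda}\bigl(u^*P\cup e(\widetilde\BS_{\leq\lambda})\cup\tw_{\leq\lambda}([\widetilde C_{\leq\lambda}]\otimes\alpha)\bigr),
\]
proving the equality.

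The main obstacle is the first step: extending the twisted linearity to the Poincar\'e-dualized form of $\tw_{\leq\lambda}$ requires careful bookkeeping of the various $T_\oh$, $G_\oh$, and $\Cxh$-equivariances appearing in the finite-dimensional approximation. A secondary subtlety, as noted above, is that $u^*P\cap\Gamma$ is no longer a basis generator of the domain, and one must first check that the formula of Definition~\ref{bbSN} extends in a well-defined manner to all classes of the form $\rho_{\lambda*}(e(\widetilde\BS_{\leq\lambda})\cap\widetilde\Gamma)$; this consistency follows from the analogous twisted linearity together with \Cref{twistingfunctorial}.
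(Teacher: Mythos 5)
Your proposal is correct and takes essentially the same approach as the paper, whose entire proof reads "It follows from \Cref{twistinglinear} and the projection formula"; you simply unwind the definition on the basis elements $e(\BS)\cap[C_{\leq\lambda}]$, apply the $\tw_{\leq\lambda}$-analogue of \Cref{twistinglinear}, and push the resulting class through $\rho_{\lambda*}$ via the projection formula. The well-definedness subtlety you flag at the end — that $u^*P\cap\Gamma$ is no longer a $H^\bullet_{T\times\Cxh}(\pt)$-multiple of a basis generator, so one must know the formula of \Cref{bbSN} descends consistently to all pushforwards $\rho_{\lambda*}(e(\widetilde\BS_{\leq\lambda})\cap\widetilde\Gamma)$ — is genuinely present and is left implicit by the paper; your appeal to \Cref{twistingfunctorial} and the resolution-independence argument of \Cref{shiftopindependentresol} is the right way to close it.
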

\begin{proof}
    It follows from \Cref{twistinglinear} and the projection formula.
\end{proof}

\begin{df}
We write
\begin{equation*}
\bbS_{G,\bN}^{\hbar = 0} \colon \sA_{G,\bN} \longrightarrow \operatorname{End}(H_T^\bullet(X))[[q,\tau]].
\end{equation*}
for the reduction modulo $\hbar$ of $\bbS_{G,\bN}$. The \textit{Seidel homomorphism with matter $\bN$} is the map
\begin{equation*}
    \Psi_{G,\bN,X} \colon  \sA_{G,\bN}\longrightarrow QH_G^\bullet(X),\qquad \Psi_{G,\bN,X}(\Gamma)=\bbS^{\hbar=0}_{G,\bN}(\Gamma)(1).
\end{equation*}
\end{df}

\subsection{Independence of resolutions}
In this subsection, we prove that $\bbS_{G,\bN}$ is independent of the choices of the resolutions $\widetilde{C}_{\leq\lambda}$. This result also follows from \Cref{TGcompatible}.

\begin{prop}
    The operator $\bbS_{G,\bN}$ is independent of the choices of resolutions $\widetilde{C}_{\leq\lambda}$.
\end{prop}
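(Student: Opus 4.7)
The plan is to reduce to the case of comparing a fixed resolution $\widetilde{C}_{\leq\lambda}$ with a refinement $\pi\colon \widetilde{C}'_{\leq\lambda} \to \widetilde{C}_{\leq\lambda}$; this is valid because any two resolutions satisfying the conditions of \Cref{existresol} admit a common refinement, obtained by taking an equivariant resolution of the closure of the graph of the birational identity inside $\widetilde{C}_{\leq\lambda} \times_{C_{\leq\lambda}} \widetilde{C}'_{\leq\lambda}$. Once reduced, one has several geometric inputs: \textup{(a)} $\widetilde{\BS}'_{\leq\lambda} = \pi^*\widetilde{\BS}_{\leq\lambda}$ by the uniqueness part of \Cref{existresol}; \textup{(b)} $\sE'_{\leq\lambda}(X) = \sE_{\leq\lambda}(X) \times_{\widetilde{C}_{\leq\lambda}} \widetilde{C}'_{\leq\lambda}$, with induced proper birational map $\overline{\pi}\colon \sE'_{\leq\lambda}(X) \to \sE_{\leq\lambda}(X)$; \textup{(c)} because section curves are rigid and lie in fibers of $\sE_{\leq\lambda}(X) \to \widetilde{C}_{\leq\lambda}$, and $\overline{\pi}$ is an isomorphism on each fiber, pushforward along $\overline{\pi}$ yields a bijection between effective section classes, preserving the image $\overline{\beta} \in H_2^{\mathrm{ord},G}(X;\Z)$; and \textup{(d)} the moduli spaces and their zero loci fit into Cartesian squares $\cM'_{\leq\lambda}(X,\beta)_n = \cM_{\leq\lambda}(X,\beta)_n \times_{\widetilde{C}_{\leq\lambda}} \widetilde{C}'_{\leq\lambda}$ and $\cZ'_{\leq\lambda}(X,\beta)_n = \cZ_{\leq\lambda}(X,\beta)_n \times_{\widetilde{C}_{\leq\lambda}} \widetilde{C}'_{\leq\lambda}$.

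With these inputs in hand, the proof combines three steps. First, \Cref{twistingfunctorial}, the projection formula, and $\pi_*[\widetilde{C}'_{\leq\lambda}] = [\widetilde{C}_{\leq\lambda}]$ (from proper birationality) together with \textup{(a)} give
\[
e(\widetilde{\BS}_{\leq\lambda}) \cup \tw_{\leq\lambda}([\widetilde{C}_{\leq\lambda}] \otimes \alpha) = \widetilde{\pi}_*\bigl( e(\widetilde{\BS}'_{\leq\lambda}) \cup \tw'_{\leq\lambda}([\widetilde{C}'_{\leq\lambda}] \otimes \alpha) \bigr),
\]
where $\widetilde{\pi}\colon \widetilde{G}_\ok^{\prime\,\leq\lambda} \times_{G_\oh} X \to \widetilde{G}_\ok^{\leq\lambda} \times_{G_\oh} X$ is the induced map on zero fibers. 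Second, under the identifications of \textup{(d)}, the evaluation maps satisfy $\pr_X \circ \ev_\infty \circ \pi_\cM = \pr_X \circ \ev'_\infty$ and $\ev'_\ell = \ev_\ell \circ \pi_\cM$ for $\ell = 0, 1, \dots, n$, with $\hat\tau$ being compatible under pullback. Third, the virtual class identity $\pi_{\cM *} [\cZ'_{\leq\lambda}(X,\beta)_n]^\vir = [\cZ_{\leq\lambda}(X,\beta)_n]^\vir$ holds. Combining these by the projection formula termwise in the power series defining $\widetilde\bbS_{\bN,\leq\lambda}$ gives the desired equality.

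The main obstacle is the virtual class identity. Since $\sE_{\leq\lambda}(X) \to \widetilde{C}_{\leq\lambda}$ is smooth and this smoothness is preserved under base change along $\pi$, the moduli space $\cM_{\leq\lambda}(X,\beta)_n$ carries a perfect obstruction theory relative to $\widetilde{C}_{\leq\lambda}$, and this relative theory pulls back to its analogue on $\cM'_{\leq\lambda}(X,\beta)_n$. The identity $\pi_{\cM *}[\cM'_{\leq\lambda}(X,\beta)_n]^\vir = [\cM_{\leq\lambda}(X,\beta)_n]^\vir$ then follows from the functoriality of virtual pullbacks (in the style of Manolache) combined with $\pi_*[\widetilde{C}'_{\leq\lambda}] = [\widetilde{C}_{\leq\lambda}]$. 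The corresponding identity for the zero loci follows by applying the compatibility of the refined Gysin map with base change to the zero-section square \eqref{PBZ}, using \textup{(a)} that $\widetilde{\BS}'_{\leq\lambda} = \pi^*\widetilde{\BS}_{\leq\lambda}$.
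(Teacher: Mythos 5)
Your proposal is correct and takes essentially the same route as the paper. The only substantive difference is that you phrase the key virtual-class comparison as a proper pushforward identity $\pi_{\cM*}[\cZ'_{\leq\lambda}(X,\beta)_n]^{\vir}=[\cZ_{\leq\lambda}(X,\beta)_n]^{\vir}$ derived from Manolache's virtual pullback functoriality, whereas the paper uses the equivalent Gysin-pullback identity $[\cZ'_{\leq\lambda}(X,\beta)_n]^{\vir}=r^![\cZ_{\leq\lambda}(X,r_*\beta)_n]^{\vir}$ citing Behrend--Fantechi; once combined with the projection formula (and with \Cref{twistingfunctorial} and the compatibility of $\widetilde{\BS}$ under $\pi^*$, exactly as you identify) both formulations lead to the same termwise comparison of the section-counting maps.
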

\begin{proof}
    Suppose $\widetilde{C}_{\leq\lambda}'\to C_{\leq\lambda}$ is another resolution of $C_{\leq\lambda}$ on which $\BS_{\lambda}$ extends as in \Cref{existresol}. Replacing $\widetilde{C}_{\leq\lambda}'$ by a resolution of the fibre product $\widetilde{C}_{\leq\lambda}'\times_{C_{\leq\lambda}}\widetilde{C}_{\leq\lambda}$ if necessary, we may assume $\widetilde{C}_{\leq\lambda}'\to C_{\leq\lambda}$ factors as 
    \[\widetilde{C}_{\leq\lambda}'\stackrel{r}{\longrightarrow}\widetilde{C}_{\leq\lambda}\longrightarrow C_{\leq\lambda}.\] 
    Let us denote
    \[\widetilde\alpha_{\leq\lambda}=\tw_G\left([\widetilde C_{\leq\lambda}]\otimes\alpha \right),\quad \widetilde\alpha_{\leq\lambda}'=\tw_G'\left([\widetilde C_{\leq\lambda}']\otimes\alpha\right).\]
    Let $\sE_{\leq\lambda}(X)$ and $\sE_{\leq\lambda}'(X)$ denote the Seidel spaces corresponding to $\widetilde C_{\leq\lambda}$ and $\widetilde C_{\leq\lambda}'$, respectively. We write $\hat\tau \in H^\bullet_{T\times\Cxh}(\sE_{\leq\lambda}(X))$ and $\hat\tau'\in H_{T\times\Cxh}^\bullet(\sE_{\leq\lambda}'(X))$ for the assignments \eqref{tauhatmap} associated to $\sE_{\leq\lambda}(X)$ and $\sE'_{\leq\lambda}(X)$, respectively. By \Cref{twistingfunctorial} and the projection formula, these classes satisfy
    \begin{equation}\label{alphapushforward}\widetilde\alpha_{\leq\lambda}=r_*\widetilde\alpha_{\leq\lambda}',\quad \hat\tau = r_*\hat\tau'.\end{equation}
    Here, we use the same symbol $r_*$ to denote the pushforward maps $H_{T_\oh\rtimes\Cx_{\hbar}}^\bullet(\widetilde{G}_\ok^{\prime\leq\lambda}\times_{G_\oh} X)\to H_{T\times\Cxh}^\bullet(\widetilde{G}_\ok^{\leq\lambda}\times_{G_\oh}X)$ and $H_{T\times\Cxh}^\bullet(\sE'_{\leq\lambda}(X))\to H^\bullet_{T\times\Cxh}(\sE_{\leq\lambda}(X))$, whenever there is no confusion.

    Write $\widetilde\bbS$ and $\widetilde\bbS'$ for the respective section-counting maps \eqref{Slambda}. It remains to verify that
    \[\widetilde\bbS(e(\widetilde{\BS}_{\leq\lambda})\cup\widetilde\alpha_{\leq\lambda}')=\widetilde\bbS'(e(\widetilde{\BS}_{\leq\lambda})\cup \widetilde\alpha_{\leq\lambda}).\]

    Note that the moduli spaces are related by the fibre diagram
    \begin{equation*}
        \begin{tikzcd}
{\overline M_{0,n+2}(\sE_{\leq\lambda}'(X),\beta)} \arrow[d, "\pr_{\widetilde C_{\leq\lambda}'}"'] \arrow[r] & {\overline M_{0,n+2}(\sE_{\leq\lambda}(X,r_*\beta)} \arrow[d, "\pr _{\widetilde C_{\leq\lambda}}"] \\
\widetilde C_{\leq\lambda}' \arrow[r, "r"]                                                                 & \widetilde C_{\leq\lambda} .
\end{tikzcd}
    \end{equation*}
    Therefore, by functoriality (see \cite[Proposition 5.10]{Intrinsic}), the virtual fundamental classes are related by Gysin pullback
    \[[\,\overline M_{0,n+2}(\sE'_{\leq\lambda}(X),\beta)]^\vir= r^![\,\overline M_{0,n+2}(\sE_{\leq\lambda}(X),r_*\beta)]^\vir,\]
    and hence the same is true for the zero loci (see \cite[Section 6.4]{Fulton})
    \[[\cZ'_{\leq\lambda}(X,\beta)_n]^{\vir}=r^![\cZ_{\leq\lambda}(X,r_*\beta)_n]^{\vir}.\]
    Denote by $\ev_0',\ev_\infty',\pr_X'$ the evaluation and projection maps for the spaces associated with $\widetilde{C}_{\leq\lambda}'$. Using the above, we then compute
    \begin{align*}
        \widetilde\bbS'(e(\widetilde{\BS}_{\leq\lambda})\cup\widetilde\alpha_{\leq\lambda}') 
        &= \sum_{\beta}\sum_{n=1}^\infty \frac{ q^{\overline\beta}}{n!} \, \mathrm{PD} \circ \pr'_{X*}\ev'_{\infty*} \left(\ev^{\prime*}_0(\widetilde\alpha'_{\leq\lambda})\prod_{\ell=1}^n \ev_\ell^{\prime*}(\hat{\tau}')\cap [\cZ'_{\leq\lambda}(X,\beta)_n]^{\vir}\right)\\
        &= \sum_\beta \sum_{n=1}^\infty\frac{ q^{\overline\beta}}{n!} \, \PD\circ \pr_{X*}\ev_{\infty*}\, r_*\left(\ev_0^{\prime*}(\widetilde{\alpha}'_{\leq\lambda})\prod_{\ell=1}^n \ev_\ell^{\prime*}(\hat{\tau}') \cap  r^![\cZ_{\leq\lambda}(X,r_*\beta)_n]^\vir \right) \\
        &= \sum_{\beta} \sum_{n=1}^\infty\frac{ q^{\overline\beta}}{n!}  \, \mathrm{PD} \circ \pr_{X*}\ev_{\infty*} \left(\ev^{*}_0( \widetilde\alpha_{\leq\lambda})\prod_{\ell=1}^n \ev_\ell^{*}(\hat{\tau})\cap [\cZ_{\leq\lambda}(X,\beta)_n]^{\vir}\right)\\
        &= \widetilde\bbS(e(\widetilde{\BS}_{\leq\lambda})\cup \widetilde\alpha_{\leq\lambda}).
    \end{align*}
Here, we have used the identification $\overline{r_*\beta}=\overline\beta$ and the last equality is by projection formula and \eqref{alphapushforward}.
\end{proof}

\subsection{Flavour symmetries and deformations}\label{section:flavoursymmetries}
Suppose we have a short exact sequence
\begin{equation*}
    \xi:\quad 1 \longrightarrow G \longrightarrow \hG \longrightarrow T_\xi \longrightarrow 1
\end{equation*}
where $T_\xi$ is a torus, and the $G$-action on $\bN$ extends to a $\hG$-action. We call $T_\xi$ a group of \emph{flavour symmetries} of the gauge theory $(G,\bN)$. We denote by $\hT$ the unique maximal torus of $\hG$ containing $T$.

Note that the stratified vector bundle $\BS_{G,\bN}$ on $\Gr_G$ is 
$\hG$-equivariant. 
Indeed, $\BS_{G,\bN}$ can be identified with the restriction of 
$\BS_{\hG,\bN}$ to 
$\Gr_{G}\subset \Gr_{\hG}$. 
We define
\begin{align*}
    \sA_{G,\bN}^\xi 
    &= e(\BS_{G,\bN})\cap H^{\hG_\oh}_\bullet(\Gr_G),\\
    \sA_{G,\bN}^{\xi,\hbar} 
    &= e(\BS_{G,\bN})\cap H^{\hG\times\Cxh}_\bullet(\Gr_G).
\end{align*}
We call $\sA_{G,\bN}^\xi$ and $\sA_{G,\bN}^{\xi,\hbar}$ 
the \emph{deformations of the Coulomb branch algebra} 
with respect to the flavour symmetry $\xi$.

If $f:X\to\bN$ is $\hG$-equivariant, then all moduli spaces involved in defining 
$\Psi$ and $\bbS$ admit $\hG$-actions, such that all relevant maps are 
$\hG$-equivariant. Hence we can extend the definitions of $\bbS_{G,\bN,X}$ and $\Psi_{G,\bN,X}$, and define \emph{shift operator under the flavour symmetry $\xi$}:
\begin{equation*}
\bbS_{G, \bN,X}^\xi \colon \sA_{G,\bN}^{\xi,\hbar} \longrightarrow \Hom(H_{\hT}^\bullet(X)[\hbar])[[q,\tau]],
\end{equation*}
and the \emph{Seidel homomorphism under the flavour symmetry $\xi$}:
\begin{equation*}
\Psi_{G,\bN,X}^\xi \colon \sA_{G,\bN}^\xi\longrightarrow QH_{\hG}^\bullet(X).
\end{equation*}
Recall that we use $G$-equivariant Novikov and bulk variables (rather than $\hG$-equivariant ones).
\begin{rem}
The connected components of $\Gr_{\hG}$ is labelled by $\pi_1(\hG)$. Then homomorphism $\pi_1(\hG)\to \pi_1(T_\xi)$ gives a $\pi_1(T_\xi)$-grading, or equivalently a $\widecheck{T}_\xi$-action on $\sA_{\hG,\bN}$. 
It is known that $\sA_{G,\bN}^\xi$ is naturally isomorphic to 
$(\sA_{\hG,\bN})^{\widecheck{T}_\xi}$. In other words, there is a commutative diagram
\begin{equation*}
    \begin{tikzcd}
        \sA_{G,\bN}^\xi \ar[r,"\Psi_{G,\bN,X}^\xi"] \ar[rd,"\Psi_{\hG,\bN,X}",swap] 
        & QH^\bullet_{\hG}(X) \ar[d] \\
        & QH^\bullet_{\hG}(X).
    \end{tikzcd}
\end{equation*}
Indeed, one can show that $\Psi_{G,\bN,X}$ is obtained from 
$\Psi_{\hG,\bN,X}$ by \emph{quantum Hamiltonian reduction} 
in the sense of \cite[§3(vii)(d)]{BFN}. Similar remarks apply to the shift operators on quantized Coulomb branches.
\end{rem}

\section{Shift operators III: properties}\label{propertyshiftoperators}
In this section we establish several fundamental properties of the shift operators. 
In particular, we give the proofs of \Cref{IntroThm1} and \Cref{IntroThm2} stated in the introduction. 
Throughout we assume that the pair $(X, f \colon X \to \bN)$ satisfies the conditions described in \Cref{sectionshiftoperators}. 
All of the propositions remain valid for shift operators with respect to a flavour symmetry $\xi$ provided that $f$ is $\hG$-equivariant; however, since the arguments are parallel, we present proofs only in the case without flavour symmetries.

\subsection{Change of representations}\label{section:changeofrep}

In this subsection, we study the compatibility of shift operators under changes of the representations. 

\begin{prop}[Change of representations]\label{shiftopfunctoriality}
    Suppose $\bV$ is another $G$-representation, and assume one of the following:
    \begin{enumerate}[label=(\alph*)]
        \item $\bV$ contains $\bN$ as a subrepresentation; or
        \item $\bN$ is a quotient representation of $\bV$, and the morphism $f:X\to \bN$ factors through $\bV \to \bN$.
    \end{enumerate}
    Then $\bbS_{G,\bV}$ coincides with the restriction of $\bbS_{G,\bN}$ along
    \begin{equation*}
        e(\BS_{\bV})\cap H_\bullet^{T\times\Cxh}(\Gr_G)
        \;\subset\;
        e(\BS_{\bN})\cap H_\bullet^{T\times\Cxh}(\Gr_G).
    \end{equation*}
    A similar statement holds in the presence of a flavour symmetry $\xi$.
\end{prop}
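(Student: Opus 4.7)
The plan is to reduce the proposition in both cases to a comparison of virtual fundamental classes via the relationship between the canonical sections $\can^\bV$ and $\can^\bN$. By $H^\bullet_{T_\oh\rtimes\Cx_\hbar}(\pt)$-linearity and \Cref{independentResol}~(3), it suffices to verify the equality on generators of the form $\rho_{\lambda*}\bigl(e(\widetilde{\BS}_{\bV,\leq\lambda})\cap[\widetilde C_{\leq\lambda}]\bigr)$. On a common resolution $\widetilde C_{\leq\lambda}$ on which $\BS_{\bV,\lambda}$, $\BS_{\bN,\lambda}$, and $\BS_{*,\lambda}$ all extend to vector bundles (where $*=\bV/\bN$ in case (a) and $*=\mathbf{W}\coloneqq\ker(\bV\to\bN)$ in case (b)), the fibrewise decomposition of \Cref{Nweights} assembles into a short exact sequence of $T$-equivariant vector bundles on $\widetilde C_{\leq\lambda}$ with $\widetilde{\BS}_{\bV,\leq\lambda}$ in the middle and $\widetilde{\BS}_{\bN,\leq\lambda}$, $\widetilde{\BS}_{*,\leq\lambda}$ as the outer terms, yielding
\[e(\widetilde{\BS}_{\bV,\leq\lambda}) = e(\widetilde{\BS}_{\bN,\leq\lambda})\cdot e(\widetilde{\BS}_{*,\leq\lambda}).\]

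Combining this with \Cref{twistingeuler}, the required equality reduces to $\widetilde\bbS_{\bV,\leq\lambda}\bigl(e(\widetilde{\BS}_{\bV,\leq\lambda})\cup\gamma\bigr) = \widetilde\bbS_{\bN,\leq\lambda}\bigl(e(\widetilde{\BS}_{\bV,\leq\lambda})\cup\gamma\bigr)$ for every $\gamma \in H^\bullet_{T_\oh\rtimes\Cx_\hbar}(\cX_{\leq\lambda,0})[[q_G,\tau]]$, which by \Cref{sectioncountingmap} is in turn equivalent to the Gysin identity
\[[\cZ^\bV_{\leq\lambda}(X,\beta)_n]^{\vir} = e\bigl(\ev_0^*\widetilde{\BS}_{*,\leq\lambda}\bigr)\cap[\cZ^\bN_{\leq\lambda}(X,\beta)_n]^{\vir}\]
of virtual fundamental classes for each section class $\beta$ and each $n \geq 0$.

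To establish this identity, I compare the canonical sections. In case (a), since the composite $X \to \bN \hookrightarrow \bV \twoheadrightarrow \bV/\bN$ is the zero map, the element $\overline\sigma_\infty\in\BT_{\bV,p}$ of \Cref{canonicalsection} in fact lies in $\BT_{\bV,p}\cap\bN_{\C[t^{-1}]}=\BT_{\bN,p}$ (using $G$-stability of $\bN\subset\bV$), so $\can^\bV$ factors through the subbundle inclusion $\widetilde{\BS}_{\bN,\leq\lambda}\hookrightarrow\widetilde{\BS}_{\bV,\leq\lambda}$. Consequently $\cZ^\bV_{\leq\lambda}(X,\beta)_n=\cZ^\bN_{\leq\lambda}(X,\beta)_n$ as substacks, and the identity follows from functoriality of refined Gysin pullbacks along the composition $\widetilde C_{\leq\lambda}\hookrightarrow\widetilde{\BS}_{\bN,\leq\lambda}\hookrightarrow\widetilde{\BS}_{\bV,\leq\lambda}$. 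In case (b), the natural projection $p:\widetilde{\BS}_{\bV,\leq\lambda}\twoheadrightarrow\widetilde{\BS}_{\bN,\leq\lambda}$ satisfies $\can^\bN=p\circ\can^\bV$, so $\cZ^\bV_{\leq\lambda}(X,\beta)_n$ is cut out within $\cZ^\bN_{\leq\lambda}(X,\beta)_n$ by the $\widetilde{\BS}_{\mathbf{W},\leq\lambda}$-component of $\can^\bV$, and the excess intersection formula (cf.\ \cite[\S 6.4]{Fulton} and \cite[Proposition 5.10]{Intrinsic}) yields the desired identity.

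The main obstacle is upgrading the pointwise decomposition of \Cref{Nweights} to an honest short exact sequence of vector bundles on a common resolution $\widetilde C_{\leq\lambda}$, thereby securing the Euler class identity globally; I expect this to follow by adapting the construction of \Cref{existresol} so that the resolution parameterizes all three quotients simultaneously via a product of Grassmannian bundles. A secondary technicality is the excess intersection argument in case (b) at the level of virtual fundamental classes, which is handled by the same functoriality result for virtual classes already used in the proof of \Cref{shiftopindependentresol}.
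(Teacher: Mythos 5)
Your proposal is correct and follows the paper's own route: split $\bV$ using reductivity, extend both $\BS$-bundles over a common resolution, observe that the $\bN$-component of $\can^{\bV}$ equals $\can^{\bN}$ (with the complementary component vanishing in case~(a)), and compare virtual classes via excess intersection. The only point worth flagging is in case~(b), which the paper elides with ``proceeds similarly'': there the inclusion $\iota\colon\cZ^{\bV}_{\leq\lambda}(X,\beta)_n \hookrightarrow \cZ^{\bN}_{\leq\lambda}(X,\beta)_n$ can be strict, since the $\mathbf{W}$-component of $\can^{\bV}$ need not vanish on $\cZ^{\bN}$, so your displayed virtual-class identity should carry an implicit pushforward $\iota_*$ on the left-hand side; it then holds by commutativity of refined Gysin pullbacks together with the standard fact $\iota_*\circ j^{!}=e(\widetilde{\BS}_{*,\leq\lambda})\cap(-)$ for a vector-bundle section (rather than by the excess-intersection theorem in the literal sense), and this is exactly what the section-counting map requires because $\ev_{\infty*}$ on $\cZ^{\bV}_{\leq\lambda}$ factors through $\iota_*$.
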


\begin{proof}
We prove case (a); the other case proceeds similarly.

Write $\bV = \bN \oplus \bN'$, and choose a resolution $\widetilde{C}_{\leq \lambda}$ such that both $\BS_\bN$ and $\BS_{\bN'}$ extend over it. Since the composition $X \to \bV \to \bN'$ is zero, we have $\can_{\leq \lambda}^{\bN'}(X) = 0$. Moreover, since $\can_{\leq \lambda}^{\bV} (X)= (\can_{\leq \lambda}^{\bN}(X), \can_{\leq \lambda}^{\bN'}(X))$, it follows that
\begin{equation*}
\cZ_{\leq \lambda}^{\bV}(X,\beta)_n = \cZ_{\leq \lambda}^{\bN}(X,\beta)_n
\end{equation*}
for any $\beta \in \Eff(\sE_{\leq \lambda}(X))^{\mathrm{sec}}$.

We also have the identity
\[
[\cZ_{\leq \lambda}^{\bV}(X,\beta)_n]^{\mathrm{vir}} = e(\widetilde{\BS}_{\bN',\leq \lambda}) \cap [\cZ_{\leq \lambda}^{\bN}(X,\beta)_n]^{\mathrm{vir}},
\]
which follows from \eqref{virZ} and excess intersection formula associated to the diagram:
\begin{equation*}
\begin{tikzcd}
\cZ_{\leq\lambda}(X,\beta)_n \arrow[r] \arrow[d] & 
\cM_{\leq \lambda}(X, \beta)_n \arrow[d, "\can^\bN_{\leq \lambda}(X)"] \\
\widetilde{C}_{\leq \lambda} \arrow[r] \arrow[d, equal] & 
\widetilde{\BS}_{\bN, \leq \lambda} \arrow[d] \\
\widetilde{C}_{\leq \lambda} \arrow[r] & 
\widetilde{\BS}_{\bV, \leq \lambda} = \widetilde{\BS}_{\bN, \leq \lambda} \oplus \widetilde{\BS}_{\bN', \leq \lambda}
\end{tikzcd}
\end{equation*}

Let $\ev_0'$ and $\ev_\infty'$ denote the evaluation maps on $\cZ^{\bV}_{\leq \lambda}(X, \beta)_n$, and writing $\widetilde\alpha_{\leq\lambda}=\tw_G([\widetilde C_{\leq\lambda}]\otimes \alpha)$. Then we verify
\begin{align*}
&\bbS_{G, \bN}(e(\BS_{\bV}) \cap [C_{\leq \lambda}])(\alpha)\\
&= \widetilde{\bbS}_{\bN, \leq \lambda} \left( e(\widetilde{\BS}_{\bN, \leq \lambda}) \cup e(\widetilde{\BS}_{\bN', \leq \lambda}) \cup \widetilde{\alpha}_{\leq \lambda} \right) \\
&= \sum_\beta\sum_{n=0}^\infty\frac{q^{\overline{\beta}}}{n!}\, \PD \circ \pr_{X*} \ev_{\infty*} \left( 
\ev_0^*\left( \widetilde{\alpha}_{\leq \lambda} \cup e(\widetilde{\BS}_{\bN', \leq \lambda}) \right) 
\prod_{\ell=1}^n\ev_\ell^{*}(\hat{\tau})\cap [\cZ_{\leq \lambda}^{\bN}(X, \beta)_n]^{\mathrm{vir}} \right) \\
&= \sum_\beta \sum_{n=0}^\infty\frac{q^{\overline{\beta}}}{n!}\, \PD \circ \pr_{X*} \ev'_{\infty*} \left( 
\ev_0^{\prime *}(\widetilde{\alpha}_{\leq \lambda}) 
\prod_{\ell=1}^n\ev_\ell^{\prime*}(\hat{\tau})\cap [\cZ_{\leq \lambda}^{\bV}(X, \beta)_n]^{\mathrm{vir}} \right) \\
&= \bbS_{G, \bV}(e(\BS_{\bV}) \cap [C_{\leq \lambda}])(\alpha). \qedhere
\end{align*}
\end{proof}
\begin{cor}\label{CommonDomain}
Suppose $\bN'$ is another representation of $G$, and $g \colon X \to \bN'$ is a $G$-equivariant proper morphism. Then
\[
\bbS_{G,\bN}(\Gamma) = \bbS_{G,\bN'}(\Gamma)
\]
for any $\Gamma$ in the intersection of $(e(\BS_\bN) \cap H^{T_\oh \rtimes \Cx_\hbar}_\bullet(\Gr_G))$ and $(e(\BS_{\bN'}) \cap H^{T_\oh \rtimes \Cx_\hbar}_\bullet(\Gr_G))$.

A similar statement holds in the presence of a flavour symmetry $\xi$.
\end{cor}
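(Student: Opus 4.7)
The plan is to reduce to Proposition~\ref{shiftopfunctoriality}(b) by introducing the direct-sum representation $\bV = \bN \oplus \bN'$, equipped with the diagonal morphism $h = (f,g) \colon X \to \bV$. This $h$ is $G$-equivariant and proper: it factors as $X \xrightarrow{(\id,g)} X \times \bN' \xrightarrow{f \times \id} \bV$, where the first map is the graph of $g$ (hence a closed immersion, since $\bN'$ is separated) and the second is proper because $f$ is. Consequently $\bbS_{G,\bV}$, defined via the map $h$, is available.

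Next I would apply Proposition~\ref{shiftopfunctoriality}(b) twice. The first application uses the projection $\pi_\bN \colon \bV \to \bN$ together with the factorization $f = \pi_\bN \circ h$, yielding $\bbS_{G,\bV}(\Gamma \otimes \alpha) = \bbS_{G,\bN}(\Gamma \otimes \alpha)$ for every $\Gamma \in \sA^\hbar_{G,\bV}$. The second uses $\pi_{\bN'} \colon \bV \to \bN'$ and $g = \pi_{\bN'} \circ h$, giving the analogous equality $\bbS_{G,\bV}(\Gamma \otimes \alpha) = \bbS_{G,\bN'}(\Gamma \otimes \alpha)$ on the same subspace. Hence $\bbS_{G,\bN}$ and $\bbS_{G,\bN'}$ agree on $\sA^\hbar_{G,\bV} \otimes_{\C[\hbar]} H^\bullet_{G \times \Cxh}(X)$.

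To upgrade this to the possibly larger intersection $\sA^\hbar_{G,\bN} \cap \sA^\hbar_{G,\bN'}$, I would localize at $H^\bullet_{T\times\Cxh}(\pt)$. By~\eqref{localizeindepofN}, the three localized algebras $\sA^\hbar_{G,\bN,\loc}$, $\sA^\hbar_{G,\bN',\loc}$, $\sA^\hbar_{G,\bV,\loc}$ all coincide with the common $\sA^\hbar_{G,\loc}$. Since $\bbS_{G,\bN,\loc}$ and $\bbS_{G,\bN',\loc}$ (Definition~\ref{localisedshiftop}) are $\Frac H^\bullet_{T\times\Cxh}(\pt)$-linear and agree on the submodule spanned by $\sA^\hbar_{G,\bV}$, they coincide on all of $\sA^\hbar_{G,\loc} \otimes H^\bullet_{G\times\Cxh}(X)[[q_G,\tau]]$. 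Thus for any $\Gamma$ in the unlocalized intersection, the outputs $\bbS_{G,\bN}(\Gamma \otimes \alpha)$ and $\bbS_{G,\bN'}(\Gamma \otimes \alpha)$ have equal images in $H^\bullet_{T\times\Cxh}(X)_\loc[[q_G,\tau]]$. Equivariant formality (Proposition~\ref{equivformal}) makes $H^\bullet_{T\times\Cxh}(X)$ free over the polynomial ring $H^\bullet_{T\times\Cxh}(\pt)$, so the localization map is injective coefficient-wise on $q_G, \tau$, forcing equality in the unlocalized target.

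The main obstacle is checking that Proposition~\ref{shiftopfunctoriality}(b) genuinely applies with the \emph{single} map $h = (f,g)$ rather than the naive lifts $(f,0)$ or $(0,g)$ one might otherwise use. The proof of that proposition only needs the factorization $f = \pi_\bN \circ h$; the extra component $g$ causes $\cZ^\bV_{\leq\lambda}(X)$ to be a proper subscheme of $\cZ^\bN_{\leq\lambda}(X)$, but by excess intersection $[\cZ^\bV]^\vir = e(\widetilde\BS_{\bN'}) \cap [\cZ^\bN]^\vir$, and this Euler-class factor is precisely the one supplied by the inclusion $\sA^\hbar_{G,\bV} \subset \sA^\hbar_{G,\bN}$ via $e(\BS_\bV) = e(\BS_\bN) \cup e(\BS_{\bN'})$. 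All remaining steps are bookkeeping; no input beyond Proposition~\ref{shiftopfunctoriality} and the exactness of localization of free modules is required.
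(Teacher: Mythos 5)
Your proof is correct and follows essentially the same route as the paper's: the reduction to $\bV = \bN \oplus \bN'$ with the diagonal morphism $h=(f,g)$ and a double application of \Cref{shiftopfunctoriality}(b) is exactly how the paper treats the first step (stated tersely there), and your upgrade to the full intersection via injectivity of localization on the free $H^\bullet_{T\times\Cxh}(\pt)$-module $H^\bullet_{T\times\Cxh}(X)$ is precisely the torsion-freeness argument the paper packages as \Cref{Lemmatorsionfree}. The extra care you take in verifying that $(f,g)$ is proper and in noting that case~(b) applies even though $\cZ^{\bV}_{\leq\lambda}(X)$ is a proper subscheme of $\cZ^{\bN}_{\leq\lambda}(X)$ (so one needs the Gysin/projection-formula compatibility rather than the equality of zero loci used in case~(a)) is accurate and fills in details the paper leaves implicit.
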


\begin{proof}
Let $\bV = \bN \oplus \bN'$, and let $P \in H_{T\times\Cxh}^\bullet(\pt)$ be such that $P \Gamma \in e(\BS_{\bV}) \cap H^{T_\oh \rtimes \Cx_\hbar}_\bullet(\Gr_G)$. By \Cref{shiftopfunctoriality}, we have
\[
\bbS_{G,\bN}(P \Gamma)(\alpha) = \bbS_{G,\bV}(P \Gamma)(\alpha) = \bbS_{G,\bN'}(P \Gamma )(\alpha).
\]
The result then follows from the lemma below by taking $R = H_{T\times\Cxh}^\bullet(\pt)$.
\end{proof}

\begin{lem}
Let $R$ be an integral domain, and let $\zeta, \zeta' \colon A \to B$ be homomorphisms of torsion-free $R$-modules. Suppose that $\zeta$ and $\zeta'$ agree on a submodule $A' \subset A$ such that 
\[
\Frac(R) \otimes_R A = \Frac(R) \otimes_R A'.
\]
Then $\zeta = \zeta'$.
\end{lem}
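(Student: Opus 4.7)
The plan is to argue directly from torsion-freeness together with the equality of localizations. The key observation is that since $A$ is torsion-free as an $R$-module, the natural map $A \to \Frac(R) \otimes_R A$ is injective, and similarly for $A'$. Under this injection, the hypothesis $\Frac(R) \otimes_R A' = \Frac(R) \otimes_R A$ means that every element of $A$ becomes an $R$-multiple of an element of $A'$ after clearing denominators.

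More precisely, given any $a \in A$, I would write $1 \otimes a \in \Frac(R) \otimes_R A = \Frac(R) \otimes_R A'$ as a finite sum $\sum_i (r_i/s_i) \otimes a'_i$ with $a'_i \in A'$, $r_i \in R$, $s_i \in R \setminus \{0\}$. Multiplying through by the product $s = \prod_i s_i \in R \setminus \{0\}$, I obtain an equality $1 \otimes (sa) = 1 \otimes a''$ in $\Frac(R) \otimes_R A$ for some $a'' \in A'$. Injectivity of $A \hookrightarrow \Frac(R) \otimes_R A$ (from torsion-freeness of $A$) then yields $sa = a'' \in A'$.

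With this in hand, the lemma follows in one line: since $sa \in A'$, the hypothesis gives $\zeta(sa) = \zeta'(sa)$, i.e.\ $s \bigl( \zeta(a) - \zeta'(a) \bigr) = 0$ in $B$. Because $B$ is torsion-free and $s \neq 0$, this forces $\zeta(a) = \zeta'(a)$. As $a$ was arbitrary, $\zeta = \zeta'$.

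There is no genuine obstacle here; the only subtlety is keeping track of the two separate uses of torsion-freeness (once for $A$, to extract an honest element $sa \in A'$, and once for $B$, to cancel $s$ from $s(\zeta(a)-\zeta'(a)) = 0$). Everything else is formal manipulation with tensor products and localization at the nonzero elements of $R$.
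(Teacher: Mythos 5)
Your proof is correct and follows essentially the same argument as the paper's: for any $a\in A$, find a nonzero $s\in R$ with $sa\in A'$, then conclude $s\bigl(\zeta(a)-\zeta'(a)\bigr)=0$ and cancel $s$ using torsion-freeness of $B$. The only difference is that you spell out the clearing-of-denominators step (and invoke torsion-freeness of $A$ to justify it), which the paper simply asserts.
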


\begin{proof}
Let $a \in A$. Since $\Frac(R) \otimes_R A = \Frac(R) \otimes_R A'$, there exists $r \in R \setminus \{0\}$ such that $r a \in A'$. Then
\[
r \zeta(a) = \zeta(r a) = \zeta'(r a) = r \zeta'(a),
\]
and since $B$ is torsion-free over $R$, it follows that $\zeta(a) = \zeta'(a)$.
\end{proof}

Recall that the localization of 
$e(\BS_\bN)\cap H_\bullet^{T\times\Cxh}(\Gr_G)$ 
by equivariant parameters is independent of the representation $\bN$ 
(see \eqref{localizeindepofN}). 
Thus we may localize $\bbS_{G,\bN,X}$ and $\Psi_{G,\bN,X}$ and obtain
\begin{equation*}
\begin{aligned}
    \bbS_{G,X,\loc}&\colon 
    \bigl(e(\BS_\bN)\cap H^{T\times\Cxh}_\bullet(\Gr_G)\bigr)_\loc
    \to \operatorname{End}^\bullet(H_{T}^\bullet(X)[\hbar])_\loc[[q,\tau]],\\
    \Psi_{G,X,\loc} &\colon  
    \bigl(e(\BS_\bN)\cap H_\bullet^{T\times\Cxh}(\Gr_G)\bigl)_\loc
    \longrightarrow QH_T^\bullet(X)_\loc[[q,\tau]].
\end{aligned}
\end{equation*}
Similarly, one can define $\bbS_{G,X,\loc}^\xi$ in the presence of a flavour symmetry $\xi$. The following result is an immediate consequence of \Cref{CommonDomain}. We sometimes omit the subscript $\loc$ when no confusion is likely to arise.
\begin{cor}[Independence of $\bN$ and $f$]\label{independence of f}
    The localized shift operator $\bbS_{G,X,\loc}$ (resp.  $\bbS^\xi_{G,X,\loc}$) and localized Seidel homomorphism $\Psi_{G,X,\loc}$ (resp.  $\Psi^\xi_{G,X,\loc}$) are independent of the choice of 
    $f\colon X\to\bN$. 
\end{cor}

\subsection{Abelianization of the shift operators}
For $\lambda \in \Lambda$ and $\gamma \in H_{T \times \Cxh}^\bullet(X^\lambda)_\loc$ (see \Cref{abeliantwlinear}), we can choose $P \in H^\bullet_{T \times \Cxh}(\pt)$ so that $P\gamma \in H_\bullet^{T \times \Cxh}(X^\lambda)$. We set
\begin{equation*}
   \widetilde\bbS_{\lambda}(\gamma) = \frac{1}{e(\BS_{t^\lambda})\, P} \sum_{\beta \in \Eff(\sE_{t^{\lambda}}(X))^{\sec}} \sum_{n=0}^\infty \frac{q^{\overline\beta}}{n!} \PD \circ \ev_{\infty *} \left( \ev_0^*(P\gamma) \prod_{\ell=1}^n \ev_\ell^*(\hat\tau) \cap  [\cZ_{t^\lambda}(X,\beta)_n]^\vir \right),
\end{equation*}
where $[\cZ_{t^\lambda}(X,\beta)_n]^\vir$ is defined similar to $[\cZ_{\leq \lambda}(X,\beta)_n]^\vir$. It is easy to check that $\widetilde\bbS_{\lambda}$ is independent of the choice of $P$, and induces the maps
\begin{align}
    \widetilde\bbS_{\lambda} &: 
    H_{T\times\Cxh}^\bullet(X^\lambda)_\loc 
    \longrightarrow H_{T\times\Cxh}^\bullet(X)_\loc[[q_G,\tau_G]], \nonumber \\
    \bbS_{\lambda} \coloneq \widetilde\bbS_{\lambda} \circ \Phi_\lambda &: 
    H_{T\times\Cxh}^\bullet(X)_\loc 
    \longrightarrow H_{T\times\Cxh}^\bullet(X)_\loc[[q_G,\tau_G]]
    \label{definitionbbSlambda}
\end{align}
where $\Phi_\lambda$ is defined in \Cref{abeliantwlinear}. Note that we use the $G$-equivariant Novikov and bulk parameters instead of the $T$-equivariant ones. One can define $\bbS^\xi_\lambda$ similarly in the presence of a flavour symmetry $\xi$. 

We also write $\bbS_{G,X,\lambda}$ if we want to specify the dependence on $G$ and $X$. The definition of $\bbS_\lambda$ depends on the choice of representation $\bN$. However, it follows from \Cref{intertwiningFundamentalsolution} in the next subsection that this dependence is superfluous.
\begin{rem}\label{IritaniRemark}
In the case where $X^T$ is compact. Consider the pairing $(-,-)_\lambda$ on $H_{T \times \Cxh}^\bullet(X)$ given by $
(\alpha, \alpha')_\lambda \coloneqq \Phi_\lambda(\alpha, \alpha')$,
where $(-,-)$ is the Poincar\'e pairing. Let $\bbS_\lambda^{\Iri}$ denote the ``adjoint'' of $\bbS_{\lambda}$ in the sense that $(\bbS_\lambda^{\Iri}(\alpha), \alpha') = (\alpha, \bbS_{\lambda} (\alpha'))_\lambda.$

By localization, the operator $\bbS_\lambda^{\Iri}$ agrees with the shift operators defined in~\cite[Definition 3.9]{Iritani}, provided that in \emph{loc.~cit.} one replaces all $\hat{d} - \sigma_{\min}$ with $\overline{d}$. The appearance of the adjoint is due to the difference in the conventions for the zero and infinity fibres in our definition of Seidel spaces versus that in Iritani’s (cf.~\Cref{CompareIritani2}).
\end{rem}

The main result of this subsection is the following.
\begin{thm}\label{TGcompatible}
Let $\Gamma \in \sA_{G,\bN}^\hbar$, suppose that $c_{\mu}\in H^\bullet_{T\times\Cxh}(\pt)$ such that 
$\Gamma=\sum c_\mu[t^\mu]\in H^{T\times\Cxh}_\bullet(\Gr_G)$. Then, for any 
$\alpha \in H_{G\times\Cxh}^\bullet(X)[[q_G,\tau_G]]$, we have
\begin{equation*}\label{TGcompatibleeq}
\bbS_{G,\bN}\big(\Gamma\big) (\alpha)
= \sum_{\mu \leq \lambda} c_{\mu} \, \bbS_{\mu}(\alpha).
\end{equation*}
A similar statement holds in the presence of a flavour symmetry $\xi$.
\end{thm}

Since both sides of \eqref{TGcompatibleeq} are linear over equivariant parameters, we only need to prove the case when $\Gamma = e(\BS) \cap [C_{\leq \lambda}]$ for some $\lambda \in \Lambda$. We will fix $\lambda \in \Lambda$ until the end of this subsection.

We need some preparation before the proof of \Cref{TGcompatible}. For any $\mu \leq \lambda$, let $F_\mu$ denote the $T \times \Cx_\hbar$-fixed locus of 
$\rho_\lambda^{-1}(t^\mu) \subset \widetilde{C}_{\leq \lambda}$. 
Let $e(N_\mu) \in H_{T \times \Cx_\hbar}^\bullet(F_\mu)$ be the Euler class of the normal bundle $N_\mu$ of $F_\mu$ in $\widetilde{C}_{\leq \lambda}$\footnote{We allow $N_\mu$ to have different ranks on different connected components.}.

The following formula follows from localization immediately.
\begin{lem}\label{localizeSlambda}
    We have 
        $e(\BS) \cap [C_{\leq \lambda}] 
        = \sum_{\mu \leq \lambda} c^\bN_{\lambda,\mu} [t^\mu]$, where 
    \begin{equation*}
        c^{\bN}_{\lambda,\mu} = \int_{F_\mu} 
        \frac{e(\widetilde\BS_{\leq\lambda}|_{F_\mu})}{e(N_\mu)}.
    \end{equation*}
\end{lem}

Let us denote
\begin{equation*}
\cZ_{F_\mu}(X,\beta)_n \coloneq F_\mu \times_{\widetilde C_{\leq\lambda}} \cZ_{\leq\lambda}(X,\beta)_n.
\end{equation*}
Equivalently, $\cZ_{F_\mu}(X,\beta)_n$ is cut out from 
$F_\mu \times_{\widetilde C_{\leq\lambda}} \cM_{\leq\lambda}(X,\beta)_n$ 
by the canonical section $\can_{\leq\lambda}(X)$ of $\widetilde\BS_{\leq\lambda}|_{F_\mu}$. 
Recall that, by construction, we have a quotient map of vector bundles
\begin{equation*}
\zeta : F_\mu \times \BS_{t^\mu} \longrightarrow \widetilde\BS_{\leq\lambda}\big|_{F_\mu}.
\end{equation*}
In particular, we have closed embeddings
\begin{equation*}
g_\mu : F_\mu \times \cZ_{t^\mu}(X,\beta)_n \longrightarrow \cZ_{F_\mu}(X,\beta)_n
\end{equation*}
and
\begin{equation*}
f_\mu: F_\mu\times\cZ_{t^\mu}(X,\beta)_n\longrightarrow \cZ_{\leq\lambda}(X,\beta)_n.
\end{equation*}
Note that we have
\begin{equation}\label{eqfmu}
    g_{\mu*}\big([F_\mu]\otimes [\cZ_{t^\mu}(X,\beta)_n]^\vir\big)
    = \frac{e(\BS_{t^\mu})}{e(\widetilde\BS_{\leq\lambda}|_{F_\mu})}\cap 
      [\cZ_{F_\mu}(X,\beta)_n]^\vir.
\end{equation}

\begin{lem}\label{cZvloc}
The following formula holds in 
$H_{T\times\Cxh}^\bullet(\cZ_{\leq\lambda}(X,\beta)_n)_\loc$:
\begin{equation*}
    [\cZ_{\leq\lambda}(X,\beta)_n]^\vir 
    = \sum_{\mu\leq\lambda} 
      f_{\mu*}\!\left(
        \frac {e(\widetilde\BS_{\leq\lambda}|_{F_\mu})}
             {e(N_\mu)\, e(\BS_{t^\mu})}
        [F_\mu]\otimes[\cZ_{t^\mu}(X,\beta)_n]^\vir
      \right).
\end{equation*}
\end{lem}

\begin{proof}
Throughout the proof, all equalities of homology classes are considered in a suitable localized homology, without being mentioned explicitly.  

The inclusion
\[
j_\mu : 
F_\mu \times \sE_{t^\mu}(X) 
\cong F_\mu \times_{\widetilde{C}_{\leq \lambda}} \sE_{\leq \lambda}(X) 
\hookrightarrow \sE_{\leq \lambda}(X),
\]
induces
\[
\overline{j}_\mu : 
F_\mu \times \overline{M}_{0,n+2}(\sE_{t^\mu}(X), \beta) 
\cong F_\mu \times_{\widetilde{C}_{\leq \lambda}} 
        \overline{M}_{0,n+2}(\sE_{\leq \lambda}(X), \beta) 
\hookrightarrow \overline{M}_{0,n+2}(\sE_{\leq \lambda}(X), \beta).
\]
Taking $T \times \Cxh$-fixed loci, we obtain
\[
\overline M_{0,n+2}(\sE_{\leq\lambda}(X),\beta)^{T \times \Cxh} 
= \bigsqcup_{\mu \leq \lambda} 
   F_\mu \times 
   \overline M_{0,n+2}(\sE_{t^\mu}(X),\beta)^{T \times \Cxh}.
\]
By the virtual localization formula, this gives
\begin{equation}\label{LocaliseMvir}
    [\overline M_{0,n+2}(\sE_{\leq\lambda}(X),\beta)]^\vir 
    = \sum_{\mu\leq\lambda}
      \overline j_{\mu*}\!\left(
        \frac{1}{e(N_\mu)} [F_\mu]\times 
        [\overline M_{0,n+2}(\sE_{t^\mu}(X),\beta)]^\vir
      \right).
\end{equation}

Since $F_\mu\times\cM_{t^\mu}(X,\beta)_n$ is isomorphic to the fibre product of 
$\overline M_{0,n+2}(F_\mu\times\sE_{t^\mu}(X),\beta)$ 
and $\cM_{\leq\lambda}(X,\beta)_n$ over 
$\overline M_{0,n+2}(\sE_{\leq\lambda}(X),\beta)$, 
\eqref{LocaliseMvir} implies
\[
[\cM_{\leq\lambda}(X,\beta)_n]^\vir 
= \sum_{\mu\leq\lambda}
  (j_{\cM,\mu})_*\!\left(
     \frac{1}{e(N_\mu)} [F_\mu]\times [\cM_{t^\mu}(X,\beta)_n]^\vir
  \right),
\]
where $j_{\cM,\mu}:F_\mu \times \cM_{t^\mu}(X,\beta)_n\to \cM_{\leq\lambda}(X,\beta)_n$ is the restriction of $\overline{j}_{\mu}$. 

Now consider the diagram
\begin{equation*}
    \begin{tikzcd}
{F_\mu\times\cZ_{t^\mu}(X,\beta)_n} 
  \arrow[rr, "f_\mu", bend left]
  \arrow[r, "g_\mu"]
  \arrow[rd] 
& {\cZ_{F_\mu}(X,\beta)_n}  
  \arrow[r,"j'_{\cM,\mu}"]\arrow[d]   
& {\cZ_{\leq\lambda}(X,\beta)_n}\arrow[d] \ar[r]
& \widetilde{C}_{\leq \lambda}\ar[d,"\mathfrak{z}"]\\ 
& {F_\mu\times\cM_{t^\mu}(X,\beta)_n}  
  \arrow[r, "j_{\cM,\mu}"]
& {\cM_{\leq\lambda}(X,\beta)_n}\ar[r]
& \widetilde{\BS}_{\leq\lambda}.
\end{tikzcd}
\end{equation*}
Both squares are Cartesian. By functoriality of Gysin pullbacks and the excess intersection formula \cite[Section~6]{Fulton}, we obtain
\begin{align*}
    [\cZ_{\leq\lambda}(X,\beta)_n]^\vir 
    &= \sum_{\mu\leq\lambda}
       \mathfrak{z}^! (j_{\cM,\mu})_*\!\left(
          \frac{1}{e(N_\mu)}[F_\mu]\times [\cM_{t^\mu}(X,\beta)_n]^\vir
       \right) \\[4pt]
    &= \sum_{\mu\leq\lambda} 
       (j'_{\cM,\mu})_*\mathfrak{z}^!\!\left(
          \frac{1}{e(N_\mu)}[F_\mu]\times[\cM_{t^\mu}(X,\beta)_n]^\vir
       \right) \\[4pt]
    &= \sum_{\mu\leq\lambda} 
       f_{\mu*}\!\left(
          \frac{e(\widetilde\BS_{\leq\lambda}|_{F_\mu})}
               {e(N_\mu)\,e(\BS_{t^\mu})}
          [F_\mu]\times[\cZ_{t^\mu}(X,\beta)_n]^\vir
       \right),
\end{align*}
where the last step follows from \eqref{eqfmu}.
\end{proof}

Recall that we denote $X^\mu= t^\mu G_\oh\times_{G_\oh} X\subset G_\ok\times_{G_\oh}X$. The restriction of the $X$-bundle $\cX_{\leq\lambda,0}\to \widetilde C_{\leq\lambda}$ to $F_\mu$ is naturally identified with $F_\mu\times X^\mu$ (cf. \Cref{zerofibre}). Let $\pr_{X^\mu}$ be the projection map
\[
\pr_{X^\mu} \colon F_\mu \times X^\mu \to X^\mu.
\]

\begin{lem}\label{Localprop}
For any $e(\widetilde\BS_{\leq\lambda})\cup \gamma \in e(\widetilde{\BS}_{\leq\lambda})\cup H_{T \times \Cx_\hbar}^\bullet(\cX_{\leq\lambda,0})$, we have
\begin{equation}\label{Localpropformula}
    \widetilde\bbS_{\bN,\leq \lambda}(e(\widetilde{\BS}_{\leq\lambda})\cup \gamma) = \sum_{\mu\leq\lambda} \widetilde\bbS_{\mu}(\gamma_\mu),
\end{equation}
where
\begin{equation}\label{gammamu}
\gamma_\mu=(\pr_{X^\mu})_*\left(\frac{e(\widetilde\BS_{\leq\lambda}|_{F_\mu})}{e(N_\mu)}\gamma\big|_{F_\mu\times X^\mu}\right).
\end{equation}
\end{lem}
\begin{proof}
Since $X$ is equivariantly formal, $H_T^\bullet(X)$ is torsion-free over $H_T^\bullet(\pt)$. 
Thus it suffices to check that \Cref{Localpropformula} holds after applying the tensor product 
$\otimes_{H_T^\bullet(\pt)} \Frac(H_T^\bullet(\pt))$. Now we compute
\begin{align*}
    &\pr_{X*}\ev_{\infty*}\left(\ev_0^*(\gamma)\prod_{\ell=1}^n \ev_\ell^*(\hat\tau)\cap [\cZ_{\leq\lambda}(X,\beta)_n]^\vir\right) \\ 
    &= \sum_{\mu\leq\lambda}\pr_{X*}\ev_{\infty*}\left(\frac{e(\widetilde\BS_{\leq\lambda}|_{F_\mu})}{e(N_\mu) e(\BS_{t^\mu})}\ev_0^*(\gamma\big|_{F_\mu\times X^\mu})\prod_{\ell=1}^n \ev_\ell^*(\hat\tau)\cap [F_k]\times [\cZ_{t^\mu}(X,\beta)_n]^\vir\right) \\
    &=\sum_{\mu\leq\lambda}\frac{1}{e(\BS_{t^\mu})}\ev_{\infty*}\left(\ev_0^*\pr_{X_\mu*}\left(\frac{e(\widetilde\BS_{\leq\lambda}|_{F_\mu})}{e(N_\mu)}\gamma\big|_{F_\mu\times X^\mu}\right)\prod_{\ell=1}^n \ev_{\ell}^*(\hat\tau)\cap [\cZ_{t^\mu}(X,\beta)_n]^\vir\right).
\end{align*}
Here the evaluation maps in the first line send $\cZ_{\leq\lambda}$ to $\sE_{\leq\lambda}$; in the second line send $F_\mu\times \cZ_{t^\mu}\subset \cZ_{\leq\lambda}$ to $F_\mu\times \sE_{t^\mu}\subset \sE_{\leq\lambda}$; and in the last line send $\cZ_{t^\mu}$ to $\sE_{t^\mu}$.

In the above, the first equality follows from \Cref{cZvloc} and the second equality follows from projection formula. Multiplying with the equivariant Novikov variables and summing over $n\geq 0$ and section classes yields the desired result. 
\end{proof}
\begin{proof}[Proof of \Cref{TGcompatible}]
Let $\gamma=\tw_G([\widetilde C_{\leq\lambda}]\otimes \alpha)$. By \Cref{twistingcompat} and \Cref{abeliantwlinear}, the restriction of $\gamma$ to $F_\mu\times X^\mu\subset \widetilde G_\ok^{\leq\lambda}\times_{G_\oh} X$ is equal to $[F_\mu]\otimes \Phi_\mu(\alpha)$. Using the projection formula, we obtain
\begin{equation}\label{twistinggammamu}
\gamma_\mu=\left(\int_{F_\mu} \frac{e(\widetilde\BS_{\leq\lambda}|_{F_\mu})}{e(N_\mu)}\right) \cup \Phi_\mu( \alpha) = c^\bN_{\lambda,\mu}\Phi_\mu(\alpha),
\end{equation}
where the class $\gamma_\mu$ defined as in \eqref{gammamu}. Finally, by \Cref{Localprop}, \Cref{localizeSlambda}, and \Cref{twistinggammamu} above, we have
\begin{align*}
    \bbS_{G,\bN}(e(\BS_{\leq\lambda})\cap [ C_{\leq\lambda}]\otimes \alpha)&\coloneq \widetilde\bbS_{\bN,\leq\lambda}(e(\widetilde\BS_{\leq\lambda})\cup \gamma) \\
    &=\sum_{\mu\leq\lambda}\widetilde\bbS_\mu(c^\bN_{\lambda,\mu}\Phi_\mu(\alpha)) \\
    &=\sum_{\mu\leq\lambda}c^\bN_{\lambda,\mu}\bbS_\mu(\alpha) \qedhere
\end{align*}
\end{proof}

\begin{cor}\label{Wequiv}
    The map $\bbS_{G,\bN,X}$ (resp. $\bbS^\xi_{G,\bN,X}$) is $W$-equivariant.
\end{cor}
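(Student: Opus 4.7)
The plan is to reduce $W$-equivariance of $\bbS_{G,\bN,X}$ to $W$-equivariance of the $T$-equivariant version $\bbS_{T,\bN,\loc}$ via \Cref{TGcompatible}, where the latter is manifest from naturality of the construction, and then descend back to the non-localized operator using torsion-freeness coming from equivariant formality (\Cref{equivformal}).

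First I would set up the $W$-action. Fix a lift $\widetilde w \in N_G(T)$ of $w \in W$. Left translation by $\widetilde w$ defines a self-map of $\Gr_T$ (sending $t^\lambda \mapsto t^{w\lambda}$) and acts on $X$ via the restriction of the $G$-action; both maps are $T$-equivariant up to the conjugation automorphism of $T$ by $\widetilde w$, hence induce the standard $W$-actions on $H^{T_\oh\rtimes\Cxh}_\bullet(\Gr_T)$ and $H^\bullet_{T\times\Cxh}(X)$. The $W$-action on $H^\bullet_{G\times\Cxh}(X)$ is trivial. Under the localization isomorphism $H^{T_\oh\rtimes\Cxh}_\bullet(\Gr_T)_\loc \cong H^{T_\oh\rtimes\Cxh}_\bullet(\Gr_G)_\loc$ these actions coincide, and the subspace $e(\BS_\bN)\cap H^{T_\oh\rtimes\Cxh}_\bullet(\Gr_G)$ is $W$-stable because $\BS_\bN$ is constructed $G$-equivariantly.

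By \Cref{TGcompatible}, $\bbS_{G,\bN,\loc} = \bbS_{T,\bN,\loc}$ under this identification, so it suffices to show that $\bbS_{T,\bN,\loc}$ is $W$-equivariant. I would verify this by checking that every ingredient in its construction is built naturally from the $G$-equivariant data on $X$ and $\bN$: the resolutions $\widetilde C_{\leq\mu}$, the twisting map $\tw_T$, the Seidel spaces $\sE_{t^\mu}(X)$ with their canonical sections and zero loci $\cZ_{t^\mu}(X,\beta)_n$, the virtual fundamental classes, the evaluation maps $\ev_0,\ev_\infty,\ev_\ell$, the projection $\pr_X$, Poincaré duality, and the class $\hat\tau$. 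Left translation by $\widetilde w$ induces compatible isomorphisms at every step, so the intertwining identity $\bbS_{T,\bN,\loc}(\widetilde w \cdot \Gamma, \alpha) = \widetilde w \cdot \bbS_{T,\bN,\loc}(\Gamma, \alpha)$ holds whenever $\alpha$ is $W$-invariant, which is automatic because $\alpha \in H^\bullet_{G\times\Cxh}(X)[[q_G,\tau]]$.

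Finally, I would descend from the localization. The input $e(\BS_\bN)\cap H^{T_\oh\rtimes\Cxh}_\bullet(\Gr_G)$ is $W$-stable and the target $H^\bullet_{T\times\Cxh}(X)$ is torsion-free over $H^\bullet_{T\times\Cxh}(\pt)$ by \Cref{equivformal}, hence embeds into $H^\bullet_{T\times\Cxh}(X)_\loc$; this allows $W$-equivariance of $\bbS_{G,\bN,\loc}$ to transfer to $\bbS_{G,\bN,X}$ by a standard torsion-free argument as in \Cref{Lemmatorsionfree}. The only delicate point is verifying that the building blocks of $\bbS_{T,\bN,\loc}$ can be made $N_G(T)$-compatible in a coherent manner---in particular, the choice of resolution $\widetilde C_{\leq\mu}$ need not be strictly $N_G(T)$-equivariant, only equivariant up to an alternative choice---but this is harmless because \Cref{shiftopindependentresol} guarantees independence of the resolution.
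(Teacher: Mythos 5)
Your proposal is correct and follows essentially the same route as the paper's proof: reduce to the localized abelian shift operator $\bbS_{T,\bN,\loc}$ via \Cref{TGcompatible}, observe that the $G_{\C[t^{-1}]}\rtimes\Cxh$-action induces an $N_G(T)$-action on all the ingredients (Seidel spaces, evaluation maps, twisting map, $\hat\tau$), invoke \Cref{twistingcompat} for $W$-equivariance of $\tw_T$, and descend to the unlocalized operator using torsion-freeness of $H^\bullet_{T\times\Cxh}(X)$ over $H^\bullet_{T\times\Cxh}(\pt)$. One minor simplification you could make: after \Cref{TGcompatible}, the relevant formula is the fully localized expression~\eqref{localshift2}, which bypasses the resolutions $\widetilde C_{\leq\mu}$ entirely, so the caveat about $N_G(T)$-compatibility of resolutions is unnecessary at that stage (it is already handled once, via \Cref{shiftopindependentresol}, when passing from $\bbS_{G,\bN}$ to $\bbS_{T,\bN,\loc}$).
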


\begin{proof}
By \Cref{TGcompatible}, it suffices to show that 
\begin{equation*}
w \cdot \bbS_\lambda(\alpha) = \bbS_{w\lambda}(w \cdot \alpha)
\end{equation*}
for any $w \in W$, $\lambda \in \Lambda$, and $\alpha \in H_{T\times\Cxh}^\bullet(X)$.

Indeed, the map $\tw_T$ is $W$-equivariant. So it remains to check that
\begin{equation*}
w \cdot \widetilde\bbS_\lambda(\alpha) = \widetilde\bbS_{w\lambda}(w \cdot \alpha).
\end{equation*}
Observe that the $G_{\C[t^{-1}]} \rtimes \Cxh$-action on $\sE(X)$ induces an $N(T)$-action on
\begin{equation*}
\bigsqcup_{\mu \in \Lambda} \sE_{t^\mu}(X),
\end{equation*}
which is compatible with the natural $N(T)$-actions on both $T_\ok \times_{T_\oh} X$ and on $\Gr_T$. Furthermore, the bundle $\BS_{T,\bN} \to \Gr_T$ is $N(T)$-equivariant. Thus, for each $w \in N(T)$, we have an induced isomorphism
\begin{equation*}
w : \cZ_{t^\mu}(X,\beta)_n \longrightarrow \cZ_{w\cdot t^\mu}(X,w_*\beta)_n
\end{equation*}
such that
\begin{equation*}
[\cZ_{w\cdot t^\mu}(X,w_*\beta)_n]^\vir = w_* [\cZ_{t^\mu}(X,\beta)_n]^\vir.
\end{equation*}

By the projection formula, and the fact that the evaluation maps are $N(T)$-equivariant, this yields the desired result.
\end{proof}

In view of \Cref{Wequiv}, it makes sense to take the $W$-invariant part of $\bbS_{G,\bN,X}$ and $\Psi_{G,\bN,X}$. 
\begin{df}
The $W$-invariant part of $\bbS_{G,\bN,X}$ and $\Psi_{G,\bN,X}$, still denoted by the same symbol and referred to as \emph{the shift operators with matter} $\bN$ and \emph{the Seidel homomorphism with matter} $\bN$ respectively, are the maps
\[\bbS_{G,\bN,X}: \sA^\hbar_{G,\bN}\longrightarrow \operatorname{End}^\bullet(H^\bullet_G(X)[\hbar])[[q_G,\tau]].\]
\[\Psi_{G,\bN,X}: \sA_{G,\bN}\longrightarrow QH^\bullet_G(X)[[q_G,\tau]].\]
Similarly for $\bbS_{G,\bN,X}^\xi$ and $\Psi^\xi_{G,\bN,X}$.
\end{df}

\subsection{Module property}\label{subsection:moduleproperty}
In this subsection, we prove the following theorem.

\begin{thm}\label{moduleprop}
For $\lambda, \mu \in \Lambda$, $\alpha \in H_{T \times \Cx_\hbar}^\bullet(X)_\loc$, 
and $P \in H_{T \times \Cx_\hbar}^\bullet(\pt)$, we have
\begin{equation*}
    \bbS_0=\operatorname{id},\qquad
    \bbS_{\lambda+\mu}(\alpha) = \bbS_{\lambda}\big(\bbS_{\mu}(\alpha)\big), 
    \qquad
    \bbS_{\lambda}(P \alpha) = \Phi_\lambda(P)\,\bbS_{\lambda}(\alpha).
\end{equation*}
A similar statement holds in the presence of a flavour symmetry $\xi$.
\end{thm}

\Cref{IntroThm2} now follows from \Cref{TGcompatible} and \Cref{moduleprop}.

\begin{thm}[=\Cref{IntroThm2}]\label{moduleproperty1}
The operator 
\[\bbS_{G,\bN,X}: \sA^\hbar_{G,\bN}\longrightarrow \operatorname{End}^\bullet(H^\bullet_G(X)[\hbar])[[q_G,\tau]]\]
is a graded $H^\bullet_G(\pt)[\hbar]$-algebra homomorphism. A similar statement holds in the presence of a flavour symmetry $\xi$.
\end{thm}

\begin{proof}
We already know that $\bbS_{G,\bN,X}$ is graded and $$H^\bullet_G(\pt)[\hbar]$$-linear, it remains to show it is an algebra homomorphism. Let $\Gamma = \sum_{\lambda} a_\lambda [t^\lambda]$ and 
$\Gamma' = \sum_{\mu} b_\mu [t^\mu]$, where $a_\lambda, b_\mu \in H_{T \times \Cxh}^\bullet(\pt)$. 
By \Cref{TGcompatible} and \Cref{moduleprop}, we have
\begin{align*}
\bbS_{G,\bN}\big(\Gamma \otimes \bbS_{G,\bN}(\Gamma' \otimes \alpha)\big) 
&= \sum_{\lambda} a_\lambda \bbS_{\lambda}\!\left(\sum_\mu b_\mu \bbS_{\mu}(\alpha)\right) \\
&= \sum_{\lambda,\mu} a_\lambda \,\Phi_\lambda(b_\mu) \cup \bbS_{\lambda+\mu}(\alpha) \\
&= \bbS_{G,\bN}\big((\Gamma \ast \Gamma') \otimes \alpha\big).
\end{align*}
Here we used the formula
\[
[t^\lambda] \ast b_\mu = \Phi_\lambda(b_\mu) \ast [t^\lambda],
\]
which expresses the twisted linearity of the convolution product \eqref{convolution2} 
(cf. \cite[Section~4(ii)]{BFN}). This completes the proof.
\end{proof}
The idea of the proof of \Cref{moduleprop} is similar to that of~\cite[Theorem 3.14]{Iritani}. 
However, since we do not assume $X^T$ is proper, we must take additional care when applying localization techniques. 
We begin with some preparations.

\subsubsection*{Shift operators on the Givental space}
Let $F$ be a smooth algebraic variety equipped with the trivial $T$-action, and let $E$ be a $T$-equivariant vector bundle on $F$. 
For each $\lambda \in \Lambda$, the space $\sE_{t^\lambda}(E)$ is a vector bundle over $\sE_{t^\lambda}(F) = \PP^1 \times F$. 
Let $\Phi_\lambda$ denote the automorphism of $K_{T \times \Cxh}(F)$ induced by the automorphism of 
$T \times \Cxh$ sending $(z, z_\hbar)$ to $(z\lambda(z_\hbar)^{-1}, z_\hbar)$. 
We define $\Delta_\lambda(E)$ to be the following $K$-theory class on $F$:
\begin{equation*}
    \Delta_\lambda(E) \coloneq [E]
    - p_*\big[\sE_{t^\lambda}(E)\big]
    \in K_{T \times \Cxh}(F),
\end{equation*}
where $p : \PP^1 \times F \to F$ is the projection to the second factor. 

It is clear that $\Delta_\lambda$ is additive on short exact sequences of vector bundles, 
and thus induces a map 
\begin{equation*}
\Delta_\lambda : K_{T \times \Cxh}(F) \to K_{T \times \Cxh}(F).
\end{equation*}
Indeed, for any $A \in K_{T \times \Cxh}(F)$, we have
\begin{equation}\label{Deltalambda}
\Delta_\lambda(A) 
= \frac{A - \Phi_\lambda(A)}{1 - \zeta},
\end{equation}
where $\zeta = [\C_\hbar] \in K_{T \times \Cxh}(\pt)$. 

To see this, we may assume $A$ is represented by a vector bundle $E$. 
\Cref{Deltalambda} then follows from the localization formula, together with the observations
\begin{equation*}
[E] = p_*([p^*E]), \qquad 
[\sE_{t^\lambda}(E)|_{\{0\} \times F}] \cong \Phi_\lambda([E]), \qquad 
[\sE_{t^\lambda}(E)|_{\{\infty\} \times F}] \cong [E].
\end{equation*}

\begin{lem}\label{Krelation}
For any $A \in K_{T \times \Cxh}(F)$, we have
\begin{equation*}
\Delta_{\lambda+\mu}(A) = \Delta_{\lambda}(A) + \Phi_\lambda\big(\Delta_{\mu}(A)\big).
\end{equation*}
\end{lem}

\begin{proof}
We may assume $A$ is represented by a vector bundle $E$. 
We can decompose $E$ as $\bigoplus_\alpha E_\alpha$, where $T$ acts on $E_\alpha$ via the character $\alpha : T \to \Cx$. 
Since $\Delta_\lambda$ is additive, we may further assume $E = E_\alpha$ for some character $\alpha$. 
In this case,
\begin{equation*}
\Delta_\lambda(E_\alpha) 
= \frac{E_\alpha - \zeta^{\langle \alpha, \lambda \rangle}\otimes E_\alpha}{1 - \zeta} 
= \left[\langle \alpha, \lambda \rangle\right]_\zeta \otimes E_\alpha,
\end{equation*}
where 
\begin{equation*}
[n]_\zeta \coloneq \frac{1 - \zeta^n}{1 - \zeta}.
\end{equation*}
The lemma now follows from the well-known identity $[m+n]_\zeta = [m]_\zeta + \zeta^m [n]_\zeta$.
\end{proof}

\begin{df}[The shift operator on the Givental space]\label{Giventalshift}
Let $\{F_k : k \in I\}$ be the connected components of $X^T$. 
For each $k \in I$ and $\lambda \in \Lambda$, let 
$i_k : F_k \to X$ and $i_{\lambda,k} : F_k \to X^\lambda$ denote the inclusions. 
We denote by 
$\beta_{\lambda,k} \in \Eff(\sE_{t^\lambda}(X))^{\sec}$ 
the class represented by $\sE_{t^\lambda}(x)$ for some (hence any) $x \in F_k$.

Consider the map
\begin{equation*}
\widetilde{\mathsf{S}}_{\lambda} \colon 
H_{T \times \Cxh}^\bullet(X^\lambda)_\loc
\longrightarrow 
H_{T \times \Cxh}^\bullet(X)_\loc[[q_G,\tau]]
\end{equation*}
defined by
\begin{equation*}
    \widetilde{\mathsf{S}}_{\lambda}(\alpha)\big|_{F_k}
    = q^{\overline{\beta}_{\lambda,k}} \,
      \alpha\big|_{F_k} \cup e\big(\Delta_\lambda(N_{F_k/X})\big),
    \qquad k \in I.
\end{equation*}
Equivalently,
\begin{equation*}
\widetilde{\mathsf{S}}_{\lambda}(\alpha) =
\sum_{k \in I} 
   q^{\overline{\beta}_{\lambda,k}} \,
   i_{k*}\!\left(
      \frac{i_{\lambda,k}^*(\alpha)}{e(N_{F_k/X})}
      \cup e\big(\Delta_\lambda(N_{F_k/X})\big)
   \right).
\end{equation*}

\noindent
The \emph{shift operator on the Givental space} is then defined as
\begin{equation*}
\mathsf{S}_{\lambda} \coloneq \widetilde{\mathsf{S}}_\lambda \circ \Phi_\lambda \colon 
H_{T \times \Cxh}^\bullet(X)_\loc
\longrightarrow 
H_{T \times \Cxh}^\bullet(X)_\loc[[q_G,\tau]].
\end{equation*}
\end{df}

\begin{rem}
    (cf.~\cite[Definition~3.13]{Iritani})
    Let $ N_{F_k/X} = \bigoplus_{\alpha} N_{k,\alpha} $ be the $T$-eigenbundle decomposition of the normal bundle of $ F_k $ in $ X $. 
    Suppose $ r_{k,\alpha,j} $, $ j = 1, \ldots, \mathrm{rk}(N_{k,\alpha}) $, are the Chern roots of $ N_{k,\alpha} $. 
    Then explicitly,
    \begin{equation*}
        e(\Delta_\lambda(N_{F_k/X}))
        = \prod_{\alpha,j}
            \frac{
                \prod_{c=0}^{\infty} 
                (r_{k,\alpha,j} + \alpha + c\hbar)
            }{
                \prod_{c=\alpha(\lambda)}^{\infty} 
                (r_{k,\alpha,j} + \alpha + c\hbar)
            }.
    \end{equation*}
\end{rem}

\begin{prop}\label{sfmodule}
For $\lambda, \mu \in \Lambda$ and $\alpha \in H_{T \times \Cx_\hbar}^\bullet(X)_\loc$, we have
\begin{equation*}
    \mathsf{S}_{\lambda+\mu}(\alpha) = \mathsf{S}_{\lambda}\big(\mathsf{S}_{\mu}(\alpha)\big).
\end{equation*}
\end{prop}

\begin{prop}\label{intertwiningFundamentalsolution}
We have 
\[
\bbS_{\lambda} = \mathbb{M}_X \circ \mathsf S_{\lambda} \circ \mathbb{M}_X^{-1}.
\]
\end{prop}

\begin{proof}[Proof of \Cref{moduleprop}]
The last equality in \Cref{moduleprop} is immediate from the definition \eqref{definitionbbSlambda}. 
Thus it remains to prove the first two equalities. 
By \Cref{intertwiningFundamentalsolution}, it is enough to show that $\mathsf S_0=\operatorname{id}$ and
\begin{align}
    \overline{\beta}_{\lambda+\mu,k} &= \overline{\beta}_{\lambda,k} + \overline{\beta}_{\mu,k}\label{equalitybeta}\\
    \mathsf S_{\lambda+\mu}(\alpha) &= \mathsf S_{\lambda}\big(\mathsf S_{\mu}(\alpha)\big).\label{equalitysfS}
\end{align}
For \eqref{equalitybeta}, it suffices to check the equality in $H_{2}^{T}(x;\Z)$ for some $x \in F_k$, which is clear. The equation \eqref{equalitysfS} is precisely \Cref{sfmodule}. 
\end{proof}

The rest of this subsection is devoted to the proof of \Cref{sfmodule} and \Cref{intertwiningFundamentalsolution}.

\begin{proof}[Proof of \Cref{sfmodule}]
We may assume $\alpha = i_{\lambda,k*}(\alpha')$ for some $k \in I$ and 
$\alpha' \in H_{T \times \Cxh}^\bullet(F_k)_\loc$. 
This reduces the claim to checking the identity
\[
e\big(\Delta_{\lambda+\mu}(N_{F_k/X})\big)
= e\big(\Delta_\lambda(N_{F_k/X})\big) \cdot 
  \Phi_{\lambda}\big(e(\Delta_\mu(N_{F_k/X}))\big),
\]
which follows by taking Euler classes of the equality in \Cref{Krelation} with $A = [N_{F_k/X}]$.
\end{proof}

\begin{proof}[Proof of \Cref{intertwiningFundamentalsolution}]
(cf.~\cite[Proposition~8.2.1]{MO} and \cite[Theorem~3.14]{Iritani}). 
Let $\mathbb M_{X^\lambda}'$ be defined analogously to \Cref{def:fundamental_solution}, 
but with equivariance induced by the $T \times \Cxh$-action on $X^\lambda$. 
We have $\mathbb M'_{X^\lambda} = \Phi_\lambda \circ \mathbb M_X \circ \Phi_\lambda^{-1}$, 
as well as the unitarity relation 
$\mathbb M'_{X^\lambda}(-\hbar)^* = \mathbb M'_{X^\lambda}(\hbar)^{-1}$ 
(see \cite[Section~1]{Giventalelliptic}). 
It therefore suffices to show
\begin{equation*}
\widetilde \bbS_{\lambda}
= \mathbb M_X(\hbar) \circ \widetilde{\mathsf S}_\lambda \circ \mathbb M'_{X^\lambda}(-\hbar)^*.
\end{equation*}

\noindent
A $T \times \Cxh$-fixed stable map $\sigma$ has domain curve of the form 
$\Sigma_0 \cup \Sigma_{\sec} \cup \Sigma_\infty$, where
\begin{itemize}
    \item $\sigma|_{\Sigma_0}$ is a $T$-fixed stable map to the zero fibre $X^\lambda$,
    \item $\sigma|_{\Sigma_\infty}$ is a $T$-fixed stable map to the infinity fibre $X$,
    \item $\sigma|_{\Sigma_{\sec}}$ is a constant section of $\sE_{t^\lambda}(X)$ at a fixed point $x \in X^T$, with $\Sigma_{\sec} \cong \PP^1$.
\end{itemize}

Therefore, the $T \times \Cxh$-fixed locus of $\overline{M}_{0,n+2}(\sE_{t^\lambda}(X),\beta)$ is given by (cf.~\cite{Iritani})
\begin{equation*}
\bigsqcup_k \ \bigsqcup_{I_0 \sqcup I_\infty = \{0,1,2,\dots,n,\infty\}} \
\bigsqcup_{\beta_0+\beta_{\lambda,k}+\beta_\infty=\beta} 
\overline{M}_{0,I_0\cup p}(X^\lambda,\beta_0)^{T}
\times_{F_k}
\overline{M}_{0,I_\infty\cup q}(X,\beta_\infty)^{T},
\end{equation*}
where $p$ and $q$ are the intersection points $\Sigma_0 \cap \Sigma_{\sec}$ and 
$\Sigma_{\infty} \cap \Sigma_{\sec}$, respectively. 

Fix $I_0, I_\infty, \beta_0, \beta_\infty$. 
The virtual normal bundle $\cN_k^\vir$ of 
$\overline M_{0,I_0\cup p}(X^\lambda,\beta_0)\times_{F_k} \overline M_{0,I_\infty\cup q}(X,\beta_\infty)$ 
is given by
\begin{equation*}
    \cN_k^\vir 
    = \cN_0^\vir + \cN_\infty^\vir + \cN_{\sec,k}
    - N_{F_k/X^\lambda} - N_{F_k/X} 
    + L_p^{-1} \otimes \zeta^{-1} + L_{q}^{-1} \otimes \zeta.
\end{equation*}
Here:
\begin{itemize}
    \item $\cN^\vir_0$ is the virtual normal bundle of $\overline{M}_0^T$ in $\overline{M}_{0,I_0\cup p}(X^\lambda,\beta_0)$,
    \item $\cN^\vir_\infty$ is the virtual normal bundle of $\overline{M}_\infty^T$ in $\overline{M}_{0,I_\infty\cup q}(X,\beta_\infty)$,
    \item $\cN_{\sec,k}$ is the moving part of the bundle with fibre $\chi(\sigma|_{\Sigma_{\sec}}^*T\sE_{t^\lambda}(X))$,
    \item $L_p$ and $L_q$ are the universal cotangent lines at the marked points $p$ and $q$.
\end{itemize}

Note that $\chi(\Sigma_{\sec},T\Sigma_{\sec}) = \zeta + 1 + \zeta^{-1}$. 
The short exact sequence
\begin{equation*}
0 \to T\Sigma_{\sec} \to \sigma|_{\Sigma_{\sec}}^*T\sE_{t^\lambda}(X) \to \sE_{t^\lambda}(N_{F_k/X}) \to 0
\end{equation*}
then gives
\begin{equation*}
    \Delta_{\lambda}(N_{F_k/X})
    = \zeta+ \zeta^{-1} + N_{F_k/X} - \cN_{\sec,k}.
\end{equation*}
Hence we may write
\begin{equation*}
\widetilde{\mathsf{S}}_{\lambda}(\alpha) =
\hbar(-\hbar)\sum_{k} 
   q^{\overline{\beta}_{\lambda,k}} \,
   i_{k*}\left(\frac{i_{\lambda,k}^*(\alpha)}{e(\cN_{\sec,k})}\right).
\end{equation*}

We next consider $[\cM_{t^\lambda}(X,\beta)_n]^\vir$ and $[\cZ_{t^\lambda}(X,\beta)_n]^\vir$. 
Recall that $\cM_{t^\lambda}(X,\beta)$ is defined as the fibre over $(0,\infty)$ of the map 
\[
(\pr_{\PP^1}\circ\mathrm{Ev}_0,\;\pr_{\PP^1}\circ\mathrm{Ev}_\infty):
\overline M_{0,n+2}(\sE_{t^\lambda}(X),\beta)\to \PP^1\times\PP^1.
\]

Let $\cM_k$ be the union of 
\[
\overline{M}_{0,I_0\cup p}(X^\lambda,\beta_0)^{T}
\times_{F_k}
\overline{M}_{0,I_\infty\cup q}(X,\beta_\infty)^{T}
\]
for which $0\in I_0$ and $\infty\in I_\infty$. 
Then $\cM_{t^\lambda}(X,\beta)_n^{T\times\Cxh}=\bigsqcup_k \cM_k$, 
and we have the Cartesian diagram
\begin{equation*}
    \begin{tikzcd}[column sep=large, row sep=large]
        \cM_k \arrow[r] \arrow[d] 
        & \overline{M}_0^{T}\times_{F_k}\overline{M}_\infty^{T} 
          \arrow[d,"{(\pr_{\PP^1}\circ\EV_0,\;\pr_{\PP^1}\circ \EV_\infty)}"] \\
        \{(0,\infty)\} \arrow[r,"j"] & \PP^1\times\PP^1
    \end{tikzcd}
\end{equation*}
where 
\[
\overline{M}_0^{T}=\bigsqcup_{I_0,\beta_0}\overline{M}_{0,I_0\cup p}(X^\lambda,\beta_0)^{T}, 
\qquad 
\overline{M}_\infty^{T}=\bigsqcup_{I_\infty,\beta_\infty}\overline{M}_{0,I_\infty\cup q}(X,\beta_\infty)^{T}.
\]
Since $j_{\cM_k}$ is an inclusion of components, the refined Gysin pullback $j^!$ acts by cap product with $\hbar(-\hbar)$.

Next observe that the tautological section 
\[
\can_{t^{\lambda}}(X)_n:\cM_{t^\lambda}(X,\beta)_n\to \BS_{t^\lambda}
\]
is $T\times\Cxh$-equivariant, and $\BS_{t^\lambda}^{T\times\Cxh}=0$. 
It follows that the restriction of $\can_{t^{\lambda}}$ to $\cM_k$ is zero, and hence
\[
\cZ_{t^\lambda}(X,\beta)_n^{T\times\Cxh}
=\cM_{t^\lambda}(X,\beta)_n^{T\times\Cxh}.
\]

An application of the virtual localization formula (cf.~\cite{vloc}) for $\overline M_{0,n+2}(\sE_{t^\lambda}(X),\beta)$, 
together with base change, yields
\begin{equation}\label{vloczerolocus}
\begin{aligned}
    [\cZ_{t^\lambda}(X,\beta)_n]^\vir 
    = e(\BS_{t^\lambda}) \cap\sum_k \iota_{*}\left(
       \frac{j^![\overline{M}_0^{T}\times_{F_k}\overline{M}_\infty^{T}]^\vir}
            {e(\cN_{k}^\vir)}\right)
\end{aligned}
\end{equation}
Here $[\overline M_0^T\times_{F_k}\overline M_\infty^T]^\vir$ and 
$[\overline{M}_{0,I_1\cup p}(X^\lambda,\beta_0)^{T}\times_{F_k}
\overline{M}_{0,I_2\cup q}(X,\beta_\infty)^{T}]^\vir$ 
denote the virtual fundamental classes of the fixed loci of 
$\overline{M}_{0,n+2}(\sE_{t^\lambda}(X),\beta)$ in the sense of \cite{vloc}.

We need to compare $[\overline M_0^T\times_{F_k}\overline M_\infty^T]^\vir$ with the virtual fundamental class of $M_0^T$ and $M_\infty^T$. Consider the Cartesian diagram
\begin{equation*}
    \begin{tikzcd}
        \overline M_0^T\times_{F_k}\overline M_\infty^T \arrow[r,"\eta"] \arrow[d,"\ev_{p=q}"] & \overline M_0^T\times\overline M_\infty^T \arrow[d,"\ev_p\times\ev_q"]\\ 
        F_k \arrow[r,"\delta"] & F_k\times F_k .
    \end{tikzcd}
\end{equation*}
\begin{lem}
We have
\[
[\overline M_0^T\times_{F_k}\overline M_\infty^T]^\vir
= \delta^!\big([\overline M_0^T]^\vir \otimes [\overline M_\infty^T]^\vir\big).
\]
\end{lem}

\begin{proof}
Let $\mathcal C$ be the pullback of the universal curve over 
$\overline M_{0,n+2}(\sE_{t^\lambda}(X),\beta)$ along the inclusion of the fixed component. 
We write
\begin{equation*}
    \begin{tikzcd}
        \mathcal C \arrow[r,"f"] \arrow[d,"\pi"] 
        & \sE_{t^\lambda}(X) \\ 
        \overline M_0^T\times_{F_k}\overline M_\infty^T
    \end{tikzcd}.
\end{equation*}
Then the obstruction theory on 
$\overline M^T_0\times_{F_k}\overline M^T_\infty$, 
viewed as a union of fixed components of 
$\overline{M}_{0,n+2}(\sE_{t^\lambda}(X),\beta)$, 
is given by $F\to \mathbb L_{\overline M^T_0\times_{F_k}\overline M^T_\infty}$, 
where $F^\vee=\mathrm{Cone}(A\to B)$ with
\begin{equation*}
    A = (R\pi_* R\mathcal{H}om(\Omega^1_{\mathcal C}(\mathbf x),\oh_{\mathcal C}))^{\mathrm{fix}},
    \qquad 
    B = (R\pi_*f^*T_{\sE_{t^\lambda}(X)})^{\mathrm{fix}}.
\end{equation*}
Here $\Omega^1_{\mathcal{C}}$ is the sheaf of relative differentials of $\mathcal{C}$ over 
$\overline M^T_0\times_{F_k}\overline M^T_\infty$, 
and $\mathbf x=\sum_{i=1}^{n+2}\mathbf x_i$ is the divisor of marked points.  

We also have a similar diagram for $\overline M_0^T\times \overline M^T_\infty$:
\begin{equation*}
    \begin{tikzcd}
        \mathcal C'_0\times \mathcal C'_\infty \arrow[r,"f'_0\times f'_\infty"] 
        \arrow[d,"\pi'_0\times\pi'_\infty"] 
        & X^\lambda\times X \\ 
        \overline M_0^T\times\overline M_\infty^T
    \end{tikzcd}
\end{equation*}
and the obstruction theory on $\overline M^T_0\times\overline M^T_\infty$ 
is given by $E\to \mathbb L_{\overline M^T_0\times\overline M^T_\infty}$, 
where $E^\vee=\mathrm{Cone}(A'\to B')$ with
\begin{align*}
    A' &= 
    (R\pi'_{0*}R\mathcal{H}om(\Omega^1_{\mathcal{C}'_0}(\mathbf x_0+\mathbf p),\oh_{\mathcal C_0'}))^{\mathrm{fix}}
    \boxplus 
    (R\pi'_{\infty*}R\mathcal{H}om(\Omega^1_{\mathcal{C}'_\infty}(\mathbf x_\infty+\mathbf q),\oh_{\mathcal C'_\infty}))^{\mathrm{fix}},\\
    B' &= 
    (R\pi'_{0*}f_0'^*T_{X^\lambda})^{\mathrm{fix}}
    \boxplus 
    (R\pi'_{\infty*}f_\infty'^* T_X)^{\mathrm{fix}}.
\end{align*}
Here $\Omega^1_{\mathcal{C}'_0}$ and $\Omega^1_{\mathcal{C}'_\infty}$ denote the sheaves of relative differentials of $\pi_0$ and $\pi_\infty$ respectively, and $\mathbf x_0,\mathbf x_\infty,\mathbf p,\mathbf q$ are the divisors associated to the marked points.

We claim that there is a distinguished triangle
\[
\eta^*E \to F \to \mathbb L_{F_k/F_k\times F_k} \to \eta^*E[1]
\]
in the derived category of $\overline M^T_0\times_{F_k}\overline M^T_\infty$ compatible with the morphisms to cotangent complexes. 
By \cite[Proposition~7.5]{Intrinsic}, this implies the lemma.

First note that the universal curve $\mathcal C$ decomposes as 
$\mathcal C_0\cup \mathcal C_{\sec}\cup \mathcal C_{\infty}$, 
where $\mathcal C_0$ and $\mathcal C_\infty$ are the pullbacks of 
$\mathcal C'_0$ and $\mathcal C'_\infty$ along the natural projections 
$\overline M_0^T\times_{F_k}\overline M^T_\infty\to \overline M^T_0$ 
and $\overline M_0^T\times_{F_k}\overline M^T_\infty\to \overline M^T_\infty$. 
We write $\pi_i, f_i$ ($i=0,\infty$) for the pullbacks of $\pi_i', f_i'$. 
Let $\iota_0,\iota_{\sec},\iota_\infty$ be the inclusions of 
$\mathcal C_0,\mathcal C_{\sec},\mathcal C_\infty$ into $\mathcal C$. 
Let $\sigma_{\mathbf p},\sigma_{\mathbf q}$ be the sections 
$\overline M^T_0\times_{F_k}\overline M^T_\infty \to \mathcal C$ 
defining the nodal points $p$ and $q$.

We have the following exact triangles in $D^b(\mathcal C)$:
\begin{gather*}
\iota_{0*}R\mathcal{H}om\!\big(\Omega^1_{\mathcal C_0}(\mathbf{x}_0+\mathbf{p}), \oh_{\mathcal C_0}\big)
\oplus \iota_{\sec*}\oh_{\mathcal C_{\sec}}
\oplus \iota_{\infty*}R\mathcal{H}om\!\big(\Omega^1_{\mathcal C_\infty}(\mathbf{x}_\infty +\mathbf{q}),\oh_{\mathcal C_{\infty}}\big) \\
\to R\mathcal{H}om\!\big(\Omega^1_{\mathcal C}(\mathbf x), \oh_{\mathcal C}\big) 
\to \sigma_{\mathbf p*}L_p^{-1}\otimes\C_{-\hbar} 
   \oplus \sigma_{\mathbf q*}L_q^{-1}\otimes \C_{\hbar}\stackrel{+1}{\to},\\
f^*T_{\sE_{t^\lambda}(X)}\to 
\iota_{0*}f^*_0T_{\sE_{t^\lambda}(X)}
\oplus \iota_{\sec*}f_{\sec}^*T_{\sE_{t^\lambda}(X)}
\oplus \iota_{\infty*}f_{\infty}^*T_{\sE_{t^\lambda}(X)}
\to \sigma_{\mathbf p*}\sigma_{\mathbf p}^*f^*T_{\sE_{t^\lambda}(X)}
   \oplus \sigma_{\mathbf q*}\sigma_{\mathbf q}^*f^*T_{\sE_{t^\lambda}(X)}\stackrel{+1}{\to}.
\end{gather*}

Applying $R\pi_*$ and taking the fixed part, we obtain
\begin{align*} 
A &\simeq 
   R\pi_{0*}R\mathcal{H}om\!\big(\Omega^1_{\mathcal C_0}(\mathbf x_0+\mathbf p),\oh_{\mathcal C_0}\big)^{\mathrm{fix}}
   \oplus R\pi_{\sec*}\oh_{\mathcal C_{\sec}}
   \oplus R\pi_{\infty*}R\mathcal{H}om\!\big(\Omega^1_{\mathcal C_{\infty}}(\mathbf x_\infty+\mathbf q),\oh_{\mathcal C_{\infty}}\big)^{\mathrm{fix}}, \\[4pt]
B &\simeq 
   \mathrm{Cone}\!\left( 
      (R\pi_{0*}f_0^*T_{\sE_{t^\lambda}(X)})^{\mathrm{fix}}
      \oplus R\pi_{\sec*}\oh_{\mathcal{C}_{\sec}} 
      \oplus (R\pi_{\infty*} f_\infty^* T_{\sE_{t^\lambda}(X)})^{\mathrm{fix}}
      \to T_{F_k} 
   \right)[-1].
\end{align*}

Hence
\[
\mathrm{Cone}(A\to B)\simeq \mathrm{Cone}\big(\mathrm{Cone}(\eta^*A'\to \eta^*B')\to T_{F_k}\big)[-1].
\]
Therefore, we obtain a natural morphism
\[
F^\vee\to \eta^*E^\vee
\]
whose cone is quasi-isomorphic to $T_{F_k}$. 
Finally, since $\mathbb L_{F_k/F_k\times F_k}\cong T^*_{F_k}[1]$, the result follows.
\end{proof}

For the rest of the argument, we restrict attention to those components in 
$\overline M^T_0\times_{F_k}\overline M^T_\infty$ such that $\sigma$ is mapped 
to the zero and infinity fibres by $\ev_0$ and $\ev_\infty$, respectively. 
We will continue to denote these components by the same notation. 
Consider the correspondence diagram:
\begin{equation*}
\begin{tikzcd}[column sep={between origins, 1.5cm}, row sep=1.8em]
  & & & 
  \overline M^T_0 \times_{F_k} \overline M^T_\infty 
      \arrow[ld, "\ev^{\prime\prime}_q"'] 
      \arrow[rd, "\ev^{\prime\prime}_p"] 
      \arrow[lllddd, bend right=40, "\widetilde\ev_0"']
      \arrow[rrrddd, bend left=40, "\widetilde\ev_\infty"]
  & & & \\
  & & \overline M^T_0 \arrow[ld, equal] \arrow[rd, "\ev'_p"] 
  & & \overline M^T_\infty \arrow[ld, "\ev'_q"'] \arrow[rd,equal] 
  & & \\
  & \overline M_0^T \arrow[ld, "\ev_0"'] \arrow[rd, "\ev_p"] 
  & & F_k \arrow[ld, "{i_{\lambda,k}}"'] \arrow[rd, "{i_{k}}"] 
  & & \overline M_\infty^T \arrow[ld, "\ev_q"'] \arrow[rd, "\ev_\infty"] 
  & \\
X^\lambda & & X^\lambda & & X & & X
\end{tikzcd}
\end{equation*}

\begin{lem}
Let $\Gamma_0 \in H_\bullet^{T\times\Cxh}(\overline M_0^T)$, 
$\Gamma_\infty \in H_\bullet^{T\times\Cxh}(\overline M_\infty^T)$, 
and $\gamma \in H^\bullet_{T\times\Cxh}(F_k)$. 
Set 
\begin{equation*}
    \Gamma = \big(\gamma \cup e(N_{F_k/X^\lambda}) \cup e(N_{F_k/X})\big) 
    \cap \delta^!\big( \Gamma_0 \otimes \Gamma_\infty\big)
    \in H_\bullet^{T\times\Cxh}(\overline M^T_0\times_{F_k}\overline M^T_\infty).
\end{equation*}
Then for any $\alpha \in H^\bullet_{T\times\Cxh}(X^\lambda)$, we have
\begin{equation*}
    \ev_{\infty*}\Big(
        \ev_q^* i_{k*} \big( i_{\lambda,k}^* \PD \ev_{p*}(\ev_0^*(\alpha) \cap \Gamma_0)\cup\gamma \big)
        \cap \Gamma_\infty
    \Big)
    = \widetilde\ev_{\infty*}\big(\widetilde\ev_0^*(\alpha) \cap \Gamma\big).
\end{equation*}
\end{lem}

\begin{proof}
The proof is by diagram chasing, and we include the details for completeness. 
The lemma follows from the three identities
\begin{align}
   i_{\lambda,k}^* \PD\ev_{p*}(\Gamma'_0)
   &= \PD\big(e(N_{F_k/X^\lambda})\cap\ev'_{p*}(\Gamma_0')\big), 
   \label{conveq1}\\[4pt]
   \ev_q^* i_{k*}(\gamma')\cap \Gamma_\infty
   &= \big(e(N_{F_k/X})\cup\ev'^*_q(\gamma')\big)\cap\Gamma_\infty,
   \label{conveq2}\\[4pt]
   \big(\ev'^*_q\PD(\gamma\cap\ev'_{p*}(\Gamma_0''))\big)\cap \Gamma_\infty
   &= \ev''_{p*}\big(\gamma\cap\delta^!(\Gamma_0''\otimes\Gamma_\infty)\big),
   \label{conveq3}
\end{align}
valid for $\Gamma'_0,\Gamma''_0\in H_{\bullet}^{T\times\Cxh}(\overline{M}^T_0)$ 
and $\gamma'\in H^\bullet_{T\times\Cxh}(F_k)$. 
These correspond to the left square, the right square, and the upper square of the diagram above.

\Cref{conveq1} follows from
\begin{equation*}
    \PD^{-1}i_{\lambda,k}^* \PD\ev_{p*}(\Gamma'_0)
    = i_{\lambda,k}^! \ev_{p*}(\Gamma'_0)
    = i_{\lambda,k}^! i_{\lambda,k*}\ev'_{p*}(\Gamma'_0)
    = e(N_{F_k/X^\lambda})\cap \ev'_{p*}(\Gamma'_0).
\end{equation*}

\Cref{conveq2} follows from
\begin{equation*}
    \ev_q^* i_{k*}(\gamma')
    = \ev'^*_q i_{k}^* i_{k*}(\gamma')
    = e(N_{F_k/X})\cup\ev'^*_q(\gamma').
\end{equation*}

For \Cref{conveq3}, we may assume $\gamma=1$. 
Consider the diagram
\begin{equation*}
    \begin{tikzcd}
        \overline M^T_0\times_{F_k} \overline M^T_\infty 
        \arrow[r,"\eta"] \arrow[d,"\ev''_p"] 
        & \overline M^T_0\times \overline M^T_\infty 
        \arrow[d,"\ev'_p\times \id"] \\
        \overline M^T_\infty 
        \arrow[r,"\delta'"] \arrow[d,"\ev'_q"] 
        & F_k\times \overline M^T_\infty 
        \arrow[d,"\id\times \ev'_q"] \\
        F_k \arrow[r,"\delta"] 
        & F_k\times F_k
    \end{tikzcd}
\end{equation*}
Both squares are Cartesian, and $\delta,\delta'$ are regular embeddings of the same codimension. 
Thus \eqref{conveq3} follows from
\begin{equation*}
    \big(\ev'^*_q\PD(\ev'_{p*}(\Gamma_0''))\big)\cap \Gamma_\infty
    = \delta^!(\ev'_{p*}(\Gamma_0'')\otimes \Gamma_\infty)
    = \ev''_{p*}\delta^!(\Gamma_0''\otimes \Gamma_\infty).
\end{equation*}
\end{proof}
Now let $\tau\in H_{T\times\Cxh}^\bullet(X)$ and set 
$\tau'=\Phi_\lambda(\tau)\in H_{T\times\Cxh}^\bullet(X^\lambda)$. 
Note that $\hat\tau|_X=\tau$ and $\hat\tau|_{X^\lambda}=\tau'$. 
In the notation of the previous lemma, define for each $a,b\in \Z_{\geq 0}$:
\begin{align*}
    \Gamma_0 = \Gamma_0^a 
    &= \frac{\prod_{\ell=1}^{a}\ev_{\ell}^*(\tau')}
            {(-\hbar-\psi_p)\,e(\cN_0^\vir)} 
       \cap[\overline M_0^T]^\vir,\\
    \Gamma_\infty = \Gamma_\infty^b 
    &= \frac{\prod_{\ell=1}^{b}\ev_{\ell}^*(\tau)}
            {(\hbar-\psi_q)\,e(\cN_\infty^\vir)} 
       \cap[\overline M_\infty^T]^\vir,\\
    \gamma &= \frac{\hbar(-\hbar)}{e(\cN_{\sec,k})}.
\end{align*}
Hence,
\begin{align*}
    \Gamma = \Gamma^{a,b}
    &= \frac{\hbar(-\hbar)\,e(N_{F_k/X^\lambda})\,e(N_{F_k/X})
             \prod_{\ell=1}^{a}\ev^*_{\ell}(\tau') 
             \prod_{\ell=1}^{b}\ev^*_{\ell}(\tau)}
            {e(\cN_{\sec,k})(-\hbar-\psi_p)(\hbar-\psi_q)
             e(\cN^\vir_0)\,e(\cN^\vir_\infty)} 
       \cap \delta^!\big([\overline M_0^T]^\vir \otimes [\overline M_\infty^T]^\vir\big)\\[4pt]
    &=  \frac{\hbar(-\hbar)\prod_{\ell=1}^{a}\ev^*_{\ell}(\tau') 
             \prod_{\ell=1}^{b}\ev^*_{\ell}(\tau)}{e(\cN^\vir_k)}
       [\overline M^T_0\times_{F_k}\overline M^T_\infty]^\vir. \\[4pt]
       &=  \frac{\prod_{\ell=1}^{a}\ev^*_{\ell}(\tau') 
             \prod_{\ell=1}^{b}\ev^*_{\ell}(\tau)}{e(\cN^\vir_k)}
       j^![\overline M^T_0\times_{F_k}\overline M^T_\infty]^\vir.
\end{align*}

Therefore, by \Cref{vloczerolocus}, for $\alpha\in H_\bullet^{T\times\Cxh}(X^\lambda)$, we obtain
\begin{align*}
    \widetilde\bbS_{\lambda}(\alpha)
    &= \sum\frac{q^{\overline\beta}}{a!\,b!} 
       \PD\,\widetilde\ev_{\infty*}\!\left(
          \widetilde\ev_0^*(\alpha)\cap\Gamma^{a,b}
       \right) \\[6pt]
    &= \PD \sum_{b,\beta_\infty}\frac{q^{\overline\beta_\infty}}{b!}\,
       \ev_{\infty*}\!\Bigg(
          \ev_q^*\sum_{k}q^{\overline{\beta}_{\lambda,k}}\,i_{k*}\Bigg(
             i_{\lambda,k}^* \PD \ev_{p*}\!\left(
                \sum_{a,\beta_0}\frac{q^{\overline\beta_0}}{a!}\,
                \ev_0^*(\alpha)\cap \Gamma_0^a
             \right) 
             \cdot \frac{\hbar(-\hbar)}{e(\cN_{\sec})}
          \Bigg) \cap \Gamma_\infty^b
       \Bigg) \\[6pt]
    &= \mathbb M_X(\hbar)\circ \widetilde{\mathsf S}_\lambda 
       \circ \mathbb M_{X^\lambda}(-\hbar)^*(\alpha),
\end{align*}
where the summation on the first line ranges over all $a,b\geq 0$, all fixed components $\overline M^T_0\times_{F_k}\overline M^T_{\infty}$, all $\beta$ and all $F_k$. This concludes the proof.
\end{proof}

\subsubsection*{More properties}

We now collect other properties of $\bbS_{G,\bN,X}$ that follow from the localization result of \Cref{TGcompatible}.

\begin{prop}\label{CommuteConnection}
    For any $\Gamma \in \sA^\hbar_{G,\bN}$, the shift operator $\bbS_{G,\bN}(\Gamma,-)$ commutes with the quantum connections (see \Cref{def:quantum_connection}), i.e.,
    \begin{equation*}
        [\nabla_{\tau^{i,j}},\bbS_{G,\bN}(\Gamma)] = 0, \quad
        [\nabla_{\hbar\partial_\hbar},\bbS_{G,\bN}(\Gamma)] = 0, \quad
        [\nabla_{Dq\partial_q}, \bbS_{G,\bN}(\Gamma)] = 0.
    \end{equation*}
    A similar statement holds in the presence of a flavour symmetry $\xi$.
\end{prop}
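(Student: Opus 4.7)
The strategy is to reduce the statement to the abelian case via \Cref{TGcompatible} and then invoke the commutativity of abelian shift operators with the quantum connection established in \cite{Iritani}.

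First, by the equivariant formality of \Cref{equivformal}, the module $QH^\bullet_{G\times\Cxh}(X)[[q_G,\tau]]$ is free, hence torsion-free, over $H^\bullet_{G\times\Cxh}(\pt)$. It therefore suffices by \Cref{Lemmatorsionfree} to verify the commutator identities after tensoring with $\Frac H^\bullet_{G\times\Cxh}(\pt)$. By \Cref{TGcompatible}, after this localization, $\bbS_{G,\bN,\loc}(\Gamma,-)=\bbS_{T,\bN,\loc}(\Gamma,-)$, so we may decompose $\Gamma=\sum_\lambda a_\lambda[t^\lambda]$ in the basis of $T$-fixed points of $\Gr_T$ with $a_\lambda\in\Frac H^\bullet_{T\times\Cxh}(\pt)$, and write the operator as $\sum_\lambda a_\lambda\bbS_{t^\lambda}$.

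Next, by \Cref{intertwiningFundamentalsolution}, $\bbS_{t^\lambda}=\mathbb{M}_X\circ S_{t^\lambda}\circ\mathbb{M}_X^{-1}$. Since $\mathbb{M}_X$ is $H^\bullet_{T\times\Cxh}(\pt)$-linear, the operator factors as $\mathbb{M}_X\circ\bigl(\sum_\lambda a_\lambda S_{t^\lambda}\bigr)\circ\mathbb{M}_X^{-1}$. Applying the intertwining identities \eqref{qconnintertwine}, the commutations with $\nabla_{\tau^{i,j}}$, $\nabla_{Dq\partial_q}$, and $\nabla_{\hbar\partial_\hbar}$ become the commutations of $\sum_\lambda a_\lambda S_{t^\lambda}$ with the Givental-side operators $\partial_{\tau^{i,j}}$, $Dq\partial_q+\hbar^{-1}(D\cup)$, and $\hbar\partial_\hbar-\hbar^{-1}(c_1^G(X)\cup)+\mu_X$, respectively. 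The first is immediate since both $a_\lambda$ and $S_{t^\lambda}$ are independent of $\tau$. The other two are verified by direct computation from the explicit formula for $S_{t^\lambda}$ in \Cref{GiventalShift}: the logarithmic derivatives of the Euler-class ratios in $\Delta_i(\lambda)$ with respect to $q$ and $\hbar$ are precisely designed to cancel the shifts of $D$ and $c_1^G(X)$ produced by the equivariant-parameter translation $\Phi_\lambda$. These are essentially the computations carried out in \cite{Iritani}, modulo the reversal of the roles of $0$ and $\infty$ (cf.~\Cref{IritaniRemark}).

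The main technical obstacle is the commutation with $\nabla_{\hbar\partial_\hbar}$, because both the shift $\Phi_\lambda\colon a\mapsto a+\lambda(\hbar)$ and the grading operator $\mu_X$ have nontrivial $\hbar$-dependence that must balance exactly, and moreover $\nabla_{\hbar\partial_\hbar}$ is not $H^\bullet_{T\times\Cxh}(\pt)$-linear in the coefficients $a_\lambda$ so the standard Leibniz correction must be tracked. The key fact that makes everything work is the degree-preservation of $\bbS_{G,\bN}$ (the corollary following \Cref{graded}), which forces $[\mu_X,\bbS_{t^\lambda}]=0$ on homogeneous elements and provides the compatibility between the degree shift produced by $\mu_X$ on the coefficients $a_\lambda$ and the $\hbar$-dependence encoded in $\Delta_i(\lambda)$, thereby closing the commutation identity.
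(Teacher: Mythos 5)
Your proof follows essentially the same route as the paper's: localize to reduce to the abelian case, conjugate by $\mathbb{M}_X$ via \Cref{intertwiningFundamentalsolution} to move to the Givental side, and then delegate the commutator identities for $S_{t^\lambda}$ to a direct computation referencing Iritani. You do unpack the one-word ``by localization'' step of the paper a bit more explicitly (using torsion-freeness and \Cref{TGcompatible}), which is a harmless elaboration.

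One caution about your final paragraph: the claim that degree-preservation of $\bbS_{G,\bN}$ ``forces'' the $\nabla_{\hbar\partial_\hbar}$ commutation is not quite an argument. Degree preservation is stated for $\bbS_{G,\bN}(\Gamma,-)$ with $\Gamma$ in the non-localized algebra $\sA^\hbar_{G,\bN}$; after expanding $\Gamma=\sum_\lambda a_\lambda[t^\lambda]$ in the localized basis, the individual coefficients $a_\lambda$ may have nontrivial $\hbar$-dependence, and $[\hbar\partial_\hbar, a_\lambda\,\cdot\,]\ne 0$ in general. The cancellation of these Leibniz terms against the $\hbar$-dependence of $\Delta_i(\lambda)$ and the shift $\Phi_\lambda$ is exactly the nontrivial content of the ``direct computation'' the paper cites to Iritani; degree-counting alone does not supply it. You correctly flag this as the technical obstacle, but the heuristic you offer for why it resolves is not itself a proof, so this step carries the same residual weight as the paper's appeal to the Iritani reference.
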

\begin{proof}
By \Cref{TGcompatible}, it suffices to show that $\bbS_\lambda$ commutes with quantum connections for each $\lambda \in \Lambda$. By \eqref{qconnintertwine} and \Cref{intertwiningFundamentalsolution}, the claim reduces to showing that each $S_{\lambda}$ commutes with $\partial_{\tau^{i,j}}$, $\hbar \partial_\hbar - \hbar^{-1}(c_1^T(X) \cup) + \mu_X$, and $Dq \partial_q + \hbar^{-1}(D \cup)$, which follows easily from a direct computation using \Cref{Giventalshift} (cf.\ \cite[Corollary 2.11]{Iritaniblowup}).
\end{proof}

\begin{prop}
There is a commutative diagram:
\begin{equation*}
\begin{tikzcd}
\sA_{T,\bN}^\hbar \otimes_{\C[\hbar]} H_{G \times \Cxh}^\bullet(X)\arrow[d] \arrow[r, "{\bbS_{T,\bN}}"] 
  & H_{T \times \Cxh}^\bullet(X)[[q,\tau]] \arrow[d] \\
e(\BS_\bN) \cap H_\bullet^{\BI \rtimes \Cxh}(\Gr_G) \otimes_{\C[\hbar]} H_{G \times \Cxh}^\bullet(X) \arrow[r, "{\bbS_{G,\bN}}"] 
  & H_{T \times \Cxh}^\bullet(X)[[q,\tau]]
\end{tikzcd}
\end{equation*}
A similar diagram holds in the presence of a flavour symmetry $\xi$.
\end{prop}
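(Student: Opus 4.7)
The plan is to reduce the statement to the localized compatibility in \Cref{TGcompatible} via an injectivity argument: both horizontal arrows in the diagram, once post-composed with the natural inclusion into localized cohomology, will become instances of $\bbS_{T,\bN,\loc}$ and $\bbS_{G,\bN,\loc}$, which are already known to agree.

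First, I will confirm well-definedness of the vertical arrows. The left vertical map is the pushforward along the closed inclusion $\Gr_T\hookrightarrow \Gr_G$ in the first tensor factor, together with the base change of Novikov rings $\C[q_T]\to\C[q_G]$ (induced by $H_2^{\mathrm{ord},T}(X;\Z)\to H_2^{\mathrm{ord},G}(X;\Z)$) in the second. On a basis element $e(\BS_{t^\lambda})\cdot[t^\lambda]_T\in\sA^\hbar_{T,\bN}$, the pushforward gives $e(\BS_{t^\lambda})\cdot[t^\lambda]_G = \rho_{\lambda*}\bigl(e(\widetilde{\BS}^G_{\leq\lambda})\cap[\widetilde{t^\lambda}]\bigr)$, where $\widetilde{t^\lambda}$ is the unique preimage of the smooth point $t^\lambda\in C_\lambda$ in any fixed resolution $\widetilde C^G_{\leq\lambda}$; by \Cref{independentResol} this lies in $e(\BS_\bN)\cap H^{T_\oh\rtimes\Cxh}_\bullet(\Gr_G)$.

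Next, I post-compose both paths with the map
\[
\jmath \colon H^\bullet_{T\times\Cxh}(X)[[q_G,\tau]] \longrightarrow H^\bullet_{T\times\Cxh}(X)_\loc[[q_G,\tau]].
\]
The top-then-right composition then factors through $\bbS_{T,\bN,\loc}$ (see \Cref{localisedshiftop}) base-changed along $\C[q_T]\to\C[q_G]$; this base change is transparent from the defining formula \eqref{sectioncountingmap}, since Novikov variables enter only via $q^{\overline{\beta}}$, and $\overline\beta$ on the $T$-side maps to $\overline\beta$ on the $G$-side under $H_2^{\mathrm{ord},T}(X;\Z)\to H_2^{\mathrm{ord},G}(X;\Z)$. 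The left-then-bottom composition factors through $\bbS_{G,\bN,\loc}$. Under the canonical identification $\sA^\hbar_{T,\bN,\loc}\cong \bigl(e(\BS_\bN)\cap H^{T_\oh\rtimes\Cxh}_\bullet(\Gr_G)\bigr)_\loc$ of \eqref{localizeindepofN}, the two compositions coincide by \Cref{TGcompatible}.

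Finally, I conclude from the injectivity of $\jmath$. By the semiprojectivity of $X$ and \Cref{equivformal}, $H^\bullet_{T\times\Cxh}(X)$ is a free $H^\bullet_{T\times\Cxh}(\pt)$-module, so the localization map is injective; this injectivity is inherited by the graded completion along $\Eff(X)$. Equality after post-composition with $\jmath$ therefore forces equality already in $H^\bullet_{T\times\Cxh}(X)[[q_G,\tau]]$, which is the commutativity asserted by the diagram. The main subtlety, namely the compatible identification of the $\C[q_T]\to\C[q_G]$ base change of $\bbS_{T,\bN,\loc}$ with $\bbS_{T,\bN}$ followed by Novikov base change, was addressed in the second step above.
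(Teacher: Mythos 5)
Your argument is correct and takes essentially the same route as the paper: the paper's own proof is a one‑line reduction to \Cref{TGcompatible}, and you spell out the same reduction, adding the (left implicit in the paper) injectivity of the localization map $\jmath$ coming from \Cref{equivformal} and the observation that the $q_T\to q_G$ base change commutes with the section‑counting formula.
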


\begin{proof}
This follows directly from \Cref{TGcompatible}, applied both for $G$ and for $T$. Recall we use $G$-equivariant Novikov and Bulk variables for both $\bbS_{T,\bN}$ and $\bbS_{G,\bN}$.
\end{proof}

By specializing $\hbar=0$, we also obtain properties for Seidel representation and Seidel homomorphisms. 

\begin{cor}\label{compatqprod}
The map $\bbS_{G,\bN}^{\hbar = 0}$ is compatible with the quantum product, in the sense that
\begin{equation*}
\bbS^{\hbar = 0}_{G,\bN}(\Gamma)( \alpha_1 \star_{\tau} \alpha_2) = \bbS^{\hbar = 0}_{G,\bN}(\Gamma)(\alpha_1) \star_{\tau} \alpha_2.
\end{equation*}
In particular,
\begin{equation*}
\bbS^{\hbar = 0}_{G,\bN}(\Gamma)(\alpha) = \Psi_{G,\bN}(\Gamma) \star_{\tau} \alpha.
\end{equation*}
A similar statement holds in the presence of a flavour symmetry $\xi$.
\end{cor}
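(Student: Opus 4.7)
The plan is to extract the compatibility with the quantum product as a leading-order consequence (in $\hbar$) of the commutation of $\bbS_{G,\bN}(\Gamma,-)$ with the quantum connection $\nabla_{\tau^{i,j}}$ established in Proposition \ref{CommuteConnection}. Expanding $[\nabla_{\tau^{i,j}},\bbS_{G,\bN}(\Gamma,-)]=0$ using the explicit formula $\nabla_{\tau^{i,j}}=\partial_{\tau^{i,j}}+\hbar^{-1}(\phi_{i,j}\star_\tau-)$ and clearing the $\hbar^{-1}$ gives
\[
\phi_{i,j}\star_\tau\bbS_{G,\bN}(\Gamma,\alpha)-\bbS_{G,\bN}(\Gamma,\phi_{i,j}\star_\tau\alpha)
=\hbar\bigl(\partial_{\tau^{i,j}}\bbS_{G,\bN}(\Gamma,\alpha)-\bbS_{G,\bN}(\Gamma,\partial_{\tau^{i,j}}\alpha)\bigr).
\]
The image of $\bbS_{G,\bN}(\Gamma,-)$ lies in $H^\bullet_{T\times\Cxh}(X)[[q_G,\tau]]$ by \Cref{bbSN}, so neither side contains negative powers of $\hbar$. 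Setting $\hbar=0$ then yields
\[
\bbS^{\hbar=0}_{G,\bN}(\Gamma,\phi_{i,j}\star_\tau\alpha)=\phi_{i,j}\star_\tau\bbS^{\hbar=0}_{G,\bN}(\Gamma,\alpha)
\]
for every basis element $\phi_{i,j}$ and every $\alpha\in QH^\bullet_G(X)[[q_G,\tau]]$.

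Next I would promote this to the general statement. Both sides of
$\bbS^{\hbar=0}_{G,\bN}(\Gamma,\alpha_1\star_\tau\alpha_2)-\bbS^{\hbar=0}_{G,\bN}(\Gamma,\alpha_1)\star_\tau\alpha_2$
are $\C[[q_G,\tau]]$-linear in $\alpha_2$ for fixed $\alpha_1$ and $\Gamma$: the shift operator is defined termwise on power-series expansions (see \Cref{bbSN}) and $\star_\tau$ is $\C[[q_G,\tau]]$-bilinear. Since $\{\phi_{i,j}\}$ is a $\C$-basis of $H^\bullet_G(X)$ and $\star_\tau$ is commutative, setting $\alpha=\alpha_1$ and $\phi_{i,j}=\alpha_2$ in the displayed identity, then extending $\C[[q_G,\tau]]$-linearly in $\alpha_2$, yields the first claim of the corollary for arbitrary $\alpha_1,\alpha_2\in QH^\bullet_G(X)[[q_G,\tau]]$.

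Finally, the "in particular" statement is obtained by specializing $\alpha_1=1$, the unit of the quantum product, and invoking $\bbS^{\hbar=0}_{G,\bN}(\Gamma,1)=\Psi_{G,\bN}(\Gamma)$ from \Cref{Seidelrepmap}. I do not anticipate a genuine obstacle here: the argument is a formal $\hbar\to 0$ degeneration of \Cref{CommuteConnection}, with the only minor point being the verification that $\bbS_{G,\bN}(\Gamma,-)$ is polynomial (rather than Laurent) in $\hbar$, which is immediate from the definition of the section-counting map.
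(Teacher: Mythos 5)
Your proposal is correct and follows the paper's own argument exactly: both proofs multiply the quantum connection by $\hbar$, invoke the commutation established in \Cref{CommuteConnection}, observe that $\bbS_{G,\bN}(\Gamma,-)$ takes values in $H^\bullet_{T\times\Cxh}(X)[[q_G,\tau]]$ (hence is polynomial in $\hbar$), and set $\hbar=0$ to isolate the quantum-multiplication term. (The only blemish is a sign on the right-hand side of your displayed commutator identity, which should be $-\hbar(\partial_{\tau^{i,j}}\bbS - \bbS\,\partial_{\tau^{i,j}})$; this does not affect the $\hbar=0$ specialization.)
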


\begin{proof}
This follows from \Cref{CommuteConnection}. Setting $\hbar=0$ in the equality $[\hbar\nabla_{\tau^{i,j}}, \bbS_{G,\bN}(\Gamma)]=0$ yields the desired result.
\end{proof}

\begin{cor}
The map $\Psi_{G,\bN}$ (resp. $\Psi_{G,\bN}^\xi$) is a ring homomorphism.
\end{cor}
\begin{proof}
This follows directly from \Cref{moduleproperty1} and \Cref{compatqprod}:
\begin{equation*}
\bbS^{\hbar=0}_{G,\bN}(\Gamma \ast \Gamma')(1)
=
\bbS^{\hbar=0}_{G,\bN}(\Gamma)\bigl(\bbS^{\hbar=0}_{G,\bN}(\Gamma')(1)\bigr)
=
\Psi_{G,\bN,X}(\Gamma)\star_\tau \Psi_{G,\bN,X}(\Gamma').
\qedhere
\end{equation*}
\end{proof}

For $a, b \in \sA_{G,\bN}$, choose lifts $\widetilde{a}, \widetilde{b} \in \sA_{G,\bN}^\hbar$. There is a Poisson algebra structure on $\sA_{G,\bN}$ given by the bracket
\begin{equation*}
\{a, b\} = \frac{1}{\hbar}(\widetilde{a} \widetilde{b} - \widetilde{b} \widetilde{a}) \mod \hbar.
\end{equation*}
It is known that this Poisson bracket induces a symplectic structure on the smooth locus of $\spec \sA_{G,\bN}$ (see \cite[Proposition 6.15]{BFN}).

\begin{df}
Let $R$ be a commutative ring, and let $\sA_R$ be a finite type, commutative, flat $R$-algebra equipped with an $R$-linear Poisson bracket. Suppose the Poisson bracket is non-degenerate on $\sA = \sA_R/\mathfrak{m}\sA_R$ for a maximal ideal $\mathfrak{m} \subset R$. Then a closed subscheme $Z \subset \spec \sA_R$ is said to be a family of \emph{Lagrangians} over $R$ if 
\begin{enumerate}
    \item $Z$ is flat over $R$, 
    \item the radical of its defining ideal is a Lie subalgebra of $\sA_R$, and 
    \item $\dim Z \;=\; \dim R + \tfrac{1}{2} \dim \sA_R.$
\end{enumerate}
\end{df}
Note that if the fibres of $\spec \sA_R \to \spec R$ are smooth, then the above definition agrees with the usual notion of a coisotropic family of Lagrangian subvarieties in a family of smooth symplectic manifolds (see \cite[Proposition~1.5.1]{ChrissGinzburg}).

Let
\begin{equation*}
    \Psi_{G,\bN,X}^{q}:\sA_{G,\bN}[[q,\tau]]\to QH_{G}^\bullet(X)[[q,\tau]]
\end{equation*}
be defined by applying $\Psi_{G,\bN,X}$ on each coefficients of a power series expansion in $q_G$.
\begin{prop}\label{prop:Lagrangian}
The morphism of schemes
\begin{equation*}
    \spec\Psi_{G,\bN,X}^{q}:\spec QH_{G}^\bullet(X)
    \;\longrightarrow\; \spec \sA_{G,\bN}[[q,\tau]]
\end{equation*}
is finite, and the image $V(\Psi_{G,\bN,X}^{q})$ is a family of Lagrangians over $\C[[q_G,\tau]]$. Moreover, if all infinite sums involved in the quantum product and shift operators converge upon evaluation at a homomorphism
\begin{equation*}
    q_0 \colon \C[q,\tau] \to \C,
\end{equation*}
and if we denote by
\begin{equation*}
   \Psi_{G,\bN,X}^{q_0}: \Psi_{G,\bN,X}^{q_0} \colon \sA_{G,\bN} \to QH_{G}^\bullet(X)
\end{equation*}
the resulting specialization, then $V(\ker \Psi^{q_0}_{G,\bN,X})$ is Lagrangian (i.e., a family of Lagrangians over $\C$). Similarly, in the presence of a flavour symmetry $\xi$, we have a finite morphism
\begin{equation*}
    \spec QH_{\hG}^\bullet(X)
    \;\longrightarrow\; \spec \sA_{G,\bN}^\xi[[q_G,\tau]]
\end{equation*}
whose image is a family of Lagrangians over $\C[\lt_\xi][[q,\tau]]$.
\end{prop}
\begin{proof}
This follows from \cite[Theorem I]{Gabber} (see also \cite[page 56]{ChrissGinzburg}).
\end{proof}
\begin{rem}
    It can be shown that the image of $\spec(\Psi_{G,\bN,X}^{q})$ intersects the smooth locus of the morphism $\spec \sA_{G,N}^{q}[q,\tau]\to \spec \C[[q,\tau]]$ nontrivially.
\end{rem}

\begin{df}\label{Non-equivariant}
    The linear maps
\begin{equation*}
        \bbS^{\mathrm{noneq}}_{G,\bN,X} \colon\sA_{G,\bN}^\hbar\longrightarrow\Hom(H^\bullet_{G}(X)[\hbar],QH^\bullet(X)[\hbar])[[q_G,\tau]], 
\end{equation*}
and 
\begin{equation*}
        \Psi^{\mathrm{noneq}}_{G,\bN,X} \colon\sA_{G,\bN}\longrightarrow QH^\bullet(X) .
\end{equation*}
    are obtained rom $\bbS_{G,\bN,X}$ and $\Psi_{G,\bN,X}$ by specializing the equivariant parameters $H_G^\bullet(\pt)$ to zero. Moreover, $\Psi^{\mathrm{noneq}}_{G,\bN,X}$ defines a ring homomorphism. A similar statement holds in the presence of a flavour symmetry $\xi$.
\end{df}
We refer to $\bbS^{\mathrm{noneq}}_{G,\bN,X}$ and $\Psi_{G,\bN,X}^{\mathrm{noneq}}$ as the non-equivariant limits of the shift operator $\bbS_{G,\bN,X}$. See \Cref{Non-equivariantExample} for a sample calculation.

\subsection{Coproduct structure of Coulomb branches}\label{coproduct section}
The diagonal map
\begin{equation*}
    \Delta:\Gr_G\longhookrightarrow \Gr_G\times\Gr_G
\end{equation*}
induces a pushforward homomorphism
\begin{equation}\label{Delta}
    \Delta_*: \sA_G^\hbar\to \sA_G^\hbar\otimes_{H_{G\times\Cxh}^\bullet(\pt)}\sA_G^\hbar,
\end{equation}
which defines a coproduct on $\sA^\hbar_G.$ It is easy to show that $\Delta_*$ is coassociative. If we set $\hbar=0$, then $\spec \sA_G$ can be identified with the regular centralizer of the Langlands dual group $\widecheck{G}$ (\cite{BFM}), which admits a symplectic groupoid structure where multiplication map is induced by $\Delta_*$.

Moreover, if $\bN_1$ and $\bN_2$ are two representations of $G$, it is clear that $\Delta^{-1} (\BS_{G, \bN_1}\boxtimes \BS_{G, \bN_2}) \cong \BS_{G, \bN_1 \oplus \bN_2}$. Therefore, \eqref{Delta} restricts to give a homomorphism (denoted by the same symbol)
\begin{equation*}
    \Delta_{*}: \;\sA^{\hbar}_{G, \bN_1 \oplus \bN_2}\;\longrightarrow\;
    \sA^\hbar_{G, \bN_1} \otimes_{H_{G \times \Cxh}^\bullet(\pt)} \sA^\hbar_{G, \bN_2}.
\end{equation*}
In particular, for each representation $\bN$, one can show that $\sA^\hbar_{G,\bN}$ admits the structure of an $\sA^\hbar_G$-comodule. This induces a symplectic groupoid action of $\spec \sA_G$ on $\spec \sA_{G, \bN}$.

Recall that for a product variety, its quantum cohomology satisfies a Künneth formula \cite{Behrend}. The equivariant version also holds: if $X_1$ and $X_2$ are smooth semiprojective $G$-varieties, there is a homomorphism
\begin{equation}
\sigma:\ 
H^\bullet_{G\times\Cxh}(X_1)
\otimes_{H^\bullet_{G\times\Cxh}(\pt)}
H^\bullet_{G\times\Cxh}(X_2)\xrightarrow{\simeq}H^\bullet_{\Delta(G)\times\Cxh}(X_1\times X_2).
\end{equation}
In the following, $X_1\times X_2$ is equipped .with respect to the diagonal $G$-action. 

\begin{prop}\label{kunneth}
Let $f_i : X_i \to \bN_i$ be a $G$-equivariant proper morphism for $i = 1, 2$, satisfying the assumptions of \Cref{sectionshiftoperators}. Then we have
\begin{equation*}
\bbS_{G,\bN_1\oplus\bN_2}(\Gamma)\bigl(\sigma(\gamma_1\otimes\gamma_2)\bigr)
=
\sigma\bigl((\bbS_{G,\bN_1}\otimes \bbS_{G,\bN_2})(\Delta_*(\Gamma))(\gamma_1\otimes\gamma_2)\bigr),
\end{equation*}
for any $\Gamma\in \sA^\hbar_{G,\bN_1\oplus\bN_2}$, any $\gamma_i \in H_{G\times\Cxh}^\bullet(X_i)$, for $i=1,2$.
\end{prop}

\begin{proof}
Using \Cref{TGcompatible} and observing $\Delta_*([t^\lambda])=[t^\lambda]\otimes [t^\lambda]$, we only need to prove 
\begin{equation*}
    \bbS_{X_1\times X_2,\lambda}(\sigma(\gamma_1\otimes\gamma_2))=\sigma(\bbS_{X_1,\lambda}(\gamma_1)\otimes_{H^\bullet_{G\times\Cxh}(\pt)} \bbS_{X_2,\lambda}(\gamma_2)),
\end{equation*}
where $\bbS_{X_i,\lambda}$ denotes the abelian shift operators for $X_i$ (see \Cref{definitionbbSlambda}). This follows easily from \Cref{Giventalshift,intertwiningFundamentalsolution}. 
\end{proof}

\section{Applications/calculations}\label{Applications}
\subsection{Examples}
\begin{exmp}\label{abeliancase}
Let $G = T = (\Cx)^k$ and $X = \bN = \bigoplus_{j=1}^n \C_{\eta_j}$. Then
\begin{equation*}
    \bbS_{T}\big(e(\BS_{t^\lambda})\cap [t^\lambda]\big)(1) 
    = q^{\lambda}\prod_{j} 
      \prod_{c=0}^{\langle\eta_j,\lambda\rangle-1}(\eta_j+c\hbar),
\end{equation*}
and
\begin{equation}\label{abelianformula}
    \bbS_{T}([t^\lambda])(1) 
    = q^{\lambda}\,
      \frac{\prod_{j}\prod_{c=0}^{\langle\eta_j,\lambda\rangle-1}
                 (\eta_j+c\hbar)}
           {\prod_{j}\prod_{c=0}^{-\langle\eta_j,\lambda\rangle-1}
                 (\eta_j+c\hbar)}=q^{\lambda}\,
      \frac{\prod_{j}\prod_{c=0}^{\langle\eta_j,\lambda\rangle-1}
                 (\eta_j+c\hbar)}
           {e(\BS_{t^\lambda})}.
\end{equation}
Moreover, the same formulas hold in the presense of a flavour symmetry $\xi$, where each $\eta_j$ is understood as a character of $\hG$.
\end{exmp}
\begin{proof}
Let $[t^\lambda]\in \Gr_T$, by \Cref{CompareIritani2}, the associated Seidel space is given by
\[ \sE_{t^\lambda}(\bN)\cong \bigoplus_{j=1}^n \oh_{\PP^1}(-\langle\eta_j,\lambda\rangle).\]
It is straightforward to verify that the only contributions come from $2$-points invariants. The associated moduli space of sections, with marked points on the fibres over $0$ and $\infty$, is
\[
\cM_{t^\lambda}(\bN)_0= H^0\left(\PP^1,\sE_{t^\lambda}(\bN)\right)=\bigoplus_{j}\bigoplus_{c=\langle\eta_j,\lambda\rangle}^{0} \C_{\eta_j+c\hbar},
\]
and its virtual fundamental class is
\[
[\cM_{t^\lambda}(\bN)_0]^\vir = e(H^1(\PP^1,\sE_{t^\lambda}(\bN))\cap  [\cM_{t^\lambda}(\bN)_0]=\prod_{j}\prod_{c=1}^{\langle\eta_j,\lambda\rangle-1}(\eta_j+c\hbar)\cap [\cM_{t^\lambda}(\bN)_0].
\]
By the proof of \Cref{propermoduli} in the special case $X=\bN$, we have
\[\cZ_{t^\lambda}(\bN)_0\cong \bigoplus_{j:\langle\eta_j,\lambda\rangle\leq0} \C_{\eta_j}.\]
The inclusion $\cZ_{t^\lambda}(\bN)_0\hookrightarrow\cM_{t^\lambda}(\bN)_0$ is a regular embedding. Consequently, 
\[
[\cZ_{t^\lambda}(\bN)_0]^\vir=\prod_{j}\prod_{c=1}^{\langle\eta_j,\lambda\rangle-1}(\eta_j+c\hbar)\cap [\cZ_{t^\lambda}(\bN)_0].
\]
Finally, the evaluation map $\ev_\infty:\cZ_{t^\lambda}(\bN)\to \bN$ is simply the inclusion. Thus, we obtain
\[
\bbS_{T,\bN}(e(\BS_{t^\lambda})\cap [t^\lambda])(1)=q^{\lambda}\prod_{j}\prod_{c=0}^{\langle\eta_j,\lambda\rangle-1}(\eta_j+c\hbar) \in H_T^\bullet(\bN)[q_T],
\]
as desired. By \eqref{Nweights}, we obtain \eqref{abelianformula}. 
\end{proof}

The following example demonstrates the existence of non-equivariant limits as in \Cref{Non-equivariant}. As we will see, the introduction of $G$-equivariant Novikov variables is essential for the existence of non-equivariant limits.

\begin{exmp}\label{Non-equivariantExample}
Let $G = \operatorname{GL}_2$ act on $\PP^1$ by fractional linear transformations, and consider the induced action on $X = T^*\PP^1$. Let $T = (\C^\times)^2 \subset \operatorname{GL}(2,\C)$ denote the subgroup of diagonal matrices. The cohomology of $T^*\PP^1$ admits the following presentation
\[
H^\bullet_{(\C^\times)^2 \times \C^\times_\hbar}(T^*\PP^1) = \frac{\C[x, a_1, a_2, \hbar]}{\langle(x + a_1)(x + a_2)\rangle},
\]
where $x$ is the pullback of $c_1(\mathcal O_{\PP^1}(1))$ along $T^*\PP^1\to \PP^1$, and $a_i$ are the equivariant parameters of $T$.

Let us denote $q_{T,\lambda}^0$ (resp. $q_{T,\lambda}^\infty$) to be the $T$-equivariant Novikov variables associated to the constant section at $0$ (resp. $\infty$) of the Seidel space $\sE_{t^\lambda}(X)$. One can compute that
\begin{align*}
\bbS_{{(1,0)}}(1) &= \frac{a_1 - a_2 + \hbar}{(a_1 - a_2)^2} \left(q^0_{T,(1,0)}(x+a_2) - q^\infty_{T,(1,0)}(x+a_1)\right),  \\
\bbS_{{(0,1)}}(1) &= \frac{a_2 - a_1 + \hbar}{(a_1 - a_2)^2} \left(q_{T,(0,1)}^\infty (x+a_1) - q^0_{T,(0,1)}(x+a_2)\right).
\end{align*}

Notice that there exists a $G$-equivariant proper morphism $T^*\PP^1 \to \mathfrak{gl}_2$, where $\mathfrak{gl}_2$ is the adjoint representation of $\mathrm{GL}_2$. For simplicity, let us denote the fixed point classes $z_1=[t^{(1,0)}]$ and $z_2=[t^{(0,1)}]$. In this case, a class $p(a_1, a_2)z_1 \in \sA^\hbar_{(\Cx)^2, \mathfrak{gl}_2}$ if and only if $p$ is divisible by $(a_1 - a_2)^2$. On the other hand, we find
\begin{align*}
\bbS^{\mathrm{noneq}}_{(\Cx)^2, \mathfrak{gl}_2, T^*\PP^1}((a_1 - a_2)^2 z_1)(1) &= (q^0_{T,(1,0)} - q^\infty_{T,(1,0)})\hbar x, \\
\bbS^{\mathrm{noneq}}_{(\Cx)^2, \mathfrak{gl}_2, T^*\PP^1}((a_1 - a_2)^2 z_2)(1) &= (q^\infty_{T,(0,1)} - q^0_{T,(0,1)})\hbar x.
\end{align*}

It is more interesting to consider the non-abelian shift operators. Consider the class
\[
e(\BS) \cap [C_{\leq (1,0)}] = (a_1 - a_2)\left(z_1-z_2\right) \in \sA^\hbar_{\operatorname{GL_2},\mathfrak{gl}_2}.
\]
Let $\eta:\C[q_T]\to\C[q_G]$ be the base change map, then the relevant $G$-equivariant Novikov variables are identified as follows
\[q_G \coloneq \eta(q_{T,(1,0)}^0)=\eta(q_{T,(0,1)}^\infty),\quad q_{G}'\coloneq \eta(q^\infty_{T,(0,1)})=\eta(q^0_{T,(0,1)}).\]

Then, we find
\[
\bbS_{\operatorname{GL_2},\mathfrak{gl}_2,T^*\PP^1}(e(\BS)\cap [C_{\leq(1,0)}])(1)= (q_G-q_G')(2x+a_1+a_2) - (q_G+q_G')\hbar.
\]
Note that $2x+a_1+a_2$ is the negative of equivariant Euler class of $T^*\PP^1$. So,
\[
\bbS^{\mathrm{noneq}}_{\operatorname{GL}_2, \mathfrak{gl}_2, T^*\PP^1}(e(\BS) \cap [C_{\leq (1,0)}])(1) = -(q_G - q_G')e(T^*\PP^1) -(q_G+q_G')\hbar.
\]
In particular, we have 
\[
\Psi^{\mathrm{noneq}}_{\operatorname{GL}_2, \mathfrak{gl}_2, T^*\PP^1}(e(\BS) \cap [C_{\leq (1,0)}] ) = -(q_G - q_G')e(T^*\PP^1).
\]
Note that the expression admits a non-equivariant limit only after identifying the $q_T$ variables via $\eta$. Without this identification, the cancellation needed for existence of the limit does not occur. \qed
\end{exmp}

\begin{exmp}
It is known that for adjoint matter $\mathfrak{g}$, the Coulomb branch $\sA_{G,\mathfrak{g}}$ is isomorphic to $\C[T^* \widecheck{T}]^W$ (see~\cite[Section~6(vi)]{BFN}). Continuing from \Cref{Non-equivariantExample}, we may compute explicitly the Seidel homomorphism
\[
\Psi_{\operatorname{GL}_2, \mathfrak{gl}_2, T^*\PP^1} \colon \C[T^*(\C^\times)^2]^{\Z_2} \longrightarrow QH_{\operatorname{GL}_2}^\bullet(T^*\PP^1)[[q_G]].
\]
It sends
\begin{alignat*}{3}
    z_1 z_2 \ &\mapsto\   q_G q_G', \qquad &
    z_1^{-1} z_2^{-1} \ &\mapsto\   q_G^{-1} q_G'^{-1}, \\
    z_1 + z_2 \ &\mapsto\   -q_G - q_G', \qquad &
    (a_1 - a_2)(z_1 - z_2) \ &\mapsto\   -(q_G - q_G') \, e^{\operatorname{GL}_2}(T^*\PP^1). \qedhere
\end{alignat*}
\qed
\end{exmp}

\begin{exmp}
    Let $G$ act on $G/B$ and consider the induced action on $T^*G/B$. There is a $G$-equivariant proper morphism $T^*G/B\to \mathfrak g$, where $\mathfrak g$ is the adjoint representation of $G$. Then we have
    \[H^{\mathrm{ord},G}_2(T^*G/B;\Z)\cong H_2^{\mathrm{ord},B}(\pt;\Z)\cong \Lambda.\]
    We denote by $q^\lambda$ the Novikov parameter corresponding to $\lambda\in\Lambda$.
    Suppose that $a \in H_T^\bullet(\pt;\Z)$ is the Euler class of some $B$-representation $V_a$. Let $\pi:T^*G/B\to G/B$. Let $e_a$ be the equivariant Euler class of the bundle
    \[\pi^* (G\times^B V_{a})\to T^*G/B.\]
    Then the Seidel homomorphism
    \begin{equation*}
    \Psi_{G,\mathfrak g,T^*G/B}: \C[T^*\widecheck T]^W\longrightarrow QH_G^\bullet(T^*G/B)[[q_G]]
    \end{equation*}
    admits the following description. For an element of the form $at^\lambda\in \C[T^*\widecheck T]$, where $a\in H_T^\bullet(\pt;\Z)$ is the Euler class of $V_a$, we have
    \[ \Psi_{G,\mathfrak g, T^*G/B}\left(\sum_{\sigma\in W}\sigma(a)z^{\sigma(\lambda)}\right)=\sum_{\sigma\in W}e_{\sigma(a)}q^{\sigma(\lambda)}, \]
    where the sum is taken over the Weyl group elements. In the subsequent work \cite{Cotangentofflag}, we compute the shift operators for $T^*G/B$ in the presence of the flavour symmetry of the conical action along cotangent fibre. \qed
\end{exmp}

\subsection{Peterson isomorphism for reductive groups}\label{Peterson}

In this subsection, we set $\tau=0$ and consider only the small quantum cohomology.

We compute $\bbS_{G,X}([C_{\leq \lambda}])(1)$, and in particular $\Psi_{G,X}([C_{\leq \lambda}])$ when $X$ is a partial flag variety. The main result is \Cref{PetersonIso}, which generalizes the Peterson isomorphism for simply connected groups~\cite{peterson,Lam-Shimozono,Chow}. It also offers compelling evidence that working with equivariant Novikov variables is the natural setting for studying quantum cohomology.

We begin by introducing some notation. Let $R$ be the set of roots of $G$, and let $R^+ \subset R$ be the set of positive roots. Let $P \supset B$ be a standard parabolic subgroup of $G$, and let $R_P \subset R$ be the set of roots of the Levi subgroup of $P$, so that $R_P^+ = R_P \cap R^+$. In particular, we have the decomposition
\[
\operatorname{Lie} P = \operatorname{Lie} B \oplus \bigoplus_{\alpha \in R_P^+} \mathfrak{g}_{-\alpha}.
\]
We denote the Weyl group of the Levi subgroup of $P$ by $W_P$.

Let $X = G/P$ and $v \in W$. Define
\[
    \sigma(v) = \overline{BvP} \subset G/P, \qquad
    \sigma^-(v) = \overline{B^-vP} \subset G/P.
\]

We write $\lambda = w_\lambda(\lambda^-)$, where $\lambda^-$ is antidominant and $w_\lambda$ is the longest-length element in the coset
$w_\lambda \operatorname{Stab}_W(\lambda^-)$.

\begin{df}
   We say that $\lambda \in \Lambda$ is \emph{$P$-allowed} if, for every $\alpha \in R_P^+$,
\[
\langle \alpha, \lambda^- \rangle = 
\begin{cases}
    0 & \text{if } w_\lambda(\alpha) < 0, \\
    -1 & \text{if } w_\lambda(\alpha) > 0.
\end{cases}
\]
\end{df}
Let $EP$ be a contractible space with a free $P$-action, and let $BP=EP/P$. The long exact sequence of homotopy groups associated to the fibration $P\to EP\to BP$ shows that $\pi_1(BP)=1$ and $\pi_2(BP)\cong \pi_1(P)$. In particular, the Hurewicz theorem in algebraic topology implies that
\[
H_2^{\mathrm{ord},P}(\pt;\Z)=H_2^{\mathrm{ord}}(BP;\Z)\cong\pi_1(P).
\]
We identify $\pi_1(T)$ with $\Lambda$, and for $\lambda\in\Lambda$, we denote the image of $\lambda$ under the natural map $\pi_1(T)\to \pi_1(P)$ by the same symbol.

\begin{thm}\label{PetersonIso}
\[
\bbS_{G,G/P}([C_{\leq \lambda}])(1) =
\begin{cases}
    q^{\lambda^-} \sigma(w_\lambda) & \text{if } \lambda \text{ is } P\text{-allowed}, \\
    0 & \text{otherwise}.
\end{cases}
\]
In particular, the homomorphism
\[
\Psi_{G,G/B}\colon \sA_G\to (H_G^\bullet(G/B)[q],\star)
\]
becomes an isomorphism if we localize $\sA_G$ by the classes $[C_{\leq w_0(\lambda^-)}]$ for all $\lambda^-\in \Lambda^-$.
\end{thm}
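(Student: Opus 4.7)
\medskip

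\noindent\textbf{Proof plan.} The strategy follows closely \cite{Chow}, the essential new input being the use of equivariant Novikov parameters, which is exactly what allows the argument to extend beyond the simply connected semisimple setting of \cite{peterson,Lam-Shimozono,Chow}. The proof splits into two stages: first the explicit formula, then the isomorphism statement.

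\medskip

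\noindent\emph{Stage 1: the explicit formula.} I would begin from \eqref{sumofintegrations} and apply \Cref{zerodimcompo} to drastically cut down the sum. The lemma already guarantees that a nonempty, zero-dimensional fibre product $\cM_{\leq\lambda}(G/P,\beta)\times_{\ev_\infty}\sigma^-(v)$ forces $\lambda$ to be $P$-allowed, $v\equiv w_\lambda\bmod W_P$, and $\beta=\beta_{\lambda,w_\lambda}$, and moreover identifies this intersection with the singleton $\{s_{\lambda,w_\lambda}\}$. To upgrade this set-theoretic identification to a contribution of $1$ in \eqref{sumofintegrations}, one has to verify that $\ev_\infty$ is transverse to $\sigma^-(w_\lambda)$ at $s_{\lambda,w_\lambda}$; this follows from the very enumeration of tangent/deformation directions $-v(\alpha)$, $\alpha\in R^+\setminus R_P^+$, that was carried out in step (3) of the proof of \Cref{zerodimcompo}, together with the smoothness of $\cM_{\leq\lambda}(G/P,\beta)$ of the expected dimension (promised by the forthcoming Convexity result). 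Combined with \Cref{Lemmacurveclass}, which identifies $\overline{\beta_{\lambda,w_\lambda}} = w_\lambda^{-1}(\lambda)_P = (\lambda^-)_P$, this yields the displayed formula.

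\medskip

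\noindent\emph{Stage 2: the isomorphism.} Specialising to $P=B$ gives $R_B^+=\emptyset$, so every $\lambda$ is $B$-allowed and $\lambda\mapsto w_\lambda$ covers all of $W$. In particular, for strictly antidominant $\mu^-$ the coweight $\lambda=w_0(\mu^-)$ satisfies $\lambda^-=\mu^-$ and $w_\lambda=w_0$, so $\Psi_{G,G/B}([C_{\leq w_0(\mu^-)}]) = q^{\mu^-}$. Inverting these Seidel elements on the source therefore makes every antidominant Novikov monomial a unit, and hence every $q^\mu$ for $\mu\in\Lambda$ a unit, since $\Lambda^-$ generates $\Lambda$ as a group. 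Once this is done, the identity
\[
\sigma(w_\lambda) \;=\; q^{-\lambda^-}\cdot\Psi_{G,G/B}([C_{\leq\lambda}])
\]
exhibits every Schubert class in the image, giving surjectivity. Injectivity follows from a triangularity argument: the formula of Stage 1 shows that the leading term of $\Psi([C_{\leq\lambda}])$ in the Schubert $\times$ Novikov basis is $q^{\lambda^-}\sigma(w_\lambda)$ with no lower-order corrections, and distinct $\lambda$'s produce linearly independent leading terms once both $w_\lambda$ and $\lambda^-$ are recorded; a rank comparison over $H^\bullet_G(\pt)$ after localization then upgrades this to a bijection.

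\medskip

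\noindent\emph{Main obstacle.} The delicate point is the exact algebraic identification of the localization $\sA_G\bigl[[C_{\leq w_0(\lambda^-)}]^{-1}\bigr]_{\lambda^-\in\Lambda^-}$ and the accompanying rank count. Concretely, one must check that inverting these classes produces a Laurent-type extension of $\sA_G$ by the semigroup algebra $\C[\Lambda^-]$ which, as an $H^\bullet_G(\pt)$-module, has precisely rank $|W|$ per Novikov monomial, matching the target. This is essentially the step where equivariant Novikov variables enter crucially: $q_G$ now encodes the full coweight lattice $\Lambda$ of $G$ rather than merely the coroot lattice of its semisimple part, which is exactly why the Peterson isomorphism now generalizes to arbitrary reductive $G$. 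Modulo this combinatorial bookkeeping, the argument parallels that of \cite{Chow,Lam-Shimozono}.
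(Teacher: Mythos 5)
Your Stage 1 correctly identifies the three main ingredients (\eqref{sumofintegrations}, \Cref{zerodimcompo}, \Cref{Lemmacurveclass}), but it has a genuine gap in the step that promotes the set-theoretic identification to the count $\#=1$. \Cref{zerodimcompo} is a conditional statement: it says that \emph{if} the fibre product $\cM_{\leq\lambda}(G/P,\beta)\times_{\ev_\infty}\sigma^-(v)$ is finite and nonempty, \emph{then} $\lambda$ is $P$-allowed, $v\in w_\lambda W_P$, $\beta=\beta_{\lambda,w_\lambda}$ and the intersection is the singleton $\{s_{\lambda,w_\lambda}\}$. To extract a contribution of $1$ in \eqref{sumofintegrations} one must independently \emph{verify} finiteness for this distinguished $(\beta,v)$: without it one knows only that $\#$ is $0$ or $1$. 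The paper does exactly this by an explicit dimension count, showing
\[
\dim\cM_{\leq\lambda}(G/P,\beta_{\lambda,w_\lambda})\times_{\ev_\infty}\sigma^-(w_\lambda)
=\dim C_\lambda+\dim G/P+\sum_{\alpha\in R^+\setminus R_P^+}\langle\alpha,\lambda^-\rangle-\ell_P(w_\lambda)=0
\]
when $\lambda$ is $P$-allowed, using $\dim C_\lambda=-\sum_{\alpha\in R^+}\langle\alpha,\lambda^-\rangle-|R^+|+\ell(w_\lambda)$ and the length identity $\ell(w_\lambda)=\ell_P(w_\lambda)+|R_P^+|+\sum_{\alpha\in R_P^+}\langle\alpha,\lambda^-\rangle$. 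Your replacement — ``verify transversality of $\ev_\infty$ to $\sigma^-(w_\lambda)$ at $s_{\lambda,w_\lambda}$'' — does not deliver finiteness: transversality is already established globally from the $B$-equivariance remark preceding \eqref{sumofintegrations}, and transversality together with the Convexity result only tells you the fibre product has dimension equal to the \emph{expected} dimension. You still need to compute that this expected dimension vanishes. Moreover, citing step (3) of the proof of \Cref{zerodimcompo} for this purpose is circular: step (3) rules out deformations precisely by invoking the assumed finiteness of the fibre product, so it cannot be used as input to establish that finiteness.

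Your Stage 2 is more detailed than the paper (which simply remarks that the isomorphism is an immediate consequence of the explicit formula), and its surjectivity step is essentially right: $\Psi_{G,G/B}([C_{\leq w_0(\mu^-)}])=q^{\mu^-}$ since $w_{w_0(\mu^-)}=w_0$ and $\sigma(w_0)=1$, and for strictly antidominant $\mu^-$ one has $\Psi_{G,G/B}([C_{\leq w(\mu^-)}])=q^{\mu^-}\sigma(w)$ for every $w\in W$. However, the injectivity argument via ``linearly independent leading terms'' and an unspecified ``rank comparison'' is too vague to be checkable as written; the clean route is that after localization both sides are free $H^\bullet_G(\pt)\otimes\C[\Lambda]$-modules of rank $|W|$, and a surjection between finite free modules of the same rank over an integral domain is an isomorphism. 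You should either make this explicit or replace the triangularity sketch with that argument.
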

We write $e_\lambda$ for the Euler class of $T_{t^\lambda}C_\lambda$. We have
\[
    e_\lambda=\prod_{\alpha\in R^+} e_{\lambda,\alpha},
\]
where
\[
    e_{\lambda,\alpha}=
    \begin{cases}
    \displaystyle\prod_{c=0}^{\langle \alpha,\lambda\rangle-1}(\alpha+c\hbar), & \text{if }\langle \alpha,\lambda\rangle\geq 0,\\[0.8ex]
    \displaystyle\prod_{c=1}^{-\langle \alpha,\lambda\rangle-1}(-\alpha+c\hbar), & \text{if } \langle \alpha,\lambda\rangle<0.
    \end{cases}
\]

\begin{lem}\label{lem:PetersonIso_limit}
For any $\lambda\in \Lambda$ and $u\in W$, the limit
\[
\lim_{\hbar\to \infty} e_\lambda^{-1}\langle \bbS_{\lambda}(1),\sigma^-(u)\rangle
\]
exists and is a $\C[q]$-linear combination of homogeneous rational functions of nonpositive degree. Moreover, its degree-zero term is $q^{\lambda^-}$ if $\lambda$ is $P$-allowed and $u\in w_\lambda W_P$, and is zero otherwise.
\end{lem}
\begin{proof}
We use the formula $\bbS_\lambda(1)=\mathbb{M}S_\lambda\mathbb{M}^{-1}$ from \Cref{intertwiningFundamentalsolution}. Since $X=G/P$ is convex, both $\mathbb{M}$ and $\tw_\lambda(\mathbb{M})^{-1}$ are $\operatorname{id}+O(1/\hbar)$. Hence it suffices to prove the lemma with $\bbS_\lambda$ replaced by $S_\lambda$.

\Cref{Giventalshift} implies that
\[
e_\lambda^{-1}\langle S_{\lambda}(1),\sigma^-(u)\rangle
=
\sum_{v}q^{\overline{\beta_{\lambda,v}}}\cdot e_\lambda^{-1}
\frac{e(\Delta_{\lambda}(T_vX))\sigma^-(u)|_v}{e(T_vX)},
\]
where we recall that
\[
e(T_vX)=\prod_{\alpha\in v(R^+\setminus R^+_P)}(-\alpha),
\]
\[
e(\Delta_{\lambda}(T_vX))
=
\prod_{\alpha\in v(R^+\setminus R^+_P)}\Delta_{\lambda,v,\alpha},
\]
and
\[
\Delta_{\lambda,v,\alpha}
=
\begin{cases}
\displaystyle\prod_{c=0}^{-\langle \alpha,\lambda\rangle-1}(-\alpha+c\hbar),
& \text{if }\langle \alpha,\lambda\rangle\leq 0,\\[0.8ex]
\displaystyle\prod_{c=1}^{\langle \alpha,\lambda\rangle}(-\alpha+c\hbar)^{-1},
& \text{if } \langle \alpha,\lambda\rangle>0.
\end{cases}
\]
Note that the $\hbar\to \infty$ limit of
\[
A_{\lambda,u,v}\coloneqq
e_\lambda^{-1}
\frac{e(\Delta_{\lambda}(T_vX))\sigma^-(u)|_v}{e(T_vX)}.
\]
exists for all $u$ and $v$, and is nonzero only if $\langle \alpha,v^{-1}(\lambda)\rangle\leq 0$ for every $\alpha\in R^+\setminus R_P^+$, which is equivalent to $v\in w_\lambda W_P$.

It is immediate that
\[
A_{\lambda,w_\lambda,w_\lambda}
=
\prod_{\alpha\in R^+\cap w_\lambda(R_P)}e_{\lambda,\alpha}^{-1},
\]
which is $1$ if $\lambda$ is $P$-allowed, and is a rational function of negative degree otherwise.

Finally, the lemma follows from the fact that $\deg A_{\lambda,u,w_\lambda}<\deg A_{\lambda,w_\lambda,w_\lambda}$ for $u\notin w_\lambda W_P$, together with \Cref{Lemmacurveclass} below.
\end{proof}
\begin{lem}\label{Lemmacurveclass}
    $\overline{\beta_{\lambda,v}}=v^{-1}(\lambda)\in \pi_1(P)$.
\end{lem}
\begin{proof}[Proof of \Cref{Lemmacurveclass}]
Consider the diagram
    \begin{equation*}
        \begin{tikzcd}
            \pi_2(\sE_{\leq\lambda}\times (G/P))\ar[r]&\pi_2(\sE_{\leq\lambda}(G/P))\ar[r]\ar[d]&\pi_1(P)\ar[r,"1"]\ar[d,equal]&\pi_1(\sE_{\leq\lambda}\times (G/P))\\
            \pi_2(EP)\ar[r]&\pi_2(BP)\ar[r,"\cong"]&\pi_1(P)\ar[r]&\pi_1(EP),
        \end{tikzcd}
    \end{equation*}
    where the rows are the long exact sequence associated to the fibrations $P\to \sE_{\leq\lambda}\to \sE_{\leq\lambda}(G/P)$ and $P\to EP\to BP$ respectively. The vertical homomorphisms are induced by the classifying map $\sE_{\leq\lambda}(G/P)\to BP$ corresponding to the principal $P$-bundle $\sE_{\leq\lambda}\to \sE_{\leq\lambda}(G/P)$.
    By a diagram tracing, it remains to show that the boundary homomorphism $\pi_2(\sE_{\leq\lambda}(G/P))\to \pi_1(P)$ sends $\beta_{\lambda,v}$ to $v^{-1}(\lambda)$.

    Let $\oh_{\PP^1}(-1)^\times$ be the $\Cx$ bundle associated to the tautological bundle of $\PP^1$. By checking the transition functions, it is easy to see that the restriction of $\sE_{\leq\lambda}$ to $\beta_{\lambda,v}$ is isomorphic to $\oh_{\PP^1}(-1)^\times\times_{v^{-1}(\lambda)} P$.

    This implies that there is a commutative diagram
    \begin{equation*}
        \begin{tikzcd}
            \pi_2(\PP^1)\ar[r]\ar[d,equal]&\pi_1(\Cx)\ar[d]\\
            \pi_2(\PP^1)\ar[r]\ar[d]&\pi_1(P)\ar[d,equal]\\
            \pi_2(\sE_{\leq\lambda}(G/P))\ar[r]&\pi_1(P),
        \end{tikzcd}
    \end{equation*}
where the horizontal maps are boundary homomorphism associated to the fibrations $\Cx\to \oh_{\PP^1}^\times\to\PP^1$, $\Cx\to \oh_{\PP^1}(-1)^\times\times_{v^{-1}(\lambda)}P \to\PP^1$, and $P\to \sE_{\leq\lambda}\to \sE_{\leq\lambda}(G/P)$ respectively. Note that the left lower map sends $[\PP^1]$ to $\beta_{\lambda,v}$ and the right upper map sends the generator $1\in \Z\cong \pi_1(\Cx)$ to $v^{-1}(\lambda)_P$. Now the lemma follows from the convention that the upper boundary homomorphism sends $[\PP^1]$ to the generator $1\in \pi_1(\Cx)$.
\end{proof}

\begin{proof}[Proof of \Cref{PetersonIso}]
We will show that
\begin{equation}\label{eq:PetersonIso}
    (\bbS_{G,X}([C_{\leq \lambda}])(1),\sigma^-(u))=
    \begin{cases}
    q^{\lambda^{-}} & \text{if } \lambda \text{ is } P\text{-allowed and } u\in w_\lambda W_P,\\
    0 & \text{otherwise}.
\end{cases}
\end{equation}
By \Cref{IntroThm1}, the left-hand side of \eqref{eq:PetersonIso} is a polynomial in the equivariant parameters, so it suffices to take the $\hbar\to \infty$ limit of the left-hand side. If we write
\[
[C_{\leq \lambda}]=\sum_{\mu\leq \lambda}a_\mu^{-1} [t^\mu],
\qquad
a_\mu\in \C[\lt][\hbar],
\]
then the left-hand side of \eqref{eq:PetersonIso} can be written as
\[
\sum_{\mu\leq \lambda}a_\mu^{-1}(\bbS_{\mu}(1),\sigma^-(u)).
\]
Note that $a_\mu$ is strictly divisible by $e_\mu$ for $\mu\lneq \lambda$. Therefore \Cref{lem:PetersonIso_limit}, together with the fact that $(\bbS_{G,X}([C_{\leq \lambda}])(1),\sigma^-(u))$ is a polynomial in the equivariant parameters, implies that
\[
\begin{aligned}
\lim_{\hbar\to \infty}(\bbS_{G,X}([C_{\leq \lambda}])(1),\sigma^-(u))
=\lim_{\hbar\to \infty}a_\lambda^{-1}(\bbS_{\lambda}(1),\sigma^-(u)) =\lim_{\hbar\to \infty} e_\lambda^{-1}(\bbS_{\lambda}(1),\sigma^-(u)).
\end{aligned}
\]
This is equal to $q^{\lambda^-}$ if $\lambda$ is $P$-allowed and $u\in w_\lambda W_P$, and is equal to $0$ otherwise by \Cref{lem:PetersonIso_limit}. This finishes the proof.
\end{proof}

\subsection{Peterson isomorphism with matters}\label{SectionPeterson}
In the case when $\bN=0$ and $X=G/P$ is a partial flag variety, by a theorem of \cite{Lam-Shimozono} and also \cite{Chow}, the Novikov variables $q^\beta$ lie in the image of the homomorphism $H_\bullet^{T_\oh}(\Gr_G)\to QH^\bullet_{T}(G/P)[q_G]$. In particular, this implies that for different $G/P$ and $G/P'$, or for $G/P$ equipped with different symplectic forms, the corresponding Lagrangians $\mathbb L_{G,\bN}(G/P)$ and $\mathbb L_{G,\bN}(G/P')$ have empty intersections. 

The analogous statement for general $\bN$ is not true. The Novikov variables $q^{\overline\beta}$ may not lie in the image of $e(\BS_\bN)\cap H^{T_\oh}_\bullet(\Gr_G)\to QH_T^\bullet(G/P\times\bN)[q_G]$. Nevertheless, \Cref{MatterPeterson} below provides partial results, identifying a subset of quantum Schubert classes which do lie in the image of the Seidel homomorphism.

\begin{prop}\label{NNdual}
    Let $Y$ be a smooth projective $G$-variety, let $\bN$ be a $G$-representation and $\bN^\vee$ be its dual representation. For any $\lambda\in \Lambda$, we have
    \begin{equation}\label{NNdualeq}
        \Psi_{G,\bN,Y\times\bN}(e({\BS}_{\bN})\cap C_{\leq\lambda})= \Psi_{G,Y}(e({\BS}_{\bN^\vee})\cap C_{\leq\lambda}),
    \end{equation}
    under the canonical isomorphism $H_G^\bullet(Y)\cong H_{G}^\bullet(Y\times\bN)$.
\end{prop}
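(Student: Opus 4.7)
The plan is to reduce both sides to integrals over $\cM_{\leq\lambda}(Y,\beta_Y)_0$ and compare the integrands. First I would analyze the LHS: since $\bN$ is a vector space, effective section classes in $\sE_{\leq\lambda}(Y\times\bN) = \sE_{\leq\lambda}(Y)\times_{\widetilde{C}_{\leq\lambda}\times\PP^1}\sE_{\leq\lambda}(\bN)$ are labeled by $\beta_Y\in\Eff(\sE_{\leq\lambda}(Y))^{\mathrm{sec}}$, because $\PP^1$ admits no nonconstant maps to the affine space $\bN$. A stable map decomposes as $\sigma=(\sigma_Y,\sigma_\bN)$, with $\sigma_\bN$ determined on the main component by a section $s\in H^0(\sE_p(\bN))$ (and on bubbles by constant extension). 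The canonical section $\can^{\bN}$ depends only on $s$, and by the proof of \Cref{propermoduli} its fiberwise zero locus $\cZ_p(\bN)$ embeds into $\bN$ via $s\mapsto s_\infty\in\bN_\oh\cap\bN_{\C[t^{-1}]}=\bN$. For the RHS, the zero map $f\colon Y\to\bN^\vee$ forces $\can^{\bN^\vee}\equiv 0$, so $\cZ^{\bN^\vee}(Y,\beta_Y)=\cM(Y,\beta_Y)$, and excess intersection gives $[\cZ^{\bN^\vee}(Y,\beta_Y)_0]^{\vir} = e(\widetilde{\BS}_{\bN^\vee,\leq\lambda})\cap[\cM(Y,\beta_Y)_0]^{\vir}$.

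After pushing both expressions forward through $\cM_{\leq\lambda}(Y,\beta_Y)_0$, they become integrals of the form
\[
\sum_{\beta_Y} q^{\overline{\beta_Y}}\,\PD\circ\pr_{Y*}\circ\ev_{\infty*}^Y\bigl(A\cap[\cM_{\leq\lambda}(Y,\beta_Y)_0]^{\vir}\bigr)
\]
for appropriate classes $A\in H^\bullet_{T\times\Cx_\hbar}(\widetilde{C}_{\leq\lambda})$. For the RHS, $A_{\mathrm{RHS}}=e(\widetilde{\BS}_{\bN^\vee,\leq\lambda})$. For the LHS, $A_{\mathrm{LHS}}$ combines two contributions: the K-theoretic Euler class of $R\pi_*\sE(\bN)$ (coming from the obstruction theory comparison between $\cM^{Y\times\bN}$ and $\cM^Y$) together with the Euler class of the normal bundle of $\cZ_p(\bN)\hookrightarrow\bN$ pushed into $Y\times\bN$ via $\ev_\infty$. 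The proposition thus reduces to the identity $A_{\mathrm{LHS}}=A_{\mathrm{RHS}}$ in $H^\bullet_{T\times\Cx_\hbar}(\widetilde{C}_{\leq\lambda})$.

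To establish this identity I would use a Mayer--Vietoris / Beauville--Laszlo type exact sequence
\[
0 \to H^0(\sE_{t^\lambda}(\bN)) \to \bN_\oh \oplus \bN_{\C[t^{-1}]} \xrightarrow{(s_0,s_\infty)\mapsto t^\lambda s_0 - s_\infty} \bN_\ok \to H^1(\sE_{t^\lambda}(\bN)) \to 0,
\]
combined with the definitions $\BT_{\bN} = t^\lambda \bN_\oh$, $\BR_{\bN}=\BT_{\bN}\cap\bN_\oh$, $\BS_{\bN}=\BT_{\bN}/\BR_{\bN}$ and the analogous structures for $\bN^\vee$. The natural duality $\BT_{\bN}\leftrightarrow(\BT_{\bN^\vee})^\vee$ relates the truncated fragments of these sequences, and taking equivariant (K-theoretic) Euler classes yields the comparison $A_{\mathrm{LHS}}=A_{\mathrm{RHS}}$ after the appropriate cancellations.

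The main obstacle is that the family $\{H^0(\sE_p(\bN))\}_{p\in\widetilde{C}_{\leq\lambda}}$ is not locally free (ranks jump with $\lambda$), so $A_{\mathrm{LHS}}$ cannot \emph{a priori} be realized as the Euler class of a genuine vector bundle. To handle this I would work with the finite-dimensional truncations $\BT^d$ from \Cref{BFNcoul}: after passing to a large $d$, the class $[R\pi_*\sE(\bN)]$ becomes a well-defined equivariant K-theoretic class, and the Euler-class identity with $\widetilde{\BS}_{\bN^\vee,\leq\lambda}$ can be verified directly from the truncated analogue of the exact sequence above. Once this K-theoretic identity is in hand, the rest is a routine application of excess intersection, functoriality of virtual fundamental classes, and the projection formula.
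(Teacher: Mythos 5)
Your argument takes a genuinely different route from the paper's. The paper proves \Cref{NNdual} by a short chain of reductions: the product/K\"unneth formula (\Cref{kunneth}) splits off the $Y$-factor and reduces to $Y=\pt$, so that one only needs to compare $\Psi_{G,\bN,\bN}$ with $\Psi_{G,\bN^\vee,\pt}$; quantum Hamiltonian reduction (\Cref{qham}) lets one enlarge $G$ to $G\times\Cxd$ acting on $\bN$ by scaling, so the $T$-fixed locus becomes compact; the localization compatibility of \Cref{TGcompatible} then reduces everything to the abelian torus; and the torus case is the explicit computation already established in \Cref{abeliancase}. Your plan instead compares virtual classes directly: you fibre the LHS moduli space over $\cM_{\leq\lambda}(Y,\beta_Y)_0$, handle the RHS by excess intersection, and reduce the whole proposition to an Euler-class identity obtained from a Beauville--Laszlo four-term sequence and a residue-pairing duality between $\BT_{\bN}$ and $\BT_{\bN^\vee}$. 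This is a plausible alternative, and the observation that the effective section classes of $\sE_{\leq\lambda}(Y\times\bN)$ are indexed by $\beta_Y$ because $\bN$ is affine is correct.

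The issue is that the step you call ``routine'' at the end is actually where the entire content of the proposition lives. After the obstruction-theory reduction one must verify, over $\widetilde{C}_{\leq\lambda}$, that $e\bigl(R^1\pi_*\sE_{\leq\lambda}(\bN)\bigr)\cdot e\bigl(N_{\cZ_p(\bN)/\bN}\bigr)=e\bigl(\widetilde{\BS}_{\bN^\vee,\leq\lambda}\bigr)$ in $H^\bullet_{T\times\Cxh}(\widetilde{C}_{\leq\lambda})$, and this requires nontrivial sign and $\hbar$-weight bookkeeping in the Serre/residue duality: the naive $T$-weights on $R^1\pi_*\sE(\bN)$ are $\{\xi_j\}$ whereas those on $\BS_{\bN^\vee}$ are $\{-\xi_j\}$, so the identity only holds after the relevant dualizations are tracked with care. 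The rank discontinuities of $H^0$/$H^1$ over $\widetilde{C}_{\leq\lambda}$ (which you propose to handle with the truncations $\BT^d$) compound this: you must simultaneously keep the $\cZ$-locus out of the moduli part and the $H^1$-part in the obstruction, and transport the comparison through the resolution $\rho_\lambda$. That computation is essentially what the proof of \Cref{abeliancase} does explicitly in the torus case, so your route amounts to rederiving it from scratch. It can probably be pushed through, but the paper's proof is substantially shorter precisely because it outsources every hard step to results already proved, including the specific sign-sensitive local computation.
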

\begin{proof}
    By \Cref{TGcompatible}, it suffices to show the identity
    \[c_{\lambda,\mu}^\bN\bbS_{X=Y\times\bN,\mu}(1)=c_{\lambda,\mu}^{\bN^\vee}\bbS_{X=Y,\mu}(1),\]
    where $c_{\lambda,\mu}^\bN$ and $c_{\lambda,\mu}^{\bN^\vee}$ are the coefficients defined in \Cref{localizeSlambda}. By \Cref{kunneth}, we reduce to the case \( Y = \pt \), which follows from the calculation in \Cref{abeliancase}.
\end{proof}

Now we set $\tau=0$ and only consider small quantum cohomology.
\begin{cor}\label{Matterpeterson2}
    The Seidel homomorphism $\Psi_{G,\bN,G/B\times\bN}$ induces a birational morphism 
    \[\spec (H_G^\bullet(G/B\times\bN)[q],\star)\to \spec \sA_{G,\bN}.
    \]
\end{cor}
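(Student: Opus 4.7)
The strategy is to reduce birationality of $\Psi_{G,\bN,G/B\times\bN}$ to the pure-gauge Peterson isomorphism (Theorem \ref{PetersonIso}) via the self-duality identity of Proposition \ref{NNdual}. First, since $\bN$ is affine and contractible, the projection $G/B\times\bN\to G/B$ is a $G$-equivariant homotopy equivalence and admits no nonconstant genus-zero stable maps into its fibres; this yields a canonical isomorphism of small quantum cohomology rings
\[
QH_G^\bullet(G/B\times\bN)[q_G]\;\cong\; QH_G^\bullet(G/B)[q_G].
\]
Under this identification, Proposition \ref{NNdual} applied with $Y=G/B$ gives, for every $\lambda\in\Lambda^+$,
\[
\Psi_{G,\bN,G/B\times\bN}(e(\BS_\bN)\cap[C_{\leq\lambda}])=\Psi_{G,\bN^\vee,G/B}(e(\BS_{\bN^\vee})\cap[C_{\leq\lambda}]).
\]
By Proposition \ref{shiftopfunctoriality} with $\mathbf{0}\subset \bN^\vee$ (the zero map $G/B\to \bN^\vee$ is proper and equivariant since $G/B$ is compact), the right-hand side equals $\Psi_{G,G/B}(e(\BS_{\bN^\vee})\cap[C_{\leq\lambda}])$, where the argument is regarded in $\sA_{G,\bN^\vee}\subset\sA_G$.

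Now tensor with $\mathrm{Frac}\,H_G^\bullet(\pt)$. By \eqref{localizeindepofN}, $\sA_{G,\bN,\mathrm{loc}}=\sA_{G,\bN^\vee,\mathrm{loc}}=\sA_{G,\mathrm{loc}}$. Proposition \ref{independentResol}(3) (combined with \eqref{Nweights} to verify nondegeneracy of the relevant Euler classes) shows that both families
\[
\{A_\lambda:=e(\BS_\bN)\cap[C_{\leq\lambda}]\}_{\lambda\in\Lambda^+} \quad\text{and}\quad \{B_\lambda:=e(\BS_{\bN^\vee})\cap[C_{\leq\lambda}]\}_{\lambda\in\Lambda^+}
\]
are $\mathrm{Frac}\,H_G^\bullet(\pt)$-bases of $\sA_{G,\mathrm{loc}}$. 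Let $\Phi\colon\sA_{G,\mathrm{loc}}\to\sA_{G,\mathrm{loc}}$ be the unique $\mathrm{Frac}\,H_G^\bullet(\pt)$-linear automorphism with $\Phi(A_\lambda)=B_\lambda$; it is upper-triangular with respect to the partial order $\leq$ on $\Lambda^+$, with diagonal entries $e(\BS_{\bN^\vee,t^\lambda})/e(\BS_{\bN,t^\lambda})$, each an invertible element of $\mathrm{Frac}\,H_T^\bullet(\pt)$ by \eqref{Nweights}. The identity of the previous paragraph then reads
\[
\Psi_{G,\bN,G/B\times\bN,\mathrm{loc}}=\Psi_{G,G/B,\mathrm{loc}}\circ\Phi,
\]
as $\mathrm{Frac}\,H_G^\bullet(\pt)$-linear maps $\sA_{G,\mathrm{loc}}\to QH_G^\bullet(G/B)_\mathrm{loc}[q_G]$. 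By Theorem \ref{PetersonIso}, inverting the classes $[C_{\leq w_0(\lambda^-)}]$ for $\lambda^-\in\Lambda^-$ turns $\Psi_{G,G/B}$ into an isomorphism; pre-composing with the isomorphism $\Phi$, the same localization turns $\Psi_{G,\bN,G/B\times\bN}$ into an isomorphism as well, yielding birationality of $\spec QH_G^\bullet(G/B\times\bN)[q_G]\to\spec\sA_{G,\bN}$.

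\textbf{Main obstacle.} The delicate point is justifying that $\{A_\lambda\}$ and $\{B_\lambda\}$ are bases of the same localized algebra with an invertible change-of-basis matrix. This reduces, via Proposition \ref{independentResol}(3) and \eqref{Nweights}, to the invertibility of the diagonal Euler factors $e(\BS_{\bN,t^\lambda}), e(\BS_{\bN^\vee,t^\lambda})$ as products of nonzero $T$-weights, and to \eqref{localizeindepofN} identifying the two localized subalgebras. Some additional bookkeeping is needed to pass correctly between the $T_\oh$- and $G_\oh$-equivariant pictures by taking Weyl invariants, but this is routine once the abelian change of basis is in place.
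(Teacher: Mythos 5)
Your proof is correct and follows essentially the same route as the paper, whose own proof just cites Proposition~\ref{NNdual} and Theorem~\ref{PetersonIso}: apply the self-duality identity with $Y=G/B$, reduce $\Psi_{G,\bN^\vee,G/B}$ to $\Psi_{G,G/B}$ via Proposition~\ref{shiftopfunctoriality}, and invoke the Peterson isomorphism. Your explicit change-of-basis automorphism $\Phi$ is simply a spelled-out version of what the paper leaves implicit (passing between the two bases $\{e(\BS_\bN)\cap[C_{\leq\lambda}]\}$ and $\{e(\BS_{\bN^\vee})\cap[C_{\leq\lambda}]\}$ of the localized Coulomb branch).
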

\begin{proof}
    This follows from \eqref{NNdualeq} and the $\bN=0$ case (\Cref{PetersonIso}).
\end{proof}

The following proposition is a generalization of \Cref{Peterson}. The assumption is satisfied if $G=G'\times \Cxd$, where the $\Cxd$-factor acts on $\bN$ by scaling.

\begin{cor}\label{MatterPeterson}
Let $\rho:\C^\times \subset G$ be the inclusion of a central subgroup, and let $\bN$ be a representation whose $\C^\times$-weights are all positive (resp.\ negative). Let $X = G/P \times \bN$, and fix $\lambda \in \Lambda$. Then for sufficiently large $n > 0$, we have
\begin{align*}
\Psi_{G ,\, \bN,\, X}\bigl(e(\widetilde{\BS}_{\leq \lambda - n\rho}) \cap [\widetilde{C}_{\lambda - n\rho}]\bigr) 
&= q^{(\lambda-n\rho)_P^-} \sigma(w_{\lambda}), \\
\text{resp.} \quad 
\Psi_{G,\, \bN,\, X}\bigl(e(\widetilde{\BS}_{\leq \lambda + n\rho}) \cap [\widetilde{C}_{\lambda + n\rho}]\bigr)
&= q^{(\lambda+n\rho)_P^-} \sigma(w_{\lambda}),
\end{align*}
under the pullback isomorphism $H_G^\bullet(G/P)\cong H_G^\bullet(G/P \times \bN)$.
\end{cor}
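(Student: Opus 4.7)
The plan is to reduce the statement to the pure-gauge Peterson isomorphism (\Cref{PetersonIso}) via the matter--dual-matter identity of \Cref{NNdual} together with the independence-of-representation principle of \Cref{CommonDomain}.

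Write $\mu = \lambda - n\rho$ in the positive-weight case and $\mu = \lambda + n\rho$ in the negative-weight case. The first step is to apply \Cref{NNdual} with $Y = G/P$ in order to pass to the dual representation:
\[
\Psi_{G,\bN,G/P\times\bN}\bigl(e(\widetilde\BS_{\bN,\leq\mu})\cap[\widetilde C_{\leq\mu}]\bigr)
= \Psi_{G,\bN^\vee,G/P}\bigl(e(\widetilde\BS_{\bN^\vee,\leq\mu})\cap[\widetilde C_{\leq\mu}]\bigr).
\]
The role of the shift by $n\rho$ is now to force the vanishing of $\BS_{\bN^\vee,t^\mu}$: the $T$-weights of $\bN^\vee$ are $-\xi_i$, so by \eqref{Nweights} the summands of $\BS_{\bN^\vee,t^\mu}$ are indexed by those $i$ with $\xi_i(\mu)>0$. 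In the positive-weight case, $\xi_i(\mu) = \xi_i(\lambda) - n\xi_i(\rho) < 0$ as soon as $n\xi_i(\rho) > \xi_i(\lambda)$ for every $i$, and the negative-weight case is symmetric. Hence $\BS_{\bN^\vee,t^\mu}=0$, the bundle $\widetilde\BS_{\bN^\vee,\leq\mu}$ has rank zero with trivial Euler class, and $\rho_{\mu*}[\widetilde C_{\leq\mu}] = [C_{\leq\mu}]$. By \Cref{independentResol} this class lies simultaneously in $\sA_{G,\bN^\vee}$ and $\sA_G$, so \Cref{CommonDomain} yields
\[
\Psi_{G,\bN^\vee,G/P}\bigl([C_{\leq\mu}]\bigr) = \Psi_{G,G/P}\bigl([C_{\leq\mu}]\bigr),
\]
to which \Cref{PetersonIso} now applies.

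Since $\rho$ is central, the Weyl group fixes it pointwise, which gives $(\lambda\mp n\rho)^- = \lambda^- \mp n\rho$, $\operatorname{Stab}_W(\mu^-) = \operatorname{Stab}_W(\lambda^-)$, and hence $w_\mu = w_\lambda$. Moreover $\langle\alpha,\mu^-\rangle = \langle\alpha,\lambda^-\rangle$ for every root $\alpha$, so $\mu$ is $P$-allowed exactly when $\lambda$ is. The Peterson formula then reads $q^{\mu_P^-}\sigma(w_\lambda) = q^{(\lambda\mp n\rho)_P^-}\sigma(w_\lambda)$, as claimed (with both sides vanishing when $\lambda$ fails to be $P$-allowed, so the stated equation is to be interpreted as $0 = 0$ in that case).

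The principal technical obstacle is the equivariance bookkeeping. The class $[C_{\leq\mu}]$ is not individually $W$-invariant when $\mu$ is not dominant, whereas \Cref{NNdual}, \Cref{CommonDomain}, and \Cref{PetersonIso} are formulated $G_\oh$-equivariantly. The cleanest remedy is to carry out the entire chain of identifications at the $T_\oh$-equivariant level (using \Cref{propA} and the $T$-equivariant versions of each ingredient), and only at the end to take $W$-invariants via the $W$-equivariance of $\bbS_{G,\bN}$ (\Cref{Wequiv}). With this in place, every step above descends to the $W$-invariant subspaces and produces the $G$-equivariant statement of the corollary.
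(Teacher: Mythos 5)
Your proof follows the paper's own (very terse) route: apply \Cref{NNdual} with $Y=G/P$ to pass from $(\bN,\,G/P\times\bN)$ to $(\bN^\vee,\,G/P)$, observe that $\BS_{\bN^\vee,\leq\mu}$ vanishes for $\mu=\lambda\mp n\rho$ with $n$ large because every $\xi_i(\mu)$ becomes negative, and reduce to \Cref{PetersonIso}. The change of representation via \Cref{CommonDomain} is valid, though \Cref{shiftopfunctoriality}(a) with $\mathbf 0\subset\bN^\vee$ gives it more directly; the Weyl-theoretic bookkeeping (centrality of $\rho$ gives $(\lambda\mp n\rho)^- = \lambda^-\mp n\rho$, $w_{\lambda\mp n\rho}=w_\lambda$, and preservation of $P$-allowedness) is correct. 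The equivariance worry you flag at the end is harmless but overblown: \Cref{NNdual}, \Cref{CommonDomain}, and \Cref{PetersonIso} are each already stated for arbitrary $\lambda\in\Lambda$ or at the $T_\oh$-level, so no additional descent argument is needed.

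One point does need correcting. Your parenthetical --- ``with both sides vanishing when $\lambda$ fails to be $P$-allowed, so the stated equation is to be interpreted as $0=0$'' --- is wrong as written. The right-hand side $q^{(\lambda\mp n\rho)^-_P}\,\sigma(w_\lambda)$ is a nonzero Novikov monomial times the nonzero Schubert class $\sigma(w_\lambda)$, so it never vanishes. What \Cref{PetersonIso} actually yields is that $\Psi_{G,\bN,X}(\cdots)=0$ when $\lambda$ is not $P$-allowed, which the displayed formula in the corollary does \emph{not} equal. In other words, the corollary as stated holds only under an implicit hypothesis that $\lambda$ is $P$-allowed; the correct observation is to flag that hypothesis (it is also tacit in the paper's proof), not to reinterpret the nonzero right-hand side as zero.
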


\begin{proof}
    This follows from \Cref{PetersonIso} and \Cref{NNdual} applied to $Y=G/P$, along with the observation that $\BS_{\bN^\vee,\leq \lambda-n\rho}=0$ (resp. $\BS_{\bN^\vee,\leq \lambda+n\rho}=0$) for $n$ sufficiently large. 
\end{proof}

\begin{rem}
    For each specialization $\varsigma \colon \mathbb{C}[q_G] \to \mathbb{C}$, we obtain a Lagrangian subvariety
\[
\mathbb{L}_{\varsigma} = \mathrm{Supp}\left(QH_G^\bullet(G/B \times \bN) \otimes_{\mathbb{C}[q_G]} \mathbb{C}\right).
\]
For any two distinct specializations $\varsigma_1$ and $\varsigma_2$, it is easy to see that the corresponding Lagrangians $\mathbb{L}_{\varsigma_1}$ and $\mathbb{L}_{\varsigma_2}$ are disjoint. Therefore, the previous corollary may be viewed as a generalization of Teleman's result in the case $\bN = 0$ \cite[Theorem 6.8]{Tel}.
\end{rem}

\subsection{A new characterization of the Coulomb branch}\label{sectionnewcharacterization}
The goal of this subsection is to prove \Cref{Introlargestsubalg}. In this subsection, we only need to consider the small quantum cohomology.

Let $\xi$ be a flavour symmetry of $\bN$ (see \Cref{section:flavoursymmetries}), i.e., a short exact sequence
\begin{equation}\label{eq:flavour_symmetry}
    \xi: 1\to G\to \hG\to T_\xi\to 1
\end{equation}
such that $T_\xi$ is a torus and $\bN$ lifts to an $\hG$-representation. We write $\hT$ for the maximal torus of $\hG$ containing $T$. We say that $\xi'$ contains $\xi$ if $T_{\xi'}\supset T_{\xi}$. Recall that we write
\[
\oh_{T^*\check T}^{\hbar,\xi}
=
\C[\hat\lt]\otimes_{\C[\lt]}\oh_{T^*\check T}^{\hbar}.
\]
As a $\C[\hat\lt]$-module, it is equal to $\C[\hat \lt\times \check T]$, but it has a twisted product with
\[
at^\lambda-t^\lambda a=a(\lambda)\hbar t^\lambda,
\qquad
a\in \hat\lt^*,\ \lambda\in\Lambda.
\]
\begin{df}\label{df:lift}
An element $\tilde a\in (\oh_{T^*\check T}^{\hbar,\xi})_\loc$ is called a \emph{lift} of $a \in (\oh_{T^*\check T}^\hbar)_\loc$ if $\tilde a$ can be represented by $a'/b$, where $a'\in \oh_{T^*\check T}^{\hbar,\xi}$ and $b\in H^\bullet_{\hT\times\Cxh}$ is homogeneous and not divisible by $\xi$, such that
\[
\tilde a|_{\xi=0}=a.
\]
\end{df}
\begin{thm}[=\Cref{Introlargestsubalg}]\label{Thmlargestsubalg}
For any $a \in (\oh_{T^*\check T}^\hbar)_\loc$, the following statements are equivalent:
\begin{enumerate}
    \item[(a)] $a\in \sA^\hbar_{G,\bN}$.
    \item[(b)] For any flavour symmetry $\xi$, there exists a lift $\tilde a \in (\oh_{T^*\check T}^{\hbar,\xi})_\loc$ of $a$ such that for any smooth semiprojective variety $X$ equipped with a $G\times \Cx$-equivariant proper morphism $f: X \to \bN$, the deformed shift operator
\[
\bbS_{G,\bN, X}^\xi(\widetilde a)\in \operatorname{End}^\bullet(H_{\hG\times \Cx}^\bullet(X)[\hbar])_\loc[[q,\tau]]
\]
is regular.
    \item[(c)] For any flavour symmetry $\xi$, there exists a lift $\tilde a \in (\oh_{T^*\check T}^{\hbar,\xi})_\loc$ of $a$ such that the deformed shift operator $\bbS_{G,\bN, X}^\xi(\widetilde a)$ is regular for $X=G/B\times \bN$ and $X=\bN$.
\end{enumerate}
\end{thm}

By \Cref{IntroThm2}, (a) implies (b). Clearly, (b) implies (c). It remains to prove that (c) implies (a). We first prove \Cref{Thmlargestsubalg} in the special case $\bN=0$.
\begin{prop}\label{largestsubalgcompact} $\sA^\hbar_G=H^{G_\oh\rtimes \Cx_\hbar}_\bullet(\Gr_G)$ is equal to the subset of $(\oh_{T^*\check T}^\hbar)_\loc$ consisting of elements $x$ for which $\bbS_{G,G/B}(x)(1)\in H_G^{\bullet}(G/B)[\hbar][[q]]$.
\end{prop}
\begin{proof}
Let $\sB$ be the subset of $(\oh_{T^*\check T}^\hbar)_\loc$ consisting of elements $x$ for which $\bbS_{G,G/B}(x)(1)\in H_G^{\bullet}(G/B)[\hbar][[q]]$. We have $\sA^\hbar_G\subset \sB$ by \Cref{IntroThm2}. To show the other inclusion, let $x\in \sB$. We may write 
\begin{equation*}
x=\sum_{\lambda\in\Lambda} x_\lambda [C_{\leq\lambda}],\quad x_\lambda\in \C(\lt)(\hbar).
\end{equation*}
Setting $\tau=0$. By \Cref{PetersonIso}, we have
\begin{equation*}
\bbS_{G,G/B}(x)(1)=\sum_{\lambda\in\Lambda} x_\lambda q^{\lambda^-}\sigma(w_\lambda)\in H_G^\bullet(G/B)[\hbar][q].
\end{equation*}
As $\{q^{\lambda}\sigma(w)\}_{\lambda\in\Lambda,w\in W}$ is a $\C[\lt][\hbar]$-linear basis of $H_{T}^\bullet(G/B)[\hbar][q]$, it follows that each $x_\lambda\in \C[\lt][\hbar]$. To show $x\in \sA_G^\hbar$, it remains to show that $x$ is invariant under the Weyl group action. Since $\bbS_{G,G/B}(x)(1)$ is $W$-invariant, this follows from the fact that $\bbS_{G,G/B}(-)(1)$ is injective and $W$-equivariant.
\end{proof}

Recall $\sA^\hbar_{G,\bN}$ carries a natural $\sA^\hbar_G$-comodule structure via (\Cref{coproduct section}):
\begin{equation*}
    \Delta_{*}:\sA^\hbar_{G,\bN}\to\sA^\hbar_{G}\otimes_{\C[\lt][\hbar]}\sA^\hbar_{G,\bN}.
\end{equation*}
A $\C[\lt]^W[\hbar]$-submodule $\sA'\subset \sA^\hbar_G$ is called a (left) $\sA^\hbar_G$-\emph{subcomodule} if it is stable under $\Delta_*$, i.e.,
\[\sA'\subset \sA^\hbar_G\xrightarrow{\Delta_*}\sA^\hbar_G\otimes_{H_G^\bullet(\pt)} \sA_G\]
has image contained in $\sA^\hbar_G\otimes_{H_G^\bullet(\pt)} \sA'$. The same notion applies to subcomodules of $\sA^{\hbar,\xi}_G$ for any flavour symmetry $\xi$, as well as their quantized counterparts.
\begin{lem}\label{largestsubcomod}
    Let $\hG=G\times\Cxd$, where $T_\xi=\Cxd$ acts on $\bN$ by the scaling action. Let $X=\bN$. Suppose $\sA'\subset \sA^{\hbar,\xi}_{G}$ is an $\sA^{\hbar,\xi}_{G}$-subcomodule such that
    \begin{equation*}
       \bbS_{G,X=\bN}(\sA')(1) \subset H_{\hG}^\bullet(\bN)[\hbar][q].
    \end{equation*}
    Then $\sA'\subset\sA^{\hbar,\xi}_{G,\bN}$.
\end{lem}

\begin{proof}
It suffices to prove the lemma after setting $q=1,\tau=0$. Replacing $\sA'$ by $\sA^{\hbar,\xi}_{G,\bN}+\sA'$, we may assume $\sA^{\hbar,\xi}_{G,\bN} \subset \sA'$. We write $\bbS$ for $\bbS_{G,X=\bN}$.

Consider the composition
\begin{equation*}
\sA^{\hbar,\xi}_{G} \xrightarrow{\Delta_*} \sA^{\hbar,\xi}_{G} \otimes_{H_{\hG}^\bullet(\pt)[\hbar]} \sA^{\hbar,\xi}_{G}
\xrightarrow{\id \otimes \bbS(-)(1)}
\sA^{\hbar,\xi}_{G} \otimes_{H^\bullet_{\hG}(\pt)[\hbar]} H^\bullet_{\hG}(\bN)_\loc[\hbar] \cong (\oh_{T^*\check T}^{\hbar,\xi})_\loc.
\end{equation*}
For $a\in \sA'$, we have 
\[
(\id \otimes \bbS)(\Delta_*(a))(1) \in \sA^{\hbar,\xi}_{G}.
\]
We may write 
\begin{equation*}
   a = \sum_{i=1}^k a_i [t^{\lambda_i}], 
   \qquad 
   a_i \in \C(\lt)(\hbar), \ a_i \neq 0.
\end{equation*}
Hence the support of $a$ (viewed as an element of $H^{\hT\times \Cx_\hbar}_{\bullet}(\Gr_{G})$) is
\begin{equation}\label{support of a'}
   \bigcup_{i=1}^k C_{\leq \lambda_i}.
\end{equation}
Assume that $\lambda_1$ is maximal among the $\lambda_i$. Then we have a short exact sequence
\begin{equation*}
0 \to H^{\hT\times \Cx_\hbar}_\bullet(C_{<\lambda_1})
   + \sum_{i=2}^k H^{\hT\times \Cx_\hbar}_\bullet(C_{\leq \lambda_i})
 \to \sum_{i=1}^k H^{\hT\times \Cx_\hbar}_\bullet(C_{\leq \lambda_i})
 \to H^{\hT\times \Cx_\hbar}_\bullet(C_{\lambda_1}) \to 0.
\end{equation*}
Let $e \in H_{\hT\times\Cxh}^\bullet(\pt)$ be the Euler class of the tangent space of cell $C_{\lambda_1}$ at $[t^{\lambda_1}]$. Then
\begin{equation*}
   H_\bullet^{\hT\times \Cxh}(C_{\lambda_1})
   = H_{\hT\times\Cxh}^\bullet(\pt)\cdot \frac{[t^{\lambda_1}]}{e}.
\end{equation*}
On the other hand,
\begin{equation*}
   (\id \otimes \bbS)(\Delta_*(a))(1)
   = \sum_{i=1}^k a_i \bbS_{\lambda_i}(1)[t^{\lambda_i}],
\end{equation*}
whose projection to $H^{\hT\times \Cxh}_\bullet(C_{\lambda_1})$ equals 
\begin{equation}\label{projecta'}
   a_1 \bbS_{\lambda_1}(1)[t^{\lambda_1}]
   =
   a_1 
   e(\BS_{t^{\lambda_1}})^{-1}\prod_j\prod_{c=0}^{\langle\eta_j,\lambda_1\rangle-1}
   (\eta_j+c\hbar)
   [t^{\lambda_1}].
\end{equation}
Here we use \Cref{abeliancase}, writing $\bN = \bigoplus_{j=1}^n \C_{\eta_j}$ where $\eta_j$ are the $\hT$-weights, and
\begin{equation*}
    e(\BS_{t^{\lambda_1}})
    =
    \prod_j\prod_{c=0}^{-\langle\eta_j,\lambda_1\rangle-1}
    (\eta_j+c\hbar).
\end{equation*}
According to the decomposition $\hT = T \times \Cxd$, we can write $\eta_j = \chi_{\mathrm{dil}} + \chi_j$, where $\chi_{\mathrm{dil}}$ is the weight of the standard representation of $\Cxd$, and $\chi_j$ are the $T$-weights. It is then easy to see that the numerator and denominator in \eqref{projecta'} are relatively prime. Hence
\begin{equation*}
   b \coloneq \frac{e a_1}{e(\BS_{t^{\lambda_1}})} \in H^\bullet_{T'\times\Cxh}(\pt).
\end{equation*}

We now replace $a$ by
\begin{equation*}
   a' = a - b e(\BS) \cap [C_{\leq \lambda_1}]
       = a -  a_1 [t^{\lambda_1}] + \text{(terms with } [t^\lambda], \ \lambda < \lambda_1).
\end{equation*}
The support of $a'$ is strictly smaller than that of $a$. Repeating this process for $a'$, we eventually terminate because \eqref{support of a'} is Noetherian. Therefore $a \in \sA^{\hbar,\xi}_{G',\bN}$.
\end{proof}
\begin{cor}\label{corwithdil}
Suppose $T_\xi=\Cxd$, then $\sA^{\hbar,\xi}_{G,\bN}$ is the subset of $(\oh_{T^*\check T}^{\hbar,\xi})_\loc$ consisting of $x$ for which $\bbS^\xi_{G,X}(x)(1)\in H_{\hG}^\bullet(X)[\hbar][[q]]$ for $X=G/B\times\bN$ and $X=\bN$.
\end{cor}
\begin{proof}
    Let $\sB$ denotes the subset of $(\oh_{T^*\check T}^{\hbar,\xi})_\loc$ consisting of $x$ for which $\bbS^\xi_{G,X}(x)(1)\in H_{\hG}^\bullet(X)[\hbar][[q]]$ for $X=G/B\times\bN$ and $X=\bN$. 

    By \Cref{largestsubalgcompact}, we have $\sB\subset \sA^{\hbar,\xi}_{G}$. By \Cref{largestsubcomod}, it suffices to prove that $\sB$ is stable under the comodule action $\Delta_*$.  We first observe that by \Cref{PetersonIso}, there exists a $\C[\lt][\hbar,\xi]$-linear homomorphism
    \begin{equation*}
        r:H_{T}^\bullet(G/B)[\hbar,\xi][q]\to H^{\hT\times \Cxh}_\bullet(\Gr_{G})
    \end{equation*}
    by sending each $q^\mu\sigma(w)$ to $[C_{\leq \lambda}]$ if $\mu=\lambda^-$ and $w=w_\lambda$ for some $\lambda\in \Lambda$, and to zero otherwise. This restricts to a $\C[\lt]^W[\hbar,\xi]$-linear homomorphism
    \begin{equation*}
        \sigma:H_{\hG}^\bullet(G/B)[\hbar][q]\to \sA^{\hbar,\xi}_{G}
    \end{equation*}
    with $\sigma\circ \bbS^\xi_{G,G/B}(-)(1)=\operatorname{id}$.
    
    Let $x\in \sB$, and choose a $\C[\lt]^W[\hbar,\xi]$-linear basis $\{b_i\}$ of $\sA^{\hbar,\xi}_{G}$. We may write
    \begin{equation*}
        \Delta_*(x)=\sum_i b_i\otimes x_i\in \sA^{\hbar,\xi}_{G}\otimes_{\C[\lt]^W[\hbar,\xi]} \sA^{\hbar,\xi}_{G},
    \end{equation*}
    where only finite many terms in the sum is nonzero. It remains to show that $x_i\in \sB$ for each $i$.
    
    Consider $X=G/B\times Y$, where $Y=G/B\times \bN$ or $Y=\bN$. Applying \Cref{kunneth}, we obtain 
    \begin{equation*}
    \bbS^\xi_{G,X}(x)(1)
    =\sum_{i} \bbS^\xi_{G,G/B}(b_i)(1)\otimes \bbS^\xi_{G,Y}(x_i)(1)\in H_{\hG}^\bullet(G/B)[\hbar]\otimes_{\C[\lt]^W[\hbar,\xi]} H_{\hG}^\bullet(Y)[\hbar][[q]]
    \end{equation*}
    Further applying $\sigma\otimes
    \operatorname{id}$, we obtain
    \begin{equation*}
    \sum_{i} b_i\otimes \bbS^\xi_{G,Y}(x_i)(1)\in \sA^{\hbar,\xi}_{G}\otimes_{\C[\lt]^W[\hbar,\xi]} H_{\hG}^\bullet(Y)[\hbar][[q]].
    \end{equation*}
    Since $\{b_i\}$ is a basis of $\sA^{\hbar,\xi}_{G}$, we see that
    \begin{equation*}
        \bbS^\xi_{G,Y}(x_i)(1)\in H_{\hG}^\bullet(Y)[\hbar][[q]]
    \end{equation*}
    for any $\hG$-equivariant proper morphism $Y\to \bN$. This shows that $x_i\in \sB$ and finishes the proof.
\end{proof}

\begin{proof}[Proof of Theorem~\ref{Thmlargestsubalg}]
Let $\sB$ be the subspace of $(\oh_{T^*\check T}^\hbar)_\loc$ which consists of elements $x$ for which conclusion of (c) holds. By \Cref{IntroThm2}, we have $\sA^\hbar_{G,\bN}\subset \sB$. It remains to show the other inclusion $\sB\subset \sA^\hbar_{G,\bN}$.

Let $x\in \sB$. Consider the flavour symmetry $T_\xi=\Cxd$. By definition, there exists some lift $x'\in (\oh_{T^*\check T}^{\hbar,\xi})_\loc$ for which
\begin{equation*}
\bbS_{G,X}^\xi(x')(1)\in H_{\hG}^\bullet(X)[\hbar][[q]
\end{equation*}
for for $X=G/B\times \bN$ and $X=\bN$. By \Cref{corwithdil}, we have $x'\in \sA^{\hbar,\xi}_{G,\bN}$. Since
\begin{equation*}
   \sA_{G,\bN}^{\hbar,\xi}/(\xi)=\sA_{G,\bN}^{\hbar,\xi},
\end{equation*}
we have $x^\hbar\in\sA_{G,\bN}$. This finishes the proof.
\end{proof}

The proof of the following result is completely analogous to that of \Cref{Thmlargestsubalg}.
\begin{cor}\label{Cor:Largestsubalgebra}
Suppose we have a flavour symmetry given by \eqref{eq:flavour_symmetry}. Let $a \in (\oh_{T^*\check T}^{\hbar,\xi})_\loc$, then the following statements are equivalent.
\begin{enumerate}
    \item[(a)] $a\in \sA^{\hbar,\xi}_{G,\bN}$.
    \item[(b)] For any flavour symmetry $\xi'$ containing $\xi$, there exists a lift $\tilde a \in (\oh_{T^*\check T}^{\hbar,\xi'})_\loc$ of $a$ such that for any smooth semiprojective variety $X$, equipped with a $G\times \Cx$-equivariant proper morphism $f: X \to \bN$, the deformed shift operator $\bbS_{G,\bN, X}^{\xi'}(\widetilde a)$ is regular.
    \item[(c)] For any flavour symmetry $\xi'$ containing $\xi$, there exists a lift $\tilde a \in (\oh_{T^*\check T}^{\hbar,\xi'})_\loc$ of $a$ such that the deformed shift operator $\bbS_{G,\bN, X}^{\xi'}(\widetilde a)$ is regular for $X=G/B\times \bN$ and $X=\bN$.
\end{enumerate}
\end{cor}

During the proof of \Cref{Thmlargestsubalg}, we obtained the following corollary, which recovers \cite[Theorem 1]{2drole}.
\begin{cor}\label{Telemangluing}
    Consider the short exact sequence
    \begin{equation*}
        1 \to G \to G'=G \times \Cxd \to \Cxd \to 1.
    \end{equation*}
    Then the Coulomb branch algebra $\sA_{G,\bN}$ is equal to
    \begin{equation*}
        \Big\{\, a \in \sA_{G}^{\mathrm{dil}} \;\Big|\;
        \operatorname{id}\otimes\Psi_{G,X=\bN,\loc}^{\mathrm{dil},q_0}\big(\Delta_*(a)\big)
        \in H^\bullet_{G'}(\pt) \,\Big\}
        \;\otimes_{H^\bullet_{G'}(\pt)} H^\bullet_{G}(\pt),
    \end{equation*}
    where $q_0$ is the specialization $q_G\mapsto 1$ and $\tau\mapsto 0$.

    A similar statement holds for the quantized Coulomb branch $\sA_{G,\bN}^\hbar$.
\end{cor}

We now give another corollary of the proof of \Cref{Thmlargestsubalg}. We introduce the following definition, which appeared in~\cite{3dmirror} and~\cite{functoriality}.

\begin{df}
A $G$-representation $\bN$ is called \emph{gluable} if for all nonzero $T$-weights 
$\eta_1,\eta_2$, the weight $\eta_1$ is not a negative multiple of $\eta_2$.
\end{df}

In particular, the $G \times \Cx_{\mathrm{dil}}$-representation in which 
$\Cx_{\mathrm{dil}}$ acts on $\bN$ by scaling is gluable.

\begin{thm}
Let $\bN$ be a $G$-representation. Then the following diagram commutes:
\begin{equation*}
\begin{tikzcd}
\sA_{G,\bN} \arrow[r, "\Delta_*"] \arrow[d]
  & \sA_{G} \otimes_{H^\bullet_{G}(\pt)} \sA_{G,\bN}
    \arrow[rr, "{\id\otimes \Psi_{G,\bN}} "] 
    && \sA_G \otimes_{H^\bullet_{G}(\pt)} QH^\bullet_{G}(X)[[q_G,\tau]] \arrow[d] \\
\sA_{G} \arrow[r, "\Delta_*"]
  & \sA_{G} \otimes_{H^\bullet_{G}(\pt)} \sA_{G}
    \arrow[rr, "{\id\otimes \Psi_{G,\bN,\loc}}"]
    && \sA_G \otimes_{H^\bullet_{G}(\pt)} QH^\bullet_{G}(X)_\loc [[q_G,\tau]] .
\end{tikzcd}
\end{equation*}
Moreover, if $\bN$ is gluable and $X = \bN$, then the outer square is Cartesian.

A similar statement holds for the quantized Coulomb branch $\sA_{G,\bN}^\hbar$, with $\Psi$ replaced by $\bbS(-\otimes 1)$.
\end{thm}

\begin{proof}
After observing that the numerator and denominator of \eqref{abelianformula} are relatively prime under the gluable assumption, the rest of the proof is similar to that of \Cref{largestsubcomod}.
\end{proof}
\subsection{Properties of the Coulomb branches and their categorifications}\label{subsection:properties_and_categorifications}
In this subsection, we demonstrate how properties of the quantized Coulomb branch algebra $\sA^\hbar_{G,\bN}$ follow from its characterization in \Cref{Thmlargestsubalg}.
\begin{prop}
The following statements are true.
\begin{enumerate}
    \item $\sA^\hbar_{G,\bN}$ is a subalgebra of $(\oh^\hbar_{T^*\check T})_\loc$.

    \item Suppose that $\bN_1$ and $\bN_2$ are $G$-representations, and let $\bN=\bN_1\oplus\bN_2$. Then there is a coproduct
    \[
    \Delta_*\colon
    \sA^\hbar_{G,\bN}
    \to
    \sA^\hbar_{G,\bN_1}
    \otimes_{\C[\lt]^W[\hbar]}
    \sA^\hbar_{G,\bN_2}.
    \]

    \item Suppose that we have a flavour symmetry given by \eqref{eq:flavour_symmetry}. Then there is a deformation
    \[
    \C[\xi]\to \sA^{\hbar,\xi}_{G,\bN}
    \]
    of Coulomb branches.

    \item In addition to the assumptions in (3), suppose that there is a short exact sequence
    \[
    1\to G\to G'\to G_\xi\to 1
    \]
    such that $G'\supset \hG$ and $T_\xi$ is a maximal torus of $G_\xi$. Suppose further that the $\hG$-action on $\bN$ extends to a $G'$-action. Then $W_{G_\xi}$ acts on the deformed Coulomb branch $\sA^{\hbar,\xi}_{G,\bN}$.

    \item The torus $T^!=H^2_G(\pt;\C)/H^2_G(\pt;\Z)$ acts on the Coulomb branch $\sA^\hbar_{G,\bN}$.
\end{enumerate}
\end{prop}
\begin{proof}
Each of these properties of Coulomb branches is well known. The point of the proposition is to relate these properties to the corresponding properties of shift operators. In what follows, we state the properties of shift operators that we need and then give a sketch of the proof.
\begin{enumerate}
    \item We use the fact that $\bbS$ is a homomorphism and that the composition of two regular operators is again regular.
    Let $a,b\in \sA^\hbar_{G,\bN}$. We want to show that $ab\in \sA^\hbar_{G,\bN}$. Let $\xi$ be a flavour symmetry. Suppose that $\tilde a$ and $\tilde b$ are lifts of $a$ and $b$ for which \Cref{Thmlargestsubalg}(b) holds. Then it is clear that $\tilde a\tilde b$ is a lift of $ab$ for which \Cref{Thmlargestsubalg}(b) holds.
    \item We use the K\"unneth formula for shift operators.
    Suppose $a\in \sA^\hbar_{G,\bN}$. Define
\[
\Delta_*\colon(\oh^\hbar_{T^*\check T})_\loc\to (\oh^\hbar_{T^*\check T})_\loc\otimes_{\C[\lt][\hbar]}(\oh^\hbar_{T^*\check T})_\loc
\]
by $\Delta_*(t^\lambda)=t^\lambda\otimes t^\lambda$. Let $Y_i$ be a smooth semiprojective variety that is $G$-equivariant proper over $\bN_i$ for $i=1,2$. Then $X:=X_1\times X_2$ is $G$-equivariant proper over $\bN=\bN_1\oplus\bN_2$. We have
\[
\bbS_{G,X}(a)=(\bbS_{G,X_1}\otimes \bbS_{G,X_2})(\Delta_*(a))\in H^\bullet_{G}(X)[\hbar]=H^\bullet_{G}(X_1)[\hbar]\otimes_{\C[\lt]^W[\hbar]} H^\bullet_{G}(X_2)[\hbar].
\]
The same argument applies in the presence of flavour symmetry. By choosing $T_\xi=\Cxd$ and $X_i=G/B\times \bN$, we conclude that $\bbS_{G,X}(a)\in \sA^\hbar_{G,\bN_1}\otimes_{\C[\lt]^W[\hbar]}\sA^\hbar_{G,\bN_2}$.
\item This is immediate, but the statement can also be interpreted as saying that the flavour symmetry deforms the shift operators.
\item We use that fact that the Weyl group action is regular. 
Suppose $a'\in \sA^{\hbar,\xi}_G$, then it is clear that $\bbS_{G,Y}^\xi(w(a'))=w\circ \bbS_{G,Y}^\xi(a')$ is regular for any $X$ that is $G'$-equivariantly proper over $\bN$, and in particular for $X=G/B\times \bN$ and $X=\bN$.
\item We identify $H^2_G(\pt;\C)$ with $\Hom(H^G_2(\pt;\Z),\C)=\Hom(\Lambda,\C)$. An element $u\in H^2_G(\pt;\C)$ induces a $\C[\lt][\hbar]$-automorphism of $(\oh^\hbar_{T^*\check T})_\loc$ given by
\[
t^\lambda\mapsto \exp(u(\lambda))t^\lambda.
\]
This descends to a $T^!$-action. It induces a deformation of shift operators, equivalently described by scaling the Novikov parameters according to
\[
q^\beta\mapsto \exp(\langle u,\beta \rangle)q^\beta.
\]
As a result, $\bbS_{G,Y}(a)$ is regular if and only if $\bbS_{G,Y}(z\cdot a)$ is regular for all $z\in T^!$. The result then follows.
\end{enumerate}
\end{proof}
We should interpret the above results as suggesting the existence of a 2-category associated with the Coulomb branch $\spec \sA_{G,\bN}$ (or with its quantization $\sA^\hbar_{G,\bN}$), which is yet to be defined (cf. \cite{KRS}). The objects of this 2-category should be given by Lagrangians such as $\spec QH_G(X)$ (or their quantizations $\bbS_{G,X}$). The properties of Coulomb branches should then be reflected in the properties of this 2-category.

\appendix

\section{Stability under the convolution product}\label{AppendixStability}

\begin{proof}[Proof of Proposition~\ref{stableprod}]
    We prove the case for $G$-equivariance. The $G\times \Cxh$-equivariance case can be proved in a similar way.
    Let $\lambda_1, \lambda_2 \in \Lambda^+$ be dominant coweights, and let $d > 0$ be a sufficiently large positive integer such that $\BR^d_{\mu}$ is defined for all $\mu$ satisfying $\mu \leq \lambda_1$, $\mu \leq \lambda_2$, or $\mu \leq \lambda_1 + \lambda_2$. It suffices to show that under the product map (\ref{convolution2}), the image of 
    \[
    \left(z^*H^{G}_\bullet(\BR^d_{\leq \lambda_1})\right) \otimes \left(z^*H^{G}_\bullet(\BR^d_{\leq \lambda_2})\right)
    \]
    is contained in $z^*H^{G}_\bullet(\BR^d_{\leq \lambda_1 + \lambda_2})$, where $z^*: H^{G}_\bullet(\BT^d) \to H^{G}_\bullet(\Gr_G)$ is the Gysin map.

    Consider the following diagram
    \[
    \begin{tikzcd}
    \BR^d_{\leq \lambda_1} \times \BR^d_{\leq \lambda_2} \ar[d, symbol=\subset] 
    & p^{-1}(\BR^d_{\leq \lambda_1} \times \BR^d_{\leq \lambda_2}) \ar[l, "p''", swap] \ar[d, symbol=\subset]
    & Z \ar[l, "j"] \ar[d, symbol=\subset] \\
    \BT^d_{\leq \lambda_1} \times \BT^d_{\leq \lambda_2} 
    & p^{-1}(\BT^d_{\leq \lambda_1} \times \BT^d_{\leq \lambda_2}) \ar[l, "p'", swap] 
    & (G_\ok)_{\leq \lambda_1} \times \BT^d_{\leq \lambda_2} \\
    C_{\leq \lambda_1} \times C_{\leq \lambda_2} \ar[u, "z_1"] 
    & (G_\ok)_{\leq \lambda_1} \times C_{\leq \lambda_2} \ar[l, "p", swap] \ar[u, "z_2"] \ar[r, equal]
    & (G_\ok)_{\leq \lambda_1} \times C_{\leq \lambda_2} \ar[u, "z_3"].
    \end{tikzcd}
    \]
    Here, 
    \[
    Z \vcentcolon= \{(g_1, [g_2, s]) \in (G_\ok)_{\leq \lambda_1} \times \BR^d_{\leq \lambda_2} : g_1 g_2 s \in \bN_\oh\},
    \]
    and $j$ is defined by 
    \[
    j(g_1, [g_2, s]) = (g_1, g_2 s, [g_2, s]).
    \]
    The $G_\oh \times G_\oh$-action on $p^{-1}(\BT^d_{\leq \lambda_1} \times \BT^d_{\leq \lambda_2})$ is given by:
    \[
    (g, g') \cdot (g_1, s_1, [g_2, s_2]) = (g g_1 (g')^{-1}, g' s_1, [g' g_2, s_2]),
    \]
    so $j$ is $G_\oh \times G_\oh$-equivariant. There is a section $\phi$ of (the pullback of) $\BT^d_{\leq \lambda_1}$ over $p^{-1}(\BR^d_{\leq \lambda_1} \times \BR^d_{\leq \lambda_2})$ defined by
    \[
    \phi(g_1, s_1, [g_2, s_2]) = [g_1, s_1 - g_2 s_2],
    \]
    whose vanishing locus is $Z$. Therefore,
    \begin{equation}\label{pullpart}
    \begin{split}
        p^*z_1^*H^{G \times G}_\bullet(\BR^d_{\leq \lambda_1} \times \BR^d_{\leq \lambda_2}) 
        =&z_2^*(p')^*H^{G \times G}_\bullet(\BR^d_{\leq \lambda_1} \times \BR^d_{\leq \lambda_2})\\
        \subset&z_2^*H^{G \times G}_\bullet(p^{-1}(\BR^d_{\leq \lambda_1} \times \BR^d_{\leq \lambda_2}))\\
        \subset& z_3^* H^{G \times G}_\bullet(Z).
    \end{split}
    \end{equation}
    Next, consider the diagram
    \[
    \begin{tikzcd}
    Z' \ar[d, symbol=\subset] \ar[r, "m''"] 
    & \BR^d_{\leq \lambda_1 + \lambda_2} \ar[d, symbol=\subset] \\
    (\Gr_\ok)_{\leq \lambda_1} \times_{G_\oh} \BT^d_{\leq \lambda_2} \ar[r, "m'"] 
    & \BT^d_{\leq \lambda_1 + \lambda_2} \\
    (G_\ok)_{\leq \lambda_1} \times_{G_\oh} C_{\leq \lambda_2} \ar[r, "m"] \ar[u, "z_4"] 
    & \overline{C}_{\lambda_1 + \lambda_2} \ar[u, "z_5"].
    \end{tikzcd}
    \]
    Here, $Z' = Z / G_\oh$ is the image of $Z$ under the canonical map 
    \[
    (\Gr_\ok)_{\leq \lambda_1} \times \BT^d_{\leq \lambda_2} \to (\Gr_\ok)_{\leq \lambda_1} \times_{G_\oh} \BT^d_{\leq \lambda_2}.
    \]
    Hence,
    \begin{equation}\label{pushpart}
    \begin{split}
        m_*(q^*)^{-1}z_3^* H^{G \times G}_\bullet(Z) 
        &= m_*z_4^* H^{G}_\bullet(Z') \\
        &= z_5^*(m')_* H^{G}_\bullet(Z') \\
        &\subset z_5^* H^{G}_\bullet(\BR^d_{\leq \lambda_1 + \lambda_2}).
    \end{split}
    \end{equation}

    Combining \eqref{pullpart} and \eqref{pushpart}, we obtain
    \[
    m_*(q^*)^{-1}p^*z_1^*H^{G\times G}_\bullet(\BR^d_{\leq \lambda_1} \times \BR^d_{\leq \lambda_2}) \subset z_5^* H^{G}_\bullet(\BR^d_{\leq \lambda_1 + \lambda_2}),
    \]
    as desired.
\end{proof}

\section{\texorpdfstring{Universal $G$-torsor}{Universal G-torsor}}
\label{Gtorsorappendix}
In this subsection, we will define in terms of functor of points an ind-scheme $\sE$ equipped with an action of $G_{\C[t^{-1}]}\rtimes\Cxh$ which is a $G$-torsor over $\Gr_G\times\PP^1$. It is best to work in the language of functors of points, since the algebraic loop group $G_\ok$ is highly non-reduced (see \cite[Remark 1.3.10]{Zhu}).

\subsubsection*{Universal $G$-torsors on the loop group}
 Here $G_{\C[t^{-1}]}$ is the fppf sheaf which sends any $\C$-algebra $R$ to $G(R[t^{-1}])$. The space $\sE$ is understood as the universal $G$-torsor.

Let $R$ be a $\C$-algebra, and $\gamma \in G(R\cct)$. By the theorem of \citeauthor{BL} (\cite{BL}), there exists a unique $G$-torsor $\hat{\sE}_{R,\gamma}$ on $\mathbb{P}^1_R$, equipped with trivializations
\begin{align*}
\varphi^\gamma_0 \colon \hat{\sE}_{R,\gamma}|_{\spec R[[t]]} &\xrightarrow{\sim} \spec R[[t]] \times G, \\
\varphi^\gamma_\infty \colon \hat{\sE}_{R,\gamma}|_{\spec R[t^{-1}]} &\xrightarrow{\sim} \spec R[t^{-1}] \times G.
\end{align*}
$\varphi^\gamma_\infty=\gamma\cdot\varphi^\gamma_0$ on $\hat{\sE}_{R,\gamma}|_{\spec R\cct}$. In simple terms, $\hat{\sE}_{R,\gamma}$ is obtained by gluing trivial $G$-torsors over $\spec R[[t]]$ and $\spec R[t^{-1}]$ using $\gamma$ as the transition function.

Moreover, if $f\colon R \to S$ is a $\C$-algebra homomorphism, and $\gamma' \in G(S\cct)$ is the image of $\gamma$ under the map $G_\ok(f)\colon G(R\cct) \to G(S\cct)$, then there is an isomorphism
\begin{equation}\label{cart}
\hat\sE_{S,\gamma'} \cong \hat\sE_{R,\gamma} \times_R S
\end{equation}
such that $\varphi_0^{\gamma'}=\varphi_0^\gamma\times_RS$ and $\varphi_\infty^{\gamma'}=\varphi_\infty^\gamma\times_RS$.

Conversely, if $\mathcal{P}$ is a $G$-torsor over $\PP^1_R$, with trivializations $\varphi_0$ and $\varphi_\infty$ over $\spec R[[t]]$ and $\spec R[t^{-1}]$ respectively, then there exists a unique $\gamma\in G_\ok(R)=G(R\cct)$ such that $\varphi_\infty=\varphi_0$ when restricted to $\spec R\cct$. The following lemma summarizes the above discussion.
\begin{lem}\label{Gtorsor}
The loop group $G_\ok$ represents the functor
\[
R \longmapsto \left\{
\begin{array}{l}
\text{Isomorphism classes of the pair } (\mathcal{P}, \varphi_0,\varphi_\infty): \\
\mathcal{P} \text{ is a } G\text{-torsor over } \PP^1_R,\ \\
\varphi_0 \text{ is a trivialization of }\mathcal{P} \text{ over } \spec R[[t]],\\ \varphi_\infty \text{ is a trivialization of }\mathcal{P} \text{ over } \spec R[t^{-1}]
\end{array}
\right\}.
\]
\end{lem}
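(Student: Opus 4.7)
The plan is to exhibit mutually inverse natural transformations between the functor $R \mapsto G(R\cct)$ represented by $G_\ok$ and the functor $F$ defined in the statement. The construction is already sketched in the preceding paragraphs, so the work consists of assembling these ingredients into a proof of representability.

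First I would define the natural transformation $\Phi \colon G_\ok \to F$ on $R$-points by sending $\gamma \in G(R\cct)$ to the isomorphism class of the triple $(\hat\sE_{R,\gamma}, \varphi_0^\gamma, \varphi_\infty^\gamma)$ furnished by Beauville--Laszlo. Naturality in $R$ is the compatibility isomorphism \eqref{cart}, which comes with the trivializations compatible under base change by hypothesis. Conversely, I would define $\Psi \colon F \to G_\ok$ as follows: given $(\mathcal{P}, \varphi_0, \varphi_\infty)$ over $\PP^1_R$, restrict both trivializations to $\spec R\cct$, and observe that the automorphism $\varphi_\infty \circ \varphi_0^{-1}$ of the trivial $G$-torsor over $\spec R\cct$ is given by left multiplication by a unique element $\gamma \in G(R\cct)$. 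This assignment descends to isomorphism classes because an isomorphism of triples intertwines the two trivializations and hence fixes $\gamma$; naturality in $R$ is immediate from the construction.

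It then remains to verify $\Psi \circ \Phi = \id$ and $\Phi \circ \Psi = \id$. The first is a tautology from the construction of $\hat\sE_{R,\gamma}$: its transition function between $\varphi_0^\gamma$ and $\varphi_\infty^\gamma$ is exactly $\gamma$. The second equality is the content of the Beauville--Laszlo theorem: the pair $(\mathcal{P}, \varphi_0, \varphi_\infty)$ is determined up to unique isomorphism by its transition function $\gamma$, which yields an isomorphism $\mathcal{P} \cong \hat\sE_{R,\Psi(\mathcal{P},\varphi_0,\varphi_\infty)}$ intertwining both trivializations.

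The only subtle point is to justify that the Beauville--Laszlo gluing, originally stated for vector bundles or quasi-coherent sheaves, applies to $G$-torsors; this is standard and follows by interpreting a $G$-torsor as a tensor-functorial assignment of vector bundles (or directly, by noting that the faithfully flat descent datum for sheaves on $\PP^1_R$ with respect to the Zariski cover $\spec R[[t]] \sqcup \spec R[t^{-1}] \to \PP^1_R$ — more precisely, its fpqc refinement used in \cite{BL} — also descends $G$-torsors). Apart from this appeal, the proof reduces to the bookkeeping outlined above.
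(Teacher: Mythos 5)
Your proposal is correct and follows essentially the same route as the paper, which presents this lemma as a summary of the preceding Beauville--Laszlo discussion; you are simply making the two mutually inverse natural transformations and the appeal to BL-descent for $G$-torsors (via the Tannakian interpretation) explicit. One small imprecision: $\spec R[[t]] \sqcup \spec R[t^{-1}] \to \PP^1_R$ is not a Zariski cover (the formal disc is not an open subscheme, and the map need not even be flat without a Noetherian hypothesis), which is precisely why Beauville--Laszlo is needed rather than ordinary descent---your parenthetical self-correction acknowledges this, but the Tannakian argument you give first is the cleaner justification.
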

By abstract nonsense, there exists a universal bundle $\hat \sE\to G_\ok\times \PP^1$ with trivializations $\varphi_0$ and $\varphi_\infty$ of $\hat \sE$ over $G_\ok\times \spec \C[[t]]$ and $G_\ok \times \spec \C[t^{-1}]$ respectively. We will now describe them explcitly.

We understand schemes as functors from the category $\C\Alg$ of $\C$-algebras to the category $\Set$ of sets, via the functor of points construction. If $f\colon R \to S$ is a $\C$-algebra homomorphism, we let $\hat\sE_{R,\gamma}(S)_f$ denote the preimage of $f \in \spec(R)(S)$ under the natural projection $\hat\sE_{R,\gamma}\to \spec R$. In particular, the isomorphism \eqref{cart} implies that
\begin{equation}\label{cart2}
    \hat\sE_{S, G_\ok(f)(\gamma)}(S)_{\operatorname{id}_S} = \hat\sE_{R,\gamma}(S)_f
\end{equation}
for any $\gamma \in G_\ok(R)$.

Now $\hat\sE\colon \C\Alg \to \mathrm{Sets}$ can be defined as follows. We set
\begin{equation*}
    \hat\sE(R) = \{(\gamma, x) \mid \gamma \in G(R\cct),\ x \in \hat\sE_{R,\gamma}(R)_{\operatorname{id}_R} \}
\end{equation*}
for each $\C$-algebra $R$; and
\begin{equation*}
    \hat\sE(f)(\gamma, x) = \big(G_\ok(f)(\gamma),\ \hat\sE_{R,\gamma}(f)(x)\big)
\end{equation*}
for each $\C$-algebra homomorphism $f\colon R \to S$. Note that $\hat\sE_{R,\gamma}(f)(x) \in \hat\sE_{S, G_\ok(f)(\gamma)}(S)_{\operatorname{id}_S}$ in view of (\ref{cart2}).

\subsubsection*{The \texorpdfstring{$(G_{\C[t^{-1}]} \times G_\oh) \rtimes \Cxh$}{(G_{C[t^{-1}]} x G_O) ⋉ C*}-actions on \texorpdfstring{$\hat\sE$}{Ê}.}
We let $\Cxh$ act on $G_\oh$, $G_\ok$, and $G_{\C[t^{-1}]}$ by loop rotations defined as follows. Let $R$ be a $\C$-algebra. Each element $z \in \Cxh(R) = R^\times$ induces an $R$-algebra automorphism $m_z^*$ of $R[[t]]$ by sending $t$ to $z^{-1}t$. By abuse of notation, we denote the composition
\[
G_\oh(R) = G(R[[t]]) \xrightarrow{G(m_{z}^*)} G(R[[t]]) = G_\oh(R),
\]
also by $m_z^*$.

The action of $\Cxh$ on $G_\oh$ is then given by
\begin{align*}
    \Cxh(R) \times G_\oh(R) &\to G_\oh(R) \\
    (z, g) &\mapsto m_z^{*-1}(g).
\end{align*}
It is notationally more instructive to write $g = g(t)$ and $m_z^{*-1}(g) = g(zt)$. The loop rotation actions on $G_\ok$ and $G_{\C[t^{-1}]}$ are defined similarly.

We write $(G_{\C[t^{-1}]} \times G_\oh) \rtimes \Cxh$ for the semidirect product in which $\Cxh$ acts on $G_{\C[t^{-1}]} \times G_\oh$ via loop rotation. It is clear that $(G_{\C[t^{-1}]} \times G_\oh) \rtimes \Cxh$ acts on $G_\ok$ by
\[
(g(t), h(t), z) \cdot \gamma(t) = g(t)\, \gamma(zt)\, h(t)^{-1}
\]
for any $\C$-algebra $R$, any $(g(t), h(t), z) \in \left(G(R[t^{-1}]) \times G(R[[t]])\right) \rtimes R^\times$, and any $\gamma(t)\in G(R\cct)$. In view of \Cref{Gtorsor}, one can understand the action of $G(R[t^{-1}])$ (resp.\ $G(R[[t]])$) as changing the trivialization $\varphi_\infty$ (resp.\ $\varphi_0$), and $z \in R^\times$ acts via the pullback along $m_z^{-1}\colon \PP^1_R \to \PP^1_R$.

We are going to show that this action lifts to an action of $(G_{\C[t^{-1}]} \times G_\oh) \rtimes \Cxh$ on $\hat\sE$. Let $R$ be a $\C$-algebra, $(g(t), h(t), z) \in (G(R[t^{-1}]) \times G(R[[t]])) \rtimes R^\times$, and let $\gamma(t) \in G_\ok(R)$,  $\gamma'=(g,h,z)\cdot\gamma$. Namely
\begin{equation*}
    \gamma'(t)=g(t)\gamma(zt)h(t)^{-1}.
\end{equation*}
Note that $\varphi_0' = m_z^*h\cdot \varphi_0^{\gamma}$ and $\varphi_\infty' = m_z^*g\cdot \varphi_\infty^{\gamma}$ are local trivializations of $\hat\sE_{R,\gamma}$ over $\spec R[[t]]$ and $\spec R[t^{-1}]$, respectively. One checks immediately that $\varphi_\infty' = m_z^*\gamma'\cdot\varphi_0'$ over $\spec R\cct$. By the theorem of \citeauthor{BL}, there exists a unique isomorphism
\[
\vartheta_{g,h,z}\colon \hat\sE_{R,\gamma} \xrightarrow{\sim} m_z^*\hat\sE_{R,\gamma'},
\]
such that $m_z^*\varphi_0^{\gamma'}\circ \vartheta_{g,h,z} =\varphi_0' $ and $m_z^*\varphi_\infty^{\gamma'}\circ\vartheta_{g,h,z} =\varphi_\infty' $.

Now let $(g'(t), h'(t), z') \in (G(R[t^{-1}]) \times G(R[[t]])) \rtimes R^\times$, then we have
\begin{equation}\label{productvarphi}
(g'(t), h'(t), z') \cdot (g(t), h(t), z) = (g'(t) g(z't),\ h'(t) h(z't),\ z'z).
\end{equation}
We denote the right-hand side of \eqref{productvarphi} by $(g'', h'', z'')$, and write $\gamma'' = (g'', h'', z'') \cdot \gamma \in G(R\cct)$. We claim that
\begin{equation}\label{cocylevarphi}
    m_z^*\vartheta_{g', h', z'} \circ \vartheta_{g, h, z} = \vartheta_{g'', h'', z''}.
\end{equation}

To see this, it suffices to check that both sides of \eqref{cocylevarphi} agree with the unique isomorphism $\vartheta\colon \hat\sE_{R,\gamma} \xrightarrow{\sim}  m_{z'z}^*\hat\sE_{R,\gamma''}$ satisfying
\begin{align*}
    m_{z'z}^*\varphi^{\gamma''}_0\circ\vartheta &= m_{z'z}^*h''\cdot  \varphi_0^\gamma, \\
    m_{z'z}^*\varphi^{\gamma''}_\infty\circ\vartheta &= m_{z'z}^*g''\cdot  \varphi_\infty^\gamma. \\
\end{align*}
This is immediate for the right-hand side of \eqref{cocylevarphi}. For the left-hand side, we compute
\begin{align*}
    m_{z'z}^*\varphi_0^{\gamma''}\circ m_{z}^*\vartheta_{g', h', z'} \circ \vartheta_{g, h, z}&=m_z^*(m_{z'}^*\varphi_0^{\gamma''}\circ\vartheta_{g', h', z'})\circ \vartheta_{g, h, z}\\
    &=m_z^*(m_{z'}^*h'\cdot \varphi_0^{\gamma'})\circ \vartheta_{g, h, z}\\
    &=m_{z'z}^*h'\cdot m_{z}^*\varphi_0^{\gamma'}\circ \vartheta_{g, h, z}\\
    &=m_{z'z}^*h'\cdot m_z^*h\cdot \varphi_0^\gamma\\
    &=m_{z'z}^*h''\cdot \varphi_0^\gamma.
\end{align*}
The other equality $m_{z'z}^*\varphi_\infty^{\gamma''}\circ m_z^*\vartheta_{g', h', z'} \circ\vartheta_{g, h, z} = m_{z'z}^*h''\cdot \varphi_\infty^\gamma$ can be checked in the same way.

Moreover, let $f\colon R \to S$ be a $\C$-algebra homomorphism. We write $\gamma_S = G_\ok(f)(\gamma)$, $g_S = G_{\C[t^{-1}]}(f)(g)$, etc. Then we have
\begin{equation}\label{naturalvarphi}
    \vartheta_{g_S, h_S, z_S} = \vartheta_{g, h, z} \times_R S,
\end{equation}
because both sides agree with the unique isomorphism $\vartheta:\hat\sE_{S,\gamma_S} \xrightarrow{\sim} m_{z_S}^*\hat  \sE_{S,\gamma'_S}$ with $m_{z_S}^*(\varphi^{\gamma'}_0)_S\circ \vartheta=m_{z_S}^*h_S\cdot(\varphi_0^\gamma)_S$ and $m_{z_S}^*(\varphi^{\gamma'}_\infty)_S\circ \vartheta=m_{z_S}^*h_S\cdot(\varphi_\infty^\gamma)_S$

Now we can define the action
\[\Phi_R:(G_\C[t^{-1}](R))\times G_\oh(R))\rtimes \Cxh(R)\times \hat\sE(R)\longrightarrow \hat\sE(R)\]
by
\begin{equation*}\Phi_R(g(t),h(t),z,\gamma(t),x)=(g(t)\gamma(zt)h^{-1}(t),\vartheta_{(g,h,z)}(x)).
\end{equation*}
By \eqref{cocylevarphi}, this defines an action of $(G_\C[t^{-1}](R))\times G_\oh(R))\rtimes \Cxh(R)$ on $\hat\sE(R)$. In view of \eqref{naturalvarphi}, we obtain an action of $(G_\C[t^{-1}]\times G_\oh)\rtimes \Cxh$ on $\hat\sE$.

Since the projection $\hat \sE\to G_\ok$ is clearly $(G_{\C[t^{-1}]}\times G_\oh) \rtimes \Cxh$-equivariant, and $\Gr_G=G_\ok/G_\oh$, 
\begin{equation*}
   \sE=\hat \sE/G_\oh 
\end{equation*}
is a $G$-torsor over $\Gr_G$. Moreover, there is a canonical trivialization $\varphi_\infty$ of $\sE$ over $\Gr_G\times \spec \C[t^{-1}]$. Moreover, there is a remaining action of $G_{\C[t^{-1}]}\rtimes\Cxh$ on $\sE$.

The trivializations $\varphi_0$ and $\varphi_\infty$ induce the isomorphisms
\begin{align*}
    \hat{\sE}|_{{G_\ok} \times \{0\}} \cong G_\ok \times G, \quad
    \hat{\sE}|_{{G_\ok} \times \{\infty\}} \cong G_\ok \times G.
\end{align*}
Since $G_\mathcal{O}$ acts by changing the trivialization $\varphi_0$, these identifications descend to isomorphisms
\begin{equation}\label{fibresofsE}
    \sE|_{\Gr_G \times \{0\}} \cong G_\mathcal{K} \times_{G_\mathcal{O}} G, \quad
    \sE|_{\Gr_G \times \{\infty\}} \cong \Gr_G \times G. 
\end{equation}

\printbibliography
\end{document}